\numberwithin{equation}{section}
\theoremstyle{plain}
\newtheorem{theorem}{Theorem}[section]
\newtheorem{lemma}[theorem]{Lemma}
\newtheorem{proposition}[theorem]{Proposition}
\newtheorem{corollary}[theorem]{Corollary}
\theoremstyle{definition}
\newtheorem{definition}[theorem]{Definition}
\newtheorem{example}[theorem]{Example}
\newtheorem{remark}[theorem]{Remark}
\newtheorem{notation}[theorem]{Notation}
\setlist[itemize]{leftmargin=*, itemsep={2pt}}
\setlist[enumerate]{leftmargin=*, itemsep={2pt}}
\newcommand*{\da@rightarrow}{\mathchar"0\hexnumber@\symAMSa 4B }
\newcommand*{\da@leftarrow}{\mathchar"0\hexnumber@\symAMSa 4C }
\newcommand*{\xdashrightarrow}[2][]{%
  \mathrel{%
    \mathpalette{\da@xarrow{#1}{#2}{}\da@rightarrow{\,}{}}{}%
  }%
}
\newcommand{\xdashleftarrow}[2][]{%
  \mathrel{%
    \mathpalette{\da@xarrow{#1}{#2}\da@leftarrow{}{}{\,}}{}%
  }%
}
\newcommand*{\da@xarrow}[7]{%
  \sbox0{$\ifx#7\scriptstyle\scriptscriptstyle\else\scriptstyle\fi#5#1#6\m@th$}%
  \sbox2{$\ifx#7\scriptstyle\scriptscriptstyle\else\scriptstyle\fi#5#2#6\m@th$}%
  \sbox4{$#7\dabar@\m@th$}%
  \dimen@=\wd0 %
  \ifdim\wd2 >\dimen@
    \dimen@=\wd2 %
  \fi
  \count@=2 %
  \def\da@bars{\dabar@\dabar@}%
  \@whiledim\count@\wd4<\dimen@\do{%
    \advance\count@\@ne
    \expandafter\def\expandafter\da@bars\expandafter{%
      \da@bars
      \dabar@ 
    }%
  }%
  \mathrel{#3}%
  \mathrel{%
    \mathop{\da@bars}\limits
    \ifx\\#1\\%
    \else
      _{\copy0}%
    \fi
    \ifx\\#2\\%
    \else
      ^{\copy2}%
    \fi
  }%
  \mathrel{#4}%
}
\newcommand{\bijartop}[1][]{%
 \ar[#1]
 \ar@<0.7ex>@{}[#1]|-*=0[@]{\sim}} 
 \newcommand{\bijarbottom}[1][]{%
 \ar[#1]
 \ar@<-0.95ex>@{}[#1]|-*=0[@]{\sim}} 
\newcommand{\st}{\mid} 
\newcommand{\set}[1]{\left\{ \, #1 \, \right\}}
\newcommand{\Perf}{\mathrm{Perf}}
\newcommand{\Db}{\mathrm{D^b_{coh}}}
\newcommand{\Fun}{\mathrm{Fun}}
\newcommand{\coh}{\mathrm{coh}}
\DeclareMathOperator{\length}{\mathrm{length}}
\newcommand{\hpd}{{\natural}}
\newcommand{\svee}{\scriptscriptstyle\vee}
\newcommand{\cAd}{\cA^\hpd}
\newcommand{\dcA}{{^\hpd}\cA}
\newcommand{\cJd}{\mathcal{J}^{\hpd}}
\newcommand{\barcJ}{\bar{\mathcal{J}}}
\newcommand{\llangle}{\left \langle}
\newcommand{\rrangle}{\right \rangle}
\newcommand{\sotimes}{\otimes}
\newcommand{\QCoh}{\mathrm{D_{qc}}}
\newcommand{\Sec}{\mathbf{Sec}}
\newcommand{\Spec}{\mathrm{Spec}}
\newcommand{\Gr}{\mathrm{Gr}}
\newcommand{\OGrp}{\mathrm{OGr_+}}
\newcommand{\OGrm}{\mathrm{OGr_-}}
\newcommand{\Spin}{\mathrm{Spin}}
\newcommand{\wtilde}{\widetilde}
\newcommand{\tJ}{{\tilde\bJ}}
\newcommand{\tJv}{{{\tilde\bJ}^{\svee}}}
\newcommand{\tJJ}{{\widetilde{\bJ\bJ}}}
\newcommand{\Bl}{\mathrm{Bl}}
\newcommand{\teps}{\tilde{\eps}}
\newcommand{\eps}{\varepsilon}
\newcommand{\tp}{{\tilde{p}}}
\DeclareMathOperator{\Sym}{Sym}
\DeclareMathOperator{\Proj}{Proj}
\DeclareMathOperator{\Pic}{Pic}
\newcommand{\stimes}{\times}
\newcommand{\tM}{{\wtilde{\cM}}}
\newcommand{\omegatp}{\omega_{\tp}}
\newcommand{\omegap}{\omega_{p}}
\newcommand{\cUv}{\mathcal{U}^{\svee}}
\DeclareMathOperator{\Cl}{\mathsf{Cliff}}
\newcommand{\vV}{V^{\svee}}
\newcommand{\vW}{W^{\svee}}
\newcommand{\tV}{\tilde{V}}
\newcommand{\tbE}{\tilde{\mathbf{E}}}
\DeclareMathOperator{\Tor}{Tor}
\newcommand{\id}{\mathrm{id}}
\newcommand{\pr}{\mathrm{pr}}
\newcommand{\rank}{\mathrm{rank}}
\newcommand{\cO}{\mathcal{O}}
\newcommand{\cA}{\mathcal{A}}
\newcommand{\cB}{\mathcal{B}}
\newcommand{\cC}{\mathcal{C}}
\newcommand{\cD}{\mathcal{D}}
\newcommand{\cE}{\mathcal{E}}
\newcommand{\cJ}{\mathcal{J}}
\newcommand{\cK}{\mathcal{K}}
\newcommand{\cL}{\mathcal{L}}
\newcommand{\cM}{\mathcal{M}}
\newcommand{\cP}{\mathcal{P}}
\newcommand{\cR}{\mathcal{R}}
\newcommand{\cT}{\mathcal{T}}
\newcommand{\cU}{\mathcal{U}}
\newcommand{\ccX}{\CMcal{X}}
\newcommand{\ccP}{\CMcal{P}}
\newcommand{\rR}{\mathrm{R}}
\newcommand{\fa}{\mathfrak{a}}
\newcommand{\fj}{\mathbf{j}}
\newcommand{\bE}{\mathbf{E}}
\newcommand{\bH}{\mathbf{H}}
\newcommand{\bJ}{\mathbf{J}}
\newcommand{\bZ}{\mathbf{Z}}
\newcommand{\bP}{\mathbf{P}}
\newcommand{\bk}{\mathbf{k}}
\newcommand{\bj}{\mathbf{j}}
\newcommand{\sS}{\mathsf{S}}
\newcommand{\sSs}{\mathsf{S}_{16}}
\begin{document}

\title{Categorical joins}

\author{Alexander Kuznetsov}
\address{{\sloppy
\parbox{0.9\textwidth}{
Steklov Mathematical Institute of Russian Academy of Sciences,\\
8 Gubkina str., Moscow 119991 Russia
\\[5pt]
National Research University Higher School of Economics, Moscow, Russia
}\smallskip}}
\email{akuznet@mi-ras.ru \medskip}

\author{Alexander Perry}
\address{Department of Mathematics, University of Michigan, Ann Arbor, MI 48109 \smallskip}
\email{arper@umich.edu}

\thanks{A.K. was partially supported by the HSE University Basic Research Program, Russian Academic Excellence Project ``5-100''. 
A.P. was partially supported by NSF postdoctoral fellowship DMS-1606460, 
NSF grant DMS-1902060/DMS-2002709, and the Institute for Advanced Study. 
}

\begin{abstract}
We introduce the notion of a categorical join, which can be thought of as a categorification 
of the classical join of two projective varieties. 
This notion is in the spirit of homological projective duality, 
which categorifies classical projective duality. 
Our main theorem says that 
the homological projective dual category of the categorical join 
is naturally equivalent to the categorical join of the homological projective dual categories. 
This categorifies the classical version of this assertion and has many applications, 
including a nonlinear version of the main theorem of homological projective duality.
\end{abstract}

\maketitle

\tableofcontents


\section{Introduction}
\label{section-intro}

 The theory of homological projective duality (HPD) introduced in~\cite{kuznetsov-hpd} is a powerful tool 
 for understanding the structure of 
 derived categories of algebraic varieties. 
 Given a smooth projective variety~$X$ with a morphism to a projective space $\bP(V)$
 and a special semiorthogonal decomposition
 of its derived category, 
 HPD associates a (noncommutative) smooth projective 
 variety $X^{\hpd}$ --- called the HPD of $X$ --- 
 with a morphism to the dual projective space $\bP(\vV)$. 
 This operation provides a categorification of classical projective duality.
 The main theorem of HPD
 describes the derived categories of linear sections of $X$ in terms of those of $X^\hpd$. 
 
 Since many interesting varieties can be expressed as linear sections of ``simple'' varieties, 
 e.g. many Fano threefolds are linear sections of homogeneous varieties, 
 this gives a potent strategy for studying derived categories: 
 \begin{enumerate}
 \item \label{HPD-strategy1} Obtain an explicit geometric description of the HPD of a simple variety. 
 \item \label{HPD-strategy2} Pass to 
 information about linear sections using the main theorem of HPD. 
 \end{enumerate}
 This strategy has been fruitfully carried out in a number of cases, 
 see~\cite{kuznetsov2014semiorthogonal, thomas2015notes} for surveys. 
 However, step \eqref{HPD-strategy1} is typically very difficult. 
 This raises the question of coming up with operations that allow us to construct 
 new solutions to \eqref{HPD-strategy1} from known ones. 

The main goal of this paper is to give an answer to this question. 
Namely, we define an operation called the \emph{categorical join}, which is modeled 
on (and can be thought of as a noncommutative resolution of) its classical geometric counterpart. 
Our main theorem says 
the formation of categorical joins commutes with HPD, parallel to the classical situation. 
In particular, if a description of the HPD of two varieties is known, this gives a description 
of the HPD of their categorical join. 
As a consequence of our main theorem, we prove a nonlinear generalization of the main theorem of HPD which greatly extends the scope of step \eqref{HPD-strategy2} above. 
These results have many concrete applications
detailed in \S\ref{intro-applications}. 

\subsection{Background on HPD} 
\label{subsection-NCHPD} 

The input for HPD is a \emph{Lefschetz variety} over $\bP(V)$.
This consists of a smooth projective variety $X$ equipped with a morphism $f \colon X \to \bP(V)$ 
and a \emph{Lefschetz decomposition} of the category $\Perf(X)$ 
of perfect complexes, namely a semiorthogonal decomposition of the form 
\begin{equation*}
\Perf(X) = \llangle \cA_0, 
\cA_1 \otimes f^*\cO_{\bP(V)}(1), \dots, \cA_{m-1} \otimes f^*\cO_{\bP(V)}(m-1) \rrangle 
\end{equation*}
where 
$0 \subset \cA_{m-1} \subset \dots \subset \cA_1 \subset \cA_0 \subset \Perf(X)$
is a chain of triangulated subcategories. 
Such a decomposition is in fact determined by the subcategory $\cA_0 \subset \Perf(X)$ \cite[Lemma~2.18]{kuznetsov2008lefschetz}, 
which we call the \emph{Lefschetz center}.

A Lefschetz variety $f \colon X \to \bP(V)$ is \emph{moderate} if $m < \dim(V)$;
this condition is essentially always satisfied in practice (see Remark~\ref{remark-moderate}),
and is important for running HPD.

\begin{example}
\label{example-linear-HPD}
If $W \subset V$ is a subspace of dimension $m$, then $\bP(W)$ is a Lefschetz variety over $\bP(V)$ 
with Lefschetz decomposition 
\begin{equation}
\label{eq:lefschetz-standard}
\Perf(\bP(W)) = \llangle \cO_{\bP(W)}, \cO_{\bP(W)}(1), \dots, \cO_{\bP(W)}(m-1) \rrangle.
\end{equation} 
Here the Lefschetz center is the subcategory $\langle \cO_{\bP(W)} \rangle$ 
generated by the structure sheaf~$\cO_{\bP(W)}$.
We call this the \emph{standard Lefschetz structure} on $\bP(W) \subset \bP(V)$. 
\end{example} 

The output of HPD applied to a moderate Lefschetz variety $f \colon X \to \bP(V)$
is a new moderate Lefschetz variety $f^\hpd \colon X^{\hpd} \to \bP(\vV)$, called the HPD of $X$.
The main theorem of HPD then describes the derived categories of linear sections of $X$ 
in terms of those of orthogonal linear sections of $X^{\hpd}$. 
More precisely, it says that if $L \subset V$ is a subspace, 
the derived categories of $X \times_{\bP(V)} \bP(L)$ and $X^{\hpd} \times_{\bP(\vV)} \bP(L^\perp)$
have a distinguished semiorthogonal component in common, while the other components 
come from the Lefschetz decompositions of $X$ and $X^{\hpd}$;
see Theorem~\ref{theorem-HPD} for a precise statement.

\begin{example}
\label{example:HPD-linear}
The HPD variety of $\bP(W) \subset \bP(V)$ with the standard Lefschetz structure 
is~$\bP(W^\perp) \subset \bP(\vV)$, where $W^\perp = \ker(\vV \to W^{\svee})$ is the orthogonal of $W$,
again with the standard Lefschetz structure.
\end{example}

In contrast with classical projective duality, HPD applies to morphisms $f \colon X \to \bP(V)$ 
that are not necessarily embeddings and preserves smoothness of $X$. 
However, in general, the associated HPD variety $f^{\hpd} \colon X^\hpd \to \bP(\vV)$ is noncommutative. 
This means that~$X^\hpd$ consists only of the data of a 
(suitably enhanced) triangulated category~$\Perf(X^\hpd)$  
equipped with a~$\bP(\vV)$-linear structure,  
i.e. an action of the monoidal category~$\Perf(\bP(\vV))$. 
In some cases,  
$\Perf(X^\hpd)$ can be identified with the derived category of a 
variety (as in Example~\ref{example:HPD-linear}), or with some other category of geometric origin, 
like the derived category of sheaves of modules over a finite sheaf of algebras on a variety. 
In these cases, $X^\hpd$ is called ``commutative'' or ``almost commutative'', 
but in general there is no underlying variety, and the symbol $X^\hpd$ 
just serves as a notational device. 

In~\cite{kuznetsov-hpd} HPD is developed under the assumption that~$X^\hpd$ is commutative. 
This was generalized in~\cite{NCHPD} where both the input~$X$ and output~$X^\hpd$ 
are allowed to be noncommutative (and not necessarily smooth or proper), making the theory completely symmetric; 
in this setting, Lefschetz varieties over~$\bP(V)$ are replaced with \emph{Lefschetz categories} over~$\bP(V)$, 
which are~$\bP(V)$-linear categories equipped with a Lefschetz decomposition.

In~\S\ref{section-HPD} we briefly review this framework. 
In the Introduction, 
for simplicity we stick as much as possible to the language of Lefschetz varieties, 
but in the body of the paper we work in the general setting of Lefschetz categories.

\subsection{Categorical joins} 
\label{subsection:intro-cat-joins}
Now let us outline the construction of a categorical join. 

Given two smooth projective varieties $X_1 \subset \bP(V_1)$ and $X_2 \subset \bP(V_2)$, their \emph{classical join} 
\begin{equation*}
\bJ(X_1, X_2) \subset \bP(V_1 \oplus V_2) 
\end{equation*} 
is the union of all the lines between points of $X_1$ and $X_2$ regarded as subvarieties of $\bP(V_1 \oplus V_2)$. 
The join is usually very singular (along the union $X_1 \sqcup X_2$), 
unless both~$X_1$ and $X_2$ are linear subspaces in $\bP(V_1)$ and $\bP(V_2)$, i.e. 
unless 
\begin{equation}
\label{eq:x12-linear}
X_1 = \bP(W_1) \subset \bP(V_1)
\qquad\text{and}\qquad
X_2 = \bP(W_2) \subset \bP(V_2),
\end{equation} 
in which case $\bJ(X_1,X_2) = \bP(W_1 \oplus W_2) \subset \bP(V_1 \oplus V_2)$.

The main problem with running HPD on the classical join is that in general $\bJ(X_1,X_2)$ does not 
have a natural Lefschetz structure. 
We would like to construct one from Lefschetz structures on $X_1$ and $X_2$. 
For this purpose, we pass to the \emph{resolved join}, defined by  
\begin{equation}
\label{eq:resolved-join-intro}
\tJ(X_1,X_2) = \bP_{X_1 \times X_2}(\cO(-H_1) \oplus \cO(-H_2)) 
\end{equation}
where $H_k$ is the pullback to $X_1 \times X_2$ of the hyperplane class of $\bP(V_k)$.
The resolved join is smooth since it is a $\bP^1$-bundle over $X_1 \times X_2$, 
and the canonical embedding 
\begin{equation*}
\cO(-H_1) \oplus \cO(-H_2) \hookrightarrow (V_1 \otimes \cO) \oplus (V_2 \otimes \cO) = 
(V_1 \oplus V_2) \otimes \cO 
\end{equation*} 
induces a morphism $\tJ(X_1,X_2) \to \bP(V_1 \oplus V_2)$ which factors birationally 
through the classical join. 
The morphism $\tJ(X_1, X_2) \to \bJ(X_1, X_2)$ blows down the 
two disjoint divisors 
\begin{equation}
\label{eq:intro-ek}
\eps_k \colon \bE_k(X_1, X_2) = \bP_{X_1 \times X_2}(\cO(-H_k)) \hookrightarrow \tJ(X_1,X_2), 
\quad 
k = 1,2, 
\end{equation} 
to $X_k \subset \bJ(X_1, X_2)$. 
Note that both $\bE_k(X_1, X_2)$ are canonically isomorphic to $X_1 \times X_2$. 

There are several advantages of the resolved join over the classical join: 
\begin{enumerate} 
\item Via the morphism $\tJ(X_1, X_2) \to X_1 \times X_2$ the resolved join 
is more simply related to $X_1$ and $X_2$ than the classical join. 

\item \label{rJ-advantage-2} The definition~\eqref{eq:resolved-join-intro} of the resolved join extends 
verbatim to the more general situation where the morphisms $X_1 \to \bP(V_1)$ and $X_2 \to \bP(V_2)$ are not necessarily embeddings, 
and with some work even to the case where $X_1$ and $X_2$ are replaced with 
$\bP(V_1)$- and~$\bP(V_2)$-linear categories (see Definition~\ref{definition-tJ}).

\item \label{rJ-advantage-3} The resolved join $\tJ(X_1, X_2)$ is smooth. 
\end{enumerate}
Points~\eqref{rJ-advantage-2} and~\eqref{rJ-advantage-3} should be thought of as parallel 
to the fact that HPD takes as input varieties that are not necessarily embedded in projective space 
(or even categories), and preserves smoothness.

However, there are still two issues with the resolved join $\tJ(X_1, X_2)$: 
it is ``too far'' from the classical join, 
and is not equipped with a natural Lefschetz structure over $\bP(V_1 \oplus V_2)$. 
To illustrate these problems, consider the simplest case~\eqref{eq:x12-linear} 
where~$X_1$ and~$X_2$ are linear subspaces of~$\bP(V_1)$ and~$\bP(V_2)$. 
Then the classical join \mbox{$\bJ(X_1, X_2) = \bP(W_1 \oplus W_2)$} is already smooth 
and comes with the standard Lefschetz structure~\eqref{eq:lefschetz-standard}. 
However, the resolved join~$\tJ(X_1, X_2)$, being a blowup of~$\bJ(X_1, X_2)$, contains 
in its derived category a copy of~$\Perf(\bJ(X_1, X_2))$, but also some 
extra components coming from the blowup centers. 
These extra components are irrelevant for the geometry of the join, and 
prevent $\Perf(\tJ(X_1,X_2))$ from having a natural Lefschetz structure. 
We thus need a 
general procedure for eliminating these extra components. 
The solution is the categorical join. 

\begin{definition}
Let $X_1 \to \bP(V_1)$ and $X_2 \to \bP(V_2)$ be Lefschetz 
varieties with Lefschetz centers 
$\cA^1_0 \subset \Perf(X_1)$ and $\cA^2_0 \subset \Perf(X_2)$. 
The \emph{categorical join} of $X_1$ and $X_2$ is  
the full subcategory of~$\Perf(\tJ(X_1, X_2))$ defined by 
\begin{equation*}
\label{intro-cat-join}
\cJ(X_1,X_2) = \left\{ C \in \Perf(\tJ(X_1, X_2)) \ \left|\ 
\begin{aligned}
\eps_1^*(C) &\in \Perf(X_1) \otimes \cA^2_0 \subset \Perf(\bE_1(X_1, X_2))  \\ 
\eps_2^*(C) &\in \cA^1_0 \otimes \Perf(X_2) \subset \Perf(\bE_2(X_1, X_2)) 
\end{aligned}
\right.\right\} ,
\end{equation*}
where $\eps_1$ and $\eps_2$ are the morphisms \eqref{eq:intro-ek} and 
\begin{align*}
&\Perf(X_1) \otimes \cA^2_0 \subset \Perf(X_1 \times X_2) = \Perf(\bE_1(X_1,X_2)),\\
&\cA^1_0 \otimes \Perf(X_2) \subset \Perf(X_1 \times X_2) = \Perf(\bE_2(X_1,X_2)) , 
\end{align*}
are the subcategories generated by objects of the form $F_1 \boxtimes F_2$ with $F_1 \in \Perf(X_1)$, 
$F_2 \in \cA^2_0$ in the first case, 
and with $F_1 \in \cA^1_0$, $F_2 \in \Perf(X_2)$ in the second case.
\end{definition}

If $X_1$ and $X_2$ are linear subspaces~\eqref{eq:x12-linear} 
with their standard Lefschetz structures~\eqref{eq:lefschetz-standard}, 
then there is an equivalence 
\begin{equation*}
\cJ(\bP(W_1), \bP(W_2)) \simeq \Perf(\bP(W_1 \oplus W_2)), 
\end{equation*} 
so the categorical join gives the desired category (see Example~\ref{example:cj-pp} for details). 

In general, the categorical join $\cJ(X_1, X_2)$ is not equivalent to the derived category of a variety, 
i.e. is not commutative. Rather, $\cJ(X_1, X_2)$ should be thought of as a noncommutative 
birational modification of the resolved join $\tJ(X_1, X_2)$. 
Indeed, away from the exceptional divisors \eqref{eq:intro-ek}, the categorical join coincides with the resolved join, which in turn coincides with the classical join if the morphisms $X_1 \to \bP(V_1)$ and $X_2 \to \bP(V_2)$ are embeddings (see Lemma~\ref{lemma-tJ}, Proposition~\ref{proposition-cJT}, and Remark~\ref{remark-cJ-vs-classical} for precise statements). 
In particular, if~$X_1 \to \bP(V_1)$ and $X_2 \to \bP(V_2)$ are embeddings, 
then~$\cJ(X_1, X_2)$ can be thought of as a noncommutative resolution 
of singularities of the classical join~$\bJ(X_1, X_2)$. 

We prove in Lemma~\ref{lemma-tJ} that the categorical join is a~$\bP(V_1 \oplus V_2)$-linear subcategory of~$\Perf(\tJ(X_1, X_2))$.
The following shows that it also has a Lefschetz decomposition, i.e. the  structure of a Lefschetz category over~$\bP(V_1 \oplus V_2)$, and 
hence gives a suitable input for HPD.

\begin{theorem}[{Theorem~\ref{theorem-join-lef-cat}}]
\label{theorem-ld-cJ} 
Let $X_1 \to \bP(V_1)$ and $X_2 \to \bP(V_2)$ be moderate Lefschetz varieties with 
Lefschetz centers \mbox{$\cA^1_0 \subset \Perf(X_1)$} and $\cA^2_0 \subset \Perf(X_2)$. 
Then the categorical join~$\cJ(X_1, X_2)$ has the structure of a moderate Lefschetz category 
over $\bP(V_1 \oplus V_2)$, with Lefschetz center equivalent to the subcategory 
$\cA^1_0 \otimes \cA^2_0 \subset \Perf(X_1 \times X_2)$ generated by objects of the form 
$F_1 \boxtimes F_2$ for $F_1 \in \cA^1_0$, $F_2 \in \cA^2_0$. 
\end{theorem} 

Our main result describes the HPD of a categorical join. 

\begin{theorem}[{Theorem~\ref{theorem-joins-HPD}}]
\label{main-theorem-intro}
Let $X_1 \to \bP(V_1)$ and $X_2 \to \bP(V_2)$ be moderate Lefschetz varieties.  
Let~$X^\hpd_1 \to \bP(\vV_1)$ and $X^\hpd_2 \to \bP(\vV_2)$ be the HPD varieties. 
Then there is an equivalence 
\begin{equation*}
\cJ(X_1, X_2)^{\hpd} \simeq \cJ(X^\hpd_1, X^\hpd_2) 
\end{equation*} 
of Lefschetz categories over $\bP(\vV_1 \oplus \vV_2)$, i.e. a\/ $\bP(\vV_1 \oplus \vV_2)$-linear equivalence 
identifying the Lefschetz centers on each side.
\end{theorem}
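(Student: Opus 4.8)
The plan is to prove Theorem~\ref{main-theorem-intro} by reducing it to the \emph{linear} HPD statement via a clever geometric incidence construction, exploiting the fact that the categorical join carries a tautological map to $\bP(V_1 \oplus V_2)$ and that HPD is characterized by a universal property involving the universal hyperplane section. The key object to introduce is the \emph{double resolved join} or, more precisely, the fiber product of the universal hyperplane sections of $\tJ(X_1,X_2) \to \bP(V_1 \oplus V_2)$ with those of $X_1 \to \bP(V_1)$ and $X_2 \to \bP(V_2)$, organized over the flag-type variety parametrizing a hyperplane in $\bP(V_1 \oplus V_2)$ together with its restrictions to the two coordinate subspaces. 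First I would set up the geometry: the space of hyperplanes $\bP(\vV_1 \oplus \vV_2)$ maps (rationally, resolved by a blowup) to $\bP(\vV_1) \times \bP(\vV_2)$ in a way parallel to the resolved join itself — indeed $\tJ$ of the dual picture sits over exactly this space. This self-dual symmetry of the resolved-join construction is what ultimately powers the theorem.

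Second, I would identify the HPD category $\cJ(X_1,X_2)^\hpd$ concretely. By definition it is a semiorthogonal component of the universal hyperplane section $\mathcal{H}(\cJ(X_1,X_2))$ over $\bP(\vV_1 \oplus \vV_2)$, complementary to the pieces coming from the Lefschetz decomposition of $\cJ(X_1,X_2)$ tensored with the standard decomposition of the hyperplane section fibration. The crucial computation is to describe this universal hyperplane section geometrically. Using the defining property of $\cJ(X_1,X_2)$ as a subcategory of $\Perf(\tJ(X_1,X_2))$ cut out by conditions on $\eps_k^*$, together with the compatibility of the Lefschetz structure from Theorem~\ref{theorem-ld-cJ}, I expect the universal hyperplane section of $\cJ(X_1,X_2)$ to decompose (after passing to an appropriate blowup / resolution of the incidence locus over $\bP(\vV_1 \oplus \vV_2)$) into contributions that mirror the categorical join of the universal hyperplane sections $\mathcal{H}(X_1)$ and $\mathcal{H}(X_2)$. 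This is the technical heart and where I expect most of the bookkeeping to live: one must track how the two exceptional divisors $\bE_k$ interact with the universal hyperplane condition, and how the Lefschetz center $\cA^1_0 \otimes \cA^2_0$ (pulled back to $\tJ$) controls which blow-up components survive.

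Third, with the universal hyperplane section understood, I would invoke the \emph{linear} HPD equivalence — Example~\ref{example-linear-HPD} in its categorical form — fiberwise over $\bP(\vV_1) \times \bP(\vV_2)$, or rather over the resolved version thereof. The point is that a hyperplane in $\bP(V_1 \oplus V_2)$ corresponds to a pair $(H_1, H_2)$ with $H_k \subset \bP(V_k)$ a hyperplane (or all of $\bP(V_k)$, along the exceptional loci), so the universal hyperplane section of the join should glue out of universal hyperplane sections of $X_1$ and $X_2$; applying HPD to each factor and using the main theorem of HPD (Theorem~\ref{theorem-HPD}) to rewrite base-change categories, one transforms the description of $\cJ(X_1,X_2)^\hpd$ into a description that, by the \emph{same} categorical-join construction applied to $X_1^\hpd$ and $X_2^\hpd$, is exactly $\cJ(X_1^\hpd, X_2^\hpd)$. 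Finally I would check that this equivalence is compatible with the $\bP(\vV_1 \oplus \vV_2)$-linear structures and matches the Lefschetz centers on both sides — using that the Lefschetz center of $\cJ(X_1^\hpd,X_2^\hpd)$ is the image of $\cA^{1,\hpd}_0 \otimes \cA^{2,\hpd}_0$, which should correspond under the construction to the ``primitive part'' left over after stripping the standard components from the universal hyperplane section.

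The main obstacle, I expect, will be step two: controlling the universal hyperplane section of the categorical join. The categorical join is defined by a \emph{gluing} condition (an intersection of preimage conditions under the two divisorial pullbacks $\eps_k^*$), not by a single semiorthogonal decomposition, so computing its universal hyperplane section requires a careful analysis of how the hyperplane section operation interacts with this gluing — in particular verifying that the semiorthogonal pieces one strips off genuinely correspond to the Lefschetz decomposition of the join and not to spurious blow-up artifacts. A secondary difficulty is purely organizational: everything must be done $\bP(\vV_1 \oplus \vV_2)$-linearly and in the enhanced (dg or stable $\infty$) setting so that the equivalences are functorial, and one must handle the noncommutative/Lefschetz-category generality of Remark~\ref{remark-NC-join} rather than just smooth projective varieties, which means the geometric incidence arguments must be replaced by their categorical base-change analogues throughout. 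I would expect to isolate the core gluing computation as a standalone lemma about categorical joins and universal hyperplane sections, prove the linear building-block case by hand, and then assemble the general case by naturality.
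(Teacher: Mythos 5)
Your geometric setup is exactly the one the paper uses: the proof is indeed built on the double resolved join $\tJJ(\bH_1,\bH_2)$, i.e.\ the fiber product of the resolved join of $\bH(\bP(V_1))$ and $\bH(\bP(V_2))$ over $\bP(V_1\oplus V_2)$ with the dual resolved join over $\bP(\vV_1\oplus\vV_2)$, and the self-duality of the resolved-join construction that you point out is precisely what makes the comparison possible. From this geometry the paper manufactures an explicit $\bP(\vV_1\oplus\vV_2)$-linear functor $\gamma_{\tJ}=\alpha_*\circ\tp^*\circ\tJ(\gamma_1,\gamma_2)\colon \tJv((\cA^1)^\hpd,(\cA^2)^\hpd)\to\bH(\tJ(\cA^1,\cA^2))$ and shows it restricts to the desired Lefschetz equivalence.

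The genuine gap is in your steps two and three: you propose to compute the universal hyperplane section of the categorical join directly, decompose it, and then match pieces by applying linear HPD ``fiberwise'' together with the main theorem of HPD. Neither of these is what closes the argument, and it is not clear either can be made to work: the main theorem of HPD (Theorem~\ref{theorem-HPD}) concerns base change along linear subbundles and is not used in the proof of the join theorem at all, and a direct decomposition of $\bH(\cJ(\cA^1,\cA^2))$ compatible with the gluing condition defining the join is exactly the ``bookkeeping'' you flag as the hard part but never resolve. What replaces it in the paper is a pair of intrinsic characterizations proved in \S\ref{subsection-HPD-characterization}: $\cAd$ is cut out inside $\bH(\cA)$ by the condition $\pi_*(C\otimes h^*F)\in\cA_0$ for all $F$ (Lemma~\ref{lemma-characterization-Cd}), and its center $\cAd_0$ by the further conditions $\pi_*(\gamma(C)\otimes\wedge^t\cM)\in{}^\perp\cA_0$ for $t\ge 1$ (Proposition~\ref{proposition-characterization-Ad0}), where $\cM$ is the rank-$(N-2)$ monad bundle on the universal hyperplane. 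The proof then reduces to two pushforward computations (Propositions~\ref{proposition-pi-gammatJ-1} and~\ref{proposition-pi-gammatJ-2}) expressing $\pi_{\tJ*}\circ(-\otimes\wedge^t\cM)\circ\gamma_{\tJ}$ in terms of the analogous functors for $\cA^1$ and $\cA^2$, after which the equivalence follows from the criterion of Lemma~\ref{lemma-equivalence-lef-cat} (the functor and its left adjoint each induce equivalences of Lefschetz centers). Without some substitute for these characterizations your plan has no mechanism for recognizing $\cJ(\cA^1,\cA^2)^\hpd$ and its center inside $\bH(\tJ(\cA^1,\cA^2))$.
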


Theorem~\ref{main-theorem-intro} can be thought of as a categorification 
of the classical result that the operations of classical join and projective 
duality commute, i.e. for $X_1 \subset \bP(V_1)$ and $X_2 \subset \bP(V_2)$ 
we have 
\begin{equation*}
\bJ(X_1, X_2)^{\svee} = \bJ(X_1^{\svee}, X_2^{\svee}),
\end{equation*}
where $(-)^{\svee}$ denotes the operation of classical projective duality. 

In the paper we extend the construction of categorical joins to Lefschetz categories, 
and prove Theorem~\ref{main-theorem-intro} in this context. 
This is needed even if $X_1$ and $X_2$ are varieties, because in 
general the HPD varieties $X^\hpd_1$ and $X^\hpd_2$ will be noncommutative, i.e. only exist as Lefschetz categories. 

\subsection{The nonlinear HPD theorem} 
\label{ss:intro-nonlinear-hpd}
Given closed subvarieties $X_1 \subset \bP(V)$ and~$X_2 \subset \bP(V)$ of the \emph{same} projective space, 
the classical join $\bJ(X_1, X_2)$ (as defined in~\S\ref{subsection:intro-cat-joins}) is a subvariety of~$\bP(V \oplus V)$.  
Let~$W \subset V \oplus V$ be the graph of an isomorphism $\xi \colon V \to V$ 
given by scalar multiplication; e.g. $W \subset V \oplus V$ is the diagonal for~$\xi = \id$ and the antidiagonal for~$\xi = -\id$. 
Then we have 
\begin{equation*}
\label{classical-join-intersection}
\bJ(X_1, X_2) \cap \bP(W) \cong X_1 \cap X_2. 
\end{equation*}
If, more generally, we have morphisms $X_1 \to \bP({V})$ and $X_2 \to \bP({V})$ 
instead of embeddings, then 
\begin{equation*}
\label{classical-join-fiber-product}
\tJ(X_1, X_2) \times_{\bP(V \oplus V)} \bP(W) \cong X_1 \times_{\bP(V)} X_2. 
\end{equation*}

Categorifying this, we show in Proposition~\ref{proposition-cJT} that if $X_1 \to \bP({V})$ and $X_2 \to \bP({V})$ are 
Lefschetz varieties, then 
\begin{equation*}
\cJ(X_1, X_2)_{\bP(W)} \simeq \Perf{\left( X_1 \times_{\bP(V)} X_2 \right)}
\end{equation*}
where the left side is the base change of $\cJ(X_1, X_2)$ along the embedding $\bP(W) \to \bP(V \oplus V)$. 
Here and below, all fiber products of schemes are taken in the derived sense, 
but we note that in the $\Tor$-independent case this agrees with the usual fiber product of schemes (see \S\ref{subsection-conventions}). 

The orthogonal subspace to the diagonal $V \subset V \oplus V$ is the antidiagonal 
$V^{\svee} \subset V^{\svee} \oplus V^{\svee}$. 
Thus combining Theorem~\ref{main-theorem-intro} with the main theorem of HPD, 
we obtain the following result, that we formulate here loosely; 
see Theorem~\ref{theorem-nonlinear-HPD} for the precise statement. 

\begin{theorem}
\label{intro-nonlinear-HPD}
Let $X_1 \to \bP({V})$ and $X_2 \to \bP({V})$ be moderate Lefschetz varieties, with HPD varieties $X^\hpd_1 \to \bP({\vV})$ and $X^\hpd_2 \to \bP({\vV})$.  
Then there are induced semiorthogonal decompositions of 
\begin{equation}
\label{A1A2fiberproduct}
\Perf{ ( X_1 \times_{\bP(V)} X_2 )}
\qquad \text{and} \qquad 
\Perf{ ( X^\hpd_1 \times_{\bP(\vV)} X^\hpd_2 )}
\end{equation}
which have a distinguished component in common. 
\end{theorem}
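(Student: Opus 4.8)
The plan is to deduce Theorem~\ref{intro-nonlinear-HPD} by combining Theorem~\ref{main-theorem-intro} (HPD commutes with categorical joins) with the main theorem of HPD (Theorem~\ref{theorem-HPD}) and the base-change identification $\cJ(X_1, X_2)_{\bP(W)} \simeq \Perf(X_1 \times_{\bP(V)} X_2)$ of Proposition~\ref{proposition-cJT}. First I would set up the linear-algebra picture: with $X_1, X_2$ both mapping to $\bP(V)$, form the categorical join $\cJ(X_1, X_2)$, which by Theorem~\ref{theorem-ld-cJ} is a right strong, moderate Lefschetz category over $\bP(V \oplus V)$. Let $W \subset V \oplus V$ be the diagonal (graph of $\id$), so that $W^\perp \subset \vV \oplus \vV$ is the antidiagonal, i.e. the graph of $-\id \colon \vV \to \vV$. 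By Proposition~\ref{proposition-cJT} applied on the one side, $\cJ(X_1, X_2)_{\bP(W)} \simeq \Perf(X_1 \times_{\bP(V)} X_2)$, and applied on the dual side (using $W^\perp$ as the graph of a scalar automorphism of $\vV$), $\cJ(X_1^\hpd, X_2^\hpd)_{\bP(W^\perp)} \simeq \Perf(X_1^\hpd \times_{\bP(\vV)} X_2^\hpd)$.

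Next I would feed $\cA := \cJ(X_1, X_2)$ into the main theorem of HPD. By Theorem~\ref{main-theorem-intro}, its HPD category is $\cAd \simeq \cJ(X_1^\hpd, X_2^\hpd)$ as Lefschetz categories over $\bP(\vV \oplus \vV)$. Now Theorem~\ref{theorem-HPD} (the main theorem of HPD), applied with the subspace $L = W \subset V \oplus V$ and its orthogonal $L^\perp = W^\perp \subset \vV \oplus \vV$, produces semiorthogonal decompositions of the base changes $\cA_{\bP(W)}$ and $\cAd_{\bP(W^\perp)}$ each having a common distinguished component (the "HPD piece"), with the remaining components controlled by the Lefschetz components $\cA_i$, $\cAd_j$ together with the dimensions $\dim W$ and $\dim W^\perp$. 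Substituting the two identifications from the previous paragraph turns these into semiorthogonal decompositions of $\Perf(X_1 \times_{\bP(V)} X_2)$ and $\Perf(X_1^\hpd \times_{\bP(\vV)} X_2^\hpd)$ sharing a distinguished component, which is exactly the assertion. I would also remark that the length and admissibility hypotheses needed to apply Theorem~\ref{theorem-HPD} hold: moderateness of $\cJ(X_1, X_2)$ comes from Theorem~\ref{theorem-ld-cJ}, and the dimension count $\dim W = \dim V$, $\dim W^\perp = \dim V$ ensures both base changes fall in the range where the HPD theorem yields nonempty decompositions.

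The main obstacle I expect is not the formal deduction but verifying that the hypotheses of the ingredients match up cleanly — in particular, checking that Proposition~\ref{proposition-cJT} genuinely applies to the \emph{dual} pair $(X_1^\hpd, X_2^\hpd)$ over $\bP(\vV)$, since these are honestly noncommutative Lefschetz categories rather than varieties, and one must make sure the statement of that proposition is phrased at the level of Lefschetz categories (as Remark~\ref{remark-NC-join} promises) and that the relevant automorphism $-\id$ of $\vV$ indeed has graph equal to $W^\perp$. A secondary point requiring care is bookkeeping the Lefschetz lengths: to invoke Theorem~\ref{theorem-HPD} one needs $\dim W$ and $\dim W^\perp$ to interact correctly with the lengths $m$ of $\cJ(X_1, X_2)$ and $m^\hpd$ of $\cJ(X_1^\hpd, X_2^\hpd)$, and one should record precisely which Lefschetz/co-Lefschetz components of the categorical joins survive in the base-changed decompositions, so that the "distinguished component in common" is unambiguously identified. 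Beyond these checks, the argument is a direct concatenation of the cited results, so I would keep the proof short and defer the precise indexing to the statement of Theorem~\ref{theorem-nonlinear-HPD}.
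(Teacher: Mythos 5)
Your proposal is correct and follows essentially the same route as the paper's proof of Theorem~\ref{theorem-nonlinear-HPD}: embed $W$ diagonally into $V \oplus V$ (with antidiagonal orthogonal), identify $\cJ(X_1,X_2)_{\bP(W)}$ and $\cJ(X_1^\hpd,X_2^\hpd)_{\bP(W^\perp)}$ with the fiber products via Proposition~\ref{proposition-cJT}, and apply Theorem~\ref{theorem-HPD}\eqref{HPD-linear} together with Theorem~\ref{theorem-joins-HPD}. The concerns you raise are handled in the paper: Proposition~\ref{proposition-cJT} is stated for arbitrary Lefschetz categories (so it applies to the noncommutative $X_k^\hpd$), and its hypotheses hold on the dual side because both projections $W^\perp \to \vV_k$ are isomorphisms, so $\bP(W^\perp)$ avoids $\bP(\vV_1) \sqcup \bP(\vV_2)$.
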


Using~\cite{bznp} or (if the fiber products in~\eqref{A1A2fiberproduct} are assumed $\Tor$-independent) \cite{kuznetsov-base-change}, 
Theorem~\ref{intro-nonlinear-HPD} also implies an analogous result at the level 
of bounded derived categories of coherent sheaves in place of perfect complexes; 
see Remark~\ref{remark-Perf-to-Db} for details.

If $X_2 = \bP(L) \subset \bP(V)$ for a vector subspace 
$0 \subsetneq L \subsetneq V$, then the HPD variety is the 
orthogonal space $X^\hpd_2 = \bP(L^{\perp}) \subset \bP(\vV)$ (see Example~\ref{example:HPD-linear}). 
Hence 
\begin{equation*}
X_1 \times_{\bP(V)} X_2 = X_1 \times_{\bP(V)} \bP(L)
\qquad\text{and}\qquad 
X^\hpd_1 \times_{\bP(\vV)} X^\hpd_2 = X^\hpd_1 \times_{\bP(\vV)} \bP(L^\perp)
\end{equation*}
are mutually orthogonal linear sections of $X_1$ and $X^\hpd_1$.
The result of Theorem~\ref{intro-nonlinear-HPD} then reduces to the main theorem of HPD (Theorem~\ref{theorem-HPD}).
Accordingly, Theorem~\ref{intro-nonlinear-HPD} should be thought of as a \emph{nonlinear} version of the main theorem of HPD.

\begin{remark}
Jiang, Leung, and Xie~\cite{categorical-plucker} established a version of 
Theorem~\ref{intro-nonlinear-HPD} using a different argument which does not involve joins. 
We note that our result is more general in that~$X_k$ and~$X_k^{\hpd}$ are allowed to be 
noncommutative, and we do not require any transversality hypotheses 
(at the expense of considering derived fiber products in the non-transverse case). 
Moreover, our proof places Theorem~\ref{intro-nonlinear-HPD} in a larger conceptual framework, 
from which it follows as a corollary of the much more general  
Theorem~\ref{main-theorem-intro}. 
This framework also leads to many results 
beyond those in~\cite{categorical-plucker},
including an iterated version of Theorem~\ref{intro-nonlinear-HPD} 
for the fiber product of any number of Lefschetz varieties (Theorem~\ref{theorem-iterated-nonlinear-HPD}), 
a description of the derived categories of Enriques surfaces (Theorem~\ref{theorem-enriques}), 
a quadratic HPD theorem with many applications (see the discussion in~\S\ref{intro-applications}), 
and new derived equivalences of Calabi--Yau threefolds (see~\cite{inoue}). 
\end{remark}

\subsection{Applications} 
\label{intro-applications} 
Theorems~\ref{main-theorem-intro} and~\ref{intro-nonlinear-HPD} have many applications. 
We provide a few of them in~\S\ref{section:applications} of the paper, to show how the theory works.

First, we consider the Grassmannians $\Gr(2,5) \subset \bP^9$ and $\OGrp(5,10) \subset \bP^{15}$, 
which have the special property of being (homologically) projectively self-dual. 
Given two copies of a Grassmannian of either type, we obtain a pair of 
varieties by forming their intersection and the intersection of their projective duals.  
We show in~\S\ref{subsection:intersection-gr25} that the $\Gr(2,5)$ case  
gives a new pair of derived equivalent Calabi--Yau threefolds, 
and the $\OGrp(5,10)$ case gives a new pair of derived equivalent Calabi--Yau fivefolds. 
These equivalences are a key ingredient in the recent papers~\cite{GPK3,jj-torelli,double-spinor}, 
which show these pairs of varieties lead to the first known counterexamples to 
the birational Torelli problem for Calabi--Yau varieties. 

As another example, we use Theorem~\ref{main-theorem-intro} to understand the derived 
category of a general Enriques surface $\Sigma$. 
Namely, we prove 
the orthogonal subcategory 
$\langle \cO_{\Sigma} \rangle^{\perp} \subset \Perf(\Sigma)$ 
to the structure sheaf $\cO_\Sigma$ 
embeds into the twisted derived category of a stacky projective plane as the orthogonal to an 
exceptional object (Theorem~\ref{theorem-enriques}). 
This result can be regarded as an algebraization of the logarithmic transform, which creates 
an Enriques surface from a rational elliptic surface with two marked fibers. 

For further applications, in a sequel~\cite{categorical-cones} to this paper 
we show that our results yield a very useful ``quadratic'' HPD theorem. 
Using our nonlinear HPD theorem (Theorem~\ref{intro-nonlinear-HPD}), this boils 
down to describing HPD for a quadric $Q \to \bP(V)$. 
For smooth quadrics we do this in~\cite{kuznetsov-perry-HPD-quadrics}. 
For singular quadrics we develop a theory of \emph{categorical cones},
which are categorical joins with a projective space and thus provide categorical resolutions of classical cones.
In particular, categorical cones over smooth quadrics provide categorical resolutions of singular quadrics, 
and a description of their HPD reduces to the smooth case. 

In \cite{categorical-cones} we use the resulting quadratic HPD theorem to prove  
the duality conjecture for Gushel--Mukai varieties as stated in~\cite{kuznetsov2016perry}; 
as a special case, this gives a new, conceptual proof of the main result of~\cite{kuznetsov2016perry}. 
Besides providing a close connection between the birational geometry, Hodge theory, and 
derived categories of Gushel--Mukai varieties, this gives families of noncommutative 
deformations of the derived categories of K3 surfaces, analogous to the family of 
noncommutative K3 surfaces associated to cubic fourfolds \cite{kuznetsov2010derived}. 
In \cite{categorical-cones}, we also give other applications of the quadratic HPD theorem, 
including examples of noncommutative Calabi--Yau threefolds which are not equivalent to the derived 
category of a variety, but which admit singular degenerations with crepant resolutions by 
the derived category of a genuine Calabi--Yau threefold. 

Recently, Inoue \cite{inoue} applied our results to construct several new examples of derived equivalent 
but non-birational Calabi--Yau threefolds. 
These examples are smoothings of the classical join of suitable elliptic curves. 

\subsection{Homological projective geometry} 
\label{section-homological-projective-geometry} 

Our results suggest the existence of a robust theory of \emph{homological projective geometry}, 
of which homological projective duality, categorical joins, and categorical cones 
are the first instances. 
In this theory, Lefschetz categories over~$\bP(V)$ should play the role of 
projective varieties embedded in $\bP(V)$. 
An interesting feature of the operations of homological projective geometry known so far
is that they preserve smoothness of the objects involved, 
whereas in classical projective geometry this is far from true. 
In fact, this principle of ``homological smoothness'' guided our constructions.

The vision of homological projective geometry is alluring  
because the known results are so powerful, 
and yet they correspond to a small sector of the vast theory of 
classical projective geometry. 
For instance, it would be very interesting to categorify secant varieties, 
and prove an HPD statement for them. 
The ideas of this paper should be useful for making progress in this area. 
As an illustration, in Appendix~\ref{section-future} we discuss projected categorical joins --- 
which in particular give an approximation to the sought for theory of categorical secant varieties --- 
and show that under HPD they correspond to fiber products. 

\subsection{Noncommutative algebraic geometry framework}
\label{ss:intro-cat-framework}

To finish the Introduction, we explain the framework of noncommutative algebraic geometry adopted in this paper. 
Since the categorical join is defined only as a Lefschetz category and not a variety, 
such a framework is absolutely necessary for us --- without one we could not even formulate our main results. 
On the other hand, our approach is sufficiently flexible to work within any framework 
that has appropriate notions of \emph{$T$-linear categories} and \emph{$T$-linear functors} over an arbitrary scheme~$T$ (not just $T = \bP(V)$)
that satisfy some natural properties.

Specifically, a $T$-linear category should carry an action of the monoidal category~$\Perf(T)$, 
and include as examples derived categories of schemes over~$T$, 
and more generally admissible subcategories preserved under the action of~$\Perf(T)$. 
Moreover, there should be a notion of tensor products of $T$-linear categories, which 
in the case of derived categories of schemes is compatible with taking (derived) fiber products. 
Let us briefly describe two alternatives to defining such a class of $T$-linear categories: 

\begin{enumerate}
\item 
\label{NCAG-min}
\emph{Down-to-earth approach}:
Define a $T$-linear category to be an admissible subcategory $\cA$ of~$\Perf(X)$ or $\Db(X)$ 
(the bounded derived category of coherent sheaves)
which is preserved by the $\Perf(T)$-action, where $X$ is a proper scheme over $T$,  
and define a $T$-linear functor between $\cA \subset \Db(X)$ and $\cB \subset \Db(Y)$ 
to be a functor induced by a Fourier--Mukai functor between $\Db(X)$ and $\Db(Y)$ 
with kernel schematically supported on the fiber product $X \times_T Y$. 
In this setting, a base change operation (along morphisms satisfying a transversality assumption) with 
the necessary compatibilities is developed in~\cite{kuznetsov-base-change}.
A version of HPD in this context is described in~\cite{kuznetsov-hab}. 

\item 
\label{NCAG-max}
\emph{Higher approach}:
Define a $T$-linear category to be a small idempotent-complete stable $\infty$-category 
equipped with a $\Perf(T)$-module structure, 
and define a $T$-linear functor to be an exact functor commuting with the $\Perf(T)$-modules structures.
Relying on Lurie's foundational work \cite{lurie-HA}, this approach is developed in \cite{NCHPD} 
and used to give a version of HPD in this context. 
\end{enumerate}

The advantage of~\eqref{NCAG-min} is that it avoids the use of higher 
category theory and derived algebraic geometry. 
The advantages of~\eqref{NCAG-max} are that it includes~\eqref{NCAG-min} 
as a special case, allows us to prove more general results (e.g. over general 
base schemes and without transversality hypotheses), and there is a complete 
reference~\cite{NCHPD} for the results we need in this setting. 
In particular, \cite{NCHPD} proves a version of HPD which allows linear categories 
as both inputs and outputs. 

To fix ideas, in this paper we adopt approach~\eqref{NCAG-max}.  
However, the reader who prefers~\eqref{NCAG-min} (or any other appropriate framework)
should have no trouble translating everything to that setting. 
In fact, we encourage the reader who is not already familiar with noncommutative 
algebraic geometry to assume all noncommutative schemes are ``commutative'', 
i.e. of the form $\Perf(X)$; for intuition, we have explained throughout the paper 
what our constructions amount to in this situation. 

To recapitulate, from now on we use the following definition.

\begin{definition}
\label{definition-linear-category}
Let $T$ be a scheme. 
A \emph{$T$-linear category} is a small 
idempotent-complete stable $\infty$-category equipped with a $\Perf(T)$-module structure, 
and a \emph{$T$-linear functor} between $T$-linear categories 
is an exact functor of $\Perf(T)$-modules. 
\end{definition}

In Appendix \ref{section:linear-cats} we summarize the key facts about $T$-linear categories used in this paper. 

\subsection{Conventions} 
\label{subsection-conventions}

All schemes are assumed to be quasi-compact and separated. 
Instead of working over a ground field, we work relative to a fixed base scheme $S$ throughout the paper. 
Namely, all schemes will be defined over~$S$ and all categories will be linear over $S$. 
The only time we make extra assumptions on $S$ is in our discussion of applications 
in~\S\ref{section:applications}, where for simplicity 
we assume $S$ is the spectrum of a field.

A vector bundle $V$ on a scheme $S$ is a finite locally free $\cO_S$-module of constant rank.
Given such a $V$, its projectivization is 
\begin{equation*}
\bP(V) = \Proj(\Sym^\bullet(V^{\svee})) \to S
\end{equation*}
with $\cO_{\bP(V)}(1)$ normalized so that its pushfoward to $S$ is $V^{\svee}$. 
Note that we suppress $S$ by writing $\bP(V)$ instead of $\bP_S(V)$. 
A subbundle $W \subset V$ is an inclusion of vector bundles whose cokernel is a vector bundle. 
Given such a $W \subset V$, the orthogonal subbundle is defined by 
\begin{equation}
\label{eq:perp-bundle}
W^{\perp} = \ker(V^{\svee} \to W^{\svee}). 
\end{equation}

We often commit the following convenient abuse of notation: given a line bundle~$\cL$ or a divisor class~$D$ on a scheme~$T$, 
we denote still by~$\cL$ or~$D$ its pullback to any variety mapping to~$T$. 
Similarly, if $X \to T$ is a morphism and $V$ is a vector bundle on~$T$, we sometimes 
write~\mbox{$V \otimes \cO_X$} for the pullback of~$V$ to~$X$.

Given morphisms of schemes $X \to T$ and $Y \to T$, the symbol 
$X \times_T Y$ denotes their \emph{derived} fiber product. 
We refer to \cite{TV, lurie-SAG, gaitsgory-DAG} for treatments of derived algebraic geometry or \cite{toen-survey} for a survey. 
The existence of fiber products of derived schemes is proved in \cite[\S1.3.3]{TV}; 
explicitly, if $X = \Spec(A)$, $Y=\Spec(B)$, $T = \Spec(C)$ are affine, 
then the fiber product is computed by the spectrum of the derived tensor product $A \otimes_C B$ 
(viewed as a simplicial commutative ring), and in general $X \times_T Y$ is glued from such local affine pictures. 
The derived fiber product agrees with the usual fiber product of schemes whenever the 
morphisms~\mbox{$X \to T$} and~\mbox{$Y \to T$} are $\Tor$-independent over~$T$. 
To lighten the notation, we write fiber products over our fixed base~$S$ as absolute fiber products, i.e. we write 
\begin{equation*}
X \times Y = X \times_S Y. 
\end{equation*}

If $X$ is a scheme over $T$, we denote by $\Perf(X)$ its category of perfect complexes and 
by~$\Db(X)$ its bounded derived category of coherent sheaves, which we consider as $T$-linear 
categories in the sense of Definition~\ref{definition-linear-category}. 
The $\Perf(T)$-module structure on these categories is given by the (derived) tensor product 
with the (derived) pullback of objects from $\Perf(T)$. 

For a triangulated subcategory $\cA \subset \cT$ we denote by~${}^\perp\cA$ and~$\cA^\perp$ 
the left and right orthogonals to~$\cA$ in~$\cT$. 

We always consider derived functors (pullbacks, pushforwards, tensor products, etc.),  
but write them with underived notation. 
For example, for a morphism of schemes~$f \colon X \to Y$ 
we write $f^* \colon \Perf(Y) \to \Perf(X)$ for the derived pullback functor, and similarly for the functors~$f_*$ and~$\otimes$. 
We always work with functors defined between categories of perfect complexes. 
Note that in general, $f_*$ may not preserve perfect complexes, but it does if 
$f \colon X \to Y $ is a perfect (i.e. pseudo-coherent of finite $\Tor$-dimension) proper morphism \cite[Example 2.2(a)]{lipman}. 
This assumption will be satisfied in all of the cases where we use $f_*$ in the paper. 

The functor $f_*$ is right adjoint to $f^*$. 
Sometimes, we need other adjoint functors as well.
Provided they exist, we denote by $f^!$ the right adjoint of $f_* \colon \Perf(X) \to \Perf(Y)$ and by 
$f_!$ the left adjoint of $f^* \colon \Perf(Y) \to \Perf(X)$, so that $(f_!,f^*,f_*,f^!)$ is an adjoint sequence.
For instance, if $f \colon X \to Y$ is a perfect proper morphism and 
the relative dualizing complex $\omega_f$ is a shift of a line bundle on~$X$ (e.g. if $f$ is Gorenstein), 
then $f^!$ and $f_!$ exist and are given by 
\begin{equation}
\label{eq:shriek-adjoints}
f^!(-) \simeq f^*(-) \otimes \omega_f 
\qquad \text{and} \qquad 
f_!(-) \simeq f_*(- \otimes \omega_f). 
\end{equation}
Indeed, (due to our standing quasi-compactness and separatedness assumptions) for any morphism~$f$ 
the functor  $f_* \colon \QCoh(X) \to \QCoh(Y)$ between unbounded derived categories of quasi-coherent sheaves 
admits a right adjoint $f^! \colon \QCoh(Y) \to \QCoh(X)$, 
the relative dualizing complex of $f$ is by definition $\omega_f = f^!(\cO_Y)$, 
and if~$f$ is a perfect proper morphism then the above formula for~$f^!$ holds by~\cite[Proposition 2.1]{lipman}; 
if further~$\omega_f$ is a shift of a line bundle, then the formula for~$f_!$ follows from the one for~$f^!$. 
Hence if~$f$ is a perfect proper morphism and~$\omega_f$ is a shift of a line bundle, 
it follows that all of these functors and adjunctions restrict to categories of perfect complexes. 
In all of the cases where we need~$f^!$ and~$f_!$ in the paper, the following stronger assumptions will be satisfied.

\begin{remark}
\label{remark:good-morphism}
Suppose $f \colon X \to Y$ is a morphism between schemes which 
are smooth, projective, and of constant relative dimension over $S$. 
Then $f$ is perfect, projective, and has a relative dualizing complex, which is a shift of a line bundle: 
\begin{equation}
\label{eq:intro-omega-f}
\omega_{f} = \omega_{X/S} \otimes f^*(\omega_{Y/S})^{\svee} . 
\end{equation}
In particular, for such an $f$, all of the functors $f_!$, $f^*$, $f_*$, $f^!$ are defined 
and adjoint between categories of perfect complexes 
and the isomorphisms~\eqref{eq:shriek-adjoints} hold.
\end{remark} 

Given a $T$-linear category $\cC$, we write $C \otimes F$ 
for the action of an object $F \in \Perf(T)$ on an object $C \in \cC$. 
Given $T$-linear categories $\cC$ and $\cD$, we denote by $\cC \otimes_{\Perf(T)} \cD$ their 
$T$-linear tensor product, see~Appendix \ref{section:linear-cats}.
Parallel to our convention for fiber products of schemes, if~\mbox{$T = S$} is our fixed base scheme, 
we simplify notation by writing 
\begin{equation*}
\cC \otimes \cD = \cC \otimes_{\Perf(S)} \cD. 
\end{equation*}
If $\cC$ is a $T$-linear category and $T' \to T$ is a morphism of schemes, we write 
\begin{equation*}
\cC_{T'} = \cC \otimes_{\Perf(T)} \Perf(T')
\end{equation*}
for the $T'$-linear category obtained by \emph{base change}. 
By abuse of notation, if $\cC$ is a $T$-linear category and $\psi \colon \cD_1 \to \cD_2$ 
is a $T$-linear functor, 
then we frequently write $\psi$ for the induced functor 
\begin{equation*}
\psi \colon \cC \otimes_{\Perf(T)} \cD_1 \to \cC \otimes_{\Perf(T)} \cD_2. 
\end{equation*} 
Finally, if $\phi \colon \cC \to \cD$ is a $T$-linear functor, we write $\phi^*, \phi^! \colon \cD \to \cC$ 
for its left and right adjoints if they are defined, in which case they are automatically $T$-linear (\cite[Lemma 2.11]{NCHPD}).

\subsection{Organization of the paper} 
In \S\ref{section-HPD} we gather preliminaries on HPD, including 
a useful new characterization of the HPD category on which the proof of our main theorem relies.

In \S\ref{section-categorical-joins} we define the categorical join 
of two Lefschetz categories, show that it is equipped with a canonical 
Lefschetz structure (Theorem~\ref{theorem-ld-cJ}), and study its behavior under base change. 

In \S\ref{section-joins-HPD} we prove our main theorem, stated above 
as Theorem~\ref{main-theorem-intro}. 

In \S\ref{section-nonlinear-HPD} we prove the nonlinear HPD theorem, 
stated above as Theorem \ref{intro-nonlinear-HPD}. 

In \S\ref{section:applications} we discuss the applications  
of the previous two theorems mentioned in \S\ref{intro-applications}. 

In Appendix~\ref{section:linear-cats} we collect some useful results on linear categories. 

In Appendix~\ref{section-future} we show how our methods give a categorification of linear projections of joins.

\subsection{Acknowledgements} 
We are grateful to Roland Abuaf and Johan de Jong for useful conversations,
and to Sergey Galkin for the suggestion to consider the Enriques surface example of~\S\ref{subsection-Enriques}. 
We would also like to thank the referee for their comments.  


\section{Preliminaries on HPD}
\label{section-HPD} 

In this section, we discuss preliminary material on HPD that will be needed 
in the rest of the paper. 
In \S\ref{subsection-lef-cats} we review the notion of a Lefschetz category, 
and in \S\ref{subsection-HPD} we recall the definition of the HPD category 
and state the main theorem of HPD. 
In \S\ref{subsection-HPD-characterization} we prove a useful 
characterization of the HPD category in terms of the projection functor 
from the universal hyperplane section. 

Recall that we work relative to a general base scheme $S$. 
In particular, we consider HPD over a projective bundle $\bP(V)$, where $V$ is a vector bundle on $S$. 
This is convenient because it includes various relative versions of HPD (cf. \cite[Theorem~6.27 and Remark~6.28]{kuznetsov-hpd}) into the general framework.
We denote by $N$ the rank of $V$ and by 
$H$ the relative hyperplane class on the projective bundle $\bP(V)$ 
such that~$\cO(H) = \cO_{\bP(V)}(1)$. 

\subsection{Lefschetz categories} 
\label{subsection-lef-cats}

The fundamental objects of HPD are Lefschetz categories. 
We summarize the basic definitions following \cite[\S6]{NCHPD}, 
starting with the notion of a Lefschetz center. 

\begin{definition}
\label{definition:lefschetz-category}
Let $T$ be a scheme over $S$ with a line bundle $\cL$.
Let $\cA$ be a $T$-linear category. 
An admissible
{$S$-linear} subcategory $\cA_0 \subset \cA$ is called a \emph{Lefschetz center} of $\cA$ with respect to~$\cL$ 
if the subcategories $\cA_i \subset \cA$, $i \in \bZ$, determined by 
\begin{align}
\label{Ai-igeq0}
\cA_i & = \cA_{i-1} \cap {}^\perp(\cA_0 \otimes \cL^{-i})\hphantom{{}^\perp}, \quad 
i \ge 1 \\ 
\label{Ai-ileq0}
\cA_i & = \cA_{i+1} \cap \hphantom{{}^\perp}(\cA_0 \otimes \cL^{-i})^\perp, \quad 
i \le -1 
\end{align} 
are right admissible in $\cA$ for $i \geq 1$, left admissible in $\cA$ for $i \leq -1$,  
vanish for all $i$ of sufficiently large absolute value, say for $|i| \geq m$, and 
provide {$S$-linear} semiorthogonal decompositions 
\begin{align}
\label{eq:right-decomposition}
\cA & = \langle \cA_0, \cA_1 \otimes \cL, \dots, \cA_{m-1} \otimes \cL^{m-1} \rangle, \\ 
\label{eq:left-decomposition}
\cA & = \langle \cA_{1-m} \otimes \cL^{1-m}, \dots, \cA_{-1} \otimes \cL^{-1}, \cA_0 \rangle. 
\end{align} 
The categories $\cA_i$, $i \in \bZ$, are called the \emph{Lefschetz components} of the Lefschetz center $\cA_0 \subset \cA$. 
The semiorthogonal decompositions~\eqref{eq:right-decomposition} 
and~\eqref{eq:left-decomposition} are called \emph{the right Lefschetz decomposition} and 
\emph{the left Lefschetz decomposition} of $\cA$. 
The minimal $m$ above is called the \emph{length} of the Lefschetz decompositions. 
\end{definition}

The Lefschetz components form two (different in general) chains of subcategories
\begin{equation}
\label{eq:lefschetz-chain}
0 \subset \cA_{1-m} \subset \dots \subset \cA_{-1} \subset \cA_0 \supset \cA_1 \supset \dots \supset \cA_{m-1} \supset 0.
\end{equation} 
Note that the assumption of right or left admissibility of $\cA_i$ in $\cA$ 
is equivalent to the assumption of right or left admissibility in $\cA_0$.

\begin{remark}
By \cite[Lemma 6.3]{NCHPD}, if the subcategories $\cA_i \subset \cA$ are admissible 
for all $i \geq 0$ or all $i \leq 0$, then the length $m$ defined above satisfies 
\begin{equation*}
m = \min \set{ i \geq 0 \st \cA_{i} = 0 } = \min \set{ i \geq 0 \st \cA_{-i} = 0 } . 
\end{equation*}
This holds true, e.g., if~$\cA$ is smooth and proper over~$S$ \cite[Lemma 4.15]{NCHPD}. 
\end{remark}

\begin{remark}
\label{remark-alpha-ld}
If $\cA$ is a $T$-linear category equipped with a $T$-linear 
autoequivalence $\alpha \colon \cA \to \cA$, there is a more general notion of 
a Lefschetz center of $\cA$ with respect to $\alpha$, see \cite[\S6.1]{NCHPD}. 
The case where $\alpha$ is the autoequivalence $- \otimes \cL$ for a line bundle $\cL$ 
recovers the above definitions. 
This notion is also useful for other choices of $\alpha$, see~\cite[\S2]{kuznetsov2018residual}.
\end{remark}

The following shows that giving a Lefschetz center is equivalent to giving 
Lefschetz decompositions with suitably admissible components. 
This is useful in practice for constructing Lefschetz centers.

\begin{lemma}
\label{lemma-lef-center-from-decomp}
Let $T$ be a scheme over $S$ with a line bundle $\cL$. 
Let $\cA$ be a $T$-linear category with $S$-linear semiorthogonal decompositions 
\begin{alignat}{2}
\label{lcfd-1} \cA & = \langle \cA_0, \cA_1 \otimes \cL, \dots, \cA_{m-1} \otimes \cL^{m-1} \rangle, 
 &  &  \text{where }   {\cA_0 \supset \cA_1 \supset \dots \supset \cA_{m-1}} , \\ 
\label{lcfd-2}  \cA & = \langle \cA_{1-m} \otimes \cL^{1-m}, \dots, \cA_{-1} \otimes \cL^{-1}, \cA_0 \rangle,
  \quad & & \text{where } {\cA_{1-m} \subset \dots \subset \cA_{-1} \dots \subset \cA_0}.
\end{alignat} 
Then the categories $\cA_i$, $|i| < m$, satisfy \eqref{Ai-igeq0} and \eqref{Ai-ileq0}, and 
the categories defined by \eqref{Ai-igeq0} and \eqref{Ai-ileq0} for $|i| \geq m$ vanish. 
Hence if $\cA_i \subset \cA$ is right admissible for $i \geq 0$ and left admissible for $i \leq 0$, 
then $\cA_0 \subset \cA$ is a Lefschetz center. 
\end{lemma}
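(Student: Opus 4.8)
## Proof Proposal

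The plan is to verify the equations \eqref{Ai-igeq0} and \eqref{Ai-ileq0} directly from the two semiorthogonal decompositions \eqref{lcfd-1} and \eqref{lcfd-2}, exploiting the nestedness hypotheses on the chains. The key point is that for a semiorthogonal decomposition, membership in an intersection of orthogonals can be read off componentwise, so everything reduces to bookkeeping with the indices.

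\medskip

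\emph{Step 1: Reduce to the two inductive claims.} I will prove by induction on $i \geq 1$ that the category $\cA_i$ appearing in \eqref{lcfd-1} coincides with $\cA_{i-1} \cap {}^\perp(\cA_0 \otimes \cL^{-i})$, and symmetrically for $i \leq -1$ using \eqref{lcfd-2}. By the symmetry $\cL \leftrightarrow \cL^{-1}$ (equivalently, passing to opposite categories), it suffices to treat the case $i \geq 1$.

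\emph{Step 2: The base case and inductive step for $i \geq 1$.} From \eqref{lcfd-1}, an object $C \in \cA$ lies in $\langle \cA_i \otimes \cL^i, \dots, \cA_{m-1} \otimes \cL^{m-1}\rangle$ if and only if $C \in {}^\perp(\cA_j \otimes \cL^j)$ for all $0 \leq j \leq i-1$. I will show this right-orthogonal condition is equivalent to $C \in \cA_{i-1} \cap {}^\perp(\cA_0 \otimes \cL^{-i})$ — but one must be careful: \eqref{Ai-igeq0} has $\cA_0 \otimes \cL^{-i}$, not $\cA_0 \otimes \cL^i$. So the argument actually goes through the \emph{left} decomposition \eqref{lcfd-2} as well. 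Concretely: $\cA_{i-1}$ is (inductively) the orthogonal to the first $i-1$ terms; intersecting with ${}^\perp(\cA_0 \otimes \cL^{-i})$ and using that $\cA_0 \otimes \cL^{-i}$ contains $\cA_{j} \otimes \cL^{-i}$ for all $j \geq 0$ together with the shift relations among the Lefschetz components coming from comparing \eqref{lcfd-1} and \eqref{lcfd-2}, one identifies the intersection with the span of the remaining terms, i.e. with $\cA_i$. The nestedness $\cA_0 \supset \cA_1 \supset \cdots$ is exactly what makes $\cA_0 \otimes \cL^{-i}$ the "right" category to take the orthogonal against. I would write this out using that $\Hom(\cA_j \otimes \cL^j, \cA_0 \otimes \cL^{-i})$-vanishing for $C$ translates, after twisting, into the orthogonality relations already present in the two given decompositions.

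\medskip

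\emph{Step 3: Vanishing for $|i| \geq m$.} Once \eqref{Ai-igeq0} is established for $1 \leq i \leq m$, the category $\cA_m$ is the orthogonal to \emph{all} terms of \eqref{lcfd-1}, hence is zero; then $\cA_{i} = \cA_{i-1} \cap (\dots) \subseteq \cA_m = 0$ for $i > m$. Symmetrically for $i \leq -m$ using \eqref{lcfd-2}.

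\emph{Step 4: Conclude.} With \eqref{Ai-igeq0}, \eqref{Ai-ileq0} verified and the vanishing in place, the hypotheses of Definition~\ref{definition:lefschetz-category} are met provided $\cA_i$ is right admissible in $\cA$ for $i \geq 0$ and left admissible for $i \leq 0$ — which is the extra assumption in the lemma. The semiorthogonal decompositions \eqref{eq:right-decomposition} and \eqref{eq:left-decomposition} are literally \eqref{lcfd-1} and \eqref{lcfd-2}. Hence $\cA_0 \subset \cA$ is a Lefschetz center. (The remark after Definition~\ref{definition:lefschetz-category} then identifies $m$ as the minimal length, but that is not needed for the statement.)

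\medskip

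\textbf{Main obstacle.} The subtle point is Step 2: matching the orthogonality condition against $\cA_0 \otimes \cL^{-i}$ (a \emph{single} twist of the full center) with the condition of lying in the tail $\langle \cA_i \otimes \cL^i, \dots\rangle$ of \eqref{lcfd-1} (which a priori involves the \emph{nested} subcategories $\cA_j$ at \emph{positive} twists). The reconciliation requires simultaneously invoking both decompositions \eqref{lcfd-1} and \eqref{lcfd-2} and the compatibility between them — i.e. that they share the common piece $\cA_0$ and that their components are related by the chain condition \eqref{eq:lefschetz-chain}. Getting the index arithmetic exactly right here, and in particular checking that no spurious components sneak into the intersection, is where the real work lies; everything else is formal manipulation of semiorthogonal decompositions. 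I expect this is essentially \cite[Lemma 6.3 or its proof]{NCHPD} applied carefully, so citing and adapting that should streamline the argument.
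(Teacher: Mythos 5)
Your proposal lands in the same place as the paper: the paper's entire proof consists of the observation that $\cA_0$ is left admissible by \eqref{lcfd-1} and right admissible by \eqref{lcfd-2}, followed by a citation of \cite[Lemma 6.3]{NCHPD} --- which is precisely the content of your Steps 2--3 and which you yourself propose to cite at the end. Your attempted direct sketch of that cited lemma's proof is dispensable (and as written, the claim in Step 2 that ``$\cA_{i-1}$ is inductively the orthogonal to the first $i-1$ terms'' conflates the subcategory $\cA_{i-1} \subset \cA_0$ with the span $\langle \cA_{i-1}\otimes\cL^{i-1},\dots,\cA_{m-1}\otimes\cL^{m-1}\rangle$), but since you ultimately defer to the citation this does not affect correctness.
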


\begin{proof}
Note that $\cA_0$ is left admissible by the first semiorthogonal decomposition and right admissible by the second.
The rest follows from~\cite[Lemma 6.3]{NCHPD}. 
\end{proof}

\begin{remark}
\label{remark:lc-sp}
If $\cA$ as in Lemma~\ref{lemma-lef-center-from-decomp} is smooth and proper over $S$, 
then in order for a subcategory $\cA_0 \subset \cA$ to be a Lefschetz center, 
it is enough to give only one of the semiorthogonal decompositions~\eqref{lcfd-1} or \eqref{lcfd-2}. 
This follows from \cite[Lemmas 4.15 and 6.3]{NCHPD}.
\end{remark}

Recall the chains~\eqref{eq:lefschetz-chain} of Lefschetz components of~$\cA$.
For $i \ge 1$ the $i$-th \emph{right primitive component}~$\fa_i$ of a Lefschetz center is 
defined as the right orthogonal to $\cA_{i+1}$ in $\cA_i$, i.e.  
\begin{equation*}
\fa_i = \cA_{i+1}^\perp \cap \cA_{i},
\end{equation*}
so that 
\begin{equation}
\label{eq:ca-fa-ca-plus}
\cA_i = \llangle \fa_i, \cA_{i+1} \rrangle =  \llangle \fa_i, \fa_{i+1}, \dots, \fa_{m-1} \rrangle. 
\end{equation}
Similarly, for $i \le {-1}$ the $i$-th \emph{left primitive component} $\fa_i$ of a Lefschetz center is 
the left orthogonal to $\cA_{i-1}$ in $\cA_i$, i.e. 
\begin{equation*}
\fa_i = {}^\perp\cA_{i-1} \cap \cA_{i}, 
\end{equation*}
so that 
\begin{equation}
\label{eq:ca-fa-ca-minus}
\cA_i = \llangle \cA_{i-1}, \fa_{i} \rrangle = \llangle \fa_{1-m}, \dots, \fa_{i-1}, \fa_{i} \rrangle. 
\end{equation}
For $i = 0$, we have both right and left primitive components, defined by  
\begin{equation*}
\fa_{+0} = \cA_1^{\perp} \cap \cA_0 \quad \text{and} \quad 
\fa_{-0} = {}^\perp\cA_{-1} \cap \cA_0.
\end{equation*}
These are related by the formula $\fa_{-0} = \fa_{+0} \otimes \cL$, see \cite[Remark 6.4]{NCHPD}. 

To simplify formulas, we sometimes abusively write $\fa_0$ to mean either $\fa_{-0}$ or $\fa_{+0}$, when 
it is clear from context which is intended. 
So for instance the formulas~\eqref{eq:ca-fa-ca-plus} and~\eqref{eq:ca-fa-ca-minus} make sense for $i = 0$, and  
the right Lefschetz decomposition of $\cA$ in terms of primitive categories can be written as
\begin{equation}
\label{eq:ca-primitive-plus}
\begin{aligned}
\cA & = \llangle \fa_0, \dots ,\fa_{m-1}, \fa_1 \otimes \cL, \dots, \fa_{m-1} \otimes \cL, \dots, \fa_{m-1} \otimes \cL^{m-1} \rrangle \\
& = \llangle \fa_i \otimes \cL^t \rrangle_{0 \le t \le i \le m-1}, 
\end{aligned}
\end{equation}
while the left Lefschetz decomposition can be written as
\begin{equation}
\label{eq:ca-primitive-minus}
\begin{aligned}
\cA & = \llangle \fa_{1-m} \otimes \cL^{1-m}, \dots, \fa_{1-m} \otimes \cL^{-1}, \dots, \fa_{-1} \otimes \cL^{-1}, \fa_{m-1}, \dots, \fa_0 \rrangle  \\
& = \llangle \fa_i \otimes \cL^t \rrangle_{1-m \leq i \leq t \leq 0}. 
\end{aligned}
\end{equation}

\begin{definition} 
\label{definition-lc} 
A \emph{Lefschetz category} $\cA$ over $\bP(V)$ is a 
$\bP(V)$-linear category equipped with a Lefschetz center $\cA_0 \subset \cA$ with respect to $\cO(H)$. 
The \emph{length} of $\cA$ is the length of its Lefschetz decompositions, and 
is denoted by $\length(\cA)$. 

Given Lefschetz categories $\cA$ and $\cB$ over $\bP(V)$, an \emph{equivalence of Lefschetz categories} or a \emph{Lefschetz equivalence}  
is a $\bP(V)$-linear equivalence $\cA \simeq \cB$ which induces an $S$-linear 
equivalence $\cA_0 \simeq \cB_0$ of centers.  
\end{definition}

In this paper, we will be concerned with proving equivalences of Lefschetz categories. 
For this, the following criterion will be useful. 

\begin{lemma}
\label{lemma-equivalence-lef-cat}
Let $\phi \colon \cA \to \cB$ be a $\bP(V)$-linear functor between Lefschetz categories $\cA$ and $\cB$ 
over~$\bP(V)$. 
Assume: 
\begin{enumerate}
\item $\phi$ induces an equivalence $\cA_0 \simeq \cB_0$.
\item $\phi$ admits a left adjoint $\phi^* \colon \cB \to \cA$. 
\item $\phi^*(\cB_0) \subset \cA_0$.
\end{enumerate} 
Then $\phi$ is an equivalence of Lefschetz categories. 
\end{lemma}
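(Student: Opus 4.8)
The plan is to show that $\phi$ is fully faithful and essentially surjective by exploiting the Lefschetz decompositions on both sides together with the $\bP(V)$-linearity of $\phi$. First I would observe that since $\phi$ is $\bP(V)$-linear and restricts to an equivalence $\cA_0 \simeq \cB_0$, it carries the twisted components $\cA_0 \otimes \cO(i)$ to $\cB_0 \otimes \cO(i)$ equivalently; and since $\phi^*$ is also $\bP(V)$-linear (being left adjoint to a $\bP(V)$-linear functor, hence automatically linear) and restricts to an equivalence $\cB_0 \simeq \cA_0$, the composites $\phi^* \phi$ and $\phi \phi^*$ restrict to autoequivalences of $\cA_0$ and $\cB_0$ respectively. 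The key point is that $\cA$ is generated by $\cA_0, \cA_1 \otimes \cO(1), \dots, \cA_{m-1} \otimes \cO(m-1)$ and each $\cA_i \subset \cA_0$, so $\cA$ is generated under twists and extensions by $\cA_0$; similarly for $\cB$.

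The main step is then to verify that $\phi$ is fully faithful. For this I would check that the unit $\id_{\cA} \to \phi^* \phi$ is an isomorphism. Both functors are $\bP(V)$-linear, so it suffices to check the unit is an isomorphism on objects of $\cA_0$ (since those generate $\cA$ under the action of $\Perf(\bP(V))$ and under cones/shifts, and $\bP(V)$-linear functors commute with the action and are exact). On $\cA_0$, the composite $\phi^*\phi|_{\cA_0}$ factors as $\cA_0 \xrightarrow{\sim} \cB_0 \xrightarrow{\phi^*|_{\cB_0}} \cA_0$; hypotheses (1) and (3) say both arrows are equivalences, and one needs that $\phi^*|_{\cB_0}$ is (isomorphic to) the inverse of $\phi|_{\cA_0}$ — this follows because $\phi^*|_{\cB_0}$ is left adjoint to $\phi|_{\cA_0}$ (adjunctions restrict along the inclusion of an admissible subcategory preserved by both functors, using that $\cA_0 \subset \cA$ and $\cB_0 \subset \cB$ are admissible) and a left adjoint to an equivalence is its inverse. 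Hence the unit restricted to $\cA_0$ is an isomorphism, so the unit is an isomorphism on all of $\cA$, and $\phi$ is fully faithful. Symmetrically, applying the same argument with the roles of $\cA$ and $\cB$ swapped and $\phi^*$ in place of $\phi$ (noting $\phi^*$ has right adjoint $\phi$), one gets that $\phi^*$ is fully faithful, so the counit $\phi \phi^* \to \id_{\cB}$ is an isomorphism; together these show $\phi$ is an equivalence.

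Finally, to upgrade this to an equivalence of \emph{Lefschetz} categories, it remains to note that the equivalence $\phi$ sends the Lefschetz center $\cA_0$ to the Lefschetz center $\cB_0$: this is exactly hypothesis (1), which says $\phi|_{\cA_0}$ is an equivalence onto $\cB_0$. By Definition~\ref{definition-lc}, a $\bP(V)$-linear equivalence inducing an $S$-linear equivalence of centers is a Lefschetz equivalence, so we are done.

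I expect the main obstacle to be the bookkeeping in the second step: carefully justifying that a $\bP(V)$-linear exact functor which is an isomorphism on a subcategory generating the target under the $\Perf(\bP(V))$-action and under cones is an isomorphism everywhere, and that adjunctions restrict correctly to the admissible subcategories $\cA_0, \cB_0$. Both of these are standard — the first because the full subcategory of objects on which the unit is an isomorphism is a $\bP(V)$-linear thick subcategory, the second because $\cA_0 \subset \cA$ right admissible means the inclusion has a left adjoint through which the restricted adjunction factors — but they require the precise statements about linear categories collected in Appendix~\ref{section:linear-cats}, which I would cite rather than reprove.
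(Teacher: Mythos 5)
Your proposal is correct and follows essentially the same route as the paper: both reduce full faithfulness to checking that the canonical natural transformation $\phi^*\phi \to \id_{\cA}$ is an isomorphism on the center $\cA_0$ (where it follows from the restricted adjunction together with hypotheses (1) and (3)), and then propagate to all of $\cA$ using $\bP(V)$-linearity and the Lefschetz decomposition $\cA = \langle \cA_0, \cA_1(H), \dots, \cA_{m-1}((m-1)H)\rangle$. Two cosmetic remarks: for the adjunction $\phi^* \dashv \phi$ the canonical comparison map is the \emph{counit} $\phi^*\phi \to \id_{\cA}$ (there is no canonical ``unit $\id_{\cA} \to \phi^*\phi$'' in that adjunction, and likewise the map on the $\cB$ side is the unit $\id_{\cB} \to \phi\phi^*$), and for essential surjectivity the paper takes a slightly shorter path — the image of the now fully faithful $\phi$ is a $\bP(V)$-linear triangulated subcategory containing $\cB_0$, hence all of $\cB$ — though your symmetric argument showing $\phi^*$ is also fully faithful works equally well.
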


\begin{remark}
A similar criterion is true if we replace the left adjoint $\phi^*$ with the right adjoint~$\phi^!$. 
\end{remark} 

\begin{proof}
First we show $\phi$ is fully faithful. 
Consider the counit morphism 
\begin{equation*}
\phi^* \circ \phi \to \id_{\cA} .
\end{equation*}
This is a morphism in the category of $\bP(V)$-linear functors 
$\Fun_{\Perf(\bP(V))}(\cA, \cA)$ (note that~$\phi^*$ is a $\bP(V)$-linear 
functor \cite[Lemma 2.11]{NCHPD}). 
Let $\psi \colon \cA \to \cA$ be the $\bP(V)$-linear functor given by 
the cone of this morphism. 
Then the claim that $\phi$ is fully faithful is equivalent to $\psi$ being 
the zero functor. 
Our assumptions imply $\psi$ vanishes on the subcategory $\cA_0 \subset \cA$. 
By $\bP(V)$-linearity it follows that $\psi$ vanishes on 
$\cA_0(iH)$, and hence on $\cA_i(iH) \subset \cA_0(iH)$, for all $i$. 
But then the (right or left) Lefschetz decomposition of $\cA$ implies  
$\psi$ is the zero functor. 

Since $\phi$ is fully faithful, its image is a $\bP(V)$-linear triangulated subcategory of $\cB$. 
By assumption this image contains $\cB_0$, and hence by $\bP(V)$-linearity it contains $\cB_i(iH) \subset \cB_0(iH)$ for all $i$. 
But then the (right or left) Lefschetz decomposition of $\cB$ implies $\phi$ is essentially surjective. 
\end{proof}

For HPD we will need to consider Lefschetz categories that satisfy 
certain ``strongness'' and ``moderateness'' conditions, defined below. 

\begin{definition}
\label{definition:strong}
A Lefschetz category $\cA$ is called \emph{right strong} if all of its right primitive components 
$\fa_{+0}, \fa_i$, $i \geq 1$, are admissible in $\cA$, 
\emph{left strong} if all of its left primitive components~$\fa_{-0}, \fa_{i}$, $i \leq -1$, are admissible in $\cA$, 
and \emph{strong} if all of its primitive components are admissible. 
\end{definition} 

\begin{remark}
\label{remark:smooth-strong}
If $\cA$ is smooth and proper over $S$, then any Lefschetz structure on $\cA$ is automatically strong, 
see \cite[Remark 6.7]{NCHPD}. 
\end{remark}

By \cite[Corollary 6.19(1)]{NCHPD}, the length of a Lefschetz category $\cA$ over $\bP(V)$ satisfies 
\begin{equation}
\label{length-leq-rank}
\length(\cA) \leq \rank(V). 
\end{equation}

\begin{definition}
\label{def:moderateness}
A Lefschetz category $\cA$ over $\bP(V)$ is called \emph{moderate} if its length satisfies the strict inequality
\begin{equation*}
\length(\cA) < \rank(V) . 
\end{equation*} 
\end{definition}

\begin{remark}
\label{remark-moderate} 
Moderateness of a Lefschetz category $\cA$ over $\bP(V)$ is a very mild condition. 
Indeed, we can always embed $V$ into a vector bundle $V'$ of larger rank, 
e.g. $V' = V \oplus \cO$, and then $\cA$ is a moderate Lefschetz category 
over $\bP(V')$. 
Moreover, essentially all Lefschetz categories that arise in practice are moderate; 
we do not know any interesting immoderate examples. 
\end{remark}

There are many examples of interesting Lefschetz categories, 
some of which are listed in~\S\ref{section:applications}; see also ~\cite{kuznetsov2014semiorthogonal} for a survey. 
Here we recall one simple example, which is just the relative version of Example~\ref{example-linear-HPD}.

\begin{example} 
\label{example-projective-bundle-lc}
Let $W \subset V$ be a subbundle of rank $m$. 
The morphism $\bP(W) \to \bP(V)$ induces a $\bP(V)$-linear structure on $\Perf(\bP(W))$. 
Note that pullback along the projection $\bP(W) \to S$ gives an embedding $\Perf(S) \subset \Perf(\bP(W))$. 
The category $\Perf(\bP(W))$ is a strong Lefschetz category over $\bP(V)$ with center 
$\Perf(S)$;  
the corresponding right and left Lefschetz decompositions are 
given by Orlov's projective bundle formulas: 
\begin{align*} 
\Perf(\bP(W)) & = \llangle \Perf(S), \Perf(S)(H), \dots, \Perf(S)((m-1)H)  \rrangle , \\
\Perf(\bP(W)) & = \llangle \Perf(S)((1-m)H), \dots, \Perf(S)(-H), \Perf(S) \rrangle . 
\end{align*} 
We call this the \emph{standard Lefschetz structure} on~$\bP(W)$. 
Note that the length of $\Perf(\bP(W))$ is $m$, so it is a moderate Lefschetz category as long as~$W \neq V$. 
\end{example}

The key property of a Lefschetz category is that its Lefschetz decomposition behaves well 
under passage to linear sections. 
Recall that $\cA_{\bP(L)} = \cA \otimes_{\Perf(\bP(V))} \Perf(\bP(L))$ 
denotes the base change of a $\bP(V)$-linear category $\cA$ along a morphism $\bP(L) \to \bP(V)$.

\begin{lemma}
\label{lemma-linear-section-lc} 
Let $\cA$ be a Lefschetz category over $\bP(V)$ of length $m$. 
Let~$L \subset V$ be a subbundle of corank $s$.  
Then the functor
\begin{equation*}
\cA \to \cA_{\bP(L)} 
\end{equation*}
induced by pullback along $\bP(L) \to \bP(V)$ is fully faithful on the 
Lefschetz components $\cA_i \subset \cA$ for $|i| \geq s$. 
Moreover, denoting their images by the same symbols, 
there are semiorthogonal decompositions 
\begin{align*}
\label{eq:lefschetz-restricted} \cA_{\bP(L)} & = \llangle \cK_L(\cA), \cA_s(H), \dots, \cA_{m-1}((m-s)H) \rrangle ,  \\ 
\cA_{\bP(L)} & = \llangle \cA_{1-m}((s-m)H), \dots,\cA_{-s}(-H), \cK'_{L}(\cA) \rrangle. 
\end{align*}
\end{lemma}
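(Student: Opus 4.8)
The plan is to reduce the statement to a repeated application of the behavior of semiorthogonal decompositions under "mutation through a subbundle condition," building the linear section $\bP(L)$ by intersecting with hyperplanes one at a time. Since $L \subset V$ has corank $s$, we may choose a flag $V = L_0 \supset L_1 \supset \cdots \supset L_s = L$ with each $L_{j+1} \subset L_j$ of corank one, and prove by induction on $s$ that the Lefschetz components survive and that the two claimed semiorthogonal decompositions hold. The base case $s = 0$ is the definition of a Lefschetz center, recording the right and left Lefschetz decompositions~\eqref{eq:right-decomposition} and~\eqref{eq:left-decomposition} with $\cK_L(\cA) = \cA_0$ and $\cK'_L(\cA) = \cA_0$.

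For the inductive step, I would analyze the single-step base change $\cA_{\bP(L_{j})} \to \cA_{\bP(L_{j+1})}$ along the divisorial embedding $\bP(L_{j+1}) \hookrightarrow \bP(L_{j})$. The key input is $\bP(V)$-linearity: the functor commutes with twists by $\cO(H)$, so it suffices to understand the image of $\cA_0$ (equivalently of each $\cA_i$) under one hyperplane restriction. Here one uses the standard fact --- essentially the behavior of exceptional-type collections under hyperplane restriction, compare the proof of the Lefschetz hyperplane principle in~\cite{kuznetsov-hpd} --- that restricting to a divisor $\bP(L_{j+1})$ "merges" the last (or first) component with the would-be new first component, producing a semiorthogonal decomposition whose leftmost (resp. rightmost) piece $\cK$ is an extension of $\cA_0$ by $\cA_{m-1}((m-1-j)H \cdot \text{(shift)})$ or more precisely by the component that drops off the end. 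Iterating $s$ times, the components $\cA_i$ with $|i| \geq s$ are untouched by all $s$ restrictions and remain fully faithfully embedded, while $\cK_L(\cA)$ accumulates $\cA_0$ together with the $s$ components $\cA_{1-m}, \dots, \cA_{-s}$ (after appropriate twists), and dually $\cK'_L(\cA)$ accumulates $\cA_0$ together with $\cA_{s}, \dots, \cA_{m-1}$.

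In more structural terms, I would set $\cK_L(\cA)$ to be the left orthogonal in $\cA_{\bP(L)}$ of the subcategory generated by $\cA_s(H), \dots, \cA_{m-1}((m-s)H)$, and then verify (a) that these twisted components are indeed semiorthogonal and admissible inside $\cA_{\bP(L)}$ --- which follows from $\bP(V)$-linearity together with admissibility of the $\cA_i$ in $\cA$ and the fact that base change preserves admissibility of linear subcategories --- and (b) that together with $\cK_L(\cA)$ they generate $\cA_{\bP(L)}$. Step (b) is where one genuinely uses the geometry: $\Perf(\bP(L))$ is generated over $\Perf(\bP(V))$ by $\cO, \cO(H), \dots$, so $\cA_{\bP(L)}$ is generated by the images of $\cA(iH)$, and feeding in the right Lefschetz decomposition of $\cA$ lets one peel off the claimed components from the right, leaving $\cK_L(\cA)$. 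The left decomposition is handled symmetrically using~\eqref{eq:left-decomposition}, or deduced by a duality argument when $\cA$ is smooth and proper; in general both must be argued directly since the two chains in~\eqref{eq:lefschetz-chain} differ.

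The main obstacle I anticipate is step (b) --- controlling the "remainder" category $\cK_L(\cA)$ after restriction, i.e. showing no extra generators appear and the twisted Lefschetz components glue correctly across the $s$ hyperplane steps. Concretely, the delicate point is that a single hyperplane restriction does not simply delete a component: it replaces $\cA_{\bP(L_j)}$'s decomposition with one where the old center and one end-component combine into an extension, and one must track this merging carefully over all $s$ steps while checking the semiorthogonality relations (which rely on $\bP(V)$-linearity and the vanishing $\cA_i \cap {}^\perp(\cA_0 \otimes \cL^{-i}) = \cA_{i+1}$ built into the definition) are preserved at each stage. Making the bookkeeping of twists and the identification of $\cK_L(\cA)$ as an iterated extension precise --- rather than merely as "some admissible subcategory" --- is the technical heart of the argument; once that is in place, full faithfulness of $\cA_i \to \cA_{\bP(L)}$ for $|i| \geq s$ is a formal consequence of semiorthogonality and the fact that nothing in the peeled-off part overlaps those components.
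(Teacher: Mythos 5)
The paper does not actually prove this lemma: the proof is a one-line citation to \cite[Lemmas 6.20 and 6.22(3)]{NCHPD}, so there is no in-text argument to compare against. Judged on its own terms, your sketch has the right overall shape (semiorthogonality of the twisted components, plus generation), but it contains a genuine gap. The induction on a flag $V = L_0 \supset L_1 \supset \cdots \supset L_s = L$ does not close as formulated: after a single hyperplane restriction, $\cA_{\bP(L_1)}$ is no longer a Lefschetz category --- its first component $\cK_{L_1}(\cA)$ has none of the filtration properties of a Lefschetz center --- so there is no inductive hypothesis to apply at the second step. You identify exactly this as ``the delicate point'' but do not resolve it, and resolving it is the entire content of the lemma. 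The missing idea is the Koszul resolution of $\cO_{\bP(L)}$ on $\bP(V)$ by $\wedge^\bullet (V/L)^{\svee}(-H)$: it shows that for $C, D \in \cA$ the space $\Hom_{\cA_{\bP(L)}}(C|_{\bP(L)}, D|_{\bP(L)})$ carries a filtration with graded pieces $\Hom_{\cA}(C(tH), D \otimes \wedge^t (V/L)^{\svee})$ for $0 \leq t \leq s$. All the semiorthogonality and full-faithfulness claims then reduce to vanishings of $\Hom_{\cA}(\cA_i(tH), \cA_j)$ for $1 \leq t \leq s$ that are built into the definition of a Lefschetz decomposition, and the threshold $|i| \geq s$ appears precisely because $s$ twists must be killed. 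This device (or an equivalent one) handles all coranks at once and is what makes the direct version of your argument --- defining $\cK_L(\cA)$ as an orthogonal and checking semiorthogonality plus generation --- actually go through; without it, the assertion that ``the components $\cA_i$ with $|i| \geq s$ are untouched'' is unsupported.

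Two smaller corrections. First, $\cK_L(\cA)$ sits leftmost in its decomposition, so it is the \emph{right} orthogonal $\llangle \cA_s(H), \dots, \cA_{m-1}((m-s)H) \rrangle^{\perp}$ in $\cA_{\bP(L)}$, not the left orthogonal. Second, your description of what the two kernel categories ``accumulate'' mixes up the chains: $\cK_L(\cA)$ (from the first decomposition) absorbs the images of $\cA_0, \dots, \cA_{s-1}$ from the right Lefschetz decomposition, while $\cK'_L(\cA)$ absorbs $\cA_{1-s}, \dots, \cA_0$ from the left one; the components $\cA_{1-m}, \dots, \cA_{-s}$ you attach to $\cK_L(\cA)$ in fact appear explicitly as the complementary pieces of the \emph{second} decomposition. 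Your generation argument (restricting the right Lefschetz decomposition of $\cA$ and peeling off components) is correct in outline and is not where the difficulty lies.
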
 

\begin{proof}
This is a special case of \cite[Lemmas 6.20 and 6.22(3)]{NCHPD}. 
\end{proof}

\begin{remark}
The analogy between Lemma~\ref{lemma-linear-section-lc} and the Lefschetz 
hyperplane theorem is the source of the terminology ``Lefschetz category''.  
The main theorem of HPD (Theorem \ref{theorem-HPD} below) 
describes the categories $\cK_L(\cA)$ and $\cK'_{L}(\cA)$ in terms of the HPD category of~$\cA$. 
\end{remark}

\subsection{The HPD category} 
\label{subsection-HPD}

Let 
\begin{equation*}
\delta \colon \bH(\bP(V)) \to \bP(V) \stimes \bP(\vV). 
\end{equation*} 
be the natural incidence divisor.
We think of $\bH(\bP(V))$ as the universal hyperplane in $\bP(V)$.
  
If $X$ is a scheme with a morphism $X \to \bP(V)$, 
then the universal hyperplane section of $X$ is defined by 
\begin{equation*}
\bH(X) = X \times_{\bP(V)} \bH(\bP(V)). 
\end{equation*} 
This definition extends directly to linear categories as follows.  
 
\begin{definition}
Let $\cA$ be a $\bP(V)$-linear category. 
The \emph{universal hyperplane section} of $\cA$ is defined by 
\begin{equation*}
\bH(\cA) = \cA \otimes_{\Perf(\bP(V))} \Perf(\bH(\bP(V))). 
\end{equation*} 
\end{definition} 

The above definition is compatible with the geometric one in the following sense: 
if $X$ is a scheme over $\bP(V)$, then by Theorem~\ref{theorem-bzfn} there is an equivalence
$\bH(\Perf(X)) \simeq \Perf(\bH(X))$. 
We sometimes use the more elaborate notation 
\begin{equation*}
\bH(X/\bP(V)) = \bH(X) 
\qquad\text{and}\qquad 
\bH(\cA/\bP(V)) = \bH(\cA) 
\end{equation*}
to emphasize the universal hyperplane section is being taken with respect to $\bP(V)$.

The natural embedding $\delta$ includes into the following diagram of morphisms
\begin{equation}
\label{eq:h-diagram}
\vcenter{\xymatrix@C=6em{
& \bH(\bP(V)) \ar[dl]_\pi \ar[d]^\delta \ar[dr]^h 
\\
\bP(V) &
\bP(V) \times \bP(\vV) \ar[l]^-{\pr_1} \ar[r]_-{\pr_2} & 
\bP(\vV).
}}
\end{equation}
Here we deviate slightly from the notation of \cite{NCHPD}, where the morphisms $\pi$, $\delta$, 
and $h$ are instead denoted $p$, $\iota$, and $f$. 
All schemes in the diagram are smooth and projective over $S$, hence Remark~\ref{remark:good-morphism} applies to all morphisms.
For a $\bP(V)$-linear category $\cA$, 
it follows from Theorem~\ref{theorem-bzfn} that there are canonical identifications 
\begin{equation*} 
\cA \otimes_{\Perf(\bP(V))} \Perf(\bP(V) \times \bP(\vV)) \simeq 
\cA \sotimes \Perf(\bP(\vV)), 
\quad
\cA \otimes_{\Perf(\bP(V))} \Perf(\bP(V)) \simeq \cA, 
\end{equation*} 
by which we will regard the functors induced by morphisms in~\eqref{eq:h-diagram} as functors 
\begin{equation*}
\delta_* \colon \bH(\cA) \to \cA \sotimes \Perf(\bP(\vV)),
\quad 
\pi_* \colon \bH(\cA) \to \cA,
\end{equation*}
and so on.

The next definition differs from the original  
in~\cite{kuznetsov-hpd}, but is equivalent to it, as Lemma~\ref{lemma:hpd-sod} below shows. 
The advantage of this definition is that it is more symmetric 
(with respect to the left and the right Lefschetz decompositions of $\cA$). 

\begin{definition} 
\label{definition-HPD-category}
Let $\cA$ be a Lefschetz category over $\bP(V)$.
Then the \emph{HPD category} $\cAd$ of $\cA$ is the full $\bP(\vV)$-linear subcategory of 
$\bH(\cA)$ defined by 
\begin{equation} 
\label{eq:hpd-category}
\cAd = \set{ C \in \bH(\cA) \st \delta_*(C) \in \cA_0 \sotimes \Perf(\bP(\vV)) }.
\end{equation}
We sometimes use the notation 
\begin{equation*}
(\cA/\bP(V))^{\hpd}  = \cA^{\hpd}
\end{equation*}
to emphasize the dependence on the $\bP(V)$-linear structure. 
\end{definition}

\begin{remark}
The HPD category $\cAd$ depends on the choice of the Lefschetz center $\cA_0 \subset \cA$, 
although this is suppressed in the notation. 
For instance, for the ``stupid'' Lefschetz center~$\cA_0 = \cA$ we have $\cAd = \bH(\cA)$. 
\end{remark}

A less trivial example of HPD is the following relative version of Example~\ref{example:HPD-linear}.

\begin{example}
\label{ex:categorical-linear-hpd}
Consider the Lefschetz category $\Perf(\bP(W))$ of Example~\ref{example-projective-bundle-lc}, 
and assume~$0 \subsetneq W \subsetneq V$.
Then by \cite[Corollary~8.3]{kuznetsov-hpd} 
there is a Lefschetz equivalence
\begin{equation*}
\Perf(\bP(W))^\hpd \simeq \Perf(\bP(W^\perp)).
\end{equation*}
\end{example}

\begin{remark}
\label{remark-left-HPD} 
In fact, following \cite[\S7.1]{NCHPD}, the category $\cAd$ should more precisely be 
called the \emph{right HPD category} of $\cA$. 
Indeed, there is also a \emph{left HPD category} $\dcA$, which is defined 
by replacing the right adjoint $\delta_*$ to $\delta^*$ with the left adjoint 
$\delta_!$ in \eqref{eq:hpd-category}. 
As shown in~\cite[Lemma~7.2]{NCHPD}, there is a $\bP(\vV)$-linear 
equivalence $\cAd \simeq \dcA$. 
Under mild hypotheses these categories are endowed with natural Lefschetz 
structures, see \cite[\S7.2]{NCHPD}; 
for $\cAd$ this is part of Theorem~\ref{theorem-HPD} below. 
Under stronger hypotheses we can show that there is 
a Lefschetz equivalence $\cAd \simeq \dcA$ \cite[Proposition 7.12]{NCHPD}, 
but in general we do not know if one exists. 
In this paper, we will deal almost exclusively with $\cAd$, and 
therefore simply refer to it as the HPD category. 
All of our results can be translated directly to the ``left HPD'' setting. 
\end{remark}

\begin{remark}
\label{remark-HPD-smooth-proper}
If $\cA$ is a Lefschetz category over $\bP(V)$ which is smooth and proper over~$S$, 
then the HPD category $\cAd$ is also smooth and proper over $S$ \cite[Lemma 7.18]{NCHPD}. 
This is an instance of the ``homological smoothness principle'' of homological projective geometry, 
see~\S\ref{section-homological-projective-geometry}.
\end{remark}

Sometimes it is convenient to describe the HPD category 
$\cA^{\hpd}$ in terms of the right or left Lefschetz decompositions of $\cA$ as follows.

\begin{lemma}
\label{lemma:hpd-sod}
Let $\cA$ be a Lefschetz category over $\bP(V)$ of length $m$. 
Then there are $\bP(\vV)$-linear semiorthogonal decompositions 
\begin{align} 
\label{HC-sod}
\hspace{-.64em}
\bH(\cA) & = \llangle \cAd, 
\delta^*(\cA_1(H) \sotimes \Perf(\bP(\vV))), 
\dots, 
\delta^*(\cA_{m-1}((m-1)H) \sotimes \Perf(\bP(\vV))) \rrangle , \\ 
\label{HC-sod-left}
\hspace{-.64em}
\bH(\cA) & = \llangle 
\delta^!(\cA_{1-m}((1-m)H) \sotimes \Perf(\bP(\vV))), 
\dots, 
\delta^!(\cA_{-1}(-H) \sotimes \Perf(\bP(\vV))),
\cAd
\rrangle, 
\end{align}
where the functors $\delta^*, \delta^! \colon \cA \sotimes \Perf(\bP(\vV)) \to \bH(\cA)$ are fully faithful 
on the categories to which they are applied. 
In particular, $\cAd$ is an admissible subcategory in $\bH(\cA)$ 
and its inclusion functor \mbox{$\gamma \colon \cAd \to \bH(\cA)$} has both left and right adjoints 
$\gamma^*,\gamma^! \colon \bH(\cA) \to \cAd$. 
\end{lemma}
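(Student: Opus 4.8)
The plan is to construct the two semiorthogonal decompositions \eqref{HC-sod} and \eqref{HC-sod-left} directly from the Lefschetz decompositions of $\cA$, using base change along the incidence divisor $\delta \colon \bH(\bP(V)) \hookrightarrow \bP(V) \times \bP(\vV)$. The key geometric input is that $\delta$ is a divisorial embedding, so it fits into a short exact sequence of the form $0 \to \cO(-H-H') \to \cO \to \delta_*\cO \to 0$ on $\bP(V) \times \bP(\vV)$ after twisting; equivalently, $\delta_*$ admits both adjoints $\delta^*$ and $\delta^!$, with $\delta^! \simeq \delta^*(-) \otimes \omega_\delta[{\dim}]$ where $\omega_\delta$ is a twist of $\cO(-H')$ restricted to the incidence divisor (by Remark~\ref{remark:good-morphism}, since all schemes in \eqref{eq:h-diagram} are smooth projective over $S$). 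The first step is therefore to record the standard facts about $\delta$: the functor $\delta^* \colon \cA \sotimes \Perf(\bP(\vV)) \to \bH(\cA)$ is fully faithful on any subcategory on which the twist by $\cO(-H-H')$ acts ``away from'' the given subcategory — concretely, $\delta^*$ is fully faithful on $\cA_i(iH) \sotimes \Perf(\bP(\vV))$ for each $i$, since $\delta^* \circ \delta_* $ sits in a triangle with the identity and the autoequivalence $-\otimes \cO(-H-H')[\text{shift}]$, and $\cA_i(iH)$ is semiorthogonal to $\cA_i((i-1)H) \supseteq \cA_0((i-1)H) \otimes (\text{stuff})$ inside the Lefschetz chain. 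This is exactly the content of \cite[\S7]{NCHPD}, and I would cite the relevant lemmas there rather than reprove them.

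The second step is to produce the decomposition \eqref{HC-sod}. By base change (Theorem~\ref{theorem-bzfn}) applied to the diagram \eqref{eq:h-diagram}, the category $\bH(\cA) = \cA \otimes_{\Perf(\bP(V))} \Perf(\bH(\bP(V)))$ inherits a semiorthogonal decomposition from the right Lefschetz decomposition $\cA = \llangle \cA_0, \cA_1(H), \dots, \cA_{m-1}((m-1)H) \rrangle$ of $\cA$ over $\bP(V)$: tensoring each piece with $\Perf(\bH(\bP(V)))$ over $\Perf(\bP(V))$, and using that $\Perf(\bH(\bP(V)))$ is generated over $\Perf(\bP(V))$ by pullbacks of twists of $\cO_{\bP(\vV)}$, one gets a decomposition of $\bH(\cA)$ into pieces of the form $\delta^*(\cA_i(iH) \sotimes \Perf(\bP(\vV)))$ for $i = 0, 1, \dots, m-1$ — except that the $i=0$ piece is a priori $\delta^*(\cA_0 \sotimes \Perf(\bP(\vV)))$, which may fail to be fully faithful. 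The point is that the first component should instead be $\cAd$, defined by \eqref{eq:hpd-category} as the subcategory of $\bH(\cA)$ on which $\delta_*$ lands in $\cA_0 \sotimes \Perf(\bP(\vV))$. To see that this matches, I would argue that $\cAd$ is precisely the left orthogonal inside $\bH(\cA)$ of $\llangle \delta^*(\cA_1(H) \sotimes \Perf(\bP(\vV))), \dots, \delta^*(\cA_{m-1}((m-1)H) \sotimes \Perf(\bP(\vV))) \rrangle$: an object $C$ is in this orthogonal iff $\Hom(C, \delta^*(A_i(iH) \sotimes -)) = \Hom(\delta_*(C), A_i(iH) \sotimes -) = 0$ for $i \geq 1$ (by adjunction), which by the left Lefschetz decomposition of $\cA$ is equivalent to $\delta_*(C) \in \cA_0 \sotimes \Perf(\bP(\vV))$ — wait, more care is needed since we need the \emph{other} orthogonality; the correct statement uses that $\cA = \llangle \cA_1(H), \dots \rrangle^{?}$, so I would instead observe $\delta_*(C) \in \cA_0 \sotimes \Perf(\bP(\vV))$ iff $\delta_*(C) \perp \cA_i(iH) \sotimes \Perf(\bP(\vV))$ for all $i \geq 1$ using the right Lefschetz decomposition read as $\cA = \llangle \cA_0, \cA_1(H), \dots \rrangle$ (so $\cA_0 = {}^\perp\llangle \cA_1(H), \dots \rrangle$). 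This identifies $\cAd$ with the desired first component and yields \eqref{HC-sod}, together with admissibility of $\cAd$ (it is the orthogonal complement of an admissible subcategory) and existence of $\gamma^*, \gamma^!$.

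The third step is the ``left'' version \eqref{HC-sod-left}, which is entirely parallel but uses the \emph{left} Lefschetz decomposition of $\cA$ and the functor $\delta^!$ instead of $\delta^*$; the fully faithfulness of $\delta^!$ on the relevant pieces follows by the same argument with $\delta^! \circ \delta_*$ in place of $\delta^* \circ \delta_*$ (or by duality). One then checks that the last component of this decomposition, namely the right orthogonal of $\llangle \delta^!(\cA_{1-m}((1-m)H) \sotimes \Perf(\bP(\vV))), \dots, \delta^!(\cA_{-1}(-H) \sotimes \Perf(\bP(\vV))) \rrangle$, again equals $\cAd$: an object $C$ lies in this right orthogonal iff $\Hom(\delta^!(\cA_i(iH) \sotimes -), C) = \Hom(\cA_i(iH) \sotimes -, \delta_*(C)) = 0$ for $i \leq -1$, which by the left Lefschetz decomposition $\cA = \llangle \cA_{1-m}((1-m)H), \dots, \cA_{-1}(-H), \cA_0 \rrangle$ means exactly $\delta_*(C) \in \cA_0 \sotimes \Perf(\bP(\vV))$, i.e. $C \in \cAd$. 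This gives both adjoints $\gamma^*$ (from \eqref{HC-sod}) and $\gamma^!$ (from \eqref{HC-sod-left}), completing the proof.

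The main obstacle I anticipate is bookkeeping with the twists and the direction of semiorthogonality: one must be careful that the base-changed Lefschetz decomposition of $\bH(\cA)$ really does have $\delta^*$ (resp. $\delta^!$) applied to the correctly-twisted components, and that the fully-faithfulness claims hold — these rely on the precise form of $\delta^* \circ \delta_*$ and $\delta^! \circ \delta_*$ as cones involving $-\otimes \cO(-H-H')$, combined with the structure of the Lefschetz chains \eqref{eq:lefschetz-chain}. All of this is essentially done in \cite[\S7.1]{NCHPD}, so in practice the cleanest route is to cite \cite[Proposition~7.4 and Lemma~7.5]{NCHPD} (or the corresponding statements there) for the two semiorthogonal decompositions and their properties, and simply note that admissibility of $\cAd$ and the existence of $\gamma^*, \gamma^!$ are immediate consequences of being a component of a semiorthogonal decomposition by admissible subcategories.
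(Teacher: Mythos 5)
Your proposal is correct and lands in the same place as the paper, whose entire proof is a citation to \cite[Definition 7.1 and Lemma 7.2]{NCHPD} together with the observation that admissibility means existence of adjoints to the inclusion; your closing recommendation to cite \cite[\S7.1]{NCHPD} is exactly what the paper does, and your sketch of the underlying argument (base-changing the two Lefschetz decompositions, full faithfulness of $\delta^*$ and $\delta^!$ on the twisted components via the divisorial triangle for $\delta^*\delta_*$, and identifying $\cAd$ as the orthogonal complement via adjunction) is the content of those NCHPD results. The only wrinkles are the self-corrected adjunction-direction slip and the parenthetical ${}^\perp\llangle\cdots\rrangle$ where you mean $\llangle\cdots\rrangle^{\perp}$, neither of which affects the final logic.
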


\begin{proof}
This holds by \cite[Definition 7.1 and Lemma 7.2]{NCHPD}. 
Note that admissibility is by definition the existence of adjoint functors to the inclusion. 
\end{proof} 

By~\cite[Lemma 7.3]{NCHPD}, if $\cA$ is a moderate Lefschetz category the composition 
\begin{equation*}
\cA \xrightarrow{\, \pi^* \,} \bH(\cA) \xrightarrow{\, \gamma^* \,} \cAd 
\end{equation*}
is fully faithful on the center $\cA_0 \subset \cA$; in this case, we define
\begin{equation}
\label{Ad0}
\cAd_0 = \gamma^*\pi^*(\cA_0). 
\end{equation} 
For later use, we note the following.

\begin{lemma}
\label{lemma-A0-Ad0-equivalence} 
For a moderate Lefschetz category $\cA$ over $\bP(V)$, 
the functors $\pi_* \circ \gamma \colon \cAd \to \cA$ 
and $\gamma^* \circ \pi^* \colon \cA \to \cAd$ induce mutually inverse 
equivalences between 
$\cAd_0 \subset \cAd$ and $\cA_0 \subset \cA$. 
\end{lemma}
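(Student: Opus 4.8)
The plan is to show directly that the two functors $\pi_* \circ \gamma$ and $\gamma^* \circ \pi^*$ restrict to mutually inverse equivalences on the centers, by combining the definition~\eqref{Ad0} of $\cAd_0$ with a comparison of counit and unit morphisms. First I would recall that by definition $\cAd_0 = \gamma^*\pi^*(\cA_0)$, so $\gamma^*\circ\pi^*$ certainly sends $\cA_0$ into $\cAd_0$, and by~\cite[Lemma 7.3]{NCHPD} (as quoted just before the statement) the functor $\gamma^*\circ\pi^*$ is fully faithful on $\cA_0$; hence it induces an equivalence $\cA_0 \xrightarrow{\sim} \cAd_0$. It then remains to identify the inverse. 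The natural candidate is $\pi_*\circ\gamma$, and the key point is to produce a natural isomorphism $(\pi_* \circ \gamma) \circ (\gamma^* \circ \pi^*)|_{\cA_0} \simeq \id_{\cA_0}$.

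To establish this isomorphism, I would work with the composite natural transformation on all of $\cA$ built from the unit $\id_{\bH(\cA)} \to \gamma\gamma^*$ (wait — $\gamma^*$ is a \emph{left} adjoint of $\gamma$, so the relevant morphism is the counit $\gamma\gamma^* \to \id_{\bH(\cA)}$) and analyze it after applying $\pi_*$ and precomposing with $\pi^*$. Concretely, the composite $\pi_* \circ \pi^* \to \pi_* \circ \gamma\gamma^* \circ \pi^* = (\pi_*\circ\gamma)\circ(\gamma^*\circ\pi^*)$ is a morphism of $S$-linear endofunctors of $\cA$. On the other hand, since $\pi \colon \bH(\bP(V)) \to \bP(V)$ is a $\bP^{N-2}$-bundle, $\pi_*\pi^* \simeq \id$ via the adjunction unit. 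So what I want is that the difference between the counit $\gamma\gamma^* \to \id_{\bH(\cA)}$ and the identity becomes invisible after $\pi_*(-)\circ\pi^*$, restricted to $\cA_0$. By the semiorthogonal decomposition~\eqref{HC-sod}, the cone of $\gamma\gamma^*(C) \to C$ for $C = \pi^*(A)$ with $A \in \cA_0$ lies in $\llangle \delta^*(\cA_1(H)\otimes\Perf(\bP(\vV))),\dots\rrangle$; I then need that $\pi_*$ kills this subcategory when the input comes from $\pi^*\cA_0$. The cleanest way to see this is the projection/base-change identity $\pi_*\delta^*(-) \simeq (\text{restriction along } \bP(\vV))$ composed with an appropriate twist — more precisely $\pi_* \circ \delta^*$ relates to $\pr_{1*}$ of a Koszul-type resolution of $\cO_{\bH(\bP(V))}$ on $\bP(V)\times\bP(\vV)$ — and the fact that $\pi_*(\cA_j(jH)) = 0$ for $1 \le j \le m-1$ by semiorthogonality of the Lefschetz decomposition together with the structure of $R\pi_*\cO_{\bH}(jH)$.

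Alternatively, and perhaps more in the spirit of~\cite{NCHPD}, I would simply invoke that $\gamma^*\pi^*$ is fully faithful on $\cA_0$ and that $\pi_*\gamma$ is \emph{some} functor $\cAd \to \cA$; to show it is the inverse of the equivalence $\cA_0 \simeq \cAd_0$ it suffices to check $\pi_*\gamma\gamma^*\pi^* \simeq \id$ on $\cA_0$, and this can be deduced formally from adjunction: the right adjoint of $\gamma^*\pi^* \colon \cA \to \cAd$ is $\pi_!\gamma$ (since $(\pi_!, \pi^*)$ and $(\gamma^*,\gamma)$ are adjoint pairs, wait — $\pi^*$ has left adjoint $\pi_!$ and $\gamma^*$ has right adjoint $\gamma$, so the right adjoint of $\gamma^*\circ\pi^*$ is $\pi_*\circ\gamma$ — no: the right adjoint of a composite $G\circ F$ is $F^R \circ G^R$, so the right adjoint of $\gamma^*\circ\pi^*$ is $(\pi^*)^R\circ(\gamma^*)^R = \pi_*\circ\gamma$). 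Good — so $\pi_*\gamma$ is genuinely the right adjoint of $\gamma^*\pi^*$, and a fully faithful functor whose essential image is an admissible subcategory is, by general nonsense, inverse on that image to its right adjoint. Since the essential image of $\gamma^*\pi^*|_{\cA_0}$ is by definition $\cAd_0$, this gives exactly that $\pi_*\gamma|_{\cAd_0}$ and $\gamma^*\pi^*|_{\cA_0}$ are mutually inverse equivalences.

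The main obstacle is making the last formal argument fully rigorous at the level of the subcategories rather than the ambient categories: I must be careful that the unit $\id_{\cA_0} \to \pi_*\gamma\gamma^*\pi^*$ is an isomorphism when restricted to $\cA_0$, which uses full faithfulness of $\gamma^*\pi^*$ on $\cA_0$ (from~\cite[Lemma 7.3]{NCHPD}), and that this identifies $\pi_*\gamma$ as a one-sided inverse on $\cAd_0$; the reverse composite $\gamma^*\pi^*\pi_*\gamma \simeq \id$ on $\cAd_0$ then follows either by a symmetric full-faithfulness statement for $\pi_*\gamma$ on $\cAd_0$ (which can be extracted from the projective bundle structure of $\pi$ applied to objects of $\bH(\cA)$ whose $\delta_*$ lands in $\cA_0\otimes\Perf(\bP(\vV))$) or, more cheaply, from the fact that a fully faithful functor between small idempotent-complete categories which admits a fully faithful right adjoint with the same essential image must have that right adjoint as a two-sided inverse. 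I expect that spelling out the projection-formula computation $\pi_*\delta^*(\cA_i(iH)\otimes\Perf(\bP(\vV))) = 0$ for $1\le i\le m-1$ — if one goes the explicit route — is where the real work lies, but since that fact is essentially already encoded in~\cite[Lemma 7.3]{NCHPD}, the cleanest writeup will stay at the level of adjunctions and cite that lemma.
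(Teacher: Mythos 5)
Your overall skeleton matches the paper's: identify $\pi_*\circ\gamma$ as the right adjoint of $\gamma^*\circ\pi^*$ (your adjoint computation is correct), use full faithfulness of $\gamma^*\circ\pi^*$ on $\cA_0$ with image $\cAd_0$, and conclude by "general nonsense." But there is a genuine gap in the general-nonsense step. The abstract fact you invoke — a fully faithful functor is inverted on its image by its right adjoint — applies to the restricted functor $\gamma^*\pi^*|_{\cA_0}\colon \cA_0 \to \cAd$, and the right adjoint of \emph{that} functor is not $\pi_*\gamma$ but $\mathrm{pr}_{\cA_0}\circ\pi_*\gamma$, where $\mathrm{pr}_{\cA_0}$ is the right adjoint of the inclusion $\cA_0\hookrightarrow\cA$. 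Equivalently: full faithfulness on $\cA_0$ only tells you that the unit $\eta_{D}\colon D \to \pi_*\gamma\gamma^*\pi^*(D)$ induces isomorphisms on $\Hom(D',-)$ for $D'\in\cA_0$; its cone could a priori be a nonzero object of $\cA_0^\perp = \langle \cA_{1-m}((1-m)H),\dots,\cA_{-1}(-H)\rangle$, which such Hom's cannot detect. So the statement "the unit is an isomorphism on $\cA_0$" does not follow from full faithfulness alone, contrary to what your last paragraph asserts.

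The missing ingredient — and the one fact the paper's proof actually rests on — is that the essential image of $\pi_*\circ\gamma\colon\cAd\to\cA$ is contained in $\cA_0$; the paper cites \cite[Lemma~7.11]{NCHPD} for this. Once you know $\pi_*\gamma\gamma^*\pi^*(D)\in\cA_0$, the cone of $\eta_D$ lies in $\cA_0$ and is killed by $\Hom(D',-)$ for all $D'\in\cA_0$, hence vanishes, and your argument closes. Note that this containment also follows from Lemma~\ref{lemma-characterization-Cd} of the paper by taking $F=\cO_{\bP(\vV)}$ in the defining condition of $\cAd$ (one should check there is no circularity with the order of the lemmas, but that characterization depends only on the definition~\eqref{eq:hpd-category}). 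Your alternative explicit route — analyzing the counit $\gamma\gamma^*\to\id$ against the decomposition~\eqref{HC-sod} and showing $\pi_*$ kills the components $\delta^*(\cA_i(iH)\otimes\Perf(\bP(\vV)))$ — is in effect an attempt to reprove this image statement by hand; it is plausible but you leave it as a sketch and acknowledge that the real work lies there, so as written the proof is incomplete without the citation.
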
 

\begin{proof}
Since $\gamma^* \circ \pi^*$ is fully faithful on $\cA_0$ with image $\cAd_0$, 
this follows from the fact that the image of the right adjoint $\pi_* \circ \gamma$ is $\cA_0$ by \cite[Lemma~7.11]{NCHPD}. 
\end{proof}

The main theorem of HPD, recalled below, shows in particular that $\cAd_0 \subset \cAd$ 
is a Lefschetz center under certain hypotheses. 
This theorem was originally proved in \cite{kuznetsov-hpd} in the ``commutative'' case. 
We need the following ``noncommutative'' version from~\cite[Theorem~8.7]{NCHPD}.  

Let $H'$ denote the relative hyperplane class on $\bP(V^{\svee})$ 
such that~$\cO(H') = \cO_{\bP(\vV)}(1)$. 
Recall the definition~\eqref{eq:perp-bundle} of the orthogonal of a subbundle.

\begin{theorem}
\label{theorem-HPD} 
Let $\cA$ be a right strong, moderate Lefschetz category over $\bP(V)$. 
Then: 
\begin{enumerate}
\item 
\label{Cd-ld}
$\cAd$ is a left strong, moderate Lefschetz category over $\bP(\vV)$ with center $\cAd_0 \subset \cAd$ and 
length given by 
\begin{equation*}
\length(\cAd) = \rank(V) - \# \{ \, i \geq 0 \mid \cA_i = \cA_0 \, \}. 
\end{equation*}

\item \label{HPD-linear}  
Let $L \subset V$ be a subbundle and let $L^{\perp} \subset \vV$ be 
its orthogonal. 
Set 
\begin{equation*}
r = \rank(L), \quad s = \rank(L^{\perp}), \quad m = \length(\cA),  \quad n = \length(\cA^\hpd). 
\end{equation*} 
Then there are semiorthogonal decompositions 
\begin{align*}
\cA_{\bP(L)} 
& = \llangle \cK_L(\cA), 
\cA_s(H) , 
\dots, 
\cA_{m-1}((m-s)H)  \rrangle , 
\\
\cAd_{\bP(L^{\perp})} 
& = 
\llangle 
\cAd_{1-n}((r-n)H') , \dots, \cAd_{-r}(-H') , 
\cK'_{L^{\perp}}(\cAd) 
\rrangle , 
\end{align*}
and an $S$-linear equivalence $\cK_L(\cA) \simeq \cK'_{L^{\perp}}(\cAd)$. 

\end{enumerate}
\end{theorem}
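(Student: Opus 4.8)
The entire argument is organized around the universal hyperplane section $\bH(\cA)$, which by construction is linear over $\bP(V)$ via $\pi$ and over $\bP(\vV)$ via $h$, and which carries the two semiorthogonal decompositions \eqref{HC-sod} and \eqref{HC-sod-left} of Lemma~\ref{lemma:hpd-sod}. The plan is to play these ``vertical'' decompositions (built from the Lefschetz decompositions of $\cA$ via $\pi^{*}$ and $\delta^{*}$) against the ``horizontal'' one coming from the fact that $\pi \colon \bH(\bP(V)) \to \bP(V)$ is a $\bP^{N-2}$-bundle: the latter, after tensoring with $\cA$ over $\Perf(\bP(V))$, yields an $(N{-}1)$-term $\bP(V)$-linear semiorthogonal decomposition $\bH(\cA) = \llangle \cA, \cA(H'), \dots, \cA((N-2)H') \rrangle$. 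Shuffling the pieces of these decompositions by mutation functors is the mechanism through which the Lefschetz structure propagates from $\cA$ to $\cAd$, and through which, after base change, linear sections of $\cA$ and $\cAd$ get related.

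\emph{Part \eqref{Cd-ld}.} By Lemma~\ref{lemma:hpd-sod}, $\cAd \subset \bH(\cA)$ is admissible with inclusion $\gamma$ and adjoints $\gamma^{*},\gamma^{!}$, and moderateness guarantees (cf.\ the discussion preceding \eqref{Ad0}) that $\gamma^{*}\pi^{*}$ is fully faithful on $\cA_0$, with image $\cAd_0 = \gamma^{*}\pi^{*}(\cA_0)$. To see that $\cAd_0$ is a Lefschetz center, I would expand each copy of $\cA$ in the horizontal decomposition above by a (say right) Lefschetz decomposition of $\cA$, producing a fine semiorthogonal decomposition of $\bH(\cA)$ into blocks of the form $\cA_k \otimes \cO(kH + jH')$; then, using the semiorthogonality relations together with the length bound \eqref{length-leq-rank}, mutate this fine decomposition into the shape \eqref{HC-sod}. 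This identifies the left complement of $\delta^{*}(\cA_1(H)\sotimes\Perf(\bP(\vV))), \dots, \delta^{*}(\cA_{m-1}((m-1)H)\sotimes\Perf(\bP(\vV)))$ with $\cAd$ and simultaneously equips $\cAd$ with a semiorthogonal decomposition of Lefschetz type whose center is $\cAd_0$; starting instead from \eqref{HC-sod-left} produces the left Lefschetz decomposition. Bookkeeping the surviving blocks yields the length formula $\length(\cAd) = \rank(V) - \#\{\, i \geq 0 \mid \cA_i = \cA_0 \,\}$; since $i=0$ always contributes, $\length(\cAd) \leq \rank(V) - 1 < \rank(\vV)$, so $\cAd$ is moderate. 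Finally, left strongness of $\cAd$ follows by locating its left primitive components among the blocks produced above and observing that each is the image under the admissible projection $\gamma^{*}$ of a subcategory assembled from the right primitive components of $\cA$, which are admissible precisely by the right strongness hypothesis.

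\emph{Part \eqref{HPD-linear}.} Granting part \eqref{Cd-ld}, the two displayed semiorthogonal decompositions are exactly the instances of Lemma~\ref{lemma-linear-section-lc} obtained by restricting $\cA$ over $\bP(V)$ to the subbundle $L$ of corank $s$, and $\cAd$ over $\bP(\vV)$ to the subbundle $L^{\perp}$ of corank $r$; so the whole content is the $S$-linear equivalence $\cK_L(\cA) \simeq \cK'_{L^{\perp}}(\cAd)$. To produce it I would base change $\bH(\cA)$ along $\bP(L^{\perp}) \hookrightarrow \bP(\vV)$: by Theorem~\ref{theorem-bzfn} this is $\cA \otimes_{\Perf(\bP(V))} \Perf\bigl(\bH(\bP(V)) \times_{\bP(\vV)} \bP(L^{\perp})\bigr)$, and the geometric bridge is the incidence correspondence restricted over $\bP(L^{\perp})$, whose projection to $\bP(V)$ is a projective bundle away from $\bP(L)$ with a jump over $\bP(L)$ (note that $\bP(L) \times \bP(L^{\perp})$ lies on the incidence divisor). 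This base-changed category carries two semiorthogonal decompositions: base changing \eqref{HC-sod} exhibits $\cAd_{\bP(L^{\perp})}$, hence $\cK'_{L^{\perp}}(\cAd)$, as a block; while the structure of the restricted incidence over $\bP(V)$ together with the Lefschetz decomposition of $\cA_{\bP(L)}$ exhibits $\cA_{\bP(L)}$, hence $\cK_L(\cA)$, as a block. Comparing the two decompositions by moving every ``known'' $\cA_j(jH)$- and $\cAd_j(jH')$-type block into its standard slot via mutations then forces the residual blocks $\cK_L(\cA)$ and $\cK'_{L^{\perp}}(\cAd)$ to coincide inside the base-changed category, up to the explicit twists built into the notation, which is the asserted equivalence.

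\emph{Main obstacle.} The crux in both parts is the mutation bookkeeping: one must check at each step that the blocks being mutated are semiorthogonal, that the mutation functors act as advertised --- which is exactly where moderateness is used, to kill the $\Hom$'s that would otherwise obstruct --- and, in part \eqref{HPD-linear}, that after all the shuffling the two residual categories are \emph{literally the same} subcategory rather than merely abstractly equivalent. Establishing left strongness of $\cAd$ and pinning down the precise length formula are the other delicate points, and both rest essentially on the right strongness of $\cA$; away from these combinatorial computations the argument is a routine application of base change (Theorem~\ref{theorem-bzfn}), projective-bundle, and blowup results for linear categories.
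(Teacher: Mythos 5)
This is a statement the paper does not prove: Theorem~\ref{theorem-HPD} is imported verbatim from \cite[Theorem 8.7]{NCHPD} (and, in the commutative case, from \cite{kuznetsov-hpd}), so there is no in-paper argument to compare your proposal against. What can be said is that your outline reproduces the strategy of the proof in those references: the two $\bP(\vV)$-linear decompositions \eqref{HC-sod}--\eqref{HC-sod-left} of $\bH(\cA)$ played against the projective-bundle decomposition of Lemma~\ref{lemma-HC-sod-Pbundle} for part~\eqref{Cd-ld}, and, for part~\eqref{HPD-linear}, the comparison of two decompositions of $\cA \otimes_{\Perf(\bP(V))} \Perf\bigl(\bH(\bP(V)) \times_{\bP(\vV)} \bP(L^{\perp})\bigr)$, exploiting that the restricted incidence correspondence is a $\bP^{s-2}$-bundle over $\bP(V)$ away from $\bP(L)$ and jumps over $\bP(L)$. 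Those are indeed the right objects and the right mechanism.

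That said, as a proof the proposal is only a scaffold: essentially all of the mathematical content is deferred to the ``mutation bookkeeping,'' and that bookkeeping is where the theorem actually lives. Concretely, you never exhibit the Lefschetz components $\cAd_j$ of $\cAd$ (in the references they are built explicitly from the right primitive components $\fa_i$ of $\cA$, and right strongness is needed precisely so that the relevant mutations are defined), you do not verify the semiorthogonality and generation statements after each mutation, and you do not construct the functor realizing $\cK_L(\cA) \simeq \cK'_{L^{\perp}}(\cAd)$ --- note that these two categories sit inside the common ambient category only after a chain of mutations, so they are equivalent via an explicit composite of mutation functors rather than ``literally the same subcategory.'' The length formula and the left strongness of $\cAd$ likewise require the explicit description of the $\cAd_j$, not just the assertion that it can be extracted. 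So the route is the correct one, but the proposal as written establishes none of the assertions of the theorem; each deferred step is a nontrivial computation carried out over several sections of \cite{NCHPD}.
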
 

\begin{remark}
The Lefschetz components $\cAd_j \subset \cAd$ can be expressed explicitly in terms of the 
right primitive components of $\cA$, see \cite[\S7.2]{NCHPD}. 
\end{remark}

\begin{remark}
\label{remark-HPD-duality}
In the setup of Theorem~\ref{theorem-HPD}, 
there are equivalences ${^\hpd}(\cAd) \simeq \cA$ and $({^\hpd}\cA)^{\hpd} \simeq \cA$ 
of Lefschetz categories over $\bP(V)$, 
where ${^\hpd}(-)$ denotes the left HPD operation, 
see Remark~\ref{remark-left-HPD} and~\cite[Theorem 8.9]{NCHPD}.
This property justifies HPD being called a ``duality''.
\end{remark}

\subsection{Characterization of the HPD category} 
\label{subsection-HPD-characterization} 
Given $\cA$ as in Theorem~\ref{theorem-HPD}, we will need later 
a characterization of the Lefschetz category $\cAd$ in terms of the 
functor   
\begin{equation*}
\pi_* \colon \bH(\cA) \to \cA. 
\end{equation*}
For this, we must characterize $\cAd$ and its Lefschetz center $\cAd_0 \subset \cAd$ 
in terms of $\pi_*$. We handle the first in Lemma~\ref{lemma-characterization-Cd}, and 
the second in Proposition~\ref{proposition-characterization-Ad0}. 
Recall that given a $T$-linear category $\cC$, we write $C \otimes F$ 
for the action of an object $F \in \Perf(T)$ on an object $C \in \cC$. 

\begin{lemma}
\label{lemma-characterization-Cd} 
Let $\cA$ be a Lefschetz category over $\bP(V)$. 
Then $\cAd$ is the full $\bP(\vV)$-linear subcategory of $\bH(\cA)$ given by
\begin{equation*}
\cAd = \set{ C \in \bH(\cA) \st \pi_*(C \otimes h^*F) \in \cA_0 \text{ for all } F \in \Perf(\bP(\vV))} . 
\end{equation*}
\end{lemma}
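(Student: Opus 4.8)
The strategy is to compare the two descriptions of $\cAd$ by translating the defining condition $\delta_*(C) \in \cA_0 \otimes \Perf(\bP(\vV))$ from Definition~\ref{definition-HPD-category} into a condition on $\pi_*$. The link between $\delta_*$ and $\pi_*$ is the projection formula together with the geometry of diagram~\eqref{eq:h-diagram}: for $C \in \bH(\cA)$ and $F \in \Perf(\bP(\vV))$, we have $\pi_*(C \otimes h^*F) \simeq \pi_*\delta^*(\delta_*(C) \otimes \pr_2^*F)$, and since $\pi = \pr_1 \circ \delta$ while $\pr_1$ is just the projection $\bP(V) \times \bP(\vV) \to \bP(V)$, this should reduce to $\pr_{1*}(\delta_*(C) \otimes \pr_2^*F)$, i.e. to pushing forward $\delta_*(C) \in \cA \otimes \Perf(\bP(\vV))$ along the structure map of $\bP(\vV)$ after twisting by $F$. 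So the whole question becomes: an object $D \in \cA \otimes \Perf(\bP(\vV))$ lies in $\cA_0 \otimes \Perf(\bP(\vV))$ if and only if $\pr_{1*}(D \otimes \pr_2^*F) \in \cA_0$ for all $F \in \Perf(\bP(\vV))$.

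First I would verify the identity $\pi_*(C \otimes h^*F) \simeq \pr_{1*}(\delta_*(C) \otimes \pr_2^*F)$ carefully, being mindful that $\pi_*$ here denotes the functor $\bH(\cA) \to \cA$ induced by $\pi$ under the identification of Theorem~\ref{theorem-bzfn}, and similarly $\delta_*$ is the induced functor $\bH(\cA) \to \cA \otimes \Perf(\bP(\vV))$. The key point is that $\pi$ factors as $\delta$ followed by $\pr_1$, so $\pi_* \simeq (\pr_1)_* \circ \delta_*$ as functors on the linear categories (this is base change applied to the composition), and then the projection formula for $h^*F = \delta^*\pr_2^*F$ gives the twisting statement. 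I expect no genuine difficulty here, only care with the dictionary between geometric functors and their linear-categorical avatars.

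The heart of the matter is then the claim about detecting the subcategory $\cA_0 \otimes \Perf(\bP(\vV)) \subset \cA \otimes \Perf(\bP(\vV))$ via the functors $\pr_{1*}(- \otimes \pr_2^*F)$. One direction is immediate: if $D \in \cA_0 \otimes \Perf(\bP(\vV))$, then since $\cA_0$ is a $\Perf(S)$-linear (indeed $\bP(V)$-linear after the appropriate identifications) subcategory and $\pr_{1*}$, tensoring with $\pr_2^*F$ preserve $\cA_0 \otimes \Perf(\bP(\vV))$, one checks $\pr_{1*}(D \otimes \pr_2^*F) \in \cA_0$. For the converse, the idea is that $\Perf(\bP(\vV))$ is generated over $\Perf(S)$ by the line bundles $\cO(jH')$, $0 \le j \le N-1$, and $\cA \otimes \Perf(\bP(\vV))$ is correspondingly built from $N$ copies of $\cA$ (Orlov's projective bundle decomposition, base-changed); running $\pr_{1*}(- \otimes \cO(jH'))$ over these line bundles extracts the ``components'' of $D$ in $\cA$, so if all of them land in $\cA_0$ then $D$ itself lies in $\cA_0 \otimes \Perf(\bP(\vV))$. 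Concretely, one uses that $D \in \cA_0 \otimes \Perf(\bP(\vV))$ iff each graded piece of $D$ under this decomposition lies in $\cA_0$, and these graded pieces are recovered (up to triangulated manipulation) from the $\pr_{1*}(D \otimes \cO(jH'))$.

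The main obstacle will be making the converse direction precise: turning ``$\pr_{1*}(D \otimes \pr_2^*F) \in \cA_0$ for all $F$'' into membership of $D$ in the subcategory $\cA_0 \otimes \Perf(\bP(\vV))$ at the level of $\infty$-categories, where one cannot literally split $D$ into a direct sum of components but must argue with the semiorthogonal decomposition of $\cA \otimes \Perf(\bP(\vV))$ induced by Orlov's formula. I would handle this by induction on the projective bundle decomposition: the top graded piece of $D$ is computed by $\pr_{1*}(D \otimes \cO((N-1)H'))$ (or an appropriate twist), and knowing it lies in $\cA_0$ lets one peel it off, reducing to the same statement for a quotient object with one fewer component, until $D$ is exhausted. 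Alternatively — and perhaps more cleanly — one can observe that both subcategories are $\bP(\vV)$-linear and admissible, and that $\cA \otimes \Perf(\bP(\vV))$ is generated over $\cA_0 \otimes \Perf(\bP(\vV))$ by objects on which the functors $\pi_*(- \otimes h^*F)$ are easily seen not to land in $\cA_0$, forcing the containment; this is close to how admissibility arguments are run elsewhere in the paper, e.g. in Lemma~\ref{lemma-equivalence-lef-cat}. Either way the underlying input is just Orlov's projective bundle formula base-changed along $\cA$, so I expect this to go through without serious trouble.
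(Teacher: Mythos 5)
Your proposal is correct and follows essentially the same route as the paper: both reduce, via the factorizations $\pi = \pr_1 \circ \delta$, $h = \pr_2 \circ \delta$ and the projection formula $\pr_{1*}(\delta_*(C) \otimes \pr_2^*F) \simeq \pi_*(C \otimes h^*F)$, to the statement that membership of $\delta_*(C)$ in $\cA_0 \otimes \Perf(\bP(\vV))$ is detected by the functors $\pr_{1*}(- \otimes \pr_2^*F)$. The only difference is that the paper outsources this detection statement to \cite[Lemma 3.18]{NCHPD}, whereas you sketch a direct proof of it via Orlov's projective bundle decomposition; that argument goes through (the intermediate expression $\pi_*\delta^*(\cdots)$ in your first paragraph is a slip, but the identity you actually need and state is the correct one).
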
 

\begin{proof}
Consider the diagram~\eqref{eq:h-diagram} and its base change from $\bP(V)$ to $\cA$.
By~\cite[Lemma 3.18]{NCHPD} the defining property~\eqref{eq:hpd-category} of $\cAd$ holds for $C \in \bH(\cA)$
if and only if 
\begin{equation*}
\pr_{1*}(\delta_*(C) \otimes \pr_2^*(F)) \in \cA_0  \text{ for all } F \in \Perf(\bP(\vV)). 
\end{equation*}
But $\pr_1 \circ \delta = \pi$ and $\pr_2 \circ \delta = h$, hence the result follows from projection formula 
\begin{equation*}
\pr_{1*}(\delta_*(C) \otimes \pr_2^*(F)) \simeq
\pi_*(C \otimes h^*(F)).\qedhere
\end{equation*}
\end{proof}

The following related result will also be needed later. 

\begin{lemma} 
\label{lemma-bH-sod}
Let $\cA$ be a $\bP(V)$-linear category with a $\bP(V)$-linear semiorthogonal 
decomposition $\cA = \llangle \cA', \cA'' \rrangle$. 
Then there is a $\bP(\vV)$-linear semiorthogonal decomposition 
\begin{equation*}
\bH(\cA) = \llangle \bH(\cA') , \bH(\cA'') \rrangle, 
\end{equation*}
where $\bH(\cA')$ can be described as the full subcategory of $\bH(\cA)$ given by 
\begin{equation*}
\bH(\cA') = \set{C \in \bH(\cA) \st \pi_*(C \otimes h^*F) \in \cA' \text{ for all } F \in \Perf(\bP(\vV))} , 
\end{equation*} 
and $\bH(\cA'')$ is given analogously. 
\end{lemma}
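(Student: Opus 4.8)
The plan is to leverage the base-change description of $\bH(\cA)$ together with the way semiorthogonal decompositions interact with the tensor product over $\Perf(\bP(V))$. Recall that $\bH(\cA) = \cA \otimes_{\Perf(\bP(V))} \Perf(\bH(\bP(V)))$. By Theorem~\ref{theorem-bzfn} (and the general compatibility of the relative tensor product with semiorthogonal decompositions of one of the factors), the $\bP(V)$-linear semiorthogonal decomposition $\cA = \llangle \cA', \cA'' \rrangle$ induces a $\bP(\vV)$-linear semiorthogonal decomposition
\begin{equation*}
\bH(\cA) = \llangle \bH(\cA'), \bH(\cA'') \rrangle,
\end{equation*}
where $\bH(\cA') = \cA' \otimes_{\Perf(\bP(V))} \Perf(\bH(\bP(V)))$ and similarly for $\bH(\cA'')$. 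This gives the first assertion; the only point requiring a reference is that base change along $\Perf(\bP(V)) \to \Perf(\bH(\bP(V)))$ preserves semiorthogonality and generation, which is part of the formalism of linear categories collected in Appendix~\ref{section:linear-cats}. The $\bP(\vV)$-linearity of the pieces is immediate since they are base changed along a $\bP(\vV)$-scheme.

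For the second assertion — the intrinsic characterization of $\bH(\cA') \subset \bH(\cA)$ — I would argue exactly as in the proof of Lemma~\ref{lemma-characterization-Cd}. First I would note that, since $\cA' \subset \cA$ is an admissible $\bP(V)$-linear subcategory, by \cite[Lemma 3.18]{NCHPD} an object $C \in \bH(\cA)$, viewed after the identification $\bH(\cA) \otimes_{\Perf(\bH(\bP(V)))} \Perf(\bP(V) \times \bP(\vV)) \simeq \cA \otimes \Perf(\bP(\vV))$, has $\delta_*(C) \in \cA' \otimes \Perf(\bP(\vV))$ if and only if $\pr_{1*}(\delta_*(C) \otimes \pr_2^*(F)) \in \cA'$ for all $F \in \Perf(\bP(\vV))$. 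Then, using $\pr_1 \circ \delta = \pi$, $\pr_2 \circ \delta = h$, and the projection formula $\pr_{1*}(\delta_*(C) \otimes \pr_2^*(F)) \simeq \pi_*(C \otimes h^*F)$, this becomes the stated condition $\pi_*(C \otimes h^*F) \in \cA'$ for all $F$. So it remains only to check that the condition $\delta_*(C) \in \cA' \otimes \Perf(\bP(\vV))$ cuts out precisely $\bH(\cA') \subset \bH(\cA)$.

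For this last point I would use that the semiorthogonal decomposition $\bH(\cA) = \llangle \bH(\cA'), \bH(\cA'') \rrangle$ is $\delta$-compatible: the functor $\delta_* \colon \bH(\cA) \to \cA \otimes \Perf(\bP(\vV))$ is (up to the canonical identifications) the base change of the identity along $\Perf(\bH(\bP(V))) \to \Perf(\bP(V) \times \bP(\vV)) \to \Perf(\bP(V))$, so it sends $\bH(\cA')$ into $\cA' \otimes \Perf(\bP(\vV))$ and $\bH(\cA'')$ into $\cA'' \otimes \Perf(\bP(\vV))$. Since $\delta$ is a divisorial embedding, $\delta_*$ is conservative enough to detect membership: more precisely, for $C \in \bH(\cA)$ with components $C' \in \bH(\cA')$, $C'' \in \bH(\cA'')$, the condition $\delta_*(C) \in \cA' \otimes \Perf(\bP(\vV))$ forces the $\cA''$-component $\delta_*(C'')$ to vanish, and then one deduces $C'' = 0$ from the fact that $\delta_*$ restricted to $\bH(\cA'')$ is faithful on the relevant piece — this uses that $\delta^*$ is essentially surjective onto $\bH(\cA)$ modulo the twisted copies of $\cA \otimes \Perf(\bP(\vV))$, exactly as in Lemma~\ref{lemma:hpd-sod} applied with the stupid Lefschetz structure, or directly that $\bH(-)$ is a faithful base-change operation. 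The main obstacle here is making this vanishing argument clean: one must be careful that $\delta_*$ is not itself fully faithful, so the cleanest route is probably to observe that $\bH(\cA')$, being the base change of $\cA'$, is by construction the full subcategory of $\bH(\cA)$ generated by objects of the form $A' \otimes G$ with $A' \in \cA'$ and $G \in \Perf(\bH(\bP(V)))$, and that such objects satisfy the $\pi_*$-condition by the projection formula, while the $\pi_*$-condition is closed under the operations (shifts, cones, retracts, tensoring with $\Perf(\bP(\vV))$) defining the subcategory it cuts out — giving one inclusion — and then the reverse inclusion follows because an object $C$ satisfying the $\pi_*$-condition has zero $\bH(\cA'')$-component by the semiorthogonality $\Hom(\bH(\cA'), \bH(\cA'')) = 0$ combined with the fact that the projection of $C$ to $\bH(\cA'')$ would then also satisfy the $\pi_*$-condition, hence lie in $\bH(\cA') \cap \bH(\cA'') = 0$.
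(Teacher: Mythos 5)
Your first paragraph is fine and matches the paper: the decomposition $\bH(\cA) = \llangle \bH(\cA'), \bH(\cA'') \rrangle$ is exactly the base change of $\cA = \llangle \cA', \cA'' \rrangle$ along $\Perf(\bP(V)) \to \Perf(\bH(\bP(V)))$, which is Lemma~\ref{lemma-sod-tensor}. Your derivation of the equivalence between ``$\delta_*(C) \in \cA' \otimes \Perf(\bP(\vV))$'' and ``$\pi_*(C \otimes h^*F) \in \cA'$ for all $F$'' via the projection formula is also correct and is the same computation as in Lemma~\ref{lemma-characterization-Cd}.

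The gap is in the reverse inclusion of the characterization, and your own closing sentence is circular: you conclude that the $\bH(\cA'')$-component $C''$ of $C$ ``satisfies the $\pi_*$-condition, hence lies in $\bH(\cA') \cap \bH(\cA'')$'' --- but the inference ``satisfies the $\pi_*$-condition $\Rightarrow$ lies in $\bH(\cA')$'' is precisely the inclusion you are trying to prove. Your alternative route via conservativity of $\delta_*$ is also not completed (and $\delta_*$ for a divisorial embedding is not fully faithful, as you note; establishing that it detects zero objects in this abstract linear-category setting is itself nontrivial). The missing ingredient, which is how the paper proceeds, is to apply \cite[Lemma 3.18]{NCHPD} directly to the admissible subcategory $\cA' \subset \cA$ and the base change along $\bH(\bP(V)) \to \bP(V)$: this gives outright that $C \in \bH(\cA')$ if and only if $\pi_*(C \otimes G) \in \cA'$ for \emph{all} $G \in \Perf(\bH(\bP(V)))$. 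One then shrinks the class of test objects: since $\delta$ is a closed embedding, $\Perf(\bH(\bP(V)))$ is thickly generated by the image of $\delta^*$, hence by Lemma~\ref{lemma-generators-box-tensor} by the objects $\delta^*(E \boxtimes F) \simeq \pi^*(E) \otimes h^*(F)$; by the projection formula $\pi_*(C \otimes \pi^*E \otimes h^*F) \simeq \pi_*(C \otimes h^*F) \otimes E$, and $\bP(V)$-linearity of $\cA'$ lets you drop $E$. This same generation statement is also what you would need to repair your argument (to show that $\pi_*(C'' \otimes h^*F) \in \cA' \cap \cA'' = 0$ for all $F$ forces $C'' = 0$), so there is no way around it.
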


\begin{proof}
The claimed semiorthogonal decomposition of $\bH(\cA)$ 
holds by Lemma~\ref{lemma-sod-tensor}. 
By \cite[Lemma 3.18]{NCHPD}, for $C \in \bH(\cA)$ we have 
$C \in \bH(\cA')$ if and only if 
\begin{equation}
\label{eq:condition}
\pi_*(C \otimes G) \in \cA' \text{ for all } G \in \Perf(\bH(\bP(V))). 
\end{equation} 
Since $\delta \colon \bH(\bP(V)) \to \bP(V) \stimes \bP(\vV)$ is a closed 
embedding, $\Perf(\bH(\bP(V)))$ is thickly generated by objects in the image 
of $\delta^*$. 
Hence by Lemma~\ref{lemma-generators-box-tensor}, the category $\Perf(\bH(\bP(V)))$ is thickly generated 
by the objects $\delta^*(E \boxtimes F)$ 
for $E \in \Perf(\bP(V)), F \in \Perf(\bP(\vV))$. 
It follows that~\eqref{eq:condition} is equivalent to 
\begin{equation*}
\pi_*(C \otimes  \delta^*(E \boxtimes F)) \in \cA' \text{ for all } 
E \in \Perf(\bP(V)), F \in \Perf(\bP(\vV)). 
\end{equation*}
Note that $\delta^*(E \boxtimes F) \simeq \pi^*(E) \otimes h^*(F)$, hence 
\begin{equation*}
\pi_*(C \otimes  \delta^*(E \boxtimes F)) \simeq \pi_*(C \otimes h^*F) \otimes E. 
\end{equation*}
Since $\cA'$ is $\bP(V)$-linear, the above condition is thus equivalent to 
\begin{equation*}
\pi_*(C \otimes h^*F) \in \cA' \text{ for all } F \in \Perf(\bP(\vV)). \qedhere
\end{equation*}
\end{proof}

To characterize $\cAd_0 \subset \cAd$ we need to introduce some notation. 
Consider the tautological inclusion 
\begin{equation*}
\cO_{\bP(V)}(-H) \to V \otimes \cO_{\bP(V)}
\end{equation*}
on $\bP(V)$, 
and the tautological surjection 
\begin{equation*}
V \otimes \cO_{\bP(\vV)} \to \cO_{\bP(\vV)}(H')
\end{equation*}
on $\bP(\vV)$. 
By the definition of $\bH(\bP(V))$, the composition 
\begin{equation*}
\cO_{\bH(\bP(V))}(-H) \to V \otimes \cO_{\bH(\bP(V))} \to \cO_{\bH(\bP(V))}(H') 
\end{equation*} 
of the pullbacks of these morphisms to $\bH(\bP(V))$ vanishes, and hence can 
be considered as a complex concentrated in degrees $[-1,1]$. 
By construction, this complex has cohomology concentrated in degree $0$, 
i.e. it is a \emph{monad};  
we define $\cM$ as the degree $0$ cohomology sheaf, 
\begin{equation*}
\cM \simeq \left\{ \cO_{\bH(\bP(V))}(-H) \to V \otimes \cO_{\bH(\bP(V))} \to \cO_{\bH(\bP(V))}(H') \right\},
\end{equation*}
which is a vector bundle of rank $N-2$ on $\bH(\bP(V))$,
where recall that~$N$ is the rank of~$V$.

\begin{lemma}
\label{lemma-HC-sod-Pbundle} 
Let $\cA$ be a $\bP(V)$-linear category. 
Then there is a semiorthogonal decomposition 
\begin{equation*}
\bH(\cA) = \llangle 
\pi^*(\cA)(-(N-2)H'), \dots, \pi^*(\cA)(-H'), \pi^*(\cA) 
\rrangle  . 
\end{equation*}
Moreover, for $0 \leq t \leq N-2$ the projection functor onto the $-tH'$ component 
\textup(regarded as a functor to $\cA$\textup) is given by 
\begin{equation*}
\eta_t \colon  \bH(\cA) \to \cA,
\quad
C \mapsto \pi_*(C \otimes \wedge^t \cM[t]). 
\end{equation*}
\end{lemma}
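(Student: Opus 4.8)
The plan is to realize $\bH(\cA)$ as a base change of $\bH(\bP(V))$ and then exploit a semiorthogonal decomposition of $\Perf(\bH(\bP(V)))$ itself, relative to $\bP(V)$, coming from the monad defining $\cM$. Concretely, $\bH(\bP(V)) \subset \bP(V) \times \bP(\vV)$ is a divisor of bidegree $(1,1)$, so the projection $\pi \colon \bH(\bP(V)) \to \bP(V)$ realizes $\bH(\bP(V))$ as the projectivization $\bP_{\bP(V)}(\cM')$ for a suitable rank $N-1$ bundle $\cM'$ on $\bP(V)$ --- indeed $\bH(\bP(V))$ is the zero locus of a section of $\cO(H) \boxtimes \cO(H')$ on $\bP(V) \times \bP(\vV)$, equivalently the locus of hyperplanes through a given point, which is a linear $\bP^{N-2}$-bundle over $\bP(V)$. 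Thus Orlov's projective bundle formula gives a $\bP(V)$-linear (hence after base change $\bP(\vV)$-linear) semiorthogonal decomposition
\begin{equation*}
\Perf(\bH(\bP(V))) = \llangle \pi^*\Perf(\bP(V))(-(N-2)H'), \dots, \pi^*\Perf(\bP(V))(-H'), \pi^*\Perf(\bP(V)) \rrangle .
\end{equation*}
Tensoring this decomposition over $\Perf(\bP(V))$ with $\cA$ and applying Lemma~\ref{lemma-sod-tensor} (together with Theorem~\ref{theorem-bzfn} to identify $\cA \otimes_{\Perf(\bP(V))} \pi^*\Perf(\bP(V))$ with $\pi^*(\cA) \subset \bH(\cA)$) yields the claimed decomposition of $\bH(\cA)$. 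So the first task is to carefully set up the projective bundle structure on $\bH(\bP(V))$ and check that the Orlov decomposition is $\bP(V)$-linear, so that base change along $\cA$ is legitimate.

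For the projection functor formula, I would compute the projection onto the $-tH'$ component in the geometric case first, using the standard description of the projection functors for Orlov's projective bundle decomposition, and then propagate to $\bH(\cA)$ by base change. The key point is that the mutual right orthogonals of the subcategories $\pi^*\Perf(\bP(V))(-tH')$ are governed by the Koszul-type resolution associated with the monad $\cM$: the exterior powers $\wedge^t \cM$ interpolate between the line bundles $\cO(-tH')$ appearing in the decomposition. Precisely, the bundle $\cM$ sits in exact sequences relating $\wedge^t \cM$ to $\wedge^\bullet$ of the ``tautological'' bundle on the projectivization, and these produce a left (or right) mutation description showing that the $t$-th projection functor sends $C$ to $\pi_*(C \otimes \wedge^t \cM[t])$. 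I would verify this by checking the two defining properties of the projection functor: that $\eta_t(C)$ lies in the correct component after twisting, and that the unit/counit maps assemble into the Beilinson-type spectral sequence resolving $C$ --- this is a direct consequence of the resolution of the diagonal of $\bH(\bP(V))$ over $\bP(V)$ in terms of the $\wedge^t \cM$.

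The main obstacle I expect is the careful bookkeeping in identifying the projection functors with $C \mapsto \pi_*(C \otimes \wedge^t\cM[t])$, i.e.\ pinning down the correct shifts and twists in the Koszul/Beilinson resolution associated with $\cM$, and checking that this resolution remains valid (and the relevant pushforwards remain perfect) after the noncommutative base change to $\cA$. The geometric statement over $\bP(V)$ should be classical --- it is the relative version of Beilinson's resolution for the $\bP^{N-2}$-bundle $\bH(\bP(V)) \to \bP(V)$ --- but one must be slightly careful because $\cM$ is defined as the cohomology of a monad rather than as a tautological sub- or quotient-bundle, so I would need to relate $\cM$ to the genuine tautological bundle of the projectivization $\bP_{\bP(V)}(\cM') = \bH(\bP(V))$ before invoking the standard formulas. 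Once the geometric identity is established, transporting it to $\bH(\cA)$ is formal: all the functors in sight are $\bP(V)$-linear and pushforward along the smooth projective morphism $\pi$ commutes with base change by Theorem~\ref{theorem-bzfn}, so the formula $\eta_t(C) = \pi_*(C \otimes \wedge^t\cM[t])$ descends verbatim to the linear-categorical setting.
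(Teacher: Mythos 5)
Your proposal follows essentially the same route as the paper: identify $\bH(\bP(V))$ with the projectivization $\bP_{\bP(V)}(\cK)$ of the rank $N-1$ bundle $\cK = \ker(V^{\svee}\otimes\cO_{\bP(V)} \to \cO_{\bP(V)}(H))$, apply Orlov's relative projective bundle decomposition with projection functors given by the relative Beilinson resolution (the identification you flag as the key bookkeeping step is exactly $\cM \cong \Omega_{\bP_{\bP(V)}(\cK)/\bP(V)}(H')$), and then base change along $\cA$ using Lemma~\ref{lemma-sod-tensor}. This matches the paper's proof.
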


\begin{proof}
Define  
\begin{equation*}
\cK = \ker(V^{\svee} \otimes \cO_{\bP(V)} \to \cO_{\bP(V)}(H)). 
\end{equation*}
Then it is easy to see there is an isomorphism $\bH(\bP(V)) \cong \bP_{\bP(V)}(\cK)$, 
under which $H'$ corresponds to the tautological $\cO(1)$ line bundle 
and $\cM$ corresponds to $\Omega_{{\bP_{\bP(V)}(\cK)}/\bP(V)}(H')$. 
Hence we have the standard projective bundle semiorthogonal 
decomposition 
\begin{multline*}
\Perf(\bH(\bP(V))) =  \\
\llangle 
\pi^*(\Perf(\bP(V)))(-(N-2)H'), \dots, \pi^*(\Perf(\bP(V)))(-H'), \pi^*(\Perf(\bP(V))) 
\rrangle , 
\end{multline*}
whose projection functors for $0 \leq t \leq N-2$ are given by 
\begin{equation*}
\eta_t \colon \Perf(\bH(\bP(V))) \to \Perf(\bP(V)),
\qquad
F \mapsto \pi_*(F \otimes \wedge^t \cM[t])
\end{equation*}
(this is a relative version of the Beilinson spectral sequence, see the proof of~\cite[Theorem~2.6]{orlov1992projective}).
Now by Lemma~\ref{lemma-sod-tensor} the result follows by base change. 
\end{proof}

\begin{proposition}
\label{proposition-characterization-Ad0}
Let $\cA$ be a moderate Lefschetz category over $\bP(V)$.
Then $\cAd_0$ is the full subcategory of $\cAd$ given by 
\begin{equation*}
\cAd_0 = \set{ 
C \in \cAd \st 
\pi_*(\gamma(C) \otimes \wedge^t \cM) \in {}^\perp\cA_0 \text{ for all } t \geq 1
} , 
\end{equation*} 
where ${}^\perp\cA_0$ is the left orthogonal to the center $\cA_0 \subset \cA$. 
\end{proposition}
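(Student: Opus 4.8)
The plan is to characterize membership in $\cAd_0$ by combining the two semiorthogonal decompositions of $\bH(\cA)$ that we have available: the HPD-type decomposition of Lemma~\ref{lemma:hpd-sod}, which isolates $\cAd$ inside $\bH(\cA)$, and the projective-bundle decomposition of Lemma~\ref{lemma-HC-sod-Pbundle}, whose projection functors $\eta_t(C) = \pi_*(C \otimes \wedge^t\cM[t])$ are exactly the operators appearing in the statement. The guiding idea is that $\cAd_0 = \gamma^*\pi^*(\cA_0)$ by definition~\eqref{Ad0}, so an object $C \in \cAd$ lies in $\cAd_0$ precisely when, after applying $\gamma$ and comparing with $\pi^*(\cA_0) \subset \bH(\cA)$, the ``difference'' lies in the orthogonal complement of $\pi^*(\cA_0)$ inside the first few blocks of the projective-bundle decomposition. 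The vanishing of the higher $\eta_t$ applied to $\gamma(C)$ (for $t \geq 1$), landing in ${}^\perp\cA_0$, is the manifestation of this.

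First I would record that $\cAd \subset \bH(\cA)$ is, by Lemma~\ref{lemma:hpd-sod}, the subcategory left-orthogonal to $\bigl\langle \delta^*(\cA_1(H) \sotimes \Perf(\bP(\vV))), \dots \bigr\rangle$, and combine this with Lemma~\ref{lemma-characterization-Cd} to get a handle on which $\pi_*(\gamma(C) \otimes -)$ land where. Then I would run the projective-bundle decomposition of Lemma~\ref{lemma-HC-sod-Pbundle} on $\pi^*(\cA)$: since $\cA = \langle \cA_0, \cA_1(H), \dots, \cA_{m-1}((m-1)H)\rangle$ and $m < N$ by moderateness, the subcategory $\pi^*(\cA_0) \subset \bH(\cA)$ is contained in the block $\pi^*(\cA)$ (the $t=0$ block), and I would track the image of $\gamma^*\pi^*(\cA_0) = \cAd_0$ under the composite $\gamma$ and the projections $\eta_t$. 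The key computation is that for $C \in \cAd$, one has $\eta_0(\gamma(C)) = \pi_*(\gamma(C)) \in \cA$ recovering (up to the equivalence of Lemma~\ref{lemma-A0-Ad0-equivalence}) the $\cA$-object corresponding to $C$, while $\eta_t(\gamma(C))$ for $t \geq 1$ measures the failure of $\gamma(C)$ to be ``$\pi^*$ of something''; the condition that these higher terms lie in ${}^\perp\cA_0$ is equivalent to $C$ being in the image of $\gamma^*\pi^*$ restricted to $\cA_0$. Concretely, I expect one direction ($\cAd_0 \subset \{\dots\}$) to follow by a direct computation: if $C = \gamma^*\pi^*(A)$ with $A \in \cA_0$, then $\gamma(C)$ differs from $\pi^*(A)$ by objects in $\delta^*(\cA_{\geq 1}(\bullet H) \sotimes \Perf)$, and applying $\eta_t$ with $t\geq 1$ to $\pi^*(A)$ gives $0$ (by the projective-bundle orthogonality, since $A \in \cA \Rightarrow \pi^*(A)$ is in the $t=0$ block) while the correction terms feed into ${}^\perp\cA_0$ after a Lefschetz-collapse argument using that $\cA_i \subset {}^\perp(\cA_0 \otimes \cL^{-i})$ — this is where one uses~\eqref{Ai-igeq0} and moderateness.

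For the reverse inclusion I would argue that the conditions $\pi_*(\gamma(C) \otimes \wedge^t\cM) \in {}^\perp\cA_0$ for all $t \geq 1$, together with $C \in \cAd$, pin down $C$ uniquely inside $\cAd$ in a way that the candidate description and $\cAd_0$ share; since both are admissible and we have a containment in one direction, a counting/generation argument (analogous to the end of the proof of Lemma~\ref{lemma-equivalence-lef-cat}) closes the gap. Alternatively, and perhaps more cleanly, I would use the explicit right-adjoint formula for $\gamma^*$ coming from the decomposition~\eqref{HC-sod}: $\gamma^* = $ projection onto $\cAd$, whose kernel is built from the $\delta^*(\cA_i(\bullet H) \sotimes \Perf)$, and show that $\gamma^*\pi^*(A) \in \cAd_0$ iff the components of $\pi^*(A)$ in those blocks vanish, iff $A \in \cA_0$ — reducing the whole statement to unwinding Lemma~\ref{lemma-HC-sod-Pbundle} and Lemma~\ref{lemma:hpd-sod} in tandem. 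The main obstacle, I expect, is the bookkeeping of twists: one must carefully match the twist by $\wedge^t\cM$ (which interpolates between $-tH'$ and $tH$ directions via the monad defining $\cM$) against the $\cO(iH)$-twists appearing in the Lefschetz decomposition of $\cA$, and verify that moderateness ($m < N$) is exactly what guarantees no ``wraparound'' collision between the $t \geq 1$ blocks and the $\cA_0$ block — this is the point where the hypothesis is genuinely used, and getting the indexing right is the crux.
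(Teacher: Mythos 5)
Your overall framework is the paper's: reformulate the condition as a statement about $\operatorname{Cone}(\pi^*\pi_*\gamma(C) \to \gamma(C))$ using the projective-bundle decomposition of Lemma~\ref{lemma-HC-sod-Pbundle} (whose projection functors are the $\eta_t$), play it against the HPD decomposition~\eqref{HC-sod}, and use moderateness ($m-1 \leq N-2$) to rule out wraparound of twists. You also correctly locate where moderateness enters. But the forward inclusion $\cAd_0 \subset \{\dots\}$ is not closed by what you write: after identifying $\operatorname{Cone}(\pi^*(D) \to \gamma\gamma^*\pi^*(D))$ with $\rR_{\cAd}(\pi^*(D))[1]$ for $D \in \cA_0$, one needs to know \emph{which} twists $\cO(-tH')$ actually occur in this mutation — specifically that it lies in $\langle \pi^*(\cA_i(iH)) \otimes \cO(-tH')\rangle_{1 \leq t \leq m-1,\, 1 \leq i \leq m-t}$. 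This is a genuine computation (the paper cites \cite[Lemma 7.8]{NCHPD}, or \cite[Lemma 5.6]{kuznetsov-hpd} in the commutative case), not something that falls out of the ``Lefschetz-collapse'' you gesture at; you correctly flag the indexing as the crux, but you do not supply the input that resolves it.

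The more serious gap is the reverse inclusion. Neither of your two routes works as stated. The counting/generation argument requires knowing that the candidate subcategory and $\cAd_0$ sit as complements to a common subcategory, which you have not established (the candidate is cut out by a condition valued in ${}^\perp\cA_0 \subset \cA$, not by an orthogonality inside $\cAd$). Your alternative route characterizes when $\gamma^*\pi^*(A) \in \cAd_0$ for $A \in \cA$, but an arbitrary $C \in \cAd$ satisfying the condition is not a priori of the form $\gamma^*\pi^*(A)$ — that is precisely what must be proved. The missing step is short but essential: the condition is equivalent to the cone $\operatorname{Cone}(\pi^*\pi_*\gamma(C) \to \gamma(C))$ lying in $\langle \pi^*(\cA_i(iH)) \otimes \cO(-tH')\rangle_{1 \leq t \leq N-2,\, 1 \leq i \leq m-1}$, and by~\eqref{HC-sod} every one of these components lies in ${}^\perp(\cAd)$, so $\gamma^*$ annihilates the cone. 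Hence $C \simeq \gamma^*\gamma(C) \simeq \gamma^*\pi^*(\pi_*\gamma(C))$, and since $\pi_*\gamma(C) \in \cA_0$ by Lemma~\ref{lemma-characterization-Cd}, the definition~\eqref{Ad0} gives $C \in \cAd_0$. Without this application of Lemma~\ref{lemma-characterization-Cd} to produce the preimage in $\cA_0$, the reverse inclusion does not close.
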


\begin{proof}
First note that since $\cM$ is a vector bundle of rank $N - 2$, for 
$C \in \cAd$ the condition  
\begin{equation*}
\pi_*(\gamma(C) \otimes \wedge^t \cM) \in {}^\perp\cA_0
\end{equation*} 
holds for all $t \geq 1$ if and only if it holds for $1 \leq t \leq N-2$. 
This condition is in turn equivalent to 
\begin{equation}
\label{eq:cone-condition}
\operatorname{Cone}(\pi^*\pi_*\gamma(C) \to \gamma(C)) \in 
\Big\langle \pi^*{\left( \cA_i(iH) \right)} \otimes \cO(-tH') \Big\rangle_{1 \leq t \leq N-2, \, 1 \leq i \leq m-1}.
\end{equation} 
Indeed, this follows from the form of the projection functors for the 
semiorthogonal decomposition of Lemma~\ref{lemma-HC-sod-Pbundle}, together with  
the equality  
\begin{equation*}
{}^\perp\cA_0 = \langle \cA_1(H), \dots, \cA_{m-1}((m-1)H) \rangle
\end{equation*}
where $\cA_i$ are the components of the right Lefschetz decomposition~\eqref{eq:right-decomposition} of $\cA$. 
It remains to show that \eqref{eq:cone-condition} is equivalent to $C \in \cAd_0$. 

Suppose~\eqref{eq:cone-condition} holds for $C \in \cAd$. 
Then $\gamma^*$ kills the left side of~\eqref{eq:cone-condition},
since by~\eqref{HC-sod} all components in the right side 
are contained in~${}^{\perp}(\cAd)$. 
Hence $\gamma^* \pi^* \pi_* \gamma(C) \simeq \gamma^* \gamma(C) \simeq C$. 
But~$\pi_* \gamma(C) \in \cA_0$ by Lemma~\ref{lemma-characterization-Cd}, 
so we conclude $C \in \cAd_0$ by the definition~\eqref{Ad0} of $\cAd_0$. 

Conversely, assume $C \in \cAd$ lies in $\cAd_0$, i.e. $C = \gamma^* \pi^*(D)$ 
for some $D \in \cA_0$. 
By Lemma~\ref{lemma-A0-Ad0-equivalence} we have 
$\pi_* \gamma  \gamma^* \pi^*(D) \simeq D$. 
Under this isomorphism, the morphism $\pi^* \pi_* \gamma(C) \to \gamma(C)$
is identified with the canonical morphism 
\begin{equation*}
\pi^*(D) \to \gamma \gamma^* \pi^*(D) , 
\end{equation*}
whose cone is nothing but $\rR_{\cAd}(\pi^*(D))[1]$, where $\rR_{\cAd}$ is the right mutation functor 
through the subcategory~$\cAd \subset \bH(\cA)$. 
But by~\cite[Lemma 7.8]{NCHPD} (or \cite[Lemma~5.6]{kuznetsov-hpd} in the commutative case) 
the object $\rR_{\cAd}(\pi^*(D))$ lies in the subcategory 
\begin{equation*}
\Big\langle \pi^*{\left( \cA_i(iH) \right)} \otimes \cO(-tH') \Big\rangle_{1 \leq t \leq m-1, \, 1 \leq i \leq m-t} \subset \bH(\cA) .
\end{equation*}
Note that $m-1 \leq N-2$ since $\cA$ is a moderate Lefschetz category, so 
this subcategory is contained in the right side of~\eqref{eq:cone-condition}, and hence~\eqref{eq:cone-condition} holds. 
This completes the proof.  
\end{proof}


\section{Categorical joins} 
\label{section-categorical-joins} 

In this section, we introduce the categorical join of two Lefschetz categories, 
which in the commutative case was briefly described in~\S\ref{subsection:intro-cat-joins}.   
First in~\S\ref{subsection-resolved-joins} we define the resolved join of two categories linear 
over projective bundles, by analogy with the canonical resolution of singularities 
of the classical join of two projective schemes. 
In~\S\ref{subsection-categorical-joins} we define the categorical join of two Lefschetz categories
as a subcategory of the resolved join, and prove some basic 
properties of this construction. 
In~\S\ref{subsection-bc-cat-joins} we study base changes of categorical joins, 
and in particular show that categorical and resolved joins agree away from 
the ``exceptional locus'' of the resolved join. 
Finally, in~\S\ref{subsection-ld-cJ} we construct a canonical Lefschetz 
structure on the categorical join of two Lefschetz categories. 

We fix nonzero vector bundles $V_1$ and $V_2$ on $S$, and write 
$H_1, H_2$, and $H$ for the relative hyperplane classes on $\bP(V_1), \bP(V_2)$, 
and $\bP(V_1 \oplus V_2)$. 

\subsection{Resolved joins} 
\label{subsection-resolved-joins}
Let $X_1 \to \bP(V_1)$ and $X_2 \to \bP(V_2)$ be morphisms of schemes. 
The \emph{resolved join} of $X_1$ and $X_2$ is defined as the $\bP^1$-bundle 
\begin{equation*}
\tJ(X_1, X_2) = \bP_{X_1 \stimes X_2}(\cO(-H_1) \oplus \cO(-H_2)). 
\end{equation*}
The canonical embedding of vector bundles 
\begin{equation*}
\cO(-H_1) \oplus \cO(-H_2) \hookrightarrow (V_1 \otimes \cO) \oplus (V_2 \otimes \cO) = 
(V_1 \oplus V_2) \otimes \cO 
\end{equation*} 
induces a morphism 
\begin{equation*}
\tJ(X_1, X_2) \to \bP(V_1 \oplus V_2). 
\end{equation*}
Recall from \S\ref{subsection:intro-cat-joins} that if $X_1 \to \bP(V_1)$ and 
$X_2 \to \bP(V_2)$ are embeddings, this morphism factors birationally through 
the classical join $\bJ(X_1, X_2) \subset \bP(V_1 \oplus V_2)$, and provides a 
resolution of singularities if $X_1$ and $X_2$ are smooth. 

Note that there is an isomorphism 
\begin{equation}
\label{tJ-base-change}
\tJ(X_1, X_2) \cong (X_1 \stimes X_2) \times_{(\bP(V_1) \stimes \bP(V_2))} \tJ(\bP(V_1), \bP(V_2)). 
\end{equation} 
Motivated by this, we call $\tJ(\bP(V_1), \bP(V_2))$ the \emph{universal resolved join}. 
Denote by 
\begin{equation*}
p \colon \tJ(\bP(V_1), \bP(V_2)) \to \bP(V_1) \stimes \bP(V_2)  
\end{equation*}
the canonical projection morphism, and by 
\begin{equation*}
f \colon \tJ(\bP(V_1), \bP(V_2)) \to \bP(V_1 \oplus V_2) 
\end{equation*}
the canonical morphism introduced above. 
Define 
\begin{align*}
\bE_1  & = \bP_{\bP(V_1) \stimes \bP(V_2)}(\cO(-H_1)) \cong \bP(V_1) \stimes \bP(V_2), \\  
\bE_2  & = \bP_{\bP(V_1) \stimes \bP(V_2)}(\cO(-H_2)) \cong \bP(V_1) \stimes \bP(V_2). 
\end{align*}
These are disjoint divisors in $\tJ(\bP(V_1), \bP(V_2))$, whose embeddings we denote by 
\begin{equation*}
\eps_1  \colon \bE_1 \to \tJ(\bP(V_1), \bP(V_2))    \quad \text{and} \quad  \eps_2 \colon \bE_2 \to \tJ(\bP(V_1), \bP(V_2)) . 
\end{equation*}
We have a commutative diagram
\begin{equation}
\label{diagram-tJ-projective-bundle} 
\vcenter{\xymatrix{
\bE_1 \ar[r]^-{\eps_1} \bijarbottom[dr] & \tJ(\bP(V_1), \bP(V_2)) \ar[d]_-{p} & \bijartop[dl]  \ar[l]_-{\eps_2} \bE_2  \\ 
& \bP(V_1) \stimes \bP(V_2). & 
}}
\end{equation}

The next result follows easily from the definitions.  

\begin{lemma}
\label{lemma-tJ-divisors}
The following hold: 
\begin{enumerate}
\item \label{f-blowup}
The morphism $f \colon \tJ(\bP(V_1), \bP(V_2)) \to \bP(V_1 \oplus V_2)$ 
is the blowup of $\bP(V_1 \oplus V_2)$ in the disjoint union $\bP(V_1) \sqcup \bP(V_2)$, 
with exceptional divisor $\bE_1 \sqcup \bE_2$.  

\item
The $\cO(1)$ line bundle for the $\bP^1$-bundle 
$p \colon \tJ(\bP(V_1), \bP(V_2)) \to \bP(V_1) \stimes \bP(V_2)$ is $\cO(H)$. 

\item \label{Ek-Hk-H}
We have the following equalities of divisors modulo linear equivalence: 
\begin{align*}
\bE_1 = H - H_2,  & \quad H|_{\bE_1} = H_1, \\ 
\bE_2 = H - H_1,  & \quad H|_{\bE_2} = H_2.
\end{align*} 

\item  \label{omega-p}
The relative dualizing complex of the morphism $p$ is given by 
\begin{equation*}
\omega_p  = \cO(H_1 + H_2 - 2H)[1].
\end{equation*}
\end{enumerate} 
\end{lemma}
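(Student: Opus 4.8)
Write $\tJ = \tJ(\bP(V_1),\bP(V_2))$, $B = \bP(V_1)\times\bP(V_2)$, and $\cE = \cO(-H_1)\oplus\cO(-H_2)$, so that $\tJ = \bP_B(\cE)$, with projection $p\colon\tJ\to B$ and structural morphism $f\colon\tJ\to\bP(V_1\oplus V_2)$. The plan is to read off (2) directly from the construction of $f$, to obtain (3) and (4) by routine line-bundle computations on the $\bP^1$-bundle $p$, and to prove (1) by exhibiting $f$ as an iterated blowup. For (2): by construction $f$ classifies the composite line subbundle
\begin{equation*}
\cO_{\tJ}(-1)\hookrightarrow p^*\cE\hookrightarrow (V_1\oplus V_2)\otimes\cO_{\tJ},
\end{equation*}
where the first arrow is the tautological inclusion of $p$ and the second is the pullback of the tautological inclusions on $\bP(V_1)$ and $\bP(V_2)$; a composite of subbundle inclusions is a subbundle inclusion, so this is a morphism, and by the universal property of $\bP(V_1\oplus V_2)$ we get $f^*\cO(-H)\simeq\cO_{\tJ}(-1)$. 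Since by our conventions the class $H$ on $\tJ$ means $f^*H$, this is statement (2).

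Granting (2), I would prove (3) by unwinding the sections $\eps_k$. The divisor $\bE_1 = \bP_B(\cO(-H_1))$ is the locus where $\cO_{\tJ}(-1)\subset p^*\cE$ is contained in the summand $p^*\cO(-H_1)$, hence the zero locus of the composite $\cO_{\tJ}(-1)\hookrightarrow p^*\cE\twoheadrightarrow p^*\cO(-H_2)$; this is a section of $\cO_{\tJ}(1)\otimes\cO(-H_2)\simeq\cO(H-H_2)$ vanishing to order one on each fibre of $p$, so $\bE_1 = H-H_2$ up to linear equivalence, and symmetrically $\bE_2 = H-H_1$. Restricting the tautological inclusion to the section $\bE_k$ identifies $\cO_{\tJ}(1)|_{\bE_k}$ with $\cO(H_k)$, so $H|_{\bE_k} = H_k$. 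For (4) I would use the relative Euler sequence of the $\bP^1$-bundle $p$ (with our normalization $\bP_B(\cE) = \Proj_B\Sym^\bullet\cE^{\svee}$),
\begin{equation*}
0\to\cO_{\tJ}\to p^*\cE\otimes\cO_{\tJ}(1)\to T_p\to 0,
\end{equation*}
and take determinants: as $T_p$ is a line bundle this gives $\omega_{\tJ/B}\simeq p^*(\det\cE)^{\svee}\otimes\cO_{\tJ}(-2)\simeq\cO(H_1+H_2-2H)$, whence $\omega_p = \omega_{\tJ/B}[1]$ since $p$ has relative dimension one.

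The remaining and substantive point is (1). I would factor the blowup of $\bP(V_1\oplus V_2)$ along $\bP(V_1)\sqcup\bP(V_2)$ as two successive blowups of linear subbundles. Blowing up $\bP(V_2)$ resolves the linear projection $\bP(V_1\oplus V_2)\dashrightarrow\bP(V_1)$; since the centre $\bP(V_2)$ meets each fibre $\bP(\ell_1\oplus V_2)$ over $[\ell_1]\in\bP(V_1)$ in a hyperplane (a Cartier divisor), the blowup leaves these fibres unchanged, so $\Bl_{\bP(V_2)}\bP(V_1\oplus V_2)$ is a $\bP^{\rank V_2}$-bundle over $\bP(V_1)$, which, tracking the tautological subbundle, is identified over $\bP(V_1\oplus V_2)$ with $\bP_{\bP(V_1)}(\cO(-H_1)\oplus V_2)$. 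The second centre $\bP(V_1)$ is disjoint from $\bP(V_2)$, hence unaffected by the first blowup, and appears inside $\bP_{\bP(V_1)}(\cO(-H_1)\oplus V_2)$ as the $\bP^0$-subbundle $\bP(\cO(-H_1))$; blowing it up resolves the projection onto the complementary summand $\bP_{\bP(V_1)}(V_2) = B$, which on each fibre is the blowup of a point on a $\bP^1$ — again an isomorphism — and yields $\bP_B(\cO(-H_1)\oplus\cO(-H_2)) = \tJ$. Chasing the tautological subbundles through both steps shows the resulting isomorphism is compatible with the morphisms to $\bP(V_1\oplus V_2)$ and carries the two exceptional divisors $\bP(N_{\bP(V_k)/\bP(V_1\oplus V_2)})$ to $\bE_1$ and $\bE_2$; this is (1). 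Alternatively, since $f^{-1}(\bP(V_k)) = \bE_k$ scheme-theoretically — immediate from the description of $\bE_k$ above together with (2) — the universal property of the blowup furnishes a morphism $\tJ\to\Bl_{\bP(V_1)\sqcup\bP(V_2)}\bP(V_1\oplus V_2)$ over $\bP(V_1\oplus V_2)$, and one checks it is an isomorphism because it is birational, both sides are smooth and proper over $\bP(V_1\oplus V_2)$, and it contracts no divisor (each $\bE_k\simeq B$ maps onto the exceptional divisor $\bP(N_{\bP(V_k)/\bP(V_1\oplus V_2)})\simeq B$, of the same dimension); here one may reduce to the case of trivial $V_1$, $V_2$, both constructions being Zariski-local on $S$.

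The main obstacle is (1): parts (2)--(4) are formal manipulations with the projective-bundle structure and line-bundle classes, while (1) rests on the standard but slightly technical description of the blowup of disjoint linear subbundles of a projective bundle, together with the bookkeeping needed to match the resulting isomorphism with $f$ and its exceptional divisors with $\bE_1$, $\bE_2$.
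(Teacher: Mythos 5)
Your proof is correct, and all four parts are verified by exactly the standard computations one would expect; the paper itself gives no argument here (it states only that the lemma "follows easily from the definitions"), so your writeup simply supplies the routine details — the tautological-subbundle identification for (2), the section of $\cO(H-H_k)$ cutting out $\bE_k$ for (3), the relative Euler sequence for (4), and the iterated-blowup (or universal-property) identification for (1) — that the authors leave to the reader.
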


Part~\eqref{f-blowup} of the lemma can be summarized by the blowup diagram  
\begin{equation}
\label{diagram-tJ-blowup}
\vcenter{\xymatrix{
\bE_1 \ar[r]^-{\eps_1} \ar[d] & \tJ(\bP(V_1), \bP(V_2)) \ar[d]_f & \bE_2 \ar[l]_-{\eps_2} \ar[d] \\
\bP(V_1) \ar[r]^-{} & \bP(V_1 \oplus V_2) & \bP(V_2) \ar[l]_-{}
}}
\end{equation} 
All schemes in the diagram are smooth and projective over $S$, hence Remark~\ref{remark:good-morphism} applies to all morphisms. 

Following~\eqref{tJ-base-change} we define the resolved join of categories linear over $\bP(V_1)$ and $\bP(V_2)$ 
by base change from the universal resolved join. 

\begin{definition}
\label{definition-tJ}
Let $\cA^1$ be a $\bP(V_1)$-linear category and $\cA^2$ a $\bP(V_2)$-linear category. 
The \emph{resolved join} of $\cA^1$ and $\cA^2$ is the category 
\begin{equation*}
\tJ(\cA^1, \cA^2) = 
\left( \cA^1 \sotimes \cA^2 \right) \otimes_{\Perf(\bP(V_1) \stimes \bP(V_2))} \Perf(\tJ(\bP(V_1), \bP(V_2))). 
\end{equation*}
Further, for $k=1,2$, we define 
\begin{equation*}
\bE_k(\cA^1, \cA^2) = \left( \cA^1 \sotimes \cA^2 \right) \otimes_{\Perf(\bP(V_1) \stimes \bP(V_2))} \Perf(\bE_k) .
\end{equation*}
\end{definition}

\begin{remark} 
\label{remark-Ek}
The isomorphism $\bE_k \cong  \bP(V_1) \times \bP(V_2)$ 
induces a canonical equivalence 
\begin{equation*}
\bE_k(\cA^1, \cA^2) \simeq \cA^1 \sotimes \cA^2.
\end{equation*} 
We identify these categories via this equivalence; 
in particular, below we will regard subcategories of the right side as subcategories of the left. 
Furthermore, using this identification the morphisms $\eps_k$ from~\eqref{diagram-tJ-projective-bundle} or~\eqref{diagram-tJ-blowup} 
induce functors between $\cA^1 \sotimes \cA^2$ and $\tJ(\cA^1,\cA^2)$.
\end{remark}

\begin{remark}
\label{remark-tJ-spaces}
If $X_1 \to \bP(V_1)$ and $X_2 \to \bP(V_2)$ are morphisms of schemes, then 
by the isomorphism~\eqref{tJ-base-change} and Theorem~\ref{theorem-bzfn} the resolved join satisfies 
\begin{equation*}
\tJ(\Perf(X_1), \Perf(X_2)) \simeq \Perf(\tJ(X_1, X_2)). 
\end{equation*} 
\end{remark}

Below we gather some elementary lemmas about resolved joins. 

Let $\gamma_k \colon \cA^k \to \cB^k$ be $\bP(V_k)$-linear functors.
Then we have a $\bP(V_1) \times \bP(V_2)$-linear functor 
\begin{equation*}
\gamma_1 \otimes \gamma_2 \colon \cA^1 \otimes \cA^2 \to \cB^1 \otimes \cB^2,
\end{equation*}
and by base change along the morphism $p$ we obtain a $\tJ(\bP(V_1),\bP(V_2))$-linear functor
\begin{equation}
\label{eq:tj-gg}
\tJ(\gamma_1,\gamma_2) \colon \tJ(\cA^1,\cA^2) \to \tJ(\cB^1,\cB^2).
\end{equation}

\begin{lemma}
\label{lemma:tj-functoriality}
Let $\gamma_k \colon \cA^k \to \cB^k$ be $\bP(V_k)$-linear functors.
There are commutative diagrams
\begin{equation*} 
\vcenter{\xymatrix@C=5em{
\tJ(\cA^1,\cA^2) \ar[r]^-{\tJ(\gamma_1,\gamma_2)} \ar[d]_{p_*} &
\tJ(\cB^1,\cB^2) \ar[d]_{p_*}
\\
\cA^1 \otimes \cA^2 \ar[r]^-{\gamma_1 \otimes \gamma_2} &
\cB^1 \otimes \cB^2 
}}
\quad\text{and}\quad 
\vcenter{\xymatrix@C=5em{
\tJ(\cA^1,\cA^2) \ar[r]^-{\tJ(\gamma_1,\gamma_2)} &
\tJ(\cB^1,\cB^2) 
\\
\cA^1 \otimes \cA^2 \ar[r]^-{\gamma_1 \otimes \gamma_2} \ar[u]_{p^*} &
\cB^1 \otimes \cB^2 \ar[u]_{p^*}
}}
\end{equation*}
Moreover, if $\gamma_1$ and $\gamma_2$ both admit left or right adjoints, then so does $\tJ(\gamma_1,\gamma_2)$. 
If further $\gamma_1$ and $\gamma_2$ are fully faithful or equivalences, then so is $\tJ(\gamma_1,\gamma_2)$. 
\end{lemma}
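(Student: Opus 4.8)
The plan is to deduce the whole lemma from two formal inputs: the $2$-functoriality of base change and of relative tensor products of linear categories (recorded in Appendix~\ref{section:linear-cats}), and the fact that an adjoint of a linear functor is again linear \cite[Lemma~2.11]{NCHPD}. Throughout, write $T = \bP(V_1) \times \bP(V_2)$, so that $\tJ(\cA^1,\cA^2)$ is the base change of the $T$-linear category $\cA^1 \otimes \cA^2$ along $p \colon \tJ(\bP(V_1),\bP(V_2)) \to T$, and $\tJ(\gamma_1,\gamma_2)$ is the base change of $\gamma_1 \otimes \gamma_2$ along $p$ by the very definition~\eqref{eq:tj-gg}.

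For the two commutative diagrams, I would first observe that, by our conventions, the functors $p^*$ and $p_*$ appearing in them are obtained from the scheme-level pullback $p^* \colon \Perf(T) \to \Perf(\tJ(\bP(V_1),\bP(V_2)))$ and its right adjoint $p_*$ by tensoring with the relevant $T$-linear category over $\Perf(T)$; here $p_*$ preserves perfect complexes because $p$ is a $\bP^1$-bundle, hence perfect and proper by Remark~\ref{remark:good-morphism}. With this identification, both squares become instances of the bifunctoriality of the relative tensor product $- \otimes_{\Perf(T)} -$: the two composites in each square are canonically identified with $(\gamma_1 \otimes \gamma_2) \otimes_{\Perf(T)} p_*$ and $(\gamma_1 \otimes \gamma_2) \otimes_{\Perf(T)} p^*$, respectively.

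For the statement about adjoints, suppose $\gamma_1$ and $\gamma_2$ both admit, say, left adjoints $\gamma_1^*$ and $\gamma_2^*$; these are automatically $\bP(V_1)$- and $\bP(V_2)$-linear. Writing $\gamma_1 \otimes \gamma_2 = (\gamma_1 \otimes \id_{\cB^2}) \circ (\id_{\cA^1} \otimes \gamma_2)$ and using that tensoring a fixed $S$-linear category with an adjunction yields an adjunction, one gets that $\gamma_1 \otimes \gamma_2$ has left adjoint $\gamma_1^* \otimes \gamma_2^*$; base changing this adjunction along $p$ --- again a $2$-functorial operation --- shows that $\tJ(\gamma_1,\gamma_2)$ has left adjoint $\tJ(\gamma_1^*,\gamma_2^*)$. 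The case of right adjoints is identical. Finally, being fully faithful and being an equivalence are each preserved by tensoring with a fixed linear category, by composition, and by base change: for equivalences this is immediate, and for full faithfulness one applies these operations to the unit (or counit) isomorphism of the adjunctions just constructed. Hence both properties pass from $\gamma_1,\gamma_2$ first to $\gamma_1 \otimes \gamma_2$ and then to $\tJ(\gamma_1,\gamma_2)$.

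The only point needing genuine care is the first one: matching the adjoint functors $p^*$, $p_*$ between the base-changed categories with the tensored-up scheme-level functors, and extracting the precise form of the $2$-functoriality of base change and of relative tensor products from Appendix~\ref{section:linear-cats}; everything after that is bookkeeping. As an alternative to this abstract argument for the $p_*$-square, one could construct the comparison transformation as the mate of the (tautologically commuting) $p^*$-square and then check it is an equivalence on the generators $p^*(C)$ and $p^*(C) \otimes \cO(H)$, $C \in \cA^1 \otimes \cA^2$, of $\tJ(\cA^1,\cA^2)$ over $\cA^1 \otimes \cA^2$, using the projection formula and $p_*\cO_{\tJ(\bP(V_1),\bP(V_2))} \simeq \cO$.
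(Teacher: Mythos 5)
Your proposal is correct and follows essentially the same route as the paper, whose entire proof reads: the commutative diagrams follow from the formalism of base change for linear categories, and the adjoint/fully faithful/equivalence claims follow from Lemma~\ref{lemma-adjoints} (applied first over $S$ to get the properties for $\gamma_1\otimes\gamma_2$, then over $\bP(V_1)\times\bP(V_2)$ to base change along $p$). Your write-up is just a more detailed unpacking of those two sentences, with the mate-plus-generators argument for the $p_*$-square as an optional alternative.
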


\begin{proof}
The formalism of base change for linear categories gives the claimed commutative diagram. 
The rest follows from Lemma~\ref{lemma-adjoints}. 
\end{proof}

\begin{lemma}
\label{lemma-tJ-sod}
Let $\cA^1$ be a $\bP(V_1)$-linear category and $\cA^2$ a $\bP(V_2)$-linear category. 
Then for any $\bP(V_1)$-linear semiorthogonal decomposition $\cA^1 = \llangle \cA', \cA'' \rrangle$, 
there is a ${\tJ(\bP(V_1), \bP(V_2))}$-linear semiorthogonal decomposition 
\begin{equation*}
\tJ(\cA^1, \cA^2) = \llangle \tJ(\cA', \cA^2), \tJ(\cA'', \cA^2) \rrangle. 
\end{equation*} 
A semiorthogonal decomposition of $\cA^2$ induces an analogous decomposition of 
$\tJ(\cA^1, \cA^2)$. 
\end{lemma}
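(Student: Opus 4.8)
The plan is to deduce everything from the definition of the resolved join as a base change and the behavior of base change with respect to semiorthogonal decompositions, recorded earlier as Lemma~\ref{lemma-sod-tensor} (the statement that base change of a linear category along a morphism of schemes carries linear semiorthogonal decompositions to linear semiorthogonal decompositions). First I would observe that a $\bP(V_1)$-linear semiorthogonal decomposition $\cA^1 = \llangle \cA', \cA'' \rrangle$ induces, after tensoring with $\cA^2$ over $S$, a $\bP(V_1) \stimes \bP(V_2)$-linear semiorthogonal decomposition
\begin{equation*}
\cA^1 \sotimes \cA^2 = \llangle \cA' \sotimes \cA^2, \cA'' \sotimes \cA^2 \rrangle .
\end{equation*}
This is again an instance of the compatibility of tensor products of linear categories with semiorthogonal decompositions: tensoring a semiorthogonal decomposition with any fixed linear category preserves semiorthogonality and generation, and the $\Perf(\bP(V_1))$-linearity of the components upgrades to $\Perf(\bP(V_1)\stimes\bP(V_2))$-linearity since $\Perf(\bP(V_1)\stimes\bP(V_2)) \simeq \Perf(\bP(V_1)) \otimes \Perf(\bP(V_2))$ acts componentwise.

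Next I would apply base change along the projection $p \colon \tJ(\bP(V_1), \bP(V_2)) \to \bP(V_1) \stimes \bP(V_2)$. By Definition~\ref{definition-tJ}, $\tJ(\cA^1, \cA^2) = (\cA^1 \sotimes \cA^2) \otimes_{\Perf(\bP(V_1) \stimes \bP(V_2))} \Perf(\tJ(\bP(V_1), \bP(V_2)))$, and similarly for the components $\cA'$ and $\cA''$ in place of $\cA^1$. Base change of the displayed semiorthogonal decomposition of $\cA^1 \sotimes \cA^2$ along $p$ then yields, by Lemma~\ref{lemma-sod-tensor}, the ${\tJ(\bP(V_1), \bP(V_2))}$-linear semiorthogonal decomposition
\begin{equation*}
\tJ(\cA^1, \cA^2) = \llangle \tJ(\cA', \cA^2), \tJ(\cA'', \cA^2) \rrangle,
\end{equation*}
where I must check that the base change of the subcategory $\cA' \sotimes \cA^2$ is precisely $\tJ(\cA', \cA^2)$ as defined; but this is immediate from unwinding the definitions, since both are $(\cA' \sotimes \cA^2) \otimes_{\Perf(\bP(V_1)\stimes\bP(V_2))} \Perf(\tJ(\bP(V_1),\bP(V_2)))$, and base change along $p$ preserves admissibility of the components. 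The case of a semiorthogonal decomposition of $\cA^2$ is entirely symmetric: one tensors with $\cA^1$ on the left and repeats the argument.

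The main point to be careful about—though it is not really an obstacle, since it is exactly what Lemma~\ref{lemma-sod-tensor} provides—is that base change is only guaranteed to preserve semiorthogonal decompositions and admissibility in the linear-categorical framework, not for arbitrary functors; here everything is built out of $\Perf$ of schemes smooth and projective over $S$, so the hypotheses are automatically satisfied. I therefore expect the proof to be short: it is a two-step application of the fact that forming tensor products (hence base changes) of linear categories is compatible with linear semiorthogonal decompositions, first tensoring $\cA^1 = \llangle \cA', \cA'' \rrangle$ with $\cA^2$, then base changing along $p$.
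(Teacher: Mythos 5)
Your proposal is correct and matches the paper's argument, which is given in one line: ``Follows from the definition of the resolved join and Lemma~\ref{lemma-sod-tensor}.'' Your two-step unwinding --- first tensoring the decomposition of $\cA^1$ with $\cA^2$, then base changing along $p$ via the definition of $\tJ$, invoking Lemma~\ref{lemma-sod-tensor} at each stage --- is exactly what that one-liner is compressing.
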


\begin{proof}
Follows from the definition of the resolved join and Lemma~\ref{lemma-sod-tensor}. 
\end{proof}

\begin{lemma}
\label{lemma-tJ-P1-sod}
Let $\cA^1$ be a $\bP(V_1)$-linear category and $\cA^2$ a $\bP(V_2)$-linear category. 
Then the functor 
\begin{equation*}
p^* \colon \cA^1 \sotimes \cA^2 \to \tJ(\cA^1, \cA^2) 
\end{equation*}
is fully faithful, and there is a semiorthogonal decomposition with admissible components
\begin{equation*}
\tJ(\cA^1, \cA^2) = \llangle 
p^*{\left(\cA^1 \sotimes \cA^2 \right)}, 
p^*{\left(\cA^1 \sotimes \cA^2 \right)}(H)
 \rrangle . 
\end{equation*}
\end{lemma}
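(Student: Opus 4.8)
The plan is to obtain the statement as the base change, along $\cA^1 \sotimes \cA^2$, of Orlov's projective bundle formula for the universal resolved join; this is exactly the argument used in the proof of Lemma~\ref{lemma-HC-sod-Pbundle}, applied to a different projective bundle.

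First I would record that, by construction and Lemma~\ref{lemma-tJ-divisors}, the morphism $p \colon \tJ(\bP(V_1), \bP(V_2)) \to \bP(V_1) \stimes \bP(V_2)$ is a $\bP^1$-bundle whose relative $\cO(1)$ is $\cO(H)$. Orlov's projective bundle theorem then provides a $\Perf(\bP(V_1) \stimes \bP(V_2))$-linear semiorthogonal decomposition
\begin{equation*}
\Perf(\tJ(\bP(V_1), \bP(V_2))) = \llangle p^*\Perf(\bP(V_1) \stimes \bP(V_2)), \ p^*\Perf(\bP(V_1) \stimes \bP(V_2))(H) \rrangle,
\end{equation*}
in which $p^*$ is fully faithful and both components are admissible (since $p$ is smooth and projective, so $p^*$ has both adjoints). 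Both components are preserved by the $\Perf(\bP(V_1) \stimes \bP(V_2))$-action, because tensoring by $\cO(H)$ commutes with tensoring by $p^*$ of objects pulled back from $\bP(V_1) \stimes \bP(V_2)$; hence this is a semiorthogonal decomposition in the $\Perf(\bP(V_1) \stimes \bP(V_2))$-linear sense.

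Next I would tensor this semiorthogonal decomposition with the $\Perf(\bP(V_1) \stimes \bP(V_2))$-linear category $\cA^1 \sotimes \cA^2$ over $\Perf(\bP(V_1) \stimes \bP(V_2))$. By Lemma~\ref{lemma-sod-tensor} this produces a semiorthogonal decomposition of $\tJ(\cA^1, \cA^2)$ whose two components are the base changes of the two components above, and by Lemma~\ref{lemma-adjoints} the base-changed functor $p^*$ remains fully faithful while the base-changed components remain admissible. It then only remains to identify these components: the base change of $p^*\Perf(\bP(V_1) \stimes \bP(V_2))$ is $(\cA^1 \sotimes \cA^2) \otimes_{\Perf(\bP(V_1) \stimes \bP(V_2))} \Perf(\bP(V_1) \stimes \bP(V_2)) \simeq \cA^1 \sotimes \cA^2$, with inclusion functor the base change of $p^*$, i.e. the functor $p^* \colon \cA^1 \sotimes \cA^2 \to \tJ(\cA^1, \cA^2)$; and the base change of the twisted component $p^*\Perf(\bP(V_1) \stimes \bP(V_2))(H)$ is $p^*(\cA^1 \sotimes \cA^2)(H)$, since the $\cO(H)$-twist on $\tJ(\cA^1,\cA^2)$ is itself defined by base change. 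This yields precisely the claimed decomposition with admissible components.

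The argument is essentially formal once set up this way, and I do not expect a genuine obstacle. The only points needing a little care — and which are handled by the cited appendix results — are checking that the two pieces of Orlov's decomposition are $\Perf(\bP(V_1) \stimes \bP(V_2))$-linear so that Lemma~\ref{lemma-sod-tensor} applies, and that full faithfulness of $p^*$ and admissibility of the components are inherited under tensoring with $\cA^1 \sotimes \cA^2$. Alternatively, one could argue more directly: $p^*$ is fully faithful because $p_* p^* \simeq \id$ (as $p_*\cO_{\tJ} \simeq \cO$), and semiorthogonality $\pi_*\Hom$-vanishing between the two components follows from $p_*\cO(-H) = 0$, with fullness of the decomposition checked fiberwise over $\bP(V_1) \stimes \bP(V_2)$; but the base-change route above is cleaner and reuses machinery already in place.
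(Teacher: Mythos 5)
Your argument is correct and is essentially identical to the paper's proof: both start from Orlov's projective bundle decomposition of $\Perf(\tJ(\bP(V_1),\bP(V_2)))$ over $\bP(V_1)\times\bP(V_2)$ and then base change along $\cA^1\otimes\cA^2$ using Lemmas~\ref{lemma-sod-tensor}, \ref{lemma-admissible-tensor}, and~\ref{lemma-adjoints}. You simply spell out the identification of the base-changed components in more detail than the paper does.
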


\begin{proof}
By virtue of the $\bP^1$-bundle structure $p \colon \tJ(\bP(V_1), \bP(V_2)) \to \bP(V_1) \stimes \bP(V_2)$, 
we have a semiorthogonal decomposition 
\begin{equation*}
\Perf(\tJ(\bP(V_1), \bP(V_2))) = \llangle p^* \Perf (\bP(V_1) \stimes \bP(V_2) ), 
p^* \Perf (\bP(V_1) \stimes \bP(V_2) )(H) 
 \rrangle .
\end{equation*} 
Now by Lemmas~\ref{lemma-sod-tensor}, \ref{lemma-admissible-tensor}, and~\ref{lemma-adjoints}, the result follows by base change. 
\end{proof} 

\begin{lemma}
\label{lemma-tJ-smooth-proper}
Let $\cA^1$ and $\cA^2$ be categories linear over $\bP(V_1)$ and $\bP(V_2)$ 
which are smooth and proper over $S$. 
Then the resolved join $\tJ(\cA^1, \cA^2)$ is smooth and proper over $S$. 
\end{lemma}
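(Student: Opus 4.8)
The plan is to recognize $\tJ(\cA^1,\cA^2)$ as a base change of a smooth and proper category along a morphism between smooth projective $S$-schemes, and then to appeal to the standard stability of smoothness and properness under such operations, as recorded in Appendix~\ref{section:linear-cats}.

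First I would note that the external tensor product $\cA^1 \otimes \cA^2 = \cA^1 \otimes_{\Perf(S)} \cA^2$ is smooth and proper over $S$: indeed $\cA^1$ and $\cA^2$ are by hypothesis, and the tensor product over $\Perf(S)$ of two smooth and proper $S$-linear categories is again smooth and proper (Appendix~\ref{section:linear-cats}). This category is $\bP(V_1) \times \bP(V_2)$-linear via the given $\bP(V_k)$-linear structures on the $\cA^k$. On the other hand, $\bP(V_1) \times \bP(V_2)$ is smooth and projective over $S$, hence so is the $\bP^1$-bundle $\tJ(\bP(V_1),\bP(V_2))$ over it; in particular $\Perf(\tJ(\bP(V_1),\bP(V_2)))$ is smooth and proper over $S$ and $\bP(V_1)\times\bP(V_2)$-linear via the projection $p$. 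By Definition~\ref{definition-tJ},
\[
\tJ(\cA^1,\cA^2) = (\cA^1 \otimes \cA^2) \otimes_{\Perf(\bP(V_1)\times\bP(V_2))} \Perf(\tJ(\bP(V_1),\bP(V_2))),
\]
and since the base $\bP(V_1)\times\bP(V_2)$ is smooth and proper over $S$ and both tensor factors are smooth and proper over $S$, the relative tensor product is smooth and proper over $S$ (Appendix~\ref{section:linear-cats}). This gives the claim.

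Alternatively, if one prefers to avoid invoking the smooth-and-proper behavior of relative tensor products directly, one may instead use Lemma~\ref{lemma-tJ-P1-sod}: it presents $\tJ(\cA^1,\cA^2)$ as a semiorthogonal decomposition with two admissible components, each equivalent to $\cA^1 \otimes \cA^2$, and a category admitting a semiorthogonal decomposition into admissible smooth and proper components is itself smooth and proper over $S$. Either way there is no real obstacle here: the entire content lies in the formalism of Appendix~\ref{section:linear-cats}, and the only point to check with care is that the $\bP(V_1)\times\bP(V_2)$-linear structures in play are precisely the ones that make Definition~\ref{definition-tJ} a genuine base change along $p$.
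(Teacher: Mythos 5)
Your main argument coincides with the paper's proof: one first checks that $\cA^1 \otimes \cA^2$ is smooth and proper over $S$, and then observes that $\tJ(\cA^1,\cA^2)$ is its base change along the smooth and proper $\bP^1$-bundle $\tJ(\bP(V_1),\bP(V_2)) \to \bP(V_1)\times\bP(V_2)$, so the conclusion follows from the stability of smoothness and properness under such base change. The only caveats are that these stability facts are not actually recorded in Appendix~\ref{section:linear-cats} but are cited from \cite[Lemmas 4.8 and 4.11]{NCHPD} together with \cite{gaitsgory-DAG}, and that your alternative route via Lemma~\ref{lemma-tJ-P1-sod} rests on the converse gluing statement (a category with a semiorthogonal decomposition into admissible smooth and proper components is smooth and proper), which is true but is not among the facts the paper invokes --- it only uses the forward direction \cite[Lemma 4.15]{NCHPD} --- so it would require separate justification.
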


\begin{proof}
By \cite[Lemma~4.8]{NCHPD} combined with (the proof of) \cite[Chapter I.1, Corollary~9.5.4]{gaitsgory-DAG}, 
the category $\cA^1 \otimes \cA^2$ is smooth and proper over $S$. 
Moreover, $\tJ(\cA^1, \cA^2)$ is obtained from $\cA^1 \otimes \cA^2$ by base change along 
the smooth and proper morphism $\tJ(\bP(V_1), \bP(V_2)) \to \bP(V_1) \stimes \bP(V_2)$. 
Hence the result follows from \cite[Lemma 4.11]{NCHPD}. 
\end{proof}

\subsection{Categorical joins} 
\label{subsection-categorical-joins}

We define the categorical join of Lefschetz categories over $\bP(V_1)$ and $\bP(V_2)$ 
as a certain subcategory of the resolved join. 

\begin{definition}
\label{definition-cat-join} 
Let $\cA^1$ and $\cA^2$ be Lefschetz categories over $\bP(V_1)$ and $\bP(V_2)$ with Lefschetz centers~$\cA^1_0$ and~$\cA^2_0$. 
The \emph{categorical join} $\cJ(\cA^1,\cA^2)$ of $\cA^1$ and $\cA^2$ is defined by
\begin{equation*}
\cJ(\cA^1,\cA^2) = \left\{ C \in \tJ(\cA^1, \cA^2) \ \left|\ 
\begin{aligned}
\eps_1^*(C) &\in \cA^1 \sotimes \cA^2_0 \subset \bE_1(\cA^1, \cA^2) , \\
\eps_2^*(C) &\in \cA^1_0 \sotimes \cA^2 \subset \bE_2(\cA^1, \cA^2)  
\end{aligned}
\right.\right\} , 
\end{equation*}
where we have used the identifications of Remark~\ref{remark-Ek}. 
\end{definition} 

\begin{remark}
The categorical join depends on the choice of Lefschetz centers for~$\cA^1$ and~$\cA^2$, although this is suppressed in the notation. 
For instance, for the ``stupid'' Lefschetz centers~\mbox{$\cA^1_0 = \cA^1$} and~\mbox{$\cA^2_0 = \cA^2$}, 
the condition in the definition is void, so \mbox{$\cJ(\cA^1, \cA^2) = \tJ(\cA^1, \cA^2)$}. 
\end{remark}

To show that $\cJ(\cA^1,\cA^2)$ is an admissible subcategory of $\tJ(\cA^1,\cA^2)$ 
and to describe its orthogonal category, we need the following noncommutative version 
of \cite[Proposition~4.1]{kuznetsov2008lefschetz}, whose proof translates directly to our setting. 
Recall that for a morphism $\eps \colon E \to Y$ we denote by~$\eps_!$ 
the left adjoint of the pullback functor $\eps^* \colon \Perf(Y) \to \Perf(E)$, see~\eqref{eq:shriek-adjoints}.

\begin{proposition} 
\label{proposition-ld-resolution} 
Let $Y$ be a scheme over a base scheme $T$. 
Let $\eps \colon E \to Y$ 
be the embedding of a Cartier divisor in $Y$
with conormal bundle $\cL = \cO_E(-E)$. 
Let~$\cA$ be a $T$-linear category and set 
\begin{equation*}
\cA_Y = \cA \otimes_{\Perf(T)} \Perf(Y) 
\qquad\text{and}\qquad 
\cA_E = \cA \otimes_{\Perf(T)} \Perf(E). 
\end{equation*}
Assume $\cA_E$ is a Lefschetz category with respect to $\cL$ 
with Lefschetz center $\cA_{E,0}$ and Lefschetz components $\cA_{E,i}$, $i \in \bZ$.
Set $m = \length(\cA_E)$.
Then:  
\begin{enumerate}
\item The full subcategory of $\cA_Y$ defined by  
\begin{equation*}
\cB = \set{ C \in \cA_Y \st \eps^*(C) \in \cA_{E,0} } 
\end{equation*}
is admissible.
\item
\label{ld-resolution-right} 
The functor 
$\eps_! \colon \cA_E \to \cA_Y$
is fully faithful on the subcategories $\cA_{E,i} \otimes \cL^i$ for $i \geq 1$, and 
there is a semiorthogonal decomposition 
\begin{equation*}
\cA_Y
= \langle \cB, \eps_!(\cA_{E,1} \otimes \cL), \, \eps_!(\cA_{E,2} \otimes \cL^2), \dots, \eps_!(\cA_{E,m-1} \otimes \cL^{m-1}) \rangle.
\end{equation*} 

\item 
\label{ld-resolution-left} 
The functor 
$\eps_* \colon \cA_E \to \cA_Y$
is fully faithful on the subcategories $\cA_{E,i} \otimes \cL^i$ for~\mbox{$i \leq -1$}, and 
there is a semiorthogonal decomposition 
\begin{equation*}
\cA_Y
= \langle \eps_*(\cA_{E,1-m} \otimes \cL^{1-m}), \, \dots, \, \eps_*(\cA_{E,-2} \otimes \cL^{-2}), \eps_*(\cA_{E,-1} \otimes \cL^{-1}) , \cB \rangle. 
\end{equation*} 
\end{enumerate} 
\end{proposition}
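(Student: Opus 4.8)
The statement is the $T$-linear counterpart of \cite[Proposition~4.1]{kuznetsov2008lefschetz}, and the plan is to run that argument, substituting for each use of sheaf theory on $Y$ and $E$ the analogous statement about $\Perf(Y)$ and $\Perf(E)$ and then transporting it to $\cA_Y = \cA\otimes_{\Perf(T)}\Perf(Y)$ and $\cA_E = \cA\otimes_{\Perf(T)}\Perf(E)$ by base change along $\cA\otimes_{\Perf(T)}(-)$, using Theorem~\ref{theorem-bzfn} together with the compatibility of adjoint functors with base change (Lemma~\ref{lemma-adjoints}). First I would record the functorial facts about the divisor embedding $\eps$: since $E\subset Y$ is a Cartier divisor, $\eps$ is a perfect proper morphism whose relative dualizing complex $\omega_\eps\simeq\cL^{-1}[-1]$ is a shift of a line bundle, so by~\eqref{eq:shriek-adjoints} the whole adjoint sequence $(\eps_!,\eps^*,\eps_*,\eps^!)$ exists between $\Perf(E)$ and $\Perf(Y)$, with $\eps^!(-)\simeq\eps^*(-)\otimes\cL^{-1}[-1]$ and $\eps_!(-)\simeq\eps_*((-)\otimes\cL^{-1})[-1]$; moreover the Koszul resolution $\cO_Y(-E)\to\cO_Y$ of $\eps_*\cO_E$ yields the triangle $C\otimes\cO_Y(-E)\to C\to\eps_*\eps^*C$ for $C\in\cA_Y$, the split triangle $\cF\otimes\cL[1]\to\eps^*\eps_*\cF\to\cF$ for $\cF\in\cA_E$, and consequently the triangles $\cF\to\eps^*\eps_!\cF\to\cF\otimes\cL^{-1}[-1]$ and $\cF\to\eps^!\eps_*\cF\to\cF\otimes\cL^{-1}[-1]$. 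All of this holds verbatim after base change to $\cA$.

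With these in hand, the collection in part~\eqref{ld-resolution-right} is seen to be semiorthogonal. For $G\in\cA_{E,i}\otimes\cL^i$ with $i\ge1$ and $C\in\cB$, adjunction gives $\Hom^\bullet(\eps_!G,C)\simeq\Hom^\bullet_{\cA_E}(G,\eps^*C)$, which vanishes since $\eps^*C\in\cA_{E,0}$ while $G$ lies in a strictly later term of the right Lefschetz decomposition~\eqref{eq:right-decomposition} of $\cA_E$. For $G'\in\cA_{E,j}\otimes\cL^j$ and $G\in\cA_{E,i}\otimes\cL^i$ with $j\ge i\ge1$, applying $\Hom^\bullet_{\cA_E}(G',-)$ to the triangle $G\to\eps^*\eps_!G\to G\otimes\cL^{-1}[-1]$ expresses $\Hom^\bullet(\eps_!G',\eps_!G)\simeq\Hom^\bullet_{\cA_E}(G',\eps^*\eps_!G)$ in terms of $\Hom^\bullet_{\cA_E}(G',G)$ and $\Hom^\bullet_{\cA_E}(G',G\otimes\cL^{-1})$; using the inclusions $\cA_{E,i}\subseteq\cA_{E,i-1}\subseteq\dots\subseteq\cA_{E,0}$ from~\eqref{eq:lefschetz-chain}, the second of these always vanishes, and the first vanishes for $j>i$ and equals $\Hom^\bullet(\eps_!G',\eps_!G)$ for $j=i$. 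This simultaneously proves semiorthogonality of the collection and full faithfulness of $\eps_!$ on each $\cA_{E,i}\otimes\cL^i$, $i\ge1$. Part~\eqref{ld-resolution-left} is entirely dual, replacing $\eps_!$ by $\eps_*$, the triangle above by $\cF\to\eps^!\eps_*\cF\to\cF\otimes\cL^{-1}[-1]$, and the right Lefschetz decomposition by the left one~\eqref{eq:left-decomposition}.

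It then remains to produce, for each $C\in\cA_Y$, a filtration with graded pieces in the listed subcategories; combined with semiorthogonality this yields both semiorthogonal decompositions, and then the admissibility of $\cB$ in part~(1) is automatic, since $\cB$ is at once the first term of the decomposition in~\eqref{ld-resolution-right} (hence left admissible) and the last term of the one in~\eqref{ld-resolution-left} (hence right admissible). For~\eqref{ld-resolution-right} I would argue by a downward dévissage: write $\eps^*C\in\cA_E=\langle\cA_{E,0},\dots,\cA_{E,m-1}\otimes\cL^{m-1}\rangle$, let $P_{m-1}\in\cA_{E,m-1}\otimes\cL^{m-1}$ be its top component with counit $P_{m-1}\to\eps^*C$, lift this through the adjunction $\eps_!\dashv\eps^*$ to a morphism $\eps_!P_{m-1}\to C$, and set $C'=\operatorname{Cone}(\eps_!P_{m-1}\to C)$. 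An octahedron comparing this cone with the triangle $P_{m-1}\to\eps^*\eps_!P_{m-1}\to P_{m-1}\otimes\cL^{-1}[-1]$, together with the inclusion $\cA_{E,m-1}\subseteq\cA_{E,m-2}$, shows $\eps^*C'\in\langle\cA_{E,0},\dots,\cA_{E,m-2}\otimes\cL^{m-2}\rangle$; iterating $m-1$ times peels off successive pieces in $\eps_!(\cA_{E,i}\otimes\cL^i)$ and ends with an object whose $\eps^*$ lies in $\cA_{E,0}$, i.e.\ an object of $\cB$. The decomposition in~\eqref{ld-resolution-left} is obtained by the dual upward dévissage using $\eps^*\dashv\eps_*$ and the left Lefschetz decomposition. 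I expect this last step --- checking that at each stage the octahedron really returns the residual object to the truncated Lefschetz decomposition --- to be the part requiring the most bookkeeping, but it uses only the existence of $\eps_!$ and $\eps_*$ and the automatic right admissibility of the last term of a semiorthogonal decomposition, so it carries over from \cite{kuznetsov2008lefschetz} with no new input.
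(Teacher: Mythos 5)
Your argument is correct and is exactly the proof the paper has in mind: the paper gives no argument of its own but cites \cite[Proposition~4.1]{kuznetsov2008lefschetz} and asserts that its proof "translates directly to our setting," and what you have written out --- the adjoint sequence $(\eps_!,\eps^*,\eps_*,\eps^!)$ for a Cartier divisor embedding with $\omega_\eps\simeq\cL^{-1}[-1]$, semiorthogonality and full faithfulness via the triangles $\cF\to\eps^*\eps_!\cF\to\cF\otimes\cL^{-1}[-1]$ and the inclusions $\cA_{E,i}\subseteq\cA_{E,i-1}$, and the d\'evissage producing the filtration, all transported along $\cA\otimes_{\Perf(T)}(-)$ --- is precisely that translation. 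I see no gaps.
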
 

In the next lemma we apply the above proposition to the resolved join.

\begin{lemma}
\label{lemma-tJ}
For $k=1,2$, let $\cA^k$ be a Lefschetz category over $\bP(V_k)$ of length $m_k$. 
Then the categorical join $\cJ(\cA^1, \cA^2)$ is an admissible $\bP(V_1 \oplus V_2)$-linear 
subcategory of $\tJ(\cA^1, \cA^2)$, and there are $\bP(V_1 \oplus V_2)$-linear 
semiorthogonal decompositions 

\begin{align}
\label{sod-tJ}
\tJ(\cA^1, \cA^2) = \Big \langle  & \notag
\cJ(\cA^1, \cA^2),  \\
& \notag \eps_{1!}{\left(\cA^1 \sotimes \cA^2_1(H_2) \right)},  \dots, 
\eps_{1!}{\left(\cA^1 \sotimes \cA^2_{m_2-1}((m_2-1)H_2)\right)}, \\
& \eps_{2!}{\left( \cA^1_{1}(H_1) \sotimes \cA^2 \right)}, \dots, 
\eps_{2!}{\left( \cA^1_{m_1-1}((m_1-1)H_1) \sotimes \cA^2 \right)}  \Big\rangle, \\ 
\label{sod-tJ-other}
\tJ(\cA^1, \cA^2) = \Big\langle  & \notag  
\eps_{1*}{\left( \cA^1 \sotimes \cA^2_{{1-m_2}}((1-m_2)H_2) \right)},  \dots, 
\eps_{1*}{\left(\cA^1 \sotimes \cA^2_{{-1}}(-H_2)\right)}, \\ 
& \notag 
\eps_{2*}{\left( \cA^1_{{1-m_1}}((1-m_1)H_1) \sotimes \cA^2  \right)}, \dots, 
\eps_{2*}{\left( \cA^1_{{-1}}(-H_1) \sotimes \cA^2  \right)},  \\ 
& \cJ(\cA^1, \cA^2)  \Big\rangle . 
\end{align} 
\end{lemma}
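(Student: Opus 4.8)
The plan is to derive the whole statement --- the admissibility of $\cJ(\cA^1,\cA^2)$, its linearity over $\bP(V_1 \oplus V_2)$, and both decompositions \eqref{sod-tJ} and \eqref{sod-tJ-other} --- from a single application of Proposition~\ref{proposition-ld-resolution} to the \emph{disjoint} Cartier divisor $\bE_1 \sqcup \bE_2 \subset \tJ(\bP(V_1),\bP(V_2))$. In the notation of that proposition I would take $T = \bP(V_1) \stimes \bP(V_2)$, $\cA = \cA^1 \sotimes \cA^2$, $Y = \tJ(\bP(V_1),\bP(V_2))$, and $\eps = \eps_1 \sqcup \eps_2 \colon E = \bE_1 \sqcup \bE_2 \to Y$. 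Then $\cA_Y = \tJ(\cA^1,\cA^2)$ by definition, and since $\Perf$ of a disjoint union is a product of categories, $\cA_E = \bE_1(\cA^1,\cA^2) \oplus \bE_2(\cA^1,\cA^2) \simeq (\cA^1 \sotimes \cA^2)^{\oplus 2}$ via Remark~\ref{remark-Ek}. As $\bE_1 \cap \bE_2 = \varnothing$, the conormal bundle $\cL = \cO_E(-E)$ restricts to $\cO_{\bE_1}(-\bE_1)$ on $\bE_1$ and to $\cO_{\bE_2}(-\bE_2)$ on $\bE_2$, which by Lemma~\ref{lemma-tJ-divisors}\eqref{Ek-Hk-H} are $\cO(H_2 - H_1)$ and $\cO(H_1 - H_2)$ respectively.

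The crucial step --- the only one needing real argument --- is to equip $\cA_E$ with a Lefschetz structure \emph{with respect to $\cL$} whose center is $\cA_{E,0} = (\cA^1 \sotimes \cA^2_0) \oplus (\cA^1_0 \sotimes \cA^2)$, whose Lefschetz components are $\cA_{E,i} = (\cA^1 \sotimes \cA^2_i) \oplus (\cA^1_i \sotimes \cA^2)$, and whose length is $m = \max(m_1, m_2)$. This rests on two observations. First, the Lefschetz structure of $\cA^2$ over $\bP(V_2)$ induces on $\bE_1(\cA^1,\cA^2) \simeq \cA^1 \sotimes \cA^2$ a Lefschetz structure with respect to $\cO(H_2)$ with center $\cA^1 \sotimes \cA^2_0$ and components $\cA^1 \sotimes \cA^2_i$; since $\cA^1$ is $\bP(V_1)$-linear, twisting by any line bundle pulled back from $\bP(V_1)$ fixes every subcategory of the form $\cA^1 \sotimes \cC$, so the $i$-th term $\cA^1 \sotimes \cA^2_i(iH_2)$ of either Lefschetz decomposition of $\cA^1 \sotimes \cA^2$ equals $(\cA^1 \sotimes \cA^2_i) \otimes \cL^i|_{\bE_1}$ as well, and hence by Lemma~\ref{lemma-lef-center-from-decomp} the same center and components define a Lefschetz structure with respect to $\cL|_{\bE_1} = \cO(H_2 - H_1)$. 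Symmetrically, using the $\bP(V_2)$-linearity of $\cA^2$, the category $\bE_2(\cA^1,\cA^2) \simeq \cA^1 \sotimes \cA^2$ is Lefschetz with respect to $\cL|_{\bE_2} = \cO(H_1 - H_2)$ with center $\cA^1_0 \sotimes \cA^2$ and components $\cA^1_i \sotimes \cA^2$. Second, a finite direct sum of Lefschetz categories, taken with respect to a line bundle restricting to the given ones on the summands, is again a Lefschetz category with direct-sum center and components: the semiorthogonal decompositions \eqref{eq:right-decomposition} and \eqref{eq:left-decomposition} for the sum follow immediately from those of the two summands together with the complete orthogonality of $\bE_1(\cA^1,\cA^2)$ and $\bE_2(\cA^1,\cA^2)$ inside $\cA_E$, and admissibility of the components is immediate.

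With this in hand, Proposition~\ref{proposition-ld-resolution} does the rest. Its first part shows $\cB = \set{ C \in \tJ(\cA^1,\cA^2) \st \eps^*(C) \in \cA_{E,0} }$ is admissible; since $\eps^* = (\eps_1^*, \eps_2^*)$ and $\cA_{E,0} = (\cA^1 \sotimes \cA^2_0) \oplus (\cA^1_0 \sotimes \cA^2)$, this $\cB$ is exactly $\cJ(\cA^1,\cA^2)$. Part~\eqref{ld-resolution-right} gives $\tJ(\cA^1,\cA^2) = \llangle \cB, \eps_!(\cA_{E,1} \otimes \cL), \dots, \eps_!(\cA_{E,m-1} \otimes \cL^{m-1}) \rrangle$; here $\eps_! = \eps_{1!} \oplus \eps_{2!}$, and the same twist-absorption as above gives $\eps_!(\cA_{E,i} \otimes \cL^i) = \eps_{1!}(\cA^1 \sotimes \cA^2_i(iH_2)) \oplus \eps_{2!}(\cA^1_i(iH_1) \sotimes \cA^2)$, the first summand vanishing for $i \ge m_2$ and the second for $i \ge m_1$. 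Splitting each such term into the semiorthogonal pair formed by its two summands, and then using that $\eps_{1!}$ and $\eps_{2!}$ have images supported on the disjoint divisors $\bE_1$ and $\bE_2$ --- hence completely orthogonal --- one can slide all the $\eps_{1!}$-terms in front of all the $\eps_{2!}$-terms; after discarding the zero terms this is exactly \eqref{sod-tJ}. The decomposition \eqref{sod-tJ-other} drops out of part~\eqref{ld-resolution-left} by the identical bookkeeping. Linearity over $\bP(V_1 \oplus V_2)$ is then clear: by diagram~\eqref{diagram-tJ-blowup} each $f \circ \eps_k$ factors through $\bP(V_k)$, so the $\Perf(\bP(V_1 \oplus V_2))$-action on $\bE_k(\cA^1,\cA^2)$ factors through the $\Perf(\bP(V_k))$-action, which preserves the relevant subcategory; thus each middle component $\eps_{k!}(\cdots)$ above is a $\bP(V_1 \oplus V_2)$-linear subcategory of $\tJ(\cA^1,\cA^2)$, and therefore so is $\cJ(\cA^1,\cA^2)$.

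I expect the main obstacle to be the Lefschetz structure on $\cA_E$ of the second paragraph: one must recognize that $\bE_k(\cA^1,\cA^2)$ is naturally Lefschetz with respect to the \emph{difference} classes $H_2 - H_1$ and $H_1 - H_2$ --- the conormal bundles --- rather than the classes $H_2$ and $H_1$, and that this discrepancy is harmless precisely because $\cA^1$ and $\cA^2$ are linear over the respective projective bundles. Everything afterwards --- splitting and reordering semiorthogonal components past completely orthogonal ones, and keeping track of which terms vanish when $m_1 \ne m_2$ --- is routine.
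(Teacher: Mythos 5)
Your proposal is correct and follows essentially the same route as the paper: a single application of Proposition~\ref{proposition-ld-resolution} with $Y = \tJ(\bP(V_1),\bP(V_2))$, $E = \bE_1 \sqcup \bE_2$, $\cA = \cA^1 \otimes \cA^2$, the direct-sum Lefschetz center $(\cA^1 \otimes \cA^2_0) \oplus (\cA^1_0 \otimes \cA^2)$ with respect to $\cO_E(-E)$, and the same factorization argument for $\bP(V_1 \oplus V_2)$-linearity of the non-join components. The only difference is that you spell out the twist-absorption by $\bP(V_k)$-linearity and the reordering of completely orthogonal terms, which the paper dispatches with ``the claim follows easily.''
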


\begin{proof}
We apply Proposition~\ref{proposition-ld-resolution} in the following setup:
\begin{equation*}
T = \bP(V_1) \times \bP(V_2),
\quad
Y = \tJ(\bP(V_1),\bP(V_2)),
\quad
E = \bE_1 \sqcup \bE_2,
\quad\text{and}\quad
\cA = \cA^1 \otimes \cA^2.
\end{equation*}
Then $\cA_Y = \tJ(\cA^1,\cA^2)$ and $\cA_E = \bE_1(\cA^1,\cA^2) \oplus \bE_2(\cA^1,\cA^2)$. 
We claim that 
\begin{equation*}
\cA_{E,0} = (\cA^1 \otimes \cA^2_0) \oplus (\cA^1_0 \otimes \cA^2)
\end{equation*}
is a Lefschetz center of $\cA_E$ with respect to $\cL = \cO_E(-E)$, with Lefschetz components 
\begin{equation*}
\cA_{E,i} = (\cA^1 \otimes \cA^2_i) \oplus (\cA^1_i \otimes \cA^2).  
\end{equation*} 
Indeed, 
by Lemma~\ref{lemma-tJ-divisors} we have 
\begin{equation*}
\cL\vert_{\bE_1} = \cO_{\bE_1}(-\bE_1) = \cO_{\bE_1}(H_2-H_1) 
\quad \text{and} \quad \cL\vert_{\bE_2} = \cO_{\bE_2}(-\bE_2) = \cO_{\bE_2}(H_1-H_2), 
\end{equation*} 
from which the claim follows easily. 

In the above setup, the category $\cB$ of Proposition~\ref{proposition-ld-resolution} coincides with 
the definition of the categorical join $\cJ(\cA^1,\cA^2)$. 
Hence the proposition shows $\cJ(\cA^1,\cA^2)$ is an admissible subcategory of $\tJ(\cA^1, \cA^2)$, 
and gives the semiorthogonal decompositions~\eqref{sod-tJ} and~\eqref{sod-tJ-other}.  

It remains to show the categorical join and the decompositions are $\bP(V_1 \oplus V_2)$-linear.
Since the categorical join is the orthogonal of the other components in the decompositions,
it is enough to check that every other component is~$\bP(V_1 \oplus V_2)$-linear.
By diagram~\eqref{diagram-tJ-blowup} the morphism $\bE_1 \to \bP(V_1 \oplus V_2)$ factors through 
the projection $\bE_1 \cong \bP(V_1) \times \bP(V_2) \to \bP(V_1)$. 
Thus, since the subcategory $\cA^1 \otimes \cA^2_i(iH_2) \subset \bE_1(\cA^1,\cA^2)$ is $\bP(V_1)$-linear (because $\cA^1$ is), 
it is also $\bP(V_1 \oplus V_2)$-linear.
Since $\eps_1$ is a morphism over $\bP(V_1 \oplus V_2)$, it follows that $\eps_{1!}(\cA^1 \otimes \cA^2_i(iH_2))$ is also 
$\bP(V_1 \oplus V_2)$-linear for any $i \geq 1$.
The same argument works for the other components in~\eqref{sod-tJ} and~\eqref{sod-tJ-other}, which finishes the proof. 
\end{proof}

\begin{remark}
The last two rows in~\eqref{sod-tJ} and the first two rows in~\eqref{sod-tJ-other} are completely orthogonal since $\bE_1$ and $\bE_2$ are disjoint.
\end{remark}

Categorical joins preserve smoothness and properness: 

\begin{lemma}
\label{lemma-cJ-smooth-proper}
Let $\cA^1$ and $\cA^2$ be Lefschetz categories over $\bP(V_1)$ and $\bP(V_2)$ 
which are smooth and proper over $S$. 
Then the categorical join~$\cJ(\cA^1, \cA^2)$ is smooth and proper over $S$. 
\end{lemma}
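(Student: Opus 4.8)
The plan is to realize $\cJ(\cA^1,\cA^2)$ as an admissible subcategory of a category that is already known to be smooth and proper, and then to invoke the fact that these properties descend to admissible subcategories.

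First I would apply Lemma~\ref{lemma-tJ-smooth-proper}: since $\cA^1$ and $\cA^2$ are smooth and proper over $S$, so is the resolved join $\tJ(\cA^1,\cA^2)$. Next, Lemma~\ref{lemma-tJ} exhibits $\cJ(\cA^1,\cA^2)$ as an admissible $\bP(V_1\oplus V_2)$-linear, hence in particular $S$-linear, subcategory of $\tJ(\cA^1,\cA^2)$, fitting into the semiorthogonal decomposition~\eqref{sod-tJ}. Finally I would use the general fact that an admissible subcategory of a smooth and proper $S$-linear category is itself smooth and proper over $S$ (see \cite{NCHPD}); here admissibility is essential, since it provides both a left and a right adjoint to the inclusion, which is exactly what is needed to realize the identity bimodule of $\cJ(\cA^1,\cA^2)$ as a retract of the restriction of that of $\tJ(\cA^1,\cA^2)$, so that perfectness of mapping objects and of the diagonal are inherited. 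Combining the three observations yields the claim.

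There is essentially no hard step; the only point requiring care is having the descent-along-admissible-subcategories statement available in the $S$-linear setting, and this is the step I would single out and cite precisely. As an alternative to citing it, one can argue more explicitly from~\eqref{sod-tJ}: apart from $\cJ(\cA^1,\cA^2)$ itself, every component of that decomposition is, via the fully faithful functors $\eps_{k!}$ and a twist, equivalent to $\cA^1\sotimes\cA^2_i$ or $\cA^1_i\sotimes\cA^2$; the Lefschetz components $\cA^k_i$ are admissible in the smooth and proper categories $\cA^k$ (Remark~\ref{remark:smooth-strong}), hence smooth and proper, and tensor products and base changes of smooth and proper categories are smooth and proper by \cite[Lemmas 4.8 and 4.11]{NCHPD}. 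Thus $\cJ(\cA^1,\cA^2)$ is the orthogonal complement, inside the smooth and proper category $\tJ(\cA^1,\cA^2)$, of an admissible subcategory all of whose semiorthogonal pieces are smooth and proper, and is therefore smooth and proper as well.
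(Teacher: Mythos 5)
Your argument is exactly the paper's: the resolved join is smooth and proper by Lemma~\ref{lemma-tJ-smooth-proper}, the categorical join sits inside it as an admissible subcategory by Lemma~\ref{lemma-tJ}, and admissible subcategories of smooth and proper $S$-linear categories are smooth and proper (the paper cites \cite[Lemma 4.15]{NCHPD} for this last step). The proposal is correct and matches the paper's proof.
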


\begin{proof}
Follows from Lemma~\ref{lemma-tJ-smooth-proper}, Lemma~\ref{lemma-tJ}, and \cite[Lemma 4.15]{NCHPD}. 
\end{proof}

\begin{example}
\label{example:cj-pp}
As an example, we consider the categorical join of two projective bundles. 
Let $W_1 \subset V_1$ and $W_2 \subset V_2$ be subbundles, 
so that $\bP(W_1) \subset \bP(V_1)$ and $\bP(W_2) \subset \bP(V_2)$. 
The classical join of these spaces is given by $\bJ(\bP(W_1), \bP(W_2)) = \bP(W_1 \oplus W_2)$.
Consider the Lefschetz structures of $\bP(W_1)$ and $\bP(W_2)$ defined in Example~\ref{example-projective-bundle-lc}.
Then the pullback functor 
\begin{equation*}
f^* \colon \Perf(\bP(W_1 \oplus W_2)) \to \Perf(\tJ(\bP(W_1), \bP(W_2))) 
\end{equation*}
induces a $\bP(W_1 \oplus W_2)$-linear equivalence 
\begin{equation*}
\Perf(\bP(W_1 \oplus W_2)) \simeq \cJ(\bP(W_1), \bP(W_2)) . 
\end{equation*}
Indeed, this follows easily from Lemma~\ref{lemma-tJ-divisors}, Orlov's blowup formula, and the definitions. 
Moreover, Theorem~\ref{theorem-join-lef-cat} below equips $\cJ(\bP(W_1), \bP(W_2))$ with a canonical Lefschetz structure. 
It is easy to check that the above equivalence is a Lefschetz equivalence. 
\end{example}

\subsection{Base change of categorical joins} 
\label{subsection-bc-cat-joins} 
Let $T \to \bP(V_1 \oplus V_2)$ be a morphism of schemes. 
The base change of diagram~\eqref{diagram-tJ-blowup} along 
this morphism gives a diagram 
\begin{equation}
\vcenter{\xymatrix{
\bE_{1T} \ar[r] \ar[d] & \tJ(\bP(V_1), \bP(V_2))_T \ar[d] & \bE_{2T} \ar[l] \ar[d] \\
\bP(V_1)_{T} \ar[r]^-{} & T & \bP(V_2)_{T} \ar[l]_-{}
}}
\end{equation} 
with cartesian squares. 
Note that the isomorphisms $\bE_k \cong \bP(V_1) \stimes \bP(V_2)$, $k=1,2$, 
induce isomorphisms 
\begin{equation*}
\bE_{1T} \cong \bP(V_1)_T \stimes \bP(V_2), \quad 
\bE_{2T} \cong \bP(V_1) \stimes \bP(V_2)_T. 
\end{equation*}
If $\cA^k$ is $\bP(V_k)$-linear for $k=1,2$, then by the 
definition of $\bE_k(\cA^1, \cA^2)$ we have 
\begin{equation*}
\bE_k(\cA^1, \cA^2)_T  
\simeq 
\left( \cA^1 \sotimes \cA^2 \right) \otimes_{\Perf(\bP(V_1) \stimes \bP(V_2))} \Perf(\bE_{kT}).
\end{equation*}
Hence by the above isomorphisms and Lemma~\ref{lemma-fiber-product-categories}, 
we have equivalences 
\begin{align}
\label{E1T} 
\bE_1(\cA^1, \cA^2)_T & \simeq  \cA^1_{\bP(V_1)_T} \sotimes \cA^{2} , \\
\label{E2T} 
\bE_2(\cA^1, \cA^2)_T & \simeq  \cA^{1} \sotimes \cA^2_{\bP(V_2)_T} . 
\end{align}
We identify these categories via these equivalences. 

\begin{lemma}
\label{lemma-tJT}
For $k=1,2$, let $\cA^k$ be a Lefschetz category over $\bP(V_k)$ of length~$m_k$. 
Let \mbox{$T \to \bP(V_1 \oplus V_2)$} be a morphism of schemes. 
Then there is a $T$-linear semiorthogonal decomposition 
\begin{equation*}
\begin{aligned}
\tJ(\cA^1, \cA^2)_T = 
\Big \langle  & \cJ(\cA^1, \cA^2)_T,  \\
& \eps_{1!}{\left(\cA^1_{\bP(V_1)_T} \sotimes \cA^2_1(H_2) \right)},  \dots, 
\eps_{1!}{\left(\cA^1_{\bP(V_1)_T} \sotimes \cA^2_{m_2-1}((m_2-1)H_2)\right)}, \\
& \eps_{2!}{\left( \cA^1_{1}(H_1) \sotimes  \cA^2_{\bP(V_2)_T} \right)}, \dots, 
\eps_{2!}{\left( \cA^1_{m_1-1}((m_1-1)H_1) \sotimes  \cA^2_{\bP(V_2)_T} \right)}  \Big\rangle.
\end{aligned} 
\end{equation*} 
\end{lemma}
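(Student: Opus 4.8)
The plan is to derive the asserted decomposition by base change from the $\bP(V_1 \oplus V_2)$-linear semiorthogonal decomposition~\eqref{sod-tJ} of Lemma~\ref{lemma-tJ}. Recall that the components of~\eqref{sod-tJ} are admissible and $\bP(V_1 \oplus V_2)$-linear. Hence by Lemma~\ref{lemma-sod-tensor}, applying $- \otimes_{\Perf(\bP(V_1 \oplus V_2))} \Perf(T)$ produces a $T$-linear semiorthogonal decomposition of $\tJ(\cA^1, \cA^2)_T$ whose components are the base changes of the components of~\eqref{sod-tJ}; in particular these base-changed components are again admissible, and each is equivalent, via the base change of the corresponding inclusion functor, to the base change of the abstract category to which $\eps_{k!}$ was applied. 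So the content of the lemma is just to identify these base-changed components with the ones in the claimed decomposition.

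For the leftmost component this is immediate from our notational convention: the base change of $\cJ(\cA^1, \cA^2)$ along $T \to \bP(V_1 \oplus V_2)$ is exactly $\cJ(\cA^1, \cA^2)_T$. For the remaining components, consider for instance $\eps_{1!}(\cA^1 \sotimes \cA^2_i(iH_2))$ with $1 \le i \le m_2 - 1$. Since $\eps_1 \colon \bE_1 \to \tJ(\bP(V_1),\bP(V_2))$ is a morphism over $\bP(V_1 \oplus V_2)$ covered by Remark~\ref{remark:good-morphism} (as noted after~\eqref{diagram-tJ-blowup}), the functor $\eps_{1!}$ together with its source and target are $\bP(V_1 \oplus V_2)$-linear, so by the functoriality of base change for linear categories (\S\ref{subsection:tensor-product} and Lemma~\ref{lemma-adjoints}) tensoring with $\Perf(T)$ over $\Perf(\bP(V_1 \oplus V_2))$ carries $\eps_{1!} \colon \bE_1(\cA^1, \cA^2) \to \tJ(\cA^1, \cA^2)$ to the functor $\eps_{1!} \colon \bE_1(\cA^1, \cA^2)_T \to \tJ(\cA^1, \cA^2)_T$ of the base-changed blowup diagram. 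Moreover, the morphism $\bE_1 \to \bP(V_1 \oplus V_2)$ factors through the projection $\bE_1 \cong \bP(V_1) \stimes \bP(V_2) \to \bP(V_1)$, so the $\bP(V_1)$-linear subcategory $\cA^1 \sotimes \cA^2_i(iH_2) \subset \bE_1(\cA^1, \cA^2)$ base changes along $\bP(V_1)_T \to \bP(V_1)$, and by Lemma~\ref{lemma-fiber-product-categories} together with the identification~\eqref{E1T} this base change is $\cA^1_{\bP(V_1)_T} \sotimes \cA^2_i(iH_2) \subset \bE_1(\cA^1, \cA^2)_T$. Combining these two observations, the base change of $\eps_{1!}(\cA^1 \sotimes \cA^2_i(iH_2))$ is $\eps_{1!}(\cA^1_{\bP(V_1)_T} \sotimes \cA^2_i(iH_2))$, exactly as claimed; the components $\eps_{2!}(\cA^1_i(iH_1) \sotimes \cA^2)$ are treated identically, using~\eqref{E2T} and the factorization of $\bE_2 \to \bP(V_1 \oplus V_2)$ through $\bP(V_2)$.

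The one input that is not purely formal is the compatibility of $\eps_{k!}$ with base change along $T \to \bP(V_1 \oplus V_2)$, which I expect to be the main (though mild) obstacle. Because $\eps_k$ is the embedding of a Cartier divisor into a scheme smooth and projective over $S$, Remark~\ref{remark:good-morphism} gives $\eps_{k!}(-) \simeq \eps_{k*}(- \otimes \omega_{\eps_k})$ with $\omega_{\eps_k}$ a line bundle (up to shift); pushforward along the regular closed immersion $\eps_k$ and twisting by a line bundle both commute with base change along the relevant derived fiber product squares, and the line bundle $\omega_{\eps_k}$ pulls back to $\omega_{\eps_{kT}}$, so the needed compatibility holds. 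Once this is granted, unwinding the base change formalism of \S\ref{subsection:tensor-product} completes the proof.
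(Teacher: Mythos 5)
Your proposal is correct and matches the paper's argument, which simply states that the decomposition is the base change of~\eqref{sod-tJ} with the identifications~\eqref{E1T} and~\eqref{E2T} taken into account. The extra detail you supply (admissibility of components via Lemma~\ref{lemma-sod-tensor}, compatibility of $\eps_{k!}$ with base change via Remark~\ref{remark:good-morphism}) is a sound elaboration of the same route.
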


\begin{proof}
This is the base change of~\eqref{sod-tJ} with the identifications~\eqref{E1T} and~\eqref{E2T} taken into account.
\end{proof}

For our next result, we need the notion of a linear category being supported over a closed subset. 
The ``support'' of a $T$-linear category $\cC$ 
should be thought of as the locus of points in~$T$ over which $\cC$ 
is nonzero. 
Instead of fully developing this notion, we make the following ad hoc definition 
which is sufficient for our purposes: given a closed subset $Z \subset T$, 
we say~$\cC$ is \emph{supported over $Z$} if $\cC_U \simeq 0$, where $U = T \setminus Z$. 
In particular, note that if $X \to T$ is a morphism of schemes and $Z \subset T$ is a closed subset containing 
the image of $X$, then $\Perf(X)$ is supported over $Z$.

\begin{proposition}
\label{proposition-cJT} 
For $k=1,2$, let $\cA^k$ be a Lefschetz category over $\bP(V_k)$.  
Assume $\cA^k$ is supported over a closed subset $Z_k \subset \bP(V_k)$. 
Assume $T \to \bP(V_1 \oplus V_2)$ is a morphism which factors through the 
complement of $Z_1 \sqcup Z_2$ in~$\bP(V_1 \oplus V_2)$. 
Then there is a $T$-linear equivalence 
\begin{equation}
\label{cJ-tJ-bc}
\cJ(\cA^1, \cA^2)_T \simeq \tJ(\cA^1, \cA^2)_T.  
\end{equation}
If further $T \to \bP(V_1 \oplus V_2)$ 
factors through the complement of $\bP(V_1) \sqcup \bP(V_2)$ in~$\bP(V_1 \oplus V_2)$, 
then there is an equivalence 
\begin{equation}
\label{tJ-A1A2-bc}
\tJ(\cA^1, \cA^2)_T \simeq \cA^1_T \otimes_{\Perf(T)} \cA^2_T, 
\end{equation}
where the factors in the tensor product are the base changes of $\cA^1$ and $\cA^2$ 
along the morphisms~$T \to \bP(V_1)$ and $T \to \bP(V_2)$ obtained by composing $T \to \bP(V_1 \oplus V_2)$
with the linear projections of~$\bP(V_1 \oplus V_2)$ from $\bP(V_2)$ and $\bP(V_1)$, respectively. 
\end{proposition}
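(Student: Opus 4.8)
The plan is to deduce both equivalences from the semiorthogonal decomposition of the base-changed resolved join in Lemma~\ref{lemma-tJT}, together with the description of $f$ as a blowup in Lemma~\ref{lemma-tJ-divisors}.

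First I would prove \eqref{cJ-tJ-bc}. By Lemma~\ref{lemma-tJT} there is a $T$-linear semiorthogonal decomposition of $\tJ(\cA^1,\cA^2)_T$ whose components other than $\cJ(\cA^1,\cA^2)_T$ are of the form $\eps_{1!}(\cA^1_{\bP(V_1)_T}\sotimes\cA^2_i(iH_2))$ and $\eps_{2!}(\cA^1_j(jH_1)\sotimes\cA^2_{\bP(V_2)_T})$, where $\bP(V_k)_T=\bP(V_k)\times_{\bP(V_1\oplus V_2)}T$. The key point is that the projection $\bP(V_k)_T\to\bP(V_k)$ factors through the open subset $\bP(V_k)\setminus Z_k$: any point of $\bP(V_k)_T$ maps to a point $x\in\bP(V_k)$ whose image in $\bP(V_1\oplus V_2)$ is also the image of some point of $T$, and the latter avoids $Z_k$ by hypothesis, so $x\notin Z_k$. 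Since $\cA^k$ is supported over $Z_k$, base change along a morphism factoring through $\bP(V_k)\setminus Z_k$ produces the zero category (Definition~\ref{definition-support} and the compatibility of support with base change), so $\cA^1_{\bP(V_1)_T}=0=\cA^2_{\bP(V_2)_T}$. Hence every component of the decomposition of Lemma~\ref{lemma-tJT} except $\cJ(\cA^1,\cA^2)_T$ vanishes, which gives $\tJ(\cA^1,\cA^2)_T\simeq\cJ(\cA^1,\cA^2)_T$.

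Next I would prove \eqref{tJ-A1A2-bc}. By Lemma~\ref{lemma-tJ-divisors}\eqref{f-blowup} the morphism $f\colon\tJ(\bP(V_1),\bP(V_2))\to\bP(V_1\oplus V_2)$ is the blowup of $\bP(V_1)\sqcup\bP(V_2)$, hence restricts to an isomorphism over the open subset $U=\bP(V_1\oplus V_2)\setminus(\bP(V_1)\sqcup\bP(V_2))$. As $T\to\bP(V_1\oplus V_2)$ factors through $U$, it follows that the (derived) fiber product $\tJ(\bP(V_1),\bP(V_2))_T$ is isomorphic to $T$, and that under this isomorphism the projection $p$ becomes a morphism $q=(q_1,q_2)\colon T\to\bP(V_1)\times\bP(V_2)$ whose components $q_1$ and $q_2$ are the linear projections of $\bP(V_1\oplus V_2)$ from $\bP(V_2)$ and $\bP(V_1)$, respectively; this last identification is a direct computation from the description of $\tJ(\bP(V_1),\bP(V_2))$ as the $\bP^1$-bundle $\bP_{\bP(V_1)\times\bP(V_2)}(\cO(-H_1)\oplus\cO(-H_2))$. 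Unwinding the definition of the resolved join and applying Theorem~\ref{theorem-bzfn} then yields
\[
\tJ(\cA^1,\cA^2)_T \simeq (\cA^1\sotimes\cA^2)\otimes_{\Perf(\bP(V_1)\stimes\bP(V_2))}\Perf(T),
\]
with the $\Perf(T)$-module structure induced by $q^*$. Decomposing this base change of an exterior tensor product along $q$ --- by exactly the argument used to derive \eqref{E1T} and \eqref{E2T}, i.e. via Lemma~\ref{lemma-fiber-product-categories} and the associativity of the tensor product of linear categories --- identifies the right-hand side with $\cA^1_T\otimes_{\Perf(T)}\cA^2_T$, where $\cA^1_T$ and $\cA^2_T$ are the base changes of $\cA^1$ and $\cA^2$ along $q_1$ and $q_2$. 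This is precisely the asserted equivalence.

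The main obstacle is essentially bookkeeping: in the first part one must correctly translate the hypothesis that $T$ avoids $Z_1\sqcup Z_2$ inside $\bP(V_1\oplus V_2)$ into the vanishing $\cA^k_{\bP(V_k)_T}=0$ by chasing through the fiber product $\bP(V_k)_T$ and the behavior of support under base change, and in the second part one must correctly identify the morphism $q$ with the pair of linear projections. Neither step is difficult, but some care is needed to keep track of which projective space each subvariety is regarded as living in.
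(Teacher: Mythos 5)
Your proposal is correct and follows essentially the same route as the paper: the first equivalence is obtained by observing that $\bP(V_k)_T \to \bP(V_k)$ factors through $\bP(V_k)\setminus Z_k$ so all non-join components of the decomposition in Lemma~\ref{lemma-tJT} vanish, and the second by using Lemma~\ref{lemma-tJ-divisors}\eqref{f-blowup} to identify $\tJ(\bP(V_1),\bP(V_2))_T \cong T$ and then splitting the base change of the exterior tensor product. The paper packages your final step as Corollary~\ref{corollary-base-change-along-diagonal}, which is itself deduced from Lemma~\ref{lemma-fiber-product-categories} exactly as you describe.
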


\begin{proof}
The assumption on $T \to \bP(V_1 \oplus V_2)$ implies that $\bP(V_k)_T \to \bP(V_k)$ factors 
through the open subset~$\bP(V_k) \setminus Z_k$. 
Thus we have $\cA^k_{\bP(V_k)_T} \simeq 0$ and the equivalence 
\eqref{cJ-tJ-bc} follows from Lemma~\ref{lemma-tJT}.   

By the definition of $\tJ(\cA^1, \cA^2)$ we have an equivalence 
\begin{equation*}
\tJ(\cA^1, \cA^2)_T \simeq \left( \cA^1 \sotimes \cA^2 \right) \otimes_{\Perf(\bP(V_1) \stimes \bP(V_2))} \Perf(\tJ(\bP(V_1), \bP(V_2))_T). 
\end{equation*}
By Lemma~\ref{lemma-tJ-divisors}\eqref{f-blowup} the morphism $f \colon \tJ(\bP(V_1), \bP(V_2)) \to \bP(V_1 \oplus V_2)$ is an isomorphism 
over the complement of $\bP(V_1) \sqcup \bP(V_2)$. 
Hence if $T \to \bP(V_1 \oplus V_2)$ factors through this complement, we have 
have an isomorphism $\tJ(\bP(V_1), \bP(V_2))_T \cong T$. 
Combining this isomorphism and the above equivalence 
we obtain
\begin{equation*}
\tJ(\cA^1,\cA^2)_T \simeq (\cA^1 \sotimes \cA^2) \otimes_{\Perf(\bP(V_1) \times \bP(V_2))} \Perf(T),
\end{equation*}
and applying Corollary~\ref{corollary-base-change-along-diagonal} we deduce~\eqref{tJ-A1A2-bc}. 
\end{proof}

\begin{remark} 
\label{remark-cJ-vs-classical} 
Let $X_1 \subset \bP(V_1)$ and $X_2 \subset \bP(V_2)$ be closed subschemes.
Then the morphism $\tJ(X_1, X_2) \to \bJ(X_1, X_2)$ to the classical join 
is an isomorphism over~$U = \bP(V_1 \oplus V_2) \setminus (X_1 \sqcup X_2)$, 
so the pullback functor~$\Perf(\bJ(X_1, X_2)) \to \Perf(\tJ(X_1, X_2))$ becomes an equivalence after base 
change to $U$. 
Hence if~$\Perf(X_1)$ and~$\Perf(X_2)$ are equipped with Lefschetz structures, 
by Proposition~\ref{proposition-cJT} we have an equivalence~$\cJ(X_1, X_2)_U \simeq \Perf(\bJ(X_1, X_2)_U)$. 
Heuristically, this says that $\cJ(X_1, X_2)$ is birational to the classical join $\bJ(X_1, X_2)$ over $\bP(V_1 \oplus V_2)$. 
On the other hand, if~$X_1$ and~$X_2$ are smooth then so is~$\cJ(X_1, X_2)$ by Lemma~\ref{lemma-cJ-smooth-proper}, 
so in this case~$\cJ(X_1, X_2)$ can be thought of as a resolution of singularities of the classical join~$\bJ(X_1, X_2)$.

There are other notions of noncommutative resolutions of singularities in the literature, 
in particular categorical resolutions in the sense 
of~\cite{kuznetsov2008lefschetz,kuznetsov-lunts} and noncommutative resolutions in the 
sense of Van den Bergh~\cite{vdb-flops, vdb-crepant}. 
Using the results of~\cite{kuznetsov2008lefschetz} it can be shown 
that under certain hypotheses, $\cJ(X_1, X_2)$ is also a resolution of~$\bJ(X_1, X_2)$ in these senses. 
For instance, working over an algebraically closed field, 
$\cJ(X_1, X_2)$ is a categorical resolution if~$X_1$ and~$X_2$ are smooth  and~$\bJ(X_1, X_2)$ has rational singularities. 
\end{remark} 

\subsection{The Lefschetz structure of a categorical join} 
\label{subsection-ld-cJ}
Our next goal is to show that given Lefschetz categories 
over $\bP(V_1)$ and $\bP(V_2)$, their categorical join has a canonical Lefschetz structure.
Recall that $p \colon \tJ(\bP(V_1), \bP(V_2)) \to \bP(V_1) \stimes \bP(V_2)$ denotes the projection. 

\begin{lemma}
\label{lemma-J0}
Let $\cA^1$ and $\cA^2$ be Lefschetz categories over $\bP(V_1)$ and $\bP(V_2)$ with Lefschetz centers~$\cA^1_0$ and~$\cA^2_0$. 
Then the image of the subcategory 
\begin{equation*}
\cA^1_0 \sotimes \cA^2_0 \subset \cA^1 \sotimes \cA^2  
\end{equation*}
under the functor $p^* \colon \cA^1 \sotimes \cA^2 \to \tJ(\cA^1, \cA^2)$ 
is contained in the categorical join $\cJ(\cA^1, \cA^2)$ as an admissible 
subcategory. 
\end{lemma}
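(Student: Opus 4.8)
The plan is to verify separately the two claims in the statement — that $p^*(\cA^1_0 \sotimes \cA^2_0)$ lands inside $\cJ(\cA^1, \cA^2)$, and that it sits there as an admissible subcategory — reducing each to properties of $p^*$ and of the divisorial embeddings $\eps_k$ that are already available.

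For the containment, I would first recall from Lemma~\ref{lemma-tJ-P1-sod} that $p^* \colon \cA^1 \sotimes \cA^2 \to \tJ(\cA^1, \cA^2)$ is fully faithful, so its restriction to $\cA^1_0 \sotimes \cA^2_0$ is an equivalence onto its image. The crucial point is then that in diagram~\eqref{diagram-tJ-projective-bundle} the composition $p \circ \eps_k \colon \bE_k \to \bP(V_1) \stimes \bP(V_2)$ is exactly the isomorphism used in Remark~\ref{remark-Ek} to identify $\bE_k(\cA^1, \cA^2)$ with $\cA^1 \sotimes \cA^2$. Hence, after base change, the composite $\eps_k^* \circ p^*$ is canonically isomorphic to the identity of $\cA^1 \sotimes \cA^2$. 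Applying this for $k=1$ and $k=2$ to an object $C \in \cA^1_0 \sotimes \cA^2_0$ gives $\eps_1^*(p^*C) \simeq C \in \cA^1 \sotimes \cA^2_0$ and $\eps_2^*(p^*C) \simeq C \in \cA^1_0 \sotimes \cA^2$, where I use $\cA^1_0 \subset \cA^1$ and $\cA^2_0 \subset \cA^2$; by the very definition of the categorical join this means $p^*(C) \in \cJ(\cA^1, \cA^2)$.

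For admissibility, I would argue in two steps. Since $\cA^1_0 \subset \cA^1$ and $\cA^2_0 \subset \cA^2$ are admissible (being Lefschetz centers), Lemma~\ref{lemma-admissible-tensor} shows $\cA^1_0 \sotimes \cA^2_0$ is admissible in $\cA^1 \sotimes \cA^2$; combined with the fact, again from Lemma~\ref{lemma-tJ-P1-sod}, that $p^*$ realizes $\cA^1 \sotimes \cA^2$ as an admissible subcategory of $\tJ(\cA^1, \cA^2)$, and with the transitivity of admissibility, this makes $p^*(\cA^1_0 \sotimes \cA^2_0)$ admissible in $\tJ(\cA^1, \cA^2)$. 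On the other hand, $\cJ(\cA^1, \cA^2)$ is admissible in $\tJ(\cA^1, \cA^2)$ by Lemma~\ref{lemma-tJ}. Since an admissible subcategory of $\tJ(\cA^1, \cA^2)$ that happens to be contained in another admissible subcategory is admissible in that subcategory — the adjoints of the smaller inclusion being obtained by composing the adjoints of the inclusion into $\tJ(\cA^1, \cA^2)$ with the inclusion of $\cJ(\cA^1, \cA^2)$ — we conclude that $p^*(\cA^1_0 \sotimes \cA^2_0)$ is admissible in $\cJ(\cA^1, \cA^2)$.

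I do not anticipate a genuine obstacle: the statement is essentially bookkeeping built on the cited lemmas. The only point needing a little care is pinning down the identification $\eps_k^* \circ p^* \simeq \id$ precisely with the conventions of Remark~\ref{remark-Ek}, i.e. checking that the isomorphism $\bE_k \cong \bP(V_1) \stimes \bP(V_2)$ used there is the one induced by $p \circ \eps_k$ and not some competing identification; once this is fixed, the rest follows formally from base change for linear categories.
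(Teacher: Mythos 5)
Your proof is correct and follows essentially the same route as the paper's: Lemma~\ref{lemma-tJ-P1-sod} for full faithfulness and admissibility of the image of $p^*$, Lemma~\ref{lemma-admissible-tensor} for admissibility of $\cA^1_0 \sotimes \cA^2_0$, and the identity $p \circ \eps_k = \id$ (under the identification of Remark~\ref{remark-Ek}) to verify the defining condition of the categorical join. The extra care you take with the transitivity of admissibility and with pinning down the identification $\eps_k^* \circ p^* \simeq \id$ only makes explicit what the paper leaves implicit.
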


\begin{proof} 
By Lemma~\ref{lemma-tJ-P1-sod} the functor  $p^* \colon \cA^1 \sotimes \cA^2 \to \tJ(\cA^1, \cA^2)$ is fully faithful with admissible image. 
By Lemma~\ref{lemma-admissible-tensor} the subcategory $\cA^1_0 \sotimes \cA^2_0 \subset \cA^1 \sotimes \cA^2$ is admissible, 
so its image under~$p^*$ is admissible. 
Finally, since $p \circ \eps_k$ is the identity, 
it follows from Definition~\ref{definition-cat-join} that this image is contained in the categorical join $\cJ(\cA^1, \cA^2)$. 
\end{proof} 

\begin{definition}
\label{definition-lef-center-join}
For Lefschetz categories $\cA^1$ and $\cA^2$ over $\bP(V_1)$ and $\bP(V_2)$, 
we define 
\begin{equation}\label{eq:cat-join-center}
\cJ(\cA^1, \cA^2)_0 = 
p^* {\left(\cA^1_0 \sotimes \cA^2_0 \right)} \subset \cJ(\cA_1,\cA_2). 
\end{equation} 
\end{definition} 

Note that the containment $\cJ(\cA^1, \cA^2)_0 \subset \cJ(\cA^1, \cA^2)$ holds by Lemma~\ref{lemma-J0}. 

\begin{theorem} 
\label{theorem-join-lef-cat}
Let $\cA^1$ and $\cA^2$ be Lefschetz categories over $\bP(V_1)$ and $\bP(V_2)$ with Lefschetz centers~$\cA^1_0$ and~$\cA^2_0$. 
Then the categorical join $\cJ(\cA^1, \cA^2)$ has the structure of a Lefschetz category over~$\bP(V_1 \oplus V_2)$ 
with center~$\cJ(\cA^1, \cA^2)_0$ given by~\eqref{eq:cat-join-center}, 
and Lefschetz components given by~\eqref{Ji} and~\eqref{Ji-left} below. 

If $\cA^1$ and $\cA^2$ are both either right or left strong, then so is~$\cJ(\cA^1, \cA^2)$. 
Moreover, we have 
\begin{equation*}
\length(\cJ(\cA^1, \cA^2)) = \length(\cA^1) + \length(\cA^2), 
\end{equation*}
and $\cJ(\cA^1, \cA^2)$ is moderate if and only if one of $\cA^1$ or $\cA^2$ is moderate. 
\end{theorem}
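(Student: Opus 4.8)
The goal is to prove that $\cJ(\cA^1,\cA^2)$ is a Lefschetz category over $\bP(V_1\oplus V_2)$ with center $\cJ(\cA^1,\cA^2)_0 = p^*(\cA^1_0\sotimes\cA^2_0)$, identify its Lefschetz components, check strongness, and compute its length. The strategy is to produce explicit candidate right and left Lefschetz decompositions of $\cJ(\cA^1,\cA^2)$ and then invoke Lemma~\ref{lemma-lef-center-from-decomp} (together with an admissibility check for strongness). The key geometric input is that $\tJ(\bP(V_1),\bP(V_2))$ is a $\bP^1$-bundle over $\bP(V_1)\stimes\bP(V_2)$ carrying two disjoint divisors $\bE_1,\bE_2$ with $\bE_1 = H-H_2$, $\bE_2 = H-H_1$, $H|_{\bE_k}=H_k$ (Lemma~\ref{lemma-tJ-divisors}), so that twisting by $\cO(H)$ on $\tJ$ interpolates between twisting by $H_1$ on $\bE_2$ and by $H_2$ on $\bE_1$.

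\textbf{Step 1: candidate decompositions.} Using the $\bP^1$-bundle decomposition of Lemma~\ref{lemma-tJ-P1-sod}, $\tJ(\cA^1,\cA^2) = \llangle p^*(\cA^1\sotimes\cA^2), p^*(\cA^1\sotimes\cA^2)(H)\rrangle$, I would refine each summand using the Lefschetz decompositions of $\cA^1$ and $\cA^2$ (via Lemma~\ref{lemma-tJ-sod}) to obtain a fine semiorthogonal decomposition of $\tJ(\cA^1,\cA^2)$ into pieces $p^*(\cA^1_i(iH_1)\sotimes\cA^2_j(jH_2))$ and their $H$-twists. Intersecting with the defining conditions $\eps_k^*(C)\in(\text{center piece})$ that cut out $\cJ(\cA^1,\cA^2)$ inside $\tJ(\cA^1,\cA^2)$ — and using that $\eps_k^*\circ p^* = \id$ — one sees which of these pieces survive. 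The expectation is that the right Lefschetz decomposition takes the form
\begin{equation}
\label{Ji}
\cJ(\cA^1,\cA^2) = \llangle \cJ_0, \cJ_1(H), \dots, \cJ_{m_1+m_2-1}((m_1+m_2-1)H)\rrangle,
\end{equation}
with $\cJ_t$ the image under $p^*$ of the subcategory of $\cA^1\sotimes\cA^2$ generated by $\cA^1_i(iH_1)\sotimes\cA^2_j(jH_2)$ with $i+j \leq t$ and $i<m_1$, $j<m_2$ suitably truncated (so $\cJ_0 = p^*(\cA^1_0\sotimes\cA^2_0)$ and $\cJ_t = 0$ for $t\geq m_1+m_2$), and symmetrically for the left decomposition
\begin{equation}
\label{Ji-left}
\cJ(\cA^1,\cA^2) = \llangle \cJ_{1-m_1-m_2}((1-m_1-m_2)H), \dots, \cJ_{-1}(-H), \cJ_0\rrangle.
\end{equation}
The chain condition $\cJ_0\supset\cJ_1\supset\cdots$ will be immediate from the description of $\cJ_t$.

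\textbf{Step 2: verifying the decompositions.} This is the technical heart. One must check semiorthogonality between the $\cJ_t(tH)$ and that they generate $\cJ(\cA^1,\cA^2)$. The cleanest route is to compare with the decompositions~\eqref{sod-tJ} and~\eqref{sod-tJ-other} of $\tJ(\cA^1,\cA^2)$ from Lemma~\ref{lemma-tJ}: the blow-up pieces $\eps_{k!}(\cdots)$ there are exactly what one needs to mutate past to pass from the fine $\bP^1$-bundle decomposition to one compatible with $\cJ(\cA^1,\cA^2)$. Concretely, I would start from~\eqref{sod-tJ}, regroup the $p^*$-pieces of $\tJ(\cA^1,\cA^2)$ into the blocks $\cJ_t(tH)$, and show that the resulting refined decomposition of $\tJ(\cA^1,\cA^2)$ has $\cJ(\cA^1,\cA^2)$ as the subcategory orthogonal to all the $\eps_{k!}$-components; semiorthogonality of the $\cJ_t(tH)$ among themselves then follows by restriction. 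Generation follows because together with the $\eps_{k!}$-pieces they fill out all of $\tJ(\cA^1,\cA^2)$. The left decomposition is obtained dually from~\eqref{sod-tJ-other}.

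\textbf{Step 3: Lefschetz center, strongness, length.} With both decompositions in hand, Lemma~\ref{lemma-lef-center-from-decomp} shows $\cJ(\cA^1,\cA^2)_0 = \cJ_0$ is a Lefschetz center once we know each $\cJ_t$ is right admissible for $t\geq 0$ and left admissible for $t\leq 0$; this follows from admissibility of the $\cA^k_i$ and the fact that $p^*$ and $\eps_{k!}$ preserve admissibility (Lemma~\ref{lemma-tJ-P1-sod}, Lemma~\ref{lemma:tj-functoriality}, and Proposition~\ref{proposition-ld-resolution}). For strongness I would identify the primitive components $\fj_t$ of $\cJ$: the description of $\cJ_t$ as generated by $\cA^1_i(iH_1)\sotimes\cA^2_j(jH_2)$ with $i+j\leq t$ shows $\fj_t$ is generated by the pieces with $i+j = t$, each of which is $p^*$ of (a twist of) $\fa^1_i\sotimes\cA^2_j$ or $\cA^1_i\sotimes\fa^2_j$-type pieces; these are admissible provided the corresponding primitive components of $\cA^1$ and $\cA^2$ are, giving right/left strongness of $\cJ$ from that of $\cA^1$ and $\cA^2$. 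The length assertion $\length(\cJ(\cA^1,\cA^2)) = m_1+m_2$ is read off directly from~\eqref{Ji}: the top nonzero component is $\cJ_{m_1+m_2-1}$ (generated by $\cA^1_{m_1-1}(\cdots)\sotimes\cA^2_{m_2-1}(\cdots)$, which is nonzero as $\cA^k_{m_k-1}\neq 0$), and $\cJ_{m_1+m_2}=0$. Moderateness of $\cJ$ means $m_1+m_2 < \rank(V_1\oplus V_2) = N_1+N_2$; by~\eqref{length-leq-rank} we always have $m_k\leq N_k$, so the strict inequality holds iff $m_k < N_k$ for at least one $k$, i.e. iff one of $\cA^1,\cA^2$ is moderate.

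\textbf{Main obstacle.} The delicate point is Step 2 — correctly identifying the blocks $\cJ_t$ and verifying semiorthogonality and generation. The subtlety is bookkeeping the interaction of the two Lefschetz gradings (in $H_1$ and $H_2$) with the single grading in $H$ on the $\bP^1$-bundle, and checking that mutating the fine $\bP^1$-bundle decomposition of $\tJ(\cA^1,\cA^2)$ past the exceptional blocks $\eps_{k!}(\cdots)$ of~\eqref{sod-tJ} lands the surviving pieces exactly in the claimed blocks $\cJ_t(tH)$ without introducing error terms; this requires careful use of the divisor relations $\bE_1 = H-H_2$, $\bE_2 = H-H_1$, $H|_{\bE_k}=H_k$ from Lemma~\ref{lemma-tJ-divisors} to track twists, and repeated application of semiorthogonality coming from disjointness of $\bE_1$ and $\bE_2$.
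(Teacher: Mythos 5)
Your overall architecture matches the paper's: exhibit explicit right and left Lefschetz decompositions of $\cJ(\cA^1,\cA^2)$, feed them into Lemma~\ref{lemma-lef-center-from-decomp}, reduce strongness to admissibility of primitive pieces, and read off the length and moderateness at the end. However, there are two genuine gaps.

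First, your description of the Lefschetz components is not correct as stated. The components $\cJ_t$ must form a decreasing chain of subcategories of the center $\cJ_0=p^*(\cA^1_0\otimes\cA^2_0)$; the category ``generated by $\cA^1_i(iH_1)\otimes\cA^2_j(jH_2)$ with $i+j\le t$'' is neither contained in the center (because of the twists) nor decreasing in $t$ (taking $i=j=0$ it always contains all of $\cA^1_0\otimes\cA^2_0$). The correct definition, which is where the paper starts, is in terms of the \emph{primitive} components: the $t$-th primitive piece of the join is $\bigoplus_{i_1+i_2=t-1}p^*(\fa^1_{i_1}\otimes\fa^2_{i_2})$ --- note the shift by one, which is exactly what makes the length come out to $m_1+m_2$ rather than $m_1+m_2-1$ --- and $\cJ_t$ for $t\ge 0$ is the span of the primitive pieces with index $\ge t$. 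Without this, the chain condition, the identification of the primitive components used for strongness, and the length computation are all unsupported; ``suitably truncated'' is carrying the entire content.

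Second, and more seriously, your Step 2 --- which you yourself flag as the technical heart --- is not carried out, and it is precisely where the work lies. Semiorthogonality of the blocks $\cJ_t(tH)$ does not follow formally from the $\bP^1$-bundle decomposition plus mutation bookkeeping: the paper proves it by an adjunction computation, $p_!(\cO(tH))\simeq\bigoplus_{t_1+t_2=t,\ t_i\ge 1}\cO(t_1H_1+t_2H_2)[1]$, reducing the claim to semiorthogonality of $\fa^1_{j_1}(t_1H_1)\otimes\fa^2_{j_2}(t_2H_2)$ against $\fa^1_{i_1}\otimes\fa^2_{i_2}$, where the numerical constraint $t\le j_1+j_2+1$ forces $t_1\le j_1$ or $t_2\le j_2$ and hence semiorthogonality in at least one factor. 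Generation is also nontrivial: the paper introduces an auxiliary category $\cP$ generated by the candidate join blocks together with the $\eps_{k*}$-blocks of Lemma~\ref{lemma-tJ}, and proves by induction on $s_1+s_2$ --- using the exact sequences $0\to\cO(-\bE_k)\to\cO\to\eps_{k*}\cO_{\bE_k}\to 0$ and the divisor relations of Lemma~\ref{lemma-tJ-divisors} --- that $\cP$ contains every $p^*(\fa^1_{i_1}\otimes\fa^2_{i_2})(s_1H_1+s_2H_2+tH)$ in the relevant range, hence both components of the $\bP^1$-bundle decomposition. Your proposal gestures at ``regrouping'' and ``mutating past the exceptional blocks'' but supplies neither of these arguments, so the decompositions on which everything else rests remain unproved.
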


The proof of Theorem~\ref{theorem-join-lef-cat} takes the rest of this section. 
We let $\cA^1$ and $\cA^2$ be as in the theorem. 
Further, we let 
\begin{equation*}
m_1 = \length(\cA^1), \quad m_2 = \length(\cA^2), \quad m = m_1 + m_2, 
\end{equation*}
and set 
\begin{equation*}
\cJ_0 =  \cJ(\cA^1, \cA^2)_0. 
\end{equation*}
Note that by Lemma~\ref{lemma-tJ}, the categorical join $\cJ(\cA^1, \cA^2)$ is naturally  
$\bP(V_1 \oplus V_2)$-linear. 
To prove the theorem, we will explicitly construct 
the required Lefschetz decompositions of~$\cJ(\cA^1, \cA^2)$ and apply Lemma~\ref{lemma-lef-center-from-decomp}.

For $k = 1,2$, 
let $\fa^k_i$, $0 \neq i \in \bZ$, and $\fa^{k}_{+0}$, $\fa^{k}_{-0}$, be the primitive components of the 
Lefschetz category $\cA^k$ as defined in \S\ref{subsection-lef-cats}. 
We define 
\begin{equation}
\label{fji} 
\fj_i 
= 
\bigoplus_{\substack{i_1 + i_2 = i - 1 \\ i_1, i_2 \geq 0}} p^*{\left(\fa^1_{i_1} \sotimes \fa^2_{i_2} \right)} , \quad i \geq 0, 
\end{equation} 
where in the formula $\fa^k_0$ denotes $\fa^{k}_{+0}$ for $k=1,2$. 
Note that $\fj_0 = 0$ while $\fj_1 = {p^*(\fa^1_{+0} \sotimes \fa^2_{+0})}$.

Similarly, we define
\begin{equation}
\label{fji-left} 
\fj_i  = 
\bigoplus_{\substack{i_1 + i_2 = i + 1 \\ i_1, i_2 \leq 0}} p^*{\left(\fa^1_{i_1} \sotimes \fa^2_{i_2} \right)} , \quad i \leq 0 , 
\end{equation} 
where in the formula $\fa^k_0$ denotes $\fa^{k}_{-0}$ for $k=1,2$. 
Note that $\fj_0 = 0$ with this definition, which is consistent with~\eqref{fji} for $i = 0$. 

\begin{lemma}
\label{lemma:cj0-sod}
We have semiorthogonal decompositions
\begin{equation*}
\cJ_0  = \llangle \fj_0, \fj_1, \dots, \fj_{m-1} \rrangle
\qquad\text{and}\qquad
\cJ_0 = \llangle \fj_{1-m}, \dots, \fj_{-1}, \fj_{0} \rrangle. 
\end{equation*}
\end{lemma}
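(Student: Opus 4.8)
The plan is to deduce both decompositions of $\cJ_0 = \cJ(\cA^1,\cA^2)_0 = p^*(\cA^1_0\sotimes\cA^2_0)$ from the Lefschetz decompositions of the centers $\cA^1_0$ and $\cA^2_0$, by taking a tensor product and then regrouping the resulting pieces according to the sum of their indices. First I would record, using \eqref{eq:ca-fa-ca-plus} with $i=0$, that $\cA^k_0 = \llangle \fa^k_0, \fa^k_1, \dots, \fa^k_{m_k-1}\rrangle$ with $\fa^k_0 = \fa^k_{+0}$. Applying Lemma~\ref{lemma-sod-tensor} — first to refine the $\cA^1_0$ factor and then each $\cA^2_0$ factor, and alternatively first to $\cA^2_0$ and then each $\cA^1_0$ — I obtain two semiorthogonal decompositions of $\cA^1_0 \sotimes \cA^2_0$ with the same pieces $\fa^1_{i_1} \sotimes \fa^2_{i_2}$, $0 \le i_1 \le m_1-1$, $0 \le i_2 \le m_2-1$, ordered lexicographically with respectively $i_1$ or $i_2$ as the outer ascending index. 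In particular each $\fa^1_{i_1}\sotimes\fa^2_{i_2}$ is an admissible subcategory of $\cA^1_0\sotimes\cA^2_0$.

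The key observation is that for two distinct index pairs $(i_1,i_2)$ and $(i_1',i_2')$: if $i_1 < i_1'$ then the first order places $(i_1,i_2)$ before $(i_1',i_2')$, so $\Hom(\fa^1_{i_1'}\sotimes\fa^2_{i_2'}, \fa^1_{i_1}\sotimes\fa^2_{i_2}) = 0$; and if $i_2 < i_2'$ then the second order gives $\Hom(\fa^1_{i_1}\sotimes\fa^2_{i_2},\fa^1_{i_1'}\sotimes\fa^2_{i_2'}) = 0$. Two consequences: (a) if $i_1+i_2 = i_1'+i_2'$ but $(i_1,i_2)\ne(i_1',i_2')$, then necessarily $i_1 < i_1'$ and $i_2 > i_2'$ (or vice versa), so \emph{both} Hom groups between the two pieces vanish, i.e. the pieces along a fixed diagonal $i_1+i_2 = \mathrm{const}$ are completely orthogonal; (b) if $i_1+i_2 < i_1'+i_2'$ then $i_1 < i_1'$ or $i_2 < i_2'$, so $\Hom(\fa^1_{i_1'}\sotimes\fa^2_{i_2'},\fa^1_{i_1}\sotimes\fa^2_{i_2}) = 0$. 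By (a) the subcategory $B_s \subset \cA^1_0\sotimes\cA^2_0$ generated by the pieces with $i_1+i_2 = s$ is admissible and equals their direct sum, and by (b) together with the fact that all pieces generate $\cA^1_0\sotimes\cA^2_0$, the $B_s$ assemble into a semiorthogonal decomposition $\cA^1_0\sotimes\cA^2_0 = \llangle B_0, B_1, \dots, B_{m-2}\rrangle$ with $s$ increasing.

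Finally, since $p^*\colon \cA^1\sotimes\cA^2\to\tJ(\cA^1,\cA^2)$ is fully faithful (Lemma~\ref{lemma-tJ-P1-sod}) and hence an equivalence onto $\cJ_0$, it carries this decomposition to $\cJ_0 = \llangle p^*(B_0),\dots,p^*(B_{m-2})\rrangle$; reindexing by $i = s+1$ and noting $\fj_0 = 0$ gives $\cJ_0 = \llangle \fj_0,\fj_1,\dots,\fj_{m-1}\rrangle$, the first claimed decomposition. The second one is proved identically, starting from the left Lefschetz decompositions $\cA^k_0 = \llangle \fa^k_{1-m_k},\dots,\fa^k_{-1},\fa^k_0\rrangle$ (with $\fa^k_0 = \fa^k_{-0}$) from \eqref{eq:ca-fa-ca-minus}, grouping the pieces $\fa^1_{i_1}\sotimes\fa^2_{i_2}$ with $i_1,i_2 \le 0$ by the value of $i_1+i_2$, and again ordering diagonals by increasing sum.

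I expect the only real subtlety to be bookkeeping: keeping both lexicographic orders available (this is exactly Lemma~\ref{lemma-sod-tensor} applied to each factor), matching the direction of semiorthogonality to the order of the diagonals, and tracking index ranges so that $s\in\{0,\dots,m-2\}$ corresponds to $i\in\{1,\dots,m-1\}$ with the $i=0$ term vanishing. There is no hard geometric or homological input beyond the tensor-product-of-SOD lemma and full faithfulness of $p^*$.
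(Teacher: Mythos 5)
Your proposal is correct and follows essentially the same route as the paper: tensor the primitive decompositions of $\cA^1_0$ and $\cA^2_0$ via Lemma~\ref{lemma-sod-tensor}, observe that components on a fixed diagonal $i_1+i_2=\mathrm{const}$ are completely orthogonal while distinct diagonals are semiorthogonal in order of increasing sum, regroup accordingly, and transport through the equivalence $p^*$. The only cosmetic difference is that you extract the key orthogonality by running Lemma~\ref{lemma-sod-tensor} with two different lexicographic orders, whereas the paper invokes the lemma's clause that any order extending the coordinate-wise partial order works, which packages the same information in one application.
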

\begin{proof}
Applying Lemma~\ref{lemma-sod-tensor} to $\cA^1_0 = \langle \fa^1_{0},\fa^1_1,\dots,\fa^1_{m_1-1} \rangle$ and 
$\cA^2_0 = \langle \fa^2_{0},\fa^2_1,\dots,\fa^2_{m_2-1} \rangle$, 
we obtain a semiorthogonal decomposition
\begin{equation*}
\cA^1_0 \otimes \cA^2_0 = \llangle \fa^1_{i_1} \otimes \fa^2_{i_2} \rrangle_{0 \le i_1 \le m_1-1,\, 0 \le i_2 \le m_2-1}
\end{equation*}
with components $\fa^1_{i_1} \otimes \fa^2_{i_2}$ and $\fa^1_{j_1} \otimes \fa^2_{j_2}$ semiorthogonal if $i_1 < j_1$ or $i_2 < j_2$.
Since $p^*$ defines an equivalence between $\cA^1_0 \otimes \cA^2_0$ and $\cJ_0$, 
we obtain a semiorthogonal decomposition
\begin{equation*}
\cJ_0 = \llangle p^*(\fa^1_{i_1} \otimes \fa^2_{i_2}) \rrangle_{0 \le i_1 \le m_1-1, \, 0 \le i_2 \le m_2-1}
\end{equation*}
with the same semiorthogonalities between the components.
It follows that the summands in the right hand side of~\eqref{fji} are completely orthogonal 
as subcategories of $\cJ_0$, so that we indeed have inclusions $\fj_i \subset \cJ_0$ for $i \geq 0$, 
which give the first claimed semiorthogonal decomposition of $\cJ_0$. 
This is illustrated in Figure~\ref{figure:J0} below. 
\begin{figure}[h]
\centering
\includegraphics[clip, trim=150 548 170 125]{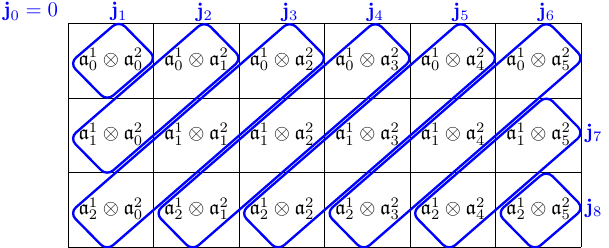}
\caption{The semiorthogonal decomposition of $\cJ_0$ into the components $\bj_i$ for $m_1 = 3$ and $m_2 = 6$. 
For simplicity, $p^*$ is omitted from $p^*(\fa^1_{i_1} \otimes \fa^2_{i_2})$. }  
\label{figure:J0}
\end{figure}

The second claimed semiorthogonal decomposition of $\cJ_0$ is constructed analogously.
\end{proof}

We define two descending chains of subcategories of $\cJ_0$ by 
\begin{align}
\label{Ji}
\cJ_i  & = \langle \fj_i, \fj_{i+1}, \dots, \fj_{m-1} \rangle , \quad 0 \leq i \leq m-1, \\ 
\label{Ji-left}
\cJ_i & = \llangle \fj_{1-m}, \dots, \fj_{i-1}, \fj_{i} \rrangle, \quad 1- m \leq i \leq 0.
\end{align}
Note that $\fj_0 = 0$ implies $\cJ_{-1} = \cJ_0 = \cJ_1$.

\begin{lemma}
\label{lemma-Ji-admissible}
The subcategories $\cJ_i \subset \cJ(\cA^1, \cA^2)$ are 
right admissible for $i \geq 0$ and left admissible for $i \leq 0$. 
Further, if $\cA^1$ and $\cA^2$ are both right strong \textup(or left strong\textup), 
then the subcategory $\fj_{i} \subset \cJ(\cA^1, \cA^2)$ is admissible for $i \geq 0$ \textup(or $i \leq 0$\textup). 
\end{lemma}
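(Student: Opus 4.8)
The plan is to reduce everything to the two semiorthogonal decompositions of the center $\cJ_0$ recorded in Lemma~\ref{lemma:cj0-sod}, together with the admissibility of $\cJ_0$ inside $\cJ(\cA^1, \cA^2)$ from Lemma~\ref{lemma-J0}. No strongness hypothesis is needed for the statement about the $\cJ_i$; strongness will enter only for the individual primitive components $\fj_i$.

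First I would treat the $\cJ_i$. By Lemma~\ref{lemma:cj0-sod} we have $\cJ_0 = \llangle \fj_0, \dots, \fj_{m-1}\rrangle$, so for $0 \le i \le m-1$, grouping terms yields a two-term decomposition $\cJ_0 = \llangle \llangle \fj_0, \dots, \fj_{i-1}\rrangle, \cJ_i\rrangle$, which exhibits $\cJ_i$ as the right-hand component and hence as a right admissible subcategory of $\cJ_0$. Dually, from the decomposition $\cJ_0 = \llangle \fj_{1-m}, \dots, \fj_0\rrangle$, grouping terms exhibits $\cJ_i = \llangle \fj_{1-m}, \dots, \fj_i\rrangle$ as the left-hand component of a two-term decomposition for $1-m \le i \le 0$, hence as a left admissible subcategory of $\cJ_0$. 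Composing with the left and right adjoints to the admissible inclusion $\cJ_0 \subset \cJ(\cA^1, \cA^2)$ supplied by Lemma~\ref{lemma-J0} then gives the claimed right/left admissibility of $\cJ_i$ inside $\cJ(\cA^1, \cA^2)$.

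For the statement about $\fj_i$, assume $\cA^1$ and $\cA^2$ are both right strong and fix $i \ge 0$. For each pair $i_1, i_2 \ge 0$ with $i_1 + i_2 = i-1$, right strongness makes $\fa^k_{i_k}$ admissible in $\cA^k$; since $\fa^k_{i_k} \subset \cA^k_0$ and the inclusion $\cA^k_0 \subset \cA^k$ is admissible, $\fa^k_{i_k}$ is then admissible in $\cA^k_0$. By Lemma~\ref{lemma-admissible-tensor} the product $\fa^1_{i_1} \sotimes \fa^2_{i_2}$ is admissible in $\cA^1_0 \sotimes \cA^2_0$, and since $p^*$ restricts to an equivalence $\cA^1_0 \sotimes \cA^2_0 \simeq \cJ_0$ by Lemma~\ref{lemma-J0}, the image $p^*(\fa^1_{i_1} \sotimes \fa^2_{i_2})$ is admissible in $\cJ_0$. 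The proof of Lemma~\ref{lemma:cj0-sod} shows that the summands making up $\fj_i$ are pairwise completely orthogonal in $\cJ_0$, so their span $\fj_i$ is again admissible in $\cJ_0$, the adjoints of its inclusion being the direct sums of the adjoints of the summands. Finally, using once more that $\cJ_0 \subset \cJ(\cA^1, \cA^2)$ is admissible, transitivity of admissibility along nested admissible subcategories yields that $\fj_i$ is admissible in $\cJ(\cA^1, \cA^2)$. The left strong case is symmetric, working with the left primitive components $\fa^k_{i_k}$, $i_k \le 0$, the second decomposition of $\cJ_0$, and $i \le 0$.

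I do not expect a genuine obstacle here: the geometric and homological substance is already packaged into Lemmas~\ref{lemma-J0} and~\ref{lemma:cj0-sod}, and what remains is the formal manipulation of adjoint functors. The only points that require a moment's care --- transitivity of admissibility along a chain of admissible inclusions, and the fact that a finite span of pairwise completely orthogonal admissible subcategories is again admissible --- are elementary and, if one prefers, can be extracted from Appendix~\ref{section:linear-cats}.
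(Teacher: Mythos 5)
Your proposal is correct and follows essentially the same route as the paper: the right/left admissibility of $\cJ_i$ is read off from the grouped semiorthogonal decompositions of $\cJ_0$ together with the admissibility of $\cJ_0$ in $\cJ(\cA^1,\cA^2)$ from Lemma~\ref{lemma-J0}, and the admissibility of $\fj_i$ under strongness comes from Lemma~\ref{lemma-admissible-tensor} applied to the summands $\fa^1_{i_1}\sotimes\fa^2_{i_2}$ plus the complete orthogonality of those summands. The only cosmetic difference is the order in which you pass between $\cJ_0$ and $\cJ(\cA^1,\cA^2)$; the paper invokes \cite[Lemma 3.10]{NCHPD} for the span of completely orthogonal admissible subcategories, exactly the elementary fact you flag.
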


\begin{proof}
For $i = 0$ we know $\cJ_0 \subset \cJ(\cA^1, \cA^2)$ is admissible by Lemma~\ref{lemma-J0}. 
If $i > 0$, then by definition we have a semiorthogonal decomposition 
\begin{equation*}
\cJ_0 = \llangle \fj_0, \dots, \fj_{i-1}, \cJ_i \rrangle. 
\end{equation*} 
It follows that $\cJ_i$ is right admissible in $\cJ_0$, and hence also in $\cJ(\cA^1, \cA^2)$ 
as $\cJ_0$ is. 
Similarly, $\cJ_i$ is left admissible in $\cJ(\cA^1, \cA^2)$ for $i < 0$. 

Finally, assume $\cA^1$ and $\cA^2$ are both right strong. 
Then Lemma~\ref{lemma-admissible-tensor} implies the subcategory~$\fa^1_{i_1} \sotimes \fa^2_{i_2}  \subset \cA^1_0 \sotimes \cA^2_0$ 
is admissible for $i_1, i_2 \geq 0$, and hence by Lemma~\ref{lemma-J0} 
the summands defining $\fj_i$, $i \geq 0$, in \eqref{fji} are admissible in 
$\cJ(\cA^1, \cA^2)$. 
So by \cite[Lemma 3.10]{NCHPD} we conclude that $\fj_i$, $i \geq 0$, is admissible 
in $\cJ(\cA^1, \cA^2)$. 
A similar argument applies if $\cA^1$ and $\cA^2$ are left strong. 
\end{proof}

The following alternative expressions for the categories $\cJ_i$ are sometimes useful, 
and can be proved by unwinding the definitions. 

\begin{lemma}
\label{lemma-Ji-alternate} 
For $i \neq 0$ we have $\cJ_i = p^*(\barcJ_i)$, where $\barcJ_i \subset \cA^1 \otimes \cA^2$ 
is the subcategory defined by 
\allowdisplaybreaks
\begin{align*}
\barcJ_i & = 
\begin{cases}
\llangle \cA^1_{i-1} \sotimes \fa^2_0, \cA^1_{i-2} \sotimes \fa^2_1, \dots, \cA^1_{1} \sotimes \fa^2_{i-2}, \cA^1_0 \sotimes \cA^2_{i-1} \rrangle
& \quad \text{if $1 \le i \le m_2$},\\
\llangle \cA^1_{i-1} \sotimes \fa^2_0, \cA^1_{i-2} \sotimes \fa^2_1, \dots, \cA^1_{i-m_2} \sotimes \fa^2_{m_2-1} \rrangle
& \quad \text{if $m_2 < i \leq m-1$},
\end{cases}
\\ 
& = 
\begin{cases}
\llangle \fa^1_0 \sotimes \cA^2_{i-1}, {\fa^1_1} \sotimes \cA^2_{i-2}, \dots, \fa^1_{i-2} \sotimes \cA^2_{1}, \cA^1_{i-1} \sotimes \cA^2_0 \rrangle
& \quad \text{if $1 \le i \le m_1$},\\
\llangle \fa^1_0 \sotimes \cA^2_{i-1}, {\fa^1_1} \sotimes \cA^2_{i-2}, \dots, \fa^1_{m_1-1} \sotimes \cA^2_{i-m_1} \rrangle
& \quad \text{if $m_1 < i \leq m-1$},
\end{cases} 
\\
& = 
\begin{cases}
\llangle \cA^1_0 \sotimes \cA^2_{i+1}, 
\cA^1_{-1} \sotimes \fa^2_{i+2}, \dots, 
\cA^1_{i+2} \sotimes \fa^2_{-1}, 
\cA^1_{i+1} \sotimes \fa^2_0 \rrangle 
& \text{if $-m_2 \le i \le -1$},\\ 
\llangle \cA^1_{i+m_2} \sotimes \fa^2_{1-m_2}, \dots, 
\cA^1_{i+2} \sotimes \fa^2_{-1}, 
\cA^1_{i+1} \sotimes \fa^2_0 \rrangle 
& \text{if $1-m \leq i < -m_2$},
\end{cases}
\\ 
& = 
\begin{cases}
\llangle  \cA^1_{i+1} \sotimes \cA^2_0,
\fa^1_{i+2} \sotimes \cA^2_{-1} , \dots, 
\fa^1_{-1} \sotimes \cA^2_{i+2}, 
\fa^1_0 \sotimes \cA^2_{i+1}  \rrangle 
& \text{if $-m_1 \le i \le -1$},\\ 
\llangle  \fa^1_{1-m_1} \sotimes \cA^2_{i+m_1} , \dots, 
\fa^1_{-1} \sotimes \cA^2_{i+2}, 
\fa^1_0 \sotimes \cA^2_{i+1}  \rrangle 
& \text{if $1-m \leq i < -m_1$}, 
\end{cases}
\end{align*} 
where for $k =1,2$, the symbol $\fa^k_0$ denotes $\fa^{k}_{+0}$ 
in the first two equalities and $\fa^{k}_{-0}$ in the last two equalities. 
\end{lemma}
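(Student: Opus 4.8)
The plan is to push everything through the fully faithful functor $p^*$ and reduce the statement to an elementary bookkeeping of indices. Since $p^*\colon \cA^1\sotimes\cA^2\to\tJ(\cA^1,\cA^2)$ is fully faithful with admissible image (Lemma~\ref{lemma-tJ-P1-sod}) and restricts to an equivalence $\cA^1_0\sotimes\cA^2_0\xrightarrow{\ \sim\ }\cJ_0$ (Definition~\ref{definition-lef-center-join}), and since each of the listed components $\cA^1_{a}\sotimes\fa^2_{b}$, $\cA^1_0\sotimes\cA^2_{c}$, etc.\ is contained in $\cA^1_0\sotimes\cA^2_0$ (using $\fa^k_{\pm 0}\subset\cA^k_0$ and the chains $\cA^k_j\subset\cA^k_0$), it is enough to show that for every $i\neq0$ the subcategory $\barcJ_i\subset\cA^1_0\sotimes\cA^2_0$ spanned by the listed components equals the preimage $(p^*)^{-1}(\cJ_i)$. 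Iterating Lemma~\ref{lemma-sod-tensor} on the right (resp.\ left) primitive decompositions $\cA^k_0=\llangle\fa^k_0,\fa^k_1,\dots,\fa^k_{m_k-1}\rrangle$ (resp.\ $\cA^k_0=\llangle\fa^k_{1-m_k},\dots,\fa^k_{-1},\fa^k_0\rrangle$) in either order, the category $\cA^1_0\sotimes\cA^2_0$ acquires a semiorthogonal decomposition into the ``atoms'' $\fa^1_{i_1}\sotimes\fa^2_{i_2}$ whose semiorthogonality is governed by the product order on the pairs $(i_1,i_2)$ (performing the refinement in both orders shows $\Hom(\fa^1_{i_1}\sotimes\fa^2_{i_2},\fa^1_{i_1'}\sotimes\fa^2_{i_2'})$ can be nonzero only when $i_1\le i_1'$ and $i_2\le i_2'$). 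Unwinding \eqref{fji}, \eqref{fji-left}, \eqref{Ji}, \eqref{Ji-left} and Lemma~\ref{lemma:cj0-sod}, the subcategory $(p^*)^{-1}(\cJ_i)$ is exactly the span of the atoms with $i_1,i_2\ge0$ and $i_1+i_2\ge i-1$ when $i\ge1$, and of the atoms with $i_1,i_2\le0$ and $i_1+i_2\le i+1$ when $i\le-1$, with $\fa^k_0$ read as $\fa^k_{+0}$ in the first case and as $\fa^k_{-0}$ in the second, matching the conventions of \eqref{fji} and \eqref{fji-left}.

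Next I would treat the four displayed expressions for $\barcJ_i$ in turn, each time expanding every listed block into its constituent atoms by substituting $\cA^k_j=\llangle\fa^k_j,\dots,\fa^k_{m_k-1}\rrangle$ for $j\ge0$ (from \eqref{eq:ca-fa-ca-plus}) and $\cA^k_j=\llangle\fa^k_{1-m_k},\dots,\fa^k_j\rrangle$ for $j\le0$ (from \eqref{eq:ca-fa-ca-minus}), again via Lemma~\ref{lemma-sod-tensor}, and reading off which pairs $(i_1,i_2)$ occur. For instance, in the first expression (valid for $1\le i\le m_2$) the block $\cA^1_{i-1-s}\sotimes\fa^2_s$ with $0\le s\le i-2$ contributes the atoms with $i_2=s$, $i-1-s\le i_1\le m_1-1$, while the final block $\cA^1_0\sotimes\cA^2_{i-1}$ contributes the atoms with $0\le i_1\le m_1-1$, $i-1\le i_2\le m_2-1$; their union is precisely $\{(i_1,i_2):0\le i_1\le m_1-1,\ 0\le i_2\le m_2-1,\ i_1+i_2\ge i-1\}$, the atom set for $\cJ_i$ identified above. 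The other subrange $m_2<i\le m-1$, the second expression (grouping by $\fa^1_s$ and ending with $\cA^1\sotimes\cA^2_0$), and the two negative-$i$ expressions are handled by the identical substitution; the transition from the first to the second subrange in each case occurs exactly where the blocks $\cA^k_j$ with $|j|\ge m_k$ vanish and drop out of the list, and the negative-$i$ expressions are the mirror images under $\cA^1\leftrightarrow\cA^2$ and under replacing the right by the left primitive decompositions.

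Finally I would check that the displayed orderings are genuine semiorthogonal decompositions of $\barcJ_i$: in each case the stated order of blocks is a linear extension of the product order on the occurring pairs $(i_1,i_2)$ (for the first expression, reading left to right the $\fa^2$-index weakly increases while the $\cA^1$-index weakly decreases), so the product-order vanishing recorded in the first paragraph shows the blocks are semiorthogonal in that order; admissibility of the blocks follows from admissibility of the $\cA^k_j$ and $\fa^k_j$ in $\cA^k$ (built into the hypotheses, or under strongness as in Lemma~\ref{lemma-Ji-admissible}). Together with the atom-set computation this yields $\barcJ_i=(p^*)^{-1}(\cJ_i)$, i.e.\ $\cJ_i=p^*(\barcJ_i)$, for all $i\neq0$. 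There is no conceptual obstacle here — as the statement indicates, the lemma is a reindexing of Lemma~\ref{lemma:cj0-sod} — so the main work is simply the volume of the case analysis in the second paragraph; the only points demanding care are the switch between the $\fa^k_{+0}$ and $\fa^k_{-0}$ conventions for positive versus negative $i$, and making sure the boundary atoms (those on the edge of the index box, and the vanishing components $\cA^k_j$ with $|j|\ge m_k$) are consistently accounted for.
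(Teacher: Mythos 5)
Your proposal is correct and follows the same route the paper intends: the paper offers no written proof beyond the remark that the lemma ``can be proved by unwinding the definitions,'' and your argument is a faithful execution of exactly that unwinding --- reduce everything via the fully faithful $p^*$ to the completely orthogonal ``atoms'' $\fa^1_{i_1}\otimes\fa^2_{i_2}$ of Lemma~\ref{lemma:cj0-sod}, match the atom sets $\{i_1+i_2\ge i-1\}$ (resp.\ $\{i_1+i_2\le i+1\}$) on both sides, and verify the displayed orderings are linear extensions of the product order from Lemma~\ref{lemma-sod-tensor}. The index bookkeeping, the treatment of the vanishing blocks at the range boundaries, and the $\fa^k_{+0}$ versus $\fa^k_{-0}$ conventions are all handled correctly.
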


\begin{figure}[h]
    \centering
 \includegraphics[clip, trim=175 535 175 135]{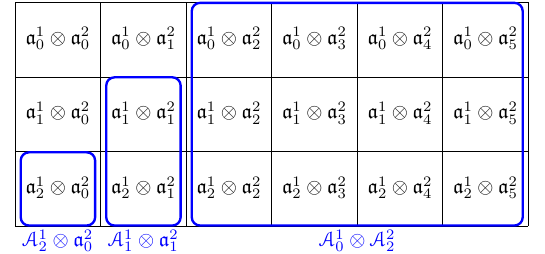}
\caption{The first semiorthogonal decomposition of $\barcJ_i$ from Lemma \ref{lemma-Ji-alternate}  for~$i = 3$, $m_1 = 3$, and $m_2 = 6$.}     
\label{figure:J3}
\end{figure}

To prove Theorem~\ref{theorem-join-lef-cat}, we will show that we have semiorthogonal decompositions 
\begin{align}
\label{sod-cJ}
\cJ(\cA^1, \cA^2) & = \llangle \cJ_0 , \cJ_1(H), \dots, \cJ_{m-1}((m-1)H) \rrangle, \\ 
\label{sod-cJ-left}
\cJ(\cA^1, \cA^2) & = \llangle \cJ_{1-m}((1-m)H), \dots, \cJ_{-1}(-H), \cJ_0 \rrangle, 
\end{align} 
and then apply Lemma~\ref{lemma-lef-center-from-decomp}.
We focus on proving~\eqref{sod-cJ} below; an analogous argument proves~\eqref{sod-cJ-left}. 

\begin{lemma}
\label{lemma-fji}
The sequence of subcategories 
\begin{equation}
\label{A-sequence} 
\cJ_0 , \, \cJ_1(H), \, \dots, \, \cJ_{m-1}((m-1)H) , 
\end{equation}
in~\eqref{sod-cJ} is semiorthogonal.
\end{lemma}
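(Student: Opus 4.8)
The plan is to transport the statement across the $\bP^1$-bundle $p\colon\tJ(\bP(V_1),\bP(V_2))\to\bP(V_1)\times\bP(V_2)$ and reduce it to the Lefschetz semiorthogonalities already built into $\cA^1$ and $\cA^2$. First I would record that, by~\eqref{Ji}, \eqref{fji}, and the full faithfulness of $p^*$ (Lemma~\ref{lemma-tJ-P1-sod}), each $\cJ_i\subset\cJ(\cA^1,\cA^2)$ equals $p^*(\barcJ_i)$, where $\barcJ_i\subseteq\cA^1\otimes\cA^2$ is the subcategory generated by the subcategories $\fa^1_{i_1}\otimes\fa^2_{i_2}$ with $i_1+i_2\ge i-1$ and $0\le i_1\le m_1-1$, $0\le i_2\le m_2-1$ (for $i=0$ this is $\cA^1_0\otimes\cA^2_0$, consistently with Definition~\ref{definition-lef-center-join}). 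Then, for $0\le i<j\le m-1$ and objects $C\in\barcJ_j$, $D\in\barcJ_i$, the adjunction $p^*\dashv p_*$ together with the projection formula gives
\[
\Hom\big(p^*C\otimes\cO(jH),\,p^*D\otimes\cO(iH)\big)\;\simeq\;\Hom_{\cA^1\otimes\cA^2}\big(C,\,D\otimes p_*\cO((i-j)H)\big).
\]
Using the $\bP^1$-bundle structure, the identities $\cO_p(1)=\cO(H)$ and $\omega_p=\cO(H_1+H_2-2H)[1]$ from Lemma~\ref{lemma-tJ-divisors}, and relative Serre duality along the fibres, one computes $p_*\cO(-H)=0$ and, for $\ell\ge2$,
\[
p_*\cO(-\ell H)\;\simeq\;\bigoplus_{\substack{a+b=\ell\\ a,b\ge1}}\cO(-aH_1-bH_2)[-1].
\]

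Hence for $j=i+1$ the Hom-space above vanishes outright, and in general the lemma reduces to showing $\Hom_{\cA^1\otimes\cA^2}\big(\barcJ_j,\,\barcJ_i\otimes\cO(-aH_1-bH_2)\big)=0$ for all $a,b\ge1$ with $a+b=j-i$. Testing on the subcategory generators of $\barcJ_j$ and $\barcJ_i$, it suffices to prove
\[
\Hom_{\cA^1\otimes\cA^2}\big(\fa^1_{a'}\otimes\fa^2_{b'},\,\fa^1_c(-aH_1)\otimes\fa^2_d(-bH_2)\big)=0
\]
whenever $a'+b'\ge j-1$, $c+d\ge i-1$ (all indices in the admissible ranges), $a,b\ge1$, $a+b=j-i$. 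By Lemma~\ref{lemma-sod-tensor}, applied to a suitable two-term grouping of the right Lefschetz decomposition~\eqref{eq:ca-primitive-plus} of $\cA^1$ (respectively $\cA^2$), such a Hom between external products of subcategories vanishes as soon as one of the two tensor factors does, i.e.\ as soon as $\Hom_{\cA^1}(\fa^1_{a'},\fa^1_c(-aH_1))=0$ or $\Hom_{\cA^2}(\fa^2_{b'},\fa^2_d(-bH_2))=0$.

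The case analysis is the heart of the matter. If $a\le a'$, then $\fa^1_{a'}(aH_1)$ and $\fa^1_c$ are two distinct components of~\eqref{eq:ca-primitive-plus}, with $\fa^1_c$ strictly preceding $\fa^1_{a'}(aH_1)$ (the latter sits in the block twisted by $\cO(aH_1)$ with $a\ge1>0$), so $\Hom_{\cA^1}(\fa^1_{a'}(aH_1),\fa^1_c)=0$, whence $\Hom_{\cA^1}(\fa^1_{a'},\fa^1_c(-aH_1))=0$; symmetrically, if $b\le b'$ then the $\cA^2$-factor vanishes. It remains to rule out $a>a'$ and $b>b'$ simultaneously, which would force $a+b\ge a'+b'+2\ge(j-1)+2=j+1$, contradicting $a+b=j-i\le j$ --- this is precisely where the hypothesis $i\ge0$ (that we work with the sequence~\eqref{A-sequence} and not its mirror) enters. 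Thus the three cases are exhaustive and the required vanishing holds. The only genuinely delicate points are the computation of $p_*\cO(-\ell H)$ and the bookkeeping of which twisted primitive component occupies which slot of the Lefschetz decomposition; the conceptual content is simply that the semiorthogonalities of $\cA^1$ and $\cA^2$, combined with the numerics $a+b=j-i$ and $a'+b'\ge j-1$, leave no room for a nonzero Hom once one unwinds along $p$.
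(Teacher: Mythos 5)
Your proof is correct and follows essentially the same route as the paper's: reduce to generators $p^*(\fa^1_{i_1}\otimes\fa^2_{i_2})$, push the relevant Hom down along the $\bP^1$-bundle $p$ to produce twists by $\cO(-aH_1-bH_2)$ with $a,b\ge 1$, kill each summand via Lemma~\ref{lemma-sod-tensor} and the primitive decomposition~\eqref{eq:ca-primitive-plus} of one of the two factors, and close with the numerical observation that $a>a'$ and $b>b'$ cannot hold simultaneously. The only cosmetic difference is that you use the right adjoint $p_*$ (computing $p_*\cO(-\ell H)$ by relative Serre duality) where the paper uses the left adjoint $p_!$ (computing $p_!\cO(tH)$ directly from $\omega_p$); the two computations are mirror images and the rest of the argument is identical.
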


\begin{proof}
By the definitions~\eqref{Ji} and \eqref{fji}, 
it is enough to check that for any integer $t$ such that $1 \leq t \leq j_1+j_2+1$, 
the subcategories  
\begin{equation*}
p^*{\left(\fa^1_{i_1} \sotimes \fa^2_{i_2} \right)}, \quad 
p^*{\left(\fa^1_{j_1} \sotimes \fa^2_{j_2} \right)}(tH), 
\end{equation*}
of $\cJ(\cA^1, \cA^2)$ are semiorthogonal. 
Let $C_1 \in \fa^1_{i_1}$, $C_2 \in \fa^2_{i_2}$ and $D_1 \in \fa^1_{j_1}$, $D_2 \in \fa^2_{j_2}$. 
Since $\fa^1_{i_1} \sotimes \fa^2_{i_2}$ and $\fa^1_{j_1} \sotimes \fa^2_{j_2}$ are thickly generated 
by objects of the form $C_1 \boxtimes C_2$ and $D_1 \boxtimes D_2$ respectively,
we must show that
\begin{equation*}
p^*(D_1 \boxtimes D_2)(tH) \in {}^\perp\left( p^*(C_1 \boxtimes C_2) \right). 
\end{equation*}
Recall that $p_!$ denotes the left adjoint functor of $p^*$, see~\eqref{eq:shriek-adjoints}.
By adjunction and the projection formula, this is equivalent to
\begin{equation*}
(D_1 \boxtimes D_2) \otimes p_!(\cO_{{\tJ(\bP(V_1),\bP(V_2))}}(tH)) \in {}^\perp \left( C_1 \boxtimes C_2 \right). 
\end{equation*}
Using the formula of Lemma~\ref{lemma-tJ-divisors}\eqref{omega-p} for the dualizing complex of $p$,
for $t \geq 1$ we obtain 
\begin{equation}
\label{eq:p-shriek}
p_{!}(\cO_{{\tJ(\bP(V_1),\bP(V_2))}}(tH)) \simeq   
\bigoplus_{\substack{t_1 + t_2 = t \\ t_1, t_2 \geq 1}} \cO_{\bP(V_1) \times \bP(V_2)}(t_1H_1 + t_2H_2)[1].
\end{equation}
So, it is enough to show that
\begin{equation*}
D_1(t_1H_1) \boxtimes D_2(t_2H_2) \in {}^\perp \left( C_1 \boxtimes C_2 \right)
\end{equation*}
for all $t_1, t_2 \geq 1$ such that $t_1 + t_2 = t$. 
The left side is contained in $\fa^1_{j_1}(t_1H_1) \otimes \fa^2_{j_2}(t_2H_2)$, while $C_1 \boxtimes C_2 \in \fa^1_{i_1} \otimes \fa^2_{i_2}$, 
so it suffices to show the pair $(\fa^1_{j_1}(t_1H_1) \otimes \fa^2_{j_2}(t_2H_2), \fa^1_{i_1} \otimes \fa^2_{i_2})$ is semiorthogonal.
By Lemma~\ref{lemma-sod-tensor} and~\eqref{eq:ca-primitive-plus}, 
this holds for~$1 \leq t_1 \leq j_1$ because of the semiorthogonality of the first factors, 
and for $1 \leq t_2 \leq j_2$ because of the semiorthogonality of the second factors. 
Since the assumption $t_1 + t_2 = t \leq j_1 + j_2 + 1$ implies either 
$t_1 \leq j_1$ or~$t_2 \leq j_2$, this finishes the proof.
\end{proof}

To show that the categories in~\eqref{A-sequence} generate $\cJ(\cA^1, \cA^2)$, 
we consider the idempotent-complete triangulated subcategory~$\cP$ of the resolved join $\tJ(\cA^1, \cA^2)$ 
generated by the following subcategories: 
\begin{align}
\label{P1} p^*( \fa^1_{i_1} \sotimes \fa^2_{i_2})(tH), & \quad 
0 \leq i_1 \leq m_1 - 1, ~ 0 \leq i_2 \leq m_2 - 1, ~
0 \leq t \leq i_1 + i_2 + 1 , \\ 
\label{P2}\eps_{1*}(\cA^1 \sotimes \fa^2_{i_2}(s_2H_2)), & \quad 
0 \leq i_2 \leq m_2 - 1, ~ 
0 \leq s_2 \leq i_2 - 1, \\ 
\label{P3} \eps_{2*}(\fa^1_{i_1}(s_1H_{1}) \sotimes \cA^2 ), & \quad 
0 \leq i_1 \leq m_1 - 1, ~ 
0 \leq s_1 \leq i_1 - 1. 
\end{align}
It follows from the definitions~\eqref{fji} and~\eqref{Ji} 
that~\eqref{A-sequence} and~\eqref{P1} generate the same subcategory of the categorical join~$\cJ(\cA^1, \cA^2)$. 
Further, by~\eqref{eq:shriek-adjoints} and Lemma~\ref{lemma-tJ-divisors}\eqref{Ek-Hk-H} we have
\begin{equation*}
\eps_{1!}(C) \simeq \eps_{1*}(C(H_1 - H_2)[-1]) , 
\end{equation*}
so it follows from $\bP(V_1)$-linearity of $\cA^1$ and the definitions 
that~\eqref{P2} generates the second line of the semiorthogonal decomposition~\eqref{sod-tJ} of $\tJ(\cA^1, \cA^2)$.
Similarly, \eqref{P3} generates the third line of~\eqref{sod-tJ}.
Hence to establish~\eqref{sod-cJ}, it suffices to show $\cP = \tJ(\cA^1, \cA^2)$. 
For this, we will need the following lemma.  

\begin{lemma}
\label{lemma-induction}
For all integers $i_1, i_2, s_1, s_2, t$, such that 
\begin{equation*}
0 \leq s_1 \leq i_1 \leq m_1 - 1 , ~  
0 \leq s_2 \leq i_2 \leq m_2 - 1, ~ 
0 \leq t \leq i_1 + i_2 - (s_1 + s_2) + 1 , 
\end{equation*}
the subcategory 
\begin{equation*}
p^*{\left(\fa^1_{i_1} \sotimes \fa^2_{i_2} \right)}(s_1H_1 + s_2H_2 + tH) \subset \tJ(\cA^1, \cA^2) 
\end{equation*}
is contained in $\cP$. 
\end{lemma}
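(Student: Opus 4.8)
The plan is to induct on the quantity $s_1+s_2$. The base case is $s_1=s_2=0$, where the hypothesis on $t$ reads $0\le t\le i_1+i_2+1$; then $p^*(\fa^1_{i_1}\otimes\fa^2_{i_2})(tH)$ is literally one of the generating subcategories~\eqref{P1} of $\cP$, so there is nothing to prove. For the inductive step I would fix $(s_1,s_2)$ with $s_1+s_2\ge 1$. By the symmetry of the construction under interchanging the two factors (which swaps $\bE_1$ with $\bE_2$ and \eqref{P2} with \eqref{P3}), I may assume $s_1\ge 1$; note that then $i_1\ge 1$, since $s_1\le i_1$, so that the relevant instance of~\eqref{P3} is nonempty. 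The case $s_1=0$, $s_2\ge 1$ is handled by the mirror-image argument. Since the functors $p^*$, $\eps_{2*}$, and twisting by a line bundle are exact and $\cP$ is a thick subcategory, it suffices to prove that $p^*(F_1\boxtimes F_2)(s_1H_1+s_2H_2+tH)$ lies in $\cP$ for objects $F_1\in\fa^1_{i_1}$ and $F_2\in\fa^2_{i_2}$, as such objects generate $\fa^1_{i_1}\otimes\fa^2_{i_2}$.

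The key tool is the Koszul triangle on $Y:=\tJ(\bP(V_1),\bP(V_2))$ attached to the divisor $\bE_2\subset Y$. From $0\to\cO_Y(-\bE_2)\to\cO_Y\to\eps_{2*}\cO_{\bE_2}\to 0$, twisting by $\cO_Y(H)$, and using $\bE_2=H-H_1$ and $H|_{\bE_2}=H_2$ from Lemma~\ref{lemma-tJ-divisors}\eqref{Ek-Hk-H} together with the projection formula, one obtains an exact triangle $\cO_Y(H_1)\to\cO_Y(H)\to\eps_{2*}\cO_{\bE_2}(H_2)$ in $\Perf(Y)$. Acting with this triangle on the object $C:=p^*(F_1\boxtimes F_2)((s_1-1)H_1+s_2H_2+tH)$ of $\tJ(\cA^1,\cA^2)$ via the $\Perf(Y)$-module structure gives a triangle
\begin{equation*}
p^*(F_1\boxtimes F_2)(s_1H_1+s_2H_2+tH)\,\to\,p^*(F_1\boxtimes F_2)\bigl((s_1-1)H_1+s_2H_2+(t+1)H\bigr)\,\to\,\eps_{2*}\bigl(\eps_2^*(C)(H_2)\bigr).
\end{equation*}
Using the projection formula for $\eps_{2*}$, the identity $\eps_2^*p^*=\id$ (as $\eps_2$ is a section of $p$), and again $H|_{\bE_2}=H_2$, the third term is identified with $\eps_{2*}\bigl(F_1((s_1-1)H_1)\boxtimes F_2((s_2+t+1)H_2)\bigr)$.

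It remains to see that the second and third terms of this triangle lie in $\cP$, for then the first term does too, $\cP$ being triangulated. The second term belongs to the subcategory $p^*(\fa^1_{i_1}\otimes\fa^2_{i_2})\bigl((s_1-1)H_1+s_2H_2+(t+1)H\bigr)$, and the shifted parameters $(s_1-1,s_2,t+1)$ satisfy the hypotheses of Lemma~\ref{lemma-induction}: indeed $0\le s_1-1\le i_1$, $0\le s_2\le i_2$, and $0\le t+1\le i_1+i_2-((s_1-1)+s_2)+1$ because $t\le i_1+i_2-(s_1+s_2)+1$. As $(s_1-1)+s_2<s_1+s_2$, the induction hypothesis applies and puts the second term in $\cP$. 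The third term lies in $\eps_{2*}\bigl(\fa^1_{i_1}((s_1-1)H_1)\otimes\cA^2\bigr)$, using that $\fa^2_{i_2}((s_2+t+1)H_2)\subset\cA^2$ by $\bP(V_2)$-linearity of $\cA^2$; and since $1\le s_1\le i_1$ we have $0\le s_1-1\le i_1-1$, so this is one of the generating subcategories~\eqref{P3} of $\cP$. This completes the inductive step.

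The substance of the argument is almost entirely the bookkeeping in the middle paragraph: correctly tracking the divisor classes $H_1,H_2,H$ under restriction to $\bE_2$ and under the twist by $\cO(H)$, and checking that the shifted indices $(s_1-1,s_2,t+1)$ stay in the ranges required both by the induction hypothesis and by membership in~\eqref{P3}. The only genuinely structural point that needs care is the passage from the exact triangle in $\Perf(Y)$ to the induced triangle in $\tJ(\cA^1,\cA^2)$, together with the projection-formula identity $C\otimes\eps_{2*}(G)\simeq\eps_{2*}(\eps_2^*(C)\otimes G)$ for the base-changed functors $\eps_2^*,\eps_{2*}$; this is standard for the module structure on a linear category and is used in the same spirit elsewhere in the paper (e.g.\ in the proof of Lemma~\ref{lemma-fji}).
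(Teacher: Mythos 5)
Your proof is correct and follows essentially the same route as the paper's: induction on $s_1+s_2$, the twisted Koszul triangle for $\bE_2 \subset \tJ(\bP(V_1),\bP(V_2))$, reduction to box-tensor generators, and the identical index bookkeeping placing the second term in the induction hypothesis and the third in~\eqref{P3}. The only (immaterial) difference is that you twist the Koszul triangle by $\cO(H)$ and act on the pre-twisted object $C$, whereas the paper twists by the full line bundle $\cO((s_1-1)H_1+s_2H_2+(t+1)H)$ before tensoring.
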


\begin{proof}
We argue by induction on $s = s_1 + s_2$. 
The base case $s_1 = s_2 = 0$ holds by~\eqref{P1} 
in the definition of $\cP$. 
Now assume $s > 0$ and the result holds for $s-1$. 
Either $s_1 > 0$ or~$s_2 > 0$. 
Assume $s_1 > 0$. 
Consider the exact sequence
\begin{equation}
\label{eq:e2-sequence}
0 \to \cO(-\bE_2) \to \cO \to \eps_{2*}\cO_{\bE_2} \to 0
\end{equation}
on $\tJ(\bP(V_1),\bP(V_2))$. 
By Lemma~\ref{lemma-tJ-divisors}\eqref{Ek-Hk-H} we 
have $-\bE_2 = H_1-H$ and $H|_{\bE_2} = H_2$, so twisting 
this sequence by $\cO((s_1 - 1)H_1 + s_2H_2 + (t + 1)H)$ gives 
\begin{multline*}
0 \to \cO(s_1H_1 + s_2H_2 + tH) \to \cO((s_1 - 1)H_1 + s_2H_2 + (t + 1)H) 
\\
\to \eps_{2*}\cO_{\bE_2}((s_1 - 1)H_1 + (s_2 + t + 1)H_2) \to 0.
\end{multline*}
For $C_1 \in \fa^1_{i_1}$, $C_2 \in \fa^2_{i_2}$, tensoring 
this sequence with $p^*(C_1 \boxtimes C_2)$ gives an exact  
triangle 
\begin{multline*}
p^*(C_1 \boxtimes C_2)(s_1H_1 + s_2H_2 + tH) \to p^*(C_1 \boxtimes C_2)((s_1 - 1)H_1 + s_2H_2 + (t + 1)H) 
\\
\to \eps_{2*}(C_1((s_1 - 1)H_1) \boxtimes C_2 ((s_2 + t + 1)H_2) ),
\end{multline*}
where we have used the projection formula and diagram~\eqref{diagram-tJ-projective-bundle} to rewrite the third term. 
The second term of this triangle is in $\cP$ by the induction hypothesis, 
and the third term is in~$\cP$ by~\eqref{P3} since $s_1 - 1 \le i_1 - 1$ by the assumption of the lemma.
Hence the first term is also in $\cP$. 
By Lemma~\ref{lemma-generators-box-tensor} 
the objects $p^*(C_1 \boxtimes C_2)$ for $C_1 \in \fa^1_{i_1}$, $C_2 \in \fa^2_{i_2}$, 
thickly generate~$p^*{\left(\fa^1_{i_1} \sotimes \fa^2_{i_2}\right)}$, 
so we deduce the required containment 
\begin{equation*}
p^*{\left(\fa^1_{i_1} \sotimes \fa^2_{i_2}\right)}(s_1H_1 + s_2H_2 + tH) \subset \cP. 
\end{equation*}
The case $s_2 > 0$ follows by the same argument (with $\bE_2$ replaced by $\bE_1$ and~\eqref{P2} used instead of~\eqref{P3}). 
This completes the induction. 
\end{proof}

\begin{proof}[{Proof of Theorem~\textup{\ref{theorem-join-lef-cat}}}]
Let us show $\cP = \tJ(\cA^1, \cA^2)$, which as observed above will complete 
the proof of the semiorthogonal decomposition~\eqref{sod-cJ}. 
By Lemma~\ref{lemma-tJ-P1-sod} we have a semiorthogonal decomposition 
\begin{equation}
\label{tJ-P1}
\tJ(\cA^1, \cA^2) = \llangle 
p^*{\left(\cA^1 \sotimes \cA^2 \right)}, 
p^*{\left(\cA^1 \sotimes \cA^2 \right)}(H) 
 \rrangle , 
\end{equation}
so it suffices to show $\cP$ contains both components of this decomposition. 
But tensoring the decompositions~\eqref{eq:ca-primitive-plus} for $\cA^1$ and $\cA^2$ and using Lemma~\ref{lemma-sod-tensor}, 
we see that $\cA^1 \sotimes \cA^2$ 
is generated by the categories 
\begin{equation*}
(\fa^1_{i_1} \sotimes \fa^2_{i_2})(s_1H_1 + s_2H_2) , \quad 
0 \leq s_1 \leq i_1 \leq m_1 - 1 , ~
0 \leq s_2 \leq i_2 \leq m_2 - 1. 
\end{equation*}
Hence taking $t =0$ in Lemma~\ref{lemma-induction} shows $\cP$ contains 
$p^*{\left(\cA^1 \sotimes \cA^2 \right)}$, and taking $t = 1$ shows 
$\cP$ contains $p^*{\left(\cA^1 \sotimes \cA^2 \right)}(H)$, 
as required. 

The semiorthogonal decomposition~\eqref{sod-cJ-left} holds by a similar argument.
Thus by Lemma~\ref{lemma-lef-center-from-decomp} and 
Lemma~\ref{lemma-Ji-admissible} we deduce that $\cJ_0 \subset \cJ(\cA_1,\cA_2)$
is a Lefschetz center with $\cJ_i$, $i \in \bZ$, the corresponding Lefschetz components. 
The strongness claims follow from the definitions and Lemma~\ref{lemma-Ji-admissible}, 
and the claims about the length and moderateness of $\cJ(\cA^1, \cA^2)$ follow 
from the definitions and~\eqref{length-leq-rank}.
\end{proof}


\section{HPD for categorical joins} 
\label{section-joins-HPD} 

In this section we prove our main theorem, 
the general form of Theorem~\ref{main-theorem-intro} for Lefschetz categories, 
which says that (under suitable hypotheses) 
the formation of categorical joins commutes with~HPD. 

\begin{theorem}
\label{theorem-joins-HPD}
Let $\cA^1$ and $\cA^2$ be right strong, moderate Lefschetz categories over $\bP(V_1)$ and~$\bP(V_2)$.  
Then there is an equivalence 
\begin{equation*}
\cJ(\cA^1, \cA^2)^{\hpd} \simeq \cJ((\cA^1)^{\hpd}, (\cA^2)^{\hpd}) 
\end{equation*}
of Lefschetz categories over $\bP(\vV_1 \oplus \vV_2)$. 
\end{theorem}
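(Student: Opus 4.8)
The plan is to construct an explicit $\bP(\vV_1 \oplus \vV_2)$-linear functor between the two Lefschetz categories and verify the hypotheses of Lemma~\ref{lemma-equivalence-lef-cat}. The key conceptual input is the characterization of the HPD category in terms of the projection from the universal hyperplane section, established in Lemma~\ref{lemma-characterization-Cd} and Proposition~\ref{proposition-characterization-Ad0}: these say that $\cJ(\cA^1,\cA^2)^\hpd$ is a subcategory of $\bH(\cJ(\cA^1,\cA^2)/\bP(V_1\oplus V_2))$ cut out by a condition on $\pi_*$, and its Lefschetz center is cut out by a further condition involving the exterior powers of the monad bundle $\cM$. So the first main step is to identify the universal hyperplane section $\bH(\cJ(\cA^1,\cA^2)/\bP(V_1\oplus V_2))$ geometrically. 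A point of $\bP(\vV_1\oplus\vV_2)$ is a hyperplane in $\bP(V_1\oplus V_2)$; I would analyze how such a hyperplane meets the resolved join $\tJ(\bP(V_1),\bP(V_2)) = \bP_{\bP(V_1)\times\bP(V_2)}(\cO(-H_1)\oplus\cO(-H_2))$ and relate the resulting incidence variety to the resolved join of the universal hyperplane sections $\bH(\bP(V_1)/\bP(V_1))$-type objects — more precisely, I expect an identification of $\bH(\tJ(\bP(V_1),\bP(V_2)))$ with something built fiberwise from $\bH(\bP(V_1))$ and $\bH(\bP(V_2))$ over $\bP(\vV_1)\times\bP(\vV_2)$, with correction terms coming from the exceptional divisors $\bE_1, \bE_2$. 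This will let me relate $\bH(\cJ(\cA^1,\cA^2))$ to $\tJ(\bH(\cA^1),\bH(\cA^2))$ after suitable base change to $\bP(\vV_1\oplus\vV_2)$.

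The second main step is to define the candidate equivalence. The natural guess is to take the functor induced by the geometric correspondence identified above, composed with the projection to the HPD subcategories: roughly, a perfect complex on (the relevant version of) $\tJ(\bH(\cA^1),\bH(\cA^2))$ restricting appropriately along the exceptional divisors should, on the one hand, define an object of $\cJ(\cA^1,\cA^2)^\hpd$ via the $\pi_*$-characterization, and on the other hand define an object of $\cJ((\cA^1)^\hpd,(\cA^2)^\hpd)$ because the defining conditions $\eps_k^*(C) \in (\cA^k)^\hpd_0 \otimes \cdots$ translate, under HPD of each factor, into the $\delta_*$-conditions defining $(\cA^k)^\hpd$ inside $\bH(\cA^k)$. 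The compatibility of these two descriptions is the heart of the matter. One then checks $\bP(\vV_1\oplus\vV_2)$-linearity, which is automatic from the geometric construction, and checks that the functor restricts to an equivalence on Lefschetz centers: by Theorem~\ref{theorem-join-lef-cat} the center of $\cJ((\cA^1)^\hpd,(\cA^2)^\hpd)$ is $p^*((\cA^1)^\hpd_0 \otimes (\cA^2)^\hpd_0)$, and by Proposition~\ref{proposition-characterization-Ad0} the center of $\cJ(\cA^1,\cA^2)^\hpd$ is cut out by the $\wedge^t\cM$ condition; matching these reduces, via Lemma~\ref{lemma-A0-Ad0-equivalence}, to the corresponding statement for each factor $\cA^k$ together with the explicit form of $\cM$ for the join. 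The existence of the left adjoint $\phi^*$ and the analysis of its behavior on centers is parallel, using that all categories in sight are admissible subcategories with the requisite adjoints (from Lemmas~\ref{lemma:hpd-sod}, \ref{lemma-tJ}, \ref{lemma-tJ-P1-sod}) and that $\cA^1,\cA^2$ are right strong and moderate so that Theorem~\ref{theorem-HPD} applies to each.

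The main obstacle, I expect, is precisely the geometric/categorical bookkeeping in the first step: the universal hyperplane section of the categorical join is not simply the categorical join of the universal hyperplane sections, because the linear embedding $\tJ(\bP(V_1),\bP(V_2)) \to \bP(V_1\oplus V_2)$ contracts the two divisors $\bE_1,\bE_2$, so a general hyperplane in $\bP(V_1\oplus V_2)$ pulls back to a divisor on $\tJ$ that contains (components supported near) $\bE_1$ and $\bE_2$. Disentangling these contributions — and tracking how the monad bundle $\cM$ on $\bH(\cJ(\cA^1,\cA^2))$ decomposes in terms of the monad bundles $\cM_1, \cM_2$ of the two factors plus exceptional corrections — is where all the real work lies; it is essentially a careful blow-up/semiorthogonal-decomposition computation using Lemma~\ref{lemma-tJ-divisors}, Lemma~\ref{lemma-HC-sod-Pbundle}, and iterated mutations. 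Once the correspondence and the behavior of $\cM$ are pinned down, the verification of the three hypotheses of Lemma~\ref{lemma-equivalence-lef-cat} should be a matter of assembling the HPD statements for $\cA^1$ and $\cA^2$ (Theorem~\ref{theorem-HPD}) with the structure theorems for categorical joins (Theorem~\ref{theorem-join-lef-cat} and Lemma~\ref{lemma-Ji-alternate}).
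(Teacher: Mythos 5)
Your proposal follows essentially the same route as the paper: the paper builds the functor as a pull--push through the \emph{double resolved join} $\tJJ(\bH_1,\bH_2)$ (a correspondence between $\tJv(\bH_1,\bH_2)$ and $\bH(\tJ(\bP(V_1),\bP(V_2)))$, not an identification --- the dimensions differ by one), composes with $\tJ(\gamma_1,\gamma_2)$, and then verifies the three criteria of Lemma~\ref{lemma-equivalence-lef-cat} using exactly the $\pi_*$-characterization of Lemma~\ref{lemma-characterization-Cd} and the $\wedge^t\cM$-characterization of Proposition~\ref{proposition-characterization-Ad0}, with the decomposition of $\wedge^t\cM$ in terms of $\cM_1,\cM_2$ (your anticipated ``main obstacle'') carried out in Lemma~\ref{lemma-wedge-tR}. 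The only cosmetic difference is the direction of the functor (the paper starts from $\tJv((\cA^1)^\hpd,(\cA^2)^\hpd)$ and maps into $\bH(\tJ(\cA^1,\cA^2))$), so your outline matches the paper's argument in all essential respects.
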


\begin{remark}
\label{remark-joins-HPD}
The Lefschetz structures on the categories $\cJ(\cA^1, \cA^2)^{\hpd}$ and $\cJ((\cA^1)^{\hpd}, (\cA^2)^{\hpd})$ 
in Theorem~\ref{theorem-joins-HPD} are the ones obtained by combining 
Theorem~\ref{theorem-HPD}\eqref{Cd-ld} and Theorem~\ref{theorem-join-lef-cat}. 
\end{remark}

The key object in the proof of Theorem~\ref{theorem-joins-HPD} is a certain fiber product of resolved joins, 
which we call a double resolved join. 
We discuss this construction in~\S\ref{subsection-double-resolved-joins}, 
and then use it in~\S\ref{subsection-joins-HPD-functor} to define a functor 
$\gamma_{\tJ} \colon \tJ((\cA^1)^{\hpd}, (\cA^2)^{\hpd}) \to \bH(\tJ(\cA^1,\cA^2))$. 
In~\S\ref{subsection-HPD-functor-relations} we prove various properties of~$\gamma_{\tJ}$, 
which we use in~\S\ref{subsection-joins-HPD-proof} to show~$\gamma_{\tJ}$ induces the equivalence of Theorem~\ref{theorem-joins-HPD}.

\subsection{Double resolved joins} 
\label{subsection-double-resolved-joins}
For $k=1,2$, let $V_k$ be a vector bundle on $S$ and 
denote by $H_k$ and $H'_k$ the relative hyperplane 
classes on $\bP(V_k)$ and $\bP(\vV_k)$. 

In this section sometimes we will consider pairs of $\bP(V_k)$-linear categories, 
so we can form their resolved join over $\bP(V_1 \oplus V_2)$,
and sometimes we will consider pairs of $\bP(\vV_k)$-linear categories, 
so we can form their resolved join over $\bP(\vV_1 \oplus \vV_2)$.
Moreover, sometimes we will consider pairs of $\bP(V_k) \times \bP(\vV_k)$-linear categories,
so that we can form both types of joins for them.
To distinguish notationally between the two types of joins we will write
\begin{align*}
\tJ(Y_1, Y_2) &= \bP_{Y_1 \stimes Y_2}(\cO(-H_1) \oplus \cO(-H_2)),\\
\tJ(\cB^1, \cB^2) &= (\cB^1 \otimes \cB^2) \otimes_{\Perf(\bP(V_1) \stimes \bP(V_2))} \Perf(\tJ(\bP(V_1),\bP(V_2)))
\intertext{if $Y_k$ are schemes over $\bP(V_k)$ and $\cB^k$ are $\bP(V_k)$-linear categories, and}
\tJv(Y_1, Y_2) &= \bP_{Y_1 \stimes Y_2}(\cO(-H'_1) \oplus \cO(-H'_2)),\\
\tJv(\cB^1, \cB^2) &= (\cB^1 \otimes \cB^2) \otimes_{\Perf(\bP(\vV_1) \stimes \bP(\vV_2))} \Perf(\tJv(\bP(\vV_1),\bP(\vV_2)))
\end{align*}
if $Y_k$ are schemes over $\bP(\vV_k)$ and $\cB^k$ are $\bP(\vV_k)$-linear categories.
We will also use this convention for schemes over $\bP(V_k) \times \bP(\vV_k)$ and for $\bP(V_k) \times \bP(\vV_k)$-linear categories.
Note, however, that we do not extend this convention to categorical joins, or to resolved joins of functors. 

Let $Y_1$ and $Y_2$ be $S$-schemes equipped with morphisms 
\begin{equation*}
Y_1 \to \bP(V_1) \times \bP(\vV_1),  \qquad   Y_2 \to \bP(V_2) \times \bP(\vV_2).
\end{equation*}
We define the \emph{double resolved join} of $Y_1$ and $Y_2$ 
as the fiber product 
\begin{equation*}
\tJJ(Y_1, Y_2) = \tJ(Y_1, Y_2) \times_{(Y_1 \times Y_2)} \tJv(Y_1, Y_2). 
\end{equation*}
In particular, we can consider the universal double resolved join with its natural projection
\begin{equation}
\label{tJJ-bc}
\tJJ(\bP(V_1) \times \bP(\vV_1), \bP(V_2) \times \bP(\vV_2))
\to (\bP(V_1) \times \bP(\vV_1)) \times (\bP(V_2) \times \bP(\vV_2)).
\end{equation}
This projection is a $(\bP^1 \times \bP^1)$-bundle.
Now, given for $k=1,2$, a category~$\cB^k$ which has a~$\bP(V_k) \times \bP(\vV_k)$-linear structure, 
the \emph{double resolved join} $\tJJ(\cB^1, \cB^2)$ 
of~$\cB^1$ and~$\cB^2$ is defined as 
\begin{equation*}
(\cB^1 \sotimes \cB^2) 
\otimes_{\Perf((\bP(V_1) \times \bP(\vV_1)) \times (\bP(V_2) \times \bP(\vV_2)))}
\Perf(\tJJ(\bP(V_1) \times \bP(\vV_1), \bP(V_2) \times \bP(\vV_2))),
\end{equation*}
that is the base change of $\cB^1 \sotimes \cB^2$ along~\eqref{tJJ-bc}. 

For us, the key case of a double resolved join is when $Y_1$ 
and $Y_2$ are the universal spaces of hyperplanes in $\bP(V_1)$ 
and $\bP(V_2)$, which we denote by 
\begin{equation*}
\bH_1 = \bH(\bP(V_1)) \quad \text{and} \quad \bH_2 = \bH(\bP(V_2)).  
\end{equation*}
Note that for $k=1,2$, the space $\bH_k$ indeed comes with a natural map to $\bP(V_k) \times \bP(\vV_k)$, 
hence we can form the double resolved join of~$\bH_1$ and~$\bH_2$. 
The following commutative diagram summarizes the spaces involved 
and names the relevant morphisms: 
\begin{equation}
\label{tJJH1H2-diagram}
\vcenter{
\xymatrix@C=.2em{
& & \tJJ(\bH_1, \bH_2) \ar[dl]_{\tilde{p}} \ar[dr]^{\tilde{q}} 
\\
& \tJv(\bH_1,\bH_2) \ar[dl]_{\wtilde{h_1 \times h_2}} \ar[dr]^{q_{\bH}} & &
\tJ(\bH_1, \bH_2) \ar[dl]_{p_{\bH}} \ar[dr]^{\wtilde{\pi_1 \times \pi_2}}
\\
\tJv(\bP(\vV_1), \bP(\vV_2)) \ar[dr]^{q} \ar[d]_{g} & & 
\bH_1 \times \bH_2 \ar[dl]_{h_1 \times h_2} \ar[dr]^{\pi_1 \times \pi_2} & & 
\tJ(\bP(V_1), \bP(V_2)) \ar[dl]_{p} \ar[d]^{f}  
\\ 
\bP(\vV_1 \oplus \vV_2) & 
\bP(\vV_1) \times \bP(\vV_2) & & 
\bP(V_1) \times \bP(V_2) & 
\bP(V_1 \oplus V_2) 
}}
\end{equation}
All of the squares in this diagram are cartesian. 

Since $\tJ(\bP(V_1), \bP(V_2))$ maps to $\bP(V_1 \oplus V_2)$, we can form 
the corresponding universal hyperplane section, which sits as a divisor in the product
\begin{equation}
\label{bHtJ}
\bH(\tJ(\bP(V_1), \bP(V_2))) \subset \tJ(\bP(V_1), \bP(V_2)) \times \bP(\vV_1 \oplus \vV_2). 
\end{equation}

\begin{lemma}
\label{lemma-tJJ-functor-spaces}
We have a diagram 
\begin{equation}
\label{tJJ-functor-spaces}
\vcenter{
\xymatrix{
& \tJJ(\bH_1, \bH_2) \ar[dl]_{\tilde{p}} \ar[dr]^{\alpha}
\\
\tJv(\bH_1,\bH_2) 
&&
\bH(\tJ(\bP(V_1),\bP(V_2))) 
}
}
\end{equation}
of schemes over $\bP(\vV_1 \oplus \vV_2)$, where 
all schemes appearing are smooth and projective over $S$. 
\end{lemma}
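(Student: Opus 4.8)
The plan is to construct the two morphisms $\tilde p$ and $\alpha$ explicitly and then verify the smoothness and projectivity claims by inspecting the $(\bP^1\times\bP^1)$-bundle structure of the double resolved join over $\bP(V_1)\times\bP(\vV_1)\times\bP(V_2)\times\bP(\vV_2)$. The morphism $\tilde p$ is already present in diagram~\eqref{tJJH1H2-diagram}: it is the first projection of the fiber product $\tJJ(\bH_1,\bH_2)=\tJ(\bH_1,\bH_2)\times_{\bH_1\times\bH_2}\tJv(\bH_1,\bH_2)$ onto its $\tJv(\bH_1,\bH_2)$ factor. The only genuinely new object is the morphism $\alpha$, and the first thing to do is to write it down.

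First I would describe $\alpha$. Recall that $\bH(\tJ(\bP(V_1),\bP(V_2)))$ is the universal hyperplane section of $\tJ(\bP(V_1),\bP(V_2))$ with respect to the morphism $f\colon \tJ(\bP(V_1),\bP(V_2))\to\bP(V_1\oplus V_2)$, sitting inside $\tJ(\bP(V_1),\bP(V_2))\times\bP(\vV_1\oplus\vV_2)$ as in~\eqref{bHtJ}. On the other hand $\tJJ(\bH_1,\bH_2)$ maps to $\tJ(\bH_1,\bH_2)$ via $\tilde q$, and then to $\tJ(\bP(V_1),\bP(V_2))$ via $\widetilde{\pi_1\times\pi_2}$, so we have a canonical morphism $\tJJ(\bH_1,\bH_2)\to\tJ(\bP(V_1),\bP(V_2))$. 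We also have a morphism $\tJJ(\bH_1,\bH_2)\to\tJv(\bP(\vV_1),\bP(\vV_2))\to\bP(\vV_1\oplus\vV_2)$ obtained by composing $\tilde p$, $\widetilde{h_1\times h_2}$, $q$ (or equivalently $g\circ q$) in~\eqref{tJJH1H2-diagram}. Together these give a morphism $\tJJ(\bH_1,\bH_2)\to\tJ(\bP(V_1),\bP(V_2))\times\bP(\vV_1\oplus\vV_2)$, and I claim it factors through the incidence divisor $\bH(\tJ(\bP(V_1),\bP(V_2)))$; this defines $\alpha$. To check the factorization it suffices to check the corresponding incidence condition: a point of $\tJJ(\bH_1,\bH_2)$ records a point of $\bP(V_1)$ (resp.\ $\bP(V_2)$) together with a hyperplane through it, a ray in $\cO(-H_1)\oplus\cO(-H_2)$, and a ray in $\cO(-H'_1)\oplus\cO(-H'_2)$; the image point of $\tJ(\bP(V_1),\bP(V_2))$ lies on the line spanned by the first two in $\bP(V_1\oplus V_2)$, the image point of $\bP(\vV_1\oplus\vV_2)$ lies on the line spanned by the dual data, and the two chosen hyperplanes being incident to the respective points forces the required pairing to vanish. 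Concretely this is the statement that the composite $\cO(-H_1)\oplus\cO(-H_2)\to (V_1\oplus V_2)\otimes\cO\to\cO(H'_1)\oplus\cO(H'_2)$ becomes, after restricting to the relevant rays on both sides, the zero map, which is exactly the definition of $\bH_1$ and $\bH_2$. This is the step I expect to be the main point of the proof, though it is more bookkeeping than difficulty: one must be careful matching the line-bundle twists in the two $\bP^1$-bundles and confirm that the tautological pairing restricted to $\bH_k$ vanishes on the nose.

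Next I would verify the smoothness and projectivity. All of $\bP(V_k)$, $\bP(\vV_k)$, $\bH_k$, $\tJ(\bP(V_1),\bP(V_2))$, $\tJv(\bP(\vV_1),\bP(\vV_2))$, and $\bH(\tJ(\bP(V_1),\bP(V_2)))$ are smooth and projective over $S$ by Remark~\ref{remark:good-morphism}, since each is built from projective bundles over products of smooth projective $S$-schemes and from a smooth divisor in such a product (the incidence divisors being smooth as they are projective subbundles). For $\tJJ(\bH_1,\bH_2)$: by~\eqref{tJJ-bc} the universal double resolved join is a $(\bP^1\times\bP^1)$-bundle over $\bP(V_1)\times\bP(\vV_1)\times\bP(V_2)\times\bP(\vV_2)$, hence smooth and projective over $S$; and $\tJJ(\bH_1,\bH_2)$ is its base change along $\bH_1\times\bH_2\to\prod(\bP(V_k)\times\bP(\vV_k))$, which is a morphism of smooth projective $S$-schemes, so $\tJJ(\bH_1,\bH_2)$ is again smooth and projective over $S$. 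All the squares in~\eqref{tJJH1H2-diagram} being cartesian (as stated there), the morphisms $\tilde p$ and $\alpha$ are morphisms of smooth projective $S$-schemes; in particular, by Remark~\ref{remark:good-morphism}, they are perfect and projective with dualizing complexes that are shifts of line bundles. That establishes the diagram~\eqref{tJJ-functor-spaces} with all the stated properties, completing the proof.
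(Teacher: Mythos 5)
Your proposal is correct and follows essentially the same route as the paper's own (very terse) proof: construct $\alpha$ from the two natural projections of $\tJJ(\bH_1,\bH_2)$ to $\tJ(\bP(V_1),\bP(V_2))$ and $\bP(\vV_1\oplus\vV_2)$, check that the tautological pairing vanishes (which the paper dismisses as ``easily seen'') so the map factors through the incidence divisor~\eqref{bHtJ}, and deduce smoothness and projectivity from the $(\bP^1\times\bP^1)$-bundle description. Your added detail on the incidence check — the diagonal terms vanish by the definition of $\bH_k$ and the cross terms vanish because $V_k^{\svee}$ pairs trivially with $V_{3-k}$ — is exactly the content the paper leaves implicit.
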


\begin{proof}
The morphism~$\tilde{p}$ is constructed in diagram~\eqref{tJJH1H2-diagram}.
Furthermore, the same diagram gives morphisms
\begin{align*}
\tJJ(\bH_1, \bH_2) &\xrightarrow{\hbox to 6em {\hfil$\scriptstyle\wtilde{\pi_1 \times \pi_2} \circ \tilde{q}$\hfil} } \tJ(\bP(V_1),\bP(V_2)),
\\
\tJJ(\bH_1, \bH_2) &\xrightarrow{\hbox to 6em {\hfil$\scriptstyle g \circ \wtilde{h_1 \times h_2} \circ \tilde{p}$\hfil} } \bP(\vV_1 \oplus \vV_2).
\end{align*}
It is easy to see that their product factors through the embedding~\eqref{bHtJ} 
via a morphism which we denote~$\alpha \colon \tJJ(\bH_1, \bH_2) \to \bH(\tJ(\bP(V_1), \bP(V_2)))$.  
This gives~\eqref{tJJ-functor-spaces}. The second claim of the lemma is evident. 
\end{proof}

\subsection{The HPD functor for categorical joins} 
\label{subsection-joins-HPD-functor}
Let $\cA^1$ and $\cA^2$ be Lefschetz categories over~$\bP(V_1)$ and $\bP(V_2)$. 
For $k = 1,2$, we denote by 
\begin{equation*}
\gamma_k \colon (\cA^k)^{\hpd} \to \bH(\cA^k) 
\end{equation*}
the canonical $\bP(\vV_k)$-linear inclusion functor. 
Then~\eqref{eq:tj-gg} defines a $\bP(\vV_1 \oplus \vV_2)$-linear functor 
\begin{equation*}
\tJ(\gamma_1, \gamma_2) \colon \tJv((\cA^1)^{\hpd}, (\cA^2)^{\hpd}) \to \tJv(\bH(\cA^1),\bH(\cA^2)). 
\end{equation*} 
By base changing the $\bP(V_1) \times \bP(V_2)$-linear category $\cA^1 \otimes \cA^2$ along 
diagram~\eqref{tJJ-functor-spaces}, we obtain a diagram of functors 
\begin{equation*}
\xymatrix@C=1.5em{
&&& \tJJ(\bH(\cA^1), \bH(\cA^2)) \ar[dr]^{\alpha_*}
\\
\tJv((\cA^1)^{\hpd}, (\cA^2)^{\hpd}) \ar[rr]^-{\tJ(\gamma_1, \gamma_2)} &&
\tJv(\bH(\cA^1),\bH(\cA^2)) \ar[ur]^{\tilde{p}^*} && \bH(\tJ(\cA^1,\cA^2)).
}
\end{equation*}
Since the diagram~\eqref{tJJ-functor-spaces} is over $\bP(\vV_1 \oplus \vV_2)$,
all of the above functors are~$\bP(\vV_1 \oplus \vV_2)$-linear.

By composing the functors in the diagram, we obtain a $\bP(\vV_1 \oplus \vV_2)$-linear functor 
\begin{equation}
\label{gammatJ}
\gamma_{\tJ} = \alpha_* \circ \tp^* \circ \tJ(\gamma_1, \gamma_2) 
\colon \tJv((\cA^1)^{\hpd}, (\cA^2)^{\hpd}) \to \bH(\tJ(\cA^1,\cA^2)) .  
\end{equation}
Our goal is to show that $\gamma_{\tJ}$ induces the desired equivalence $\cJ((\cA^1)^{\hpd}, (\cA^2)^{\hpd}) \simeq \cJ(\cA^1, \cA^2)^{\hpd}$ 
when~$\cA^1$ and $\cA^2$ satisfy the assumptions of Theorem~\ref{theorem-joins-HPD}. 

The following observation will be needed later.

\begin{lemma}
\label{lemma:gtj-adjoints}
The functor~$\gamma_{\tJ}$ has both left and right adjoints.
\end{lemma}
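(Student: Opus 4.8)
The plan is to prove Lemma~\ref{lemma:gtj-adjoints} by establishing that each of the three functors composing $\gamma_{\tJ} = \alpha_* \circ \tp^* \circ \tJ(\gamma_1, \gamma_2)$ admits both a left and a right adjoint; the composition then inherits both adjoints. I will handle the three factors in turn, using the adjoint-existence criteria from the paper's appendix (Lemma~\ref{lemma-adjoints}) and the geometric setup of diagram~\eqref{tJJ-functor-spaces}.

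First I would treat $\tJ(\gamma_1, \gamma_2)$. By Lemma~\ref{lemma:hpd-sod} each $\gamma_k \colon (\cA^k)^{\hpd} \to \bH(\cA^k)$ is the inclusion of an admissible subcategory, so it has both a left adjoint $\gamma_k^*$ and a right adjoint $\gamma_k^!$. Then the last sentence of Lemma~\ref{lemma:tj-functoriality} (applied with the functors $\gamma_1,\gamma_2$) gives that $\tJ(\gamma_1,\gamma_2)$ has both a left and a right adjoint. Next I would treat $\tp^*$: the morphism $\tp \colon \tJJ(\bH_1,\bH_2) \to \tJv(\bH_1,\bH_2)$ is a morphism between schemes smooth and projective over $S$ (by Lemma~\ref{lemma-tJJ-functor-spaces}), in fact a $\bP^1$-bundle, so by Remark~\ref{remark:good-morphism} the functors $\tp_!, \tp^*, \tp_*, \tp^!$ all exist and form an adjoint sequence on perfect complexes. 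After base change along the $\bP(V_1)\times\bP(V_2)$-linear category $\cA^1 \otimes \cA^2$, these adjunctions persist by Lemma~\ref{lemma-adjoints} (base change preserves adjoint functors between linear categories), so $\tp^* \colon \tJv(\bH(\cA^1),\bH(\cA^2)) \to \tJJ(\bH(\cA^1),\bH(\cA^2))$ has both a left adjoint ($\tp_!$) and a right adjoint ($\tp_*$). Finally, for $\alpha_*$: the morphism $\alpha \colon \tJJ(\bH_1,\bH_2) \to \bH(\tJ(\bP(V_1),\bP(V_2)))$ is again a morphism between schemes smooth and projective over $S$, so by Remark~\ref{remark:good-morphism} it is perfect, projective, with relative dualizing complex a shift of a line bundle; hence $\alpha^!, \alpha_*, \alpha^*$ exist with $(\alpha^*, \alpha_*, \alpha^!)$ adjoint, and in particular $\alpha_*$ has left adjoint $\alpha^*$ and right adjoint $\alpha^!$. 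Again these survive base change along $\cA^1 \otimes \cA^2$ by Lemma~\ref{lemma-adjoints}.

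Having exhibited two-sided adjoints for each of the three factors, I would conclude: the composition of functors each admitting a left adjoint admits a left adjoint (the composite of the left adjoints, in reverse order), and similarly for right adjoints. Therefore $\gamma_{\tJ}$ has both a left adjoint, namely $\tJ(\gamma_1^*, \gamma_2^*) \circ \tp_! \circ \alpha^*$ (up to the base-change identifications), and a right adjoint $\tJ(\gamma_1^!, \gamma_2^!) \circ \tp_* \circ \alpha^!$.

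I do not expect a serious obstacle here; the lemma is essentially bookkeeping. The one point requiring a little care is making sure all the relevant morphisms in diagram~\eqref{tJJ-functor-spaces} genuinely satisfy the hypotheses of Remark~\ref{remark:good-morphism} — i.e. that $\tJJ(\bH_1,\bH_2)$, $\tJv(\bH_1,\bH_2)$, and $\bH(\tJ(\bP(V_1),\bP(V_2)))$ are all smooth and projective over $S$ and the maps between them have constant relative dimension — but this is exactly the content of Lemma~\ref{lemma-tJJ-functor-spaces}, so I can cite it directly. The only other subtlety is that base change of linear categories preserves the existence of adjoints, which is recorded in Lemma~\ref{lemma-adjoints} (and already invoked in the proof of Lemma~\ref{lemma:tj-functoriality}), so no new work is needed there either.
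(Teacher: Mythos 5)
Your proof is correct and follows exactly the paper's argument: each of the three factors of $\gamma_{\tJ}$ is shown to have two-sided adjoints via Lemma~\ref{lemma:hpd-sod}, Lemma~\ref{lemma:tj-functoriality}, Lemma~\ref{lemma-tJJ-functor-spaces}, and Remark~\ref{remark:good-morphism}, and the composition inherits them. Your write-up merely spells out the bookkeeping that the paper leaves implicit.
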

\begin{proof}
The functors $\gamma_k$ have both left and right adjoints by Lemma~\ref{lemma:hpd-sod}.
Therefore, $\tJ(\gamma_1, \gamma_2)$ has both left and right adjoints by Lemma~\ref{lemma:tj-functoriality}.
On the other hand, the functors $\alpha_*$ and $\tp^*$ have both left and right adjoints 
by Lemma~\ref{lemma-tJJ-functor-spaces} and Remark~\ref{remark:good-morphism}.
\end{proof}

\begin{remark}
\label{remark:fm-join}
The functor $\gamma_{\tJ}$ can be described in terms of Fourier--Mukai kernels. 
For simplicity, in this remark we restrict ourselves to the commutative case
as in~\cite{kuznetsov-hpd}, but 
the same description works in general using the formalism of Fourier--Mukai kernels from \cite[\S5]{NCHPD}. 
Namely, assume for $k=1,2$ we are given a smooth and proper $\bP(V_k)$-scheme $X_k$ with an admissible $\bP(V_k)$-linear 
subcategory $\cA^k \subset \Perf(X_k)$, a smooth and proper $\bP(\vV_k)$-scheme~$Y_k$ with an admissible 
$\bP(\vV_k)$-linear subcategory~$\cB^k \subset \Perf(Y_k)$, and a $\bP(\vV_k)$-linear Fourier--Mukai functor 
\begin{equation*}
\Phi_{\cE_k} \colon \Perf(Y_k) \to \Perf(\bH(X_k)) ,  \quad  \cE_k \in \Perf \left(\bH(X_k) \times_{\bP(\vV_k)} Y_k \right) , 
\end{equation*}
such that $\Phi_{\cE_k}$ is a left splitting functor in the sense of \cite[Definition 3.1]{kuznetsov-hpd}, 
$(\cA^k)^{\hpd}$ is the image of $\Phi_{\cE_k}$, and $\cB_k$ is the image of $\Phi_{\cE_k}^*$. 
Note that by \cite[Theorem 3.3]{kuznetsov-hpd} the functors $\Phi_{\cE_k}$ and $\Phi_{\cE_k}^*$ then induce 
mutually inverse equivalences $\cB^k \simeq (\cA^k)^{\hpd}$.  

In this situation, we claim there is a $\bP(\vV_1 \oplus \vV_2)$-linear Fourier--Mukai functor  
\begin{equation*}
\Phi_{\cE} \colon 
\Perf(\tJv(Y_1, Y_2)) \to \Perf(\bH(\tJ(X_1, X_2))) ,  
\ 
\cE \in \Perf\left( \bH(\tJ(X_1, X_2)) \times_{\bP(\vV_1 \oplus \vV_2)} \tJv(Y_1, Y_2)  \right)
\end{equation*} 
which restricts to the functor 
\begin{equation*}
\tJv(\cB^1, \cB^2) \simeq \tJv((\cA^1)^{\hpd}, (\cA^2)^{\hpd}) \xrightarrow{\, \gamma_{\tJ} \,} \bH(\tJ(\cA^1,\cA^2)) . 
\end{equation*} 
For this, consider the commutative diagram 
\begin{equation*}
\xymatrix{
\tJJ(\bH_1, \bH_2) \ar[r]^{u} \ar[d]_{v} & \bH_1 \times \bH_2 \ar[d] \\ 
\bH(\tJ(\bP(V_1), \bP(V_2))) \times_{\bP(\vV_1 \oplus \vV_2)} \tJv(\bP(\vV_1), \bP(\vV_2)) \ar[r] & 
\bP(V_1) \times \bP(V_2) \times \bP(\vV_1) \times \bP(\vV_2) 
}
\end{equation*}
where $u = q_{\bH} \circ \tilde{p} = p_{\bH} \circ \tilde{q}$ and~$v$ is induced 
by the morphisms~$\wtilde{h_1 \times h_2} \circ \tilde{p}$ in~\eqref{tJJH1H2-diagram} and~$\alpha$ in~\eqref{tJJ-functor-spaces}.
Base changing from $\bP(V_1), \bP(V_2), \bP(\vV_1), \bP(\vV_2)$, to 
$X_1, X_2, Y_1, Y_2$, gives a commutative diagram 
\begin{equation*}
\xymatrix{
\tJJ(\bH(X_1), \bH(X_2)) \ar[r]^-{u} \ar[d]_{v} & \left( \bH(X_1) \times_{\bP(\vV_1)} Y_1 \right) \times \left( \bH(X_2) \times_{\bP(\vV_2)} Y_2 \right) \ar[d] \\ 
\bH(\tJ(X_1, X_2)) \times_{\bP(\vV_1 \oplus \vV_2)} \tJv(Y_1, Y_2) \ar[r] & 
X_1 \times X_2 \times Y_1 \times Y_2 
}
\end{equation*}
where we abusively still denote the top and left maps by $u$ and $v$. 
It is straightforward to verify that 
the object 
\begin{equation}
\label{eq:kernel-join}
\cE = v_*u^*(\cE_1 \boxtimes \cE_2)  \in 
\Perf \left( \bH(\tJ(X_1, X_2)) \times_{\bP(\vV_1 \oplus \vV_2)} \tJv(Y_1, Y_2)  \right)
\end{equation}
is the desired Fourier--Mukai kernel. 
\end{remark}

\subsection{Relations between the HPD functors}
\label{subsection-HPD-functor-relations}

Let $H$ and $H'$ denote the relative hyperplane classes on $\bP(V_1 \oplus V_2)$ 
and $\bP(\vV_1 \oplus \vV_2)$. 
As in \S\ref{subsection-HPD-characterization}, 
let $\cM$ be the cohomology sheaf of the monad 
\begin{equation}
\label{cR-monad} 
\cM \simeq \{ \cO(-H) \to (V_1 \oplus V_2) \otimes \cO \to \cO(H') \}
\end{equation}
on $\bH(\bP(V_1 \oplus V_2))$. 
Similarly, for $k=1,2$, we let $\cM_k$ be the cohomology sheaf of the monad
\begin{equation}
\label{cRk-monad} 
\cM_k \simeq \{ \cO(-H_k) \to V_k \otimes \cO \to \cO(H_k') \} 
\end{equation}
on $\bH_k$. 
Pushforward along the morphisms 
\begin{align*}
\pi_{\tJ} & \colon \bH(\tJ(\bP(V_1), \bP(V_2))) \to \tJ(\bP(V_1), \bP(V_2)) \\ 
\pi_{k} & \colon \bH_k \to \bP(V_k), \qquad k = 1,2, 
\end{align*}
induces functors 
\begin{align*}
\pi_{\tJ*} & \colon \bH(\tJ(\cA^1, \cA^2)) \to \tJ(\cA^1, \cA^2) , \\ 
\pi_{k*} & \colon \bH(\cA^k) \to \cA^k, \qquad k =1,2. 
\end{align*}
For $t \geq 0$, we aim to relate the composition 
\begin{equation}
\label{pi-PhiE}
\pi_{\tJ*} \circ (- \otimes \wedge^t \cM) \circ \gamma_\tJ \colon 
\tJv((\cA^1)^{\hpd}, (\cA^2)^{\hpd})) \to \tJ(\cA^1, \cA^2) 
\end{equation}
to the analogous compositions 
\begin{equation}
\label{pik-PhiEk}
\pi_{k*} \circ (- \otimes \wedge^t \cM_k) \circ \gamma_k \colon (\cA^k)^{\hpd} \to \cA^k, \qquad k=1,2. 
\end{equation}
Combined with the results of~\S\ref{subsection-HPD-characterization}, 
this will be the key ingredient in our proof that $\gamma_{\tJ}$ induces 
the equivalence of Theorem~\ref{theorem-joins-HPD}. 
The following result handles the case $t = 0$. 
We use the notation from diagram~\eqref{tJJH1H2-diagram}.

\begin{proposition}
\label{proposition-pi-gammatJ-1}
There is an isomorphism
\begin{equation*}
\pi_{\tJ*} \circ \gamma_\tJ \simeq p^* \circ ((\pi_{1*} \circ \gamma_1) \otimes (\pi_{2*} \circ \gamma_2)) \circ q_*  
\end{equation*}
of functors $\tJv((\cA^1)^{\hpd}, (\cA^2)^{\hpd}) \to \tJ(\cA^1, \cA^2)$. 
\end{proposition}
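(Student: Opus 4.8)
The plan is to unwind the definition of $\gamma_{\tJ}$ in~\eqref{gammatJ} and compute the composition $\pi_{\tJ*} \circ \gamma_{\tJ}$ by tracking functors through the spaces in diagram~\eqref{tJJH1H2-diagram}, reducing everything to base changes of explicit geometric morphisms via the formalism of Theorem~\ref{theorem-bzfn} and the commutation of pushforward/pullback with base change. Since $\gamma_{\tJ} = \alpha_* \circ \tp^* \circ \tJ(\gamma_1,\gamma_2)$, we have $\pi_{\tJ*} \circ \gamma_{\tJ} = \pi_{\tJ*} \circ \alpha_* \circ \tp^* \circ \tJ(\gamma_1,\gamma_2)$. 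The first step is to identify the composition $\pi_{\tJ} \circ \alpha \colon \tJJ(\bH_1,\bH_2) \to \tJ(\bP(V_1),\bP(V_2))$ with a map that appears in diagram~\eqref{tJJH1H2-diagram}: indeed, it should coincide with $(\wtilde{\pi_1 \times \pi_2}) \circ \tq$ (the morphism collapsing the ``dual'' $\bP^1$-direction and then the hyperplane directions). So $\pi_{\tJ*} \circ \alpha_* \simeq (\wtilde{\pi_1 \times \pi_2})_* \circ \tq_*$ at the level of the base-changed categories.

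Next I would analyze $\tp^* \circ \tJ(\gamma_1,\gamma_2)$. Using the cartesian squares in~\eqref{tJJH1H2-diagram}, the functor $\tp \colon \tJJ(\bH_1,\bH_2) \to \tJv(\bH_1,\bH_2)$ is the base change along $q_{\bH}$ (equivalently, $\wtilde{\pi_1\times\pi_2}$) of the $\bP^1$-bundle $p \colon \tJ(\bP(V_1),\bP(V_2)) \to \bP(V_1)\times\bP(V_2)$; likewise $\tq$ is the base change of the $\bP^1$-bundle $q \colon \tJv(\bP(\vV_1),\bP(\vV_2)) \to \bP(\vV_1)\times\bP(\vV_2)$ along $p_{\bH}$. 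The key is then to apply base change for pushforward/pullback along a cartesian square: $\tq_* \circ \tp^* \simeq p^*_{\bH} \circ q_*$ where on the left side these are the maps of diagram~\eqref{tJJH1H2-diagram} over $\bH_1 \times \bH_2$, using that $p$ and $q$ are proper and flat (so Tor-independence holds and projection/base change formulas apply, cf. Remark~\ref{remark:good-morphism}). Combining with the previous step, $\pi_{\tJ*} \circ \gamma_{\tJ} \simeq (\wtilde{\pi_1\times\pi_2})_* \circ p^*_{\bH} \circ q_* \circ \tJ(\gamma_1,\gamma_2)$, and another application of base change across the square with vertices $\tJv(\bH_1,\bH_2)$, $\bH_1\times\bH_2$, $\tJv(\bP(\vV_1),\bP(\vV_2))$, $\bP(\vV_1)\times\bP(\vV_2)$ lets me swap $(\wtilde{\pi_1\times\pi_2})_* \circ p^*_{\bH}$ past $q_*$, so that the outer $q_*$ can be pulled to the right end of the expression.

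The remaining ingredient is to identify what is left on the ``$\bH_1 \times \bH_2$'' level: by Lemma~\ref{lemma:tj-functoriality}, $\tJ(\gamma_1,\gamma_2)$ sits in a commutative square with $(\wtilde{h_1\times h_2})^*$ and $\gamma_1\otimes\gamma_2$, and after pushing forward along $\pi_1\times\pi_2$ one gets $(\pi_{1*}\gamma_1)\otimes(\pi_{2*}\gamma_2)$ composed with the appropriate pullback, because $\pi_{k*} \circ \gamma_k$ is computed factorwise on the external tensor product (here I use that pushforward along a product of morphisms is the external product of pushforwards, again via Theorem~\ref{theorem-bzfn}). Assembling: $\pi_{\tJ*}\circ\gamma_{\tJ} \simeq p^* \circ \big((\pi_{1*}\circ\gamma_1)\otimes(\pi_{2*}\circ\gamma_2)\big) \circ q_*$, which is the claim. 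The main obstacle will be bookkeeping: there are many morphisms and many cartesian squares in~\eqref{tJJH1H2-diagram}, and one must be careful that every base change invoked is along a square where the relevant Tor-independence holds — this is guaranteed since all morphisms in sight are between smooth projective $S$-schemes and the $\bP^1$-bundle and hyperplane-incidence morphisms are flat, so Remark~\ref{remark:good-morphism} and the standard base change theorems apply throughout. A secondary subtlety is making sure the identification $\pi_{\tJ}\circ\alpha = (\wtilde{\pi_1\times\pi_2})\circ\tq$ is correct on the nose (not just up to the data we need), which follows by unwinding the construction of $\alpha$ in Lemma~\ref{lemma-tJJ-functor-spaces}.
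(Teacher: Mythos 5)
Your proposal follows essentially the same route as the paper's proof: decompose $\gamma_{\tJ}$ as $\alpha_*\circ\tp^*\circ\tJ(\gamma_1,\gamma_2)$, identify $\pi_{\tJ}\circ\alpha$ with $(\wtilde{\pi_1\times\pi_2})\circ\tilde{q}$, perform the two flat base-change swaps across the Tor-independent cartesian squares of the double resolved join diagram, and finish with Lemma~\ref{lemma:tj-functoriality} together with the factorwise computation of $(\pi_1\times\pi_2)_*$ on external tensor products. The only quibbles are bookkeeping slips that do not affect the argument: the second swap is across the square over $\bP(V_1)\times\bP(V_2)$ (with $p_{\bH}$, $\wtilde{\pi_1\times\pi_2}$, $p$, $\pi_1\times\pi_2$) rather than the square over $\bP(\vV_1)\times\bP(\vV_2)$ that you name, and the relevant instance of Lemma~\ref{lemma:tj-functoriality} is the identity $q_{\bH*}\circ\tJ(\gamma_1,\gamma_2)\simeq(\gamma_1\otimes\gamma_2)\circ q_*$ rather than one involving $(\wtilde{h_1\times h_2})^*$.
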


\begin{proof}
Consider the commutative diagram
\begin{equation}
\label{eq:big-diagram}
\vcenter{\xymatrix{
& \tJJ(\bH_1, \bH_2) \ar[dl]_{\tilde{p}} \ar[dr]^{\alpha} \ar[d]_{\tilde{q}}
\\
\tJv(\bH_1,\bH_2) \ar[d]_{q_\bH} \ar@{}[r]|-\bigstar &
\tJ(\bH_1,\bH_2) \ar[dl]_{p_\bH} \ar[dr]^{\widetilde{\pi_1\times \pi_2}} \ar@{}[dd]|-\bigstar &
\bH(\tJ(\bP(V_1),\bP(V_2))) \ar[d]^{\pi_\tJ}
\\
\bH_1 \times \bH_2 \ar[dr]^{\pi_1 \times \pi_2} &&
\tJ(\bP(V_1),\bP(V_2)) \ar[dl]_p
\\
& \bP(V_1) \times \bP(V_2)
}}
\end{equation}
where the squares marked by $\star$ are cartesian and $\Tor$-independent since $p$, being a $\bP^1$-bundle, is flat.
Therefore, we have a chain of isomorphisms
\begin{align*}
\pi_{\tJ*} \circ \alpha_* \circ \tilde{p}^* 
&\simeq (\widetilde{\pi_1\times \pi_2})_* \circ \tilde{q}_* \circ \tilde{p}^* \\
&\simeq (\widetilde{\pi_1\times \pi_2})_* \circ p_\bH^* \circ q_{\bH*} \\
&\simeq p^* \circ ({\pi_1\times \pi_2})_* \circ q_{\bH*}  
\end{align*}
of functors $\Perf(\tJv(\bH_1, \bH_2)) \to \Perf(\tJ(\bP(V_1), \bP(V_2)))$. 
After base change from $\bP(V_1)$ to $\cA^1$ and from $\bP(V_2)$ to $\cA^2$ and composition with the functor $\tJ(\gamma_1,\gamma_2)$, 
we obtain an isomorphism
\begin{equation*}
\pi_{\tJ*} \circ \alpha_* \circ \tilde{p}^* \circ \tJ(\gamma_1,\gamma_2) \simeq
p^* \circ ({\pi_1\times \pi_2})_* \circ q_{\bH*} \circ \tJ(\gamma_1,\gamma_2)
\end{equation*}
of functors $\tJv((\cA^1)^{\hpd},(\cA^2)^{\hpd}) \to \tJ(\cA^1,\cA^2)$.
The left hand side is $\pi_{\tJ*} \circ \gamma_\tJ$. 
On the other hand, from the commutative diagram of Lemma~\ref{lemma:tj-functoriality} we obtain an isomorphism
\begin{equation*}
q_{\bH*} \circ \tJ(\gamma_1,\gamma_2) \simeq (\gamma_1 \sotimes \gamma_2) \circ q_*,
\end{equation*} 
that allows us to rewrite the right hand side as
\begin{equation*}
p^* \circ ({\pi_1\times \pi_2})_* \circ (\gamma_1 \sotimes \gamma_2) \circ q_*,
\end{equation*}
which is equivalent to the right hand side in the statement of the proposition.
\end{proof}

To relate the functor~\eqref{pi-PhiE} to the functors~\eqref{pik-PhiEk} 
for arbitrary $t \geq 0$, we will need the following lemma. 
By definition $\tJJ(\bH_1, \bH_2)$ admits projections to $\bH_1$ 
and $\bH_2$, and also maps to $\bH(\bP(V_1 \oplus V_2))$ via 
the composition 
\begin{equation*}
\tJJ(\bH_1, \bH_2) \xrightarrow{\ \alpha\ } \bH(\bJ(\bP(V_1), \bP(V_2))) 
\xrightarrow{\ \bH(f)\ } \bH(\bP(V_1 \oplus V_2)). 
\end{equation*}
We denote by $\tM, \tM_{1}, \tM_2$, the pullbacks to $\tJJ(\bH_1, \bH_2)$ 
of the sheaves $\cM, \cM_1, \cM_2$, defined by~\eqref{cR-monad} and~\eqref{cRk-monad} above. 
Note that by Lemma~\ref{lemma-tJ-divisors}\eqref{omega-p} we have the formula
\begin{equation}
\label{omegap} 
\omegatp
= \cO(H_1 + H_2 - 2H)[1] 
\end{equation} 
for the relative dualizing complex of the morphism $\tp \colon \tJJ(\bH_1, \bH_2) \to \tJv(\bH_1, \bH_2)$.

As in Remark~\ref{remark:fm-join} we write 
\begin{equation*}
u = q_\bH \circ \tilde{p} = p_\bH \circ \tilde{q} \colon \tJJ(\bH_1, \bH_2) \to \bH_1 \times \bH_2
\end{equation*}
for the canonical $\bP^1 \times \bP^1$-bundle, see~\eqref{eq:big-diagram}. 
For $k=1,2$, we write 
\begin{equation*}
\pr_k \colon \bH_1 \times \bH_2 \to \bH_k
\end{equation*}
for the projection. 

\begin{lemma}
\label{lemma-wedge-tR}
There is an isomorphism of sheaves on~$\bH_1 \times \bH_2$
\begin{equation*}
u_*(\omegatp \otimes \wedge^t \tM) \simeq \wedge^t(\pr_1^*\cM_1 \oplus \pr_2^*\cM_2) . 
\end{equation*}
\end{lemma}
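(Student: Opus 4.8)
The key is to identify the three monads involved and to work relative to the $\bP^1\times\bP^1$-bundle $u\colon \tJJ(\bH_1,\bH_2)\to\bH_1\times\bH_2$. First I would unwind the definition of $\tM$. On $\bH(\bP(V_1\oplus V_2))$ the sheaf $\cM$ sits in the monad $\{\cO(-H)\to(V_1\oplus V_2)\otimes\cO\to\cO(H')\}$; pulled back to $\tJJ(\bH_1,\bH_2)$, the hyperplane classes are related by $H=H_1+H_2-\tbE$ (the usual relations from Lemma~\ref{lemma-tJ-divisors}, now for the second factor) and $H'=H_1'+H_2'-\tbE'$ on the other $\bP^1$-factor. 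Using these, together with the fact that $V_1\oplus V_2$ splits, one sees that $\tM$ is built from $\tM_1$ and $\tM_2$: concretely, I expect a short exact sequence (or exact triangle) on $\tJJ(\bH_1,\bH_2)$ expressing $\tM$ as an extension involving $\pr_1^*\cM_1$, $\pr_2^*\cM_2$, and the relative (co)tangent bundles of the two $\bP^1$-bundles. The cleanest formulation: the pullback of $\cM$ fits into a triangle whose outer terms are $\pr_1^*\cM_1\oplus\pr_2^*\cM_2$ and the line bundles $\omegatp$-related corrections, so that $\omegatp\otimes\cM\simeq$ something whose exterior powers we can compute.

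The second step is to take exterior powers. If $\tM$ (twisted appropriately by $\omegatp$) is an extension of $\pr_1^*\cM_1$ by $\pr_2^*\cM_2$ up to line-bundle twists along the $\bP^1\times\bP^1$-bundle, then $\wedge^t$ of it decomposes by the standard filtration $\wedge^t(\cE'\oplus\cE'')=\bigoplus_{a+b=t}\wedge^a\cE'\otimes\wedge^b\cE''$, or by the analogous filtration for an extension. The twist by $\omegatp=\cO(H_1+H_2-2H)[1]$ is exactly what is needed to cancel the $\cO(1)$-type twists coming from the two relative $\bP^1$'s, so that after applying $u_*$ — which kills the positive twists along the fibers and is the identity on fiber-degree-zero pieces (projective bundle formula / Beilinson) — only the term $\wedge^t(\pr_1^*\cM_1\oplus\pr_2^*\cM_2)$ survives. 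I would compute $u_*$ using $u=\tp\circ$(projection) $=$ the composite of the two $\bP^1$-bundles and flat base change, applying $\tp_!$ (equivalently $\tp_*(-\otimes\omegatp)$) first: the formula~\eqref{omegap} for $\omegatp$ is precisely tailored so that $\tp_!(\cO(tH))$ lands in fiber-degree $0$ in the right range, exactly as in~\eqref{eq:p-shriek} in the proof of Lemma~\ref{lemma-fji}.

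Putting this together: write $u_*(\omegatp\otimes\wedge^t\tM)$, substitute the filtration of $\wedge^t\tM$ into $\pr_1^*\cM_1,\pr_2^*\cM_2$-pieces with their line-bundle twists, push forward along the fibers of $u$ term by term, and observe that the twists conspire (via~\eqref{omegap}) so that each term $\wedge^a(\pr_1^*\cM_1)\otimes\wedge^b(\pr_2^*\cM_2)$ with $a+b=t$ is pushed forward isomorphically, while any ``off-diagonal'' contributions from the extension filtration push forward to zero. This yields $u_*(\omegatp\otimes\wedge^t\tM)\simeq\bigoplus_{a+b=t}\wedge^a(\pr_1^*\cM_1)\otimes\wedge^b(\pr_2^*\cM_2)=\wedge^t(\pr_1^*\cM_1\oplus\pr_2^*\cM_2)$, as claimed.

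\textbf{Main obstacle.}
The crux is establishing the precise relation between $\tM$ and $\tM_1,\tM_2$ on $\tJJ(\bH_1,\bH_2)$ — i.e.\ getting the right short exact sequence/filtration for the pullback of the monad $\cM$ in terms of the two smaller monads and the relative $\bP^1$-geometry, with the line-bundle twists tracked correctly. Once that structural identity is in hand, taking $\wedge^t$ and pushing forward is bookkeeping of twists, guided entirely by the formula~\eqref{omegap} for $\omegatp$. I would expect to derive the structural identity by restricting the three defining monads to $\tJJ(\bH_1,\bH_2)$, using that the middle term $(V_1\oplus V_2)\otimes\cO=(V_1\otimes\cO)\oplus(V_2\otimes\cO)$ splits, and that on $\tJJ(\bH_1,\bH_2)$ the maps $\cO(-H)\to V_k\otimes\cO$ and $V_k\otimes\cO\to\cO(H')$ factor through $\cO(-H_k)$ and $\cO(H_k')$ respectively (by the $\bP^1\times\bP^1$-bundle structure), which forces the monad for $\cM$ to be an iterated extension of the monads for $\cM_1$ and $\cM_2$ twisted by the relative hyperplane classes.
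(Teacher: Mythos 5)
Your plan is correct and follows essentially the same route as the paper: the paper organizes the pullbacks of the three monads into a bicomplex to produce a filtration of $\tM$ with associated graded $\cO(H-H_1-H_2)\oplus(\tM_1\oplus\tM_2)\oplus\cO(H'_1+H'_2-H')$, takes the induced filtration on $\wedge^t\tM$, and then uses $u_*(\cO(aH+a'H'))=0$ when $a=-1$ or $a'=-1$ together with $u_*(\omegatp)\simeq\cO$ to kill every graded piece except $\wedge^t(\pr_1^*\cM_1\oplus\pr_2^*\cM_2)$. This is exactly the structural identity and twist bookkeeping you describe, so the proposal matches the paper's proof.
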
 

\begin{proof}
The pullback to $\tJJ(\bH_1, \bH_2)$ of the monad~\eqref{cR-monad} and 
the direct sum of the pullbacks to $\tJJ(\bH_1, \bH_2)$ 
of the monads~\eqref{cRk-monad} fit 
into the following bicomplex on $\tJJ(\bH_1, \bH_2)$: 
\begin{equation*}
\xymatrix{
& \cO(H'_1) \oplus \cO(H'_2) \ar[r] &
\cO(H')
\\
\cO(-H) \ar[r] &
(V_1 \oplus V_2) \otimes \cO \ar[r] \ar[u] &
\cO(H') \ar@{=}[u] 
\\
\cO(-H) \ar[r] \ar@{=}[u] &
\cO(-H_1) \oplus \cO(-H_2) \ar[u] 
}
\end{equation*}
The nontrivial cohomology sheaves of its rows with respect to the horizontal differential 
are given by \mbox{$\cO(H-H_1-H_2)$}, $\tM$, $\cO(H'_1+H'_2-H')$ in degrees $(0,-1), (0,0), (0,1)$. 
The only nontrivial cohomology sheaf with respect to the vertical differential 
is $\tM_1 \oplus \tM_2$ in degree~$(0,0)$. 
It follows that there is a filtration of $\tM$ whose associated graded is 
\begin{equation*}
\cO(H-H_1-H_2) \oplus (\tM_1 \oplus \tM_2) \oplus \cO(H'_1+H'_2-H'). 
\end{equation*}
Hence $\wedge^t \tM$ has a filtration whose associated graded is 
\begin{align*}
& \left( \wedge^{t-1}(\tM_1 \oplus \tM_2) \otimes \cO(H-H_1-H_2) \right) \\  
\oplus & \left( \wedge^t(\tM_1 \oplus \tM_2)  \right) \\ 
\oplus & \left( \wedge^{t-2}(\tM_1 \oplus \tM_2) \otimes \cO(H-H_1-H_2+H'_1+H'_2-H') \right) \\ 
\oplus & \left( \wedge^{t-1}(\tM_1 \oplus \tM_2) \otimes \cO(H'_1+H'_2-H') \right). 
\end{align*} 
Since $u \colon \tJJ(\bH_1, \bH_2) \to \bH_1 \times \bH_2$ is a $\bP^1 \times \bP^1$-bundle 
(whose relative hyperplane classes are~$H$ and $H'$), we have
\begin{equation*}
u_*(\cO(aH + a'H')) = 0 \quad \text{if either $a = -1$ or $a' = -1$.} 
\end{equation*}
Moreover, since $u = q_{\bH} \circ \tp$ where $q_{\bH}$ and $\tp$ are 
$\bP^1$-bundles, we have 
\begin{equation*}
u_*(\omegatp)  \simeq \cO. 
\end{equation*}
Hence by the formula~\eqref{omegap} for $\omegatp$ and the above description of the 
associated graded of the filtration of $\wedge^t \tM$, we find 
\begin{equation*}
u_*(\omegatp \otimes \wedge^t \tM) \simeq \wedge^t(\pr_1^*\cM_1 \oplus \pr_2^*\cM_2)  
\end{equation*}
as desired. 
\end{proof} 

Recall that $p_!$ denotes the left adjoint of $p^* \colon  \Perf(\bP(V_1) \stimes \bP(V_2)) \to \Perf(\tJ(\bP(V_1), \bP(V_2)))$. 
Below $\cM$ is considered as a vector bundle on~$\bH(\bJ(\bP(V_1), \bP(V_2)))$ 
(pulled back along~$\bH(f)$ from~$\bH(\bP(V_1 \oplus V_2))$). 

\begin{proposition}
\label{proposition-pi-gammatJ-2} 
There is an isomorphism 
\begin{equation*}
p_! \circ \pi_{\tJ*} \circ (- \otimes \wedge^t \cM) \circ  \gamma_{\tJ} \circ q^* 
\simeq 
\bigoplus_{t_1 + t_2 = t} (\pi_{1*} \circ (- \otimes \wedge^{t_1} \cM_{1}) \circ \gamma_1) \otimes 
(\pi_{2*} \circ (- \otimes \wedge^{t_2} \cM_{2}) \circ \gamma_2) 
\end{equation*}
of functors $(\cA^1)^{\hpd} \sotimes (\cA^2)^{\hpd} \to \cA^1 \sotimes \cA^2$. 
\end{proposition}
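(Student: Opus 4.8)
The plan is to reduce the claimed isomorphism to the combination of Proposition~\ref{proposition-pi-gammatJ-1}, the projective bundle / monad computation of Lemma~\ref{lemma-wedge-tR}, and the formula~\eqref{eq:p-shriek} for $p_!$ of line bundles on $\tJ(\bP(V_1),\bP(V_2))$. First I would spell out all the functors involved on the universal spaces, before any base change to $\cA^1,\cA^2$: on~$\tJJ(\bH_1,\bH_2)$ we have the two $\bP^1$-bundle maps $\tp\colon \tJJ(\bH_1,\bH_2)\to\tJv(\bH_1,\bH_2)$ and $\tq\colon\tJJ(\bH_1,\bH_2)\to\tJ(\bH_1,\bH_2)$, the map $\alpha$ to $\bH(\tJ(\bP(V_1),\bP(V_2)))$, the $\bP^1\times\bP^1$-bundle $u=q_\bH\circ\tp=p_\bH\circ\tq$, and $\pi_\tJ$. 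Unwinding the definition~\eqref{gammatJ} of $\gamma_\tJ$ and using that $q^*$ factors through $q_\bH^*$ after base change, the left-hand side becomes (after base change) the functor induced by
\begin{equation*}
p_!\circ \pi_{\tJ*}\circ(-\otimes\wedge^t\cM)\circ\alpha_*\circ\tp^*\circ q_\bH^*
\end{equation*}
precomposed with $\gamma_1\sotimes\gamma_2$. The key point is to move $(-\otimes\wedge^t\cM)$ past $\alpha_*$ via the projection formula (noting $\alpha^*\cM\simeq\tM$, since $\cM$ is pulled back from $\bH(\bP(V_1\oplus V_2))$ and $\alpha$ is compatible with the maps to that space), turning the expression into $p_!\circ\pi_{\tJ*}\circ\alpha_*\circ(-\otimes\tM^{\otimes}\text{-stuff})\circ\tp^*\circ q_\bH^*$.

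Next I would run the base-change / commutativity manipulations exactly as in the proof of Proposition~\ref{proposition-pi-gammatJ-1}, using diagram~\eqref{eq:big-diagram}: the two squares marked $\star$ are Tor-independent, so $\pi_{\tJ*}\circ\alpha_*\simeq(\widetilde{\pi_1\times\pi_2})_*\circ\tq_*$ and flat base change along the lower $\star$ square lets me write $p_!\circ(\widetilde{\pi_1\times\pi_2})_*\simeq (\pi_1\times\pi_2)_!\circ p_{\bH!}$. Then $p_{\bH!}\circ\tq_*\simeq u_*\circ(-\otimes\omega_{p_\bH})$ up to the relevant identifications — here I need to be a little careful about whether to bundle the twist by $\omega_\tp$ into the $\wedge^t\cM$ term or keep it separate, and Lemma~\ref{lemma-wedge-tR} is stated precisely so that $u_*(\omega_\tp\otimes\wedge^t\tM)\simeq\wedge^t(\pr_1^*\cM_1\oplus\pr_2^*\cM_2)$; so I would arrange the computation so that the shriek adjoints (via~\eqref{eq:shriek-adjoints}) produce exactly the factor $\omega_\tp$, or alternatively absorb~\eqref{eq:p-shriek} to handle the $p_!$ of $\cO(tH)$. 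Concretely: $\tp^*\cO\simeq\cO$, $q_\bH^*$ followed by $(-\otimes\wedge^t\cM)=(-\otimes\wedge^t\tM)$ after pulling through, and applying $u_*(\omega_\tp\otimes -)$ yields $\wedge^t(\pr_1^*\cM_1\oplus\pr_2^*\cM_2)$, which by the binomial/Koszul decomposition $\wedge^t(E_1\oplus E_2)\simeq\bigoplus_{t_1+t_2=t}\wedge^{t_1}E_1\boxtimes\wedge^{t_2}E_2$ splits as a direct sum over $t_1+t_2=t$ of $\pr_1^*\wedge^{t_1}\cM_1\otimes\pr_2^*\wedge^{t_2}\cM_2$. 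Pushing forward along $\pi_1\times\pi_2$ and using the Künneth/box-product identity $(\pi_1\times\pi_2)_*(\pr_1^*(-)\otimes\pr_2^*(-))\simeq\pi_{1*}(-)\boxtimes\pi_{2*}(-)$, then base changing $\bP(V_k)\rightsquigarrow\cA^k$ and composing with $\gamma_1\sotimes\gamma_2$, gives precisely the right-hand side.

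The main obstacle, I expect, is bookkeeping the various dualizing complexes and the precise placement of the twist by $\omega_\tp$: the left adjoints $p_!$, $p_{\bH!}$, $\tq$-pushforwards, and the $\bP^1$- and $\bP^1\times\bP^1$-bundle structures all contribute shifts and line-bundle twists (cf.~\eqref{omegap}, \eqref{eq:p-shriek}, Lemma~\ref{lemma-tJ-divisors}\eqref{omega-p}), and it is easy to get them off by a twist or a shift; Lemma~\ref{lemma-wedge-tR} is the device that makes everything cancel, so the real work is organizing the chain of isomorphisms so that its input is literally $\omega_\tp\otimes\wedge^t\tM$. A secondary point requiring care is the compatibility $\alpha^*\cM\simeq\tM$ and, more importantly, the commutation of $(-\otimes\wedge^t\cM)$ with the base change from $\bP(V_1)\times\bP(V_2)$ to $\cA^1\otimes\cA^2$ — this is fine because $\cM$ (resp.\ $\cM_k$) lives on the hyperplane universal space which is obtained by base change along the $\bP(V_1)\times\bP(V_2)$-direction, so tensoring by it commutes with the $\Perf$-module structure being tensored — but it should be stated explicitly. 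Everything else is an application of flat base change, the projection formula, and Künneth, all of which are available in the linear-category framework by Theorem~\ref{theorem-bzfn} and the lemmas of Appendix~\ref{section:linear-cats}.
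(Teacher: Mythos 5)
Your proposal is correct and follows essentially the same route as the paper's proof: rewrite $\tJ(\gamma_1,\gamma_2)\circ q^*$ as $q_\bH^*\circ(\gamma_1\otimes\gamma_2)$, move $\wedge^t\cM$ across $\alpha_*$ by the projection formula, convert $p_!$ into $p_*(-\otimes\omega_p)$ so that the pullback of $\omega_p$ becomes $\omegatp$, reduce everything to tensoring with $u_*(\omegatp\otimes\wedge^t\tM)$ via the composite $p\circ\pi_\tJ\circ\alpha=(\pi_1\times\pi_2)\circ u$, and finish with Lemma~\ref{lemma-wedge-tR} and the K\"unneth decomposition of $\wedge^t(\pr_1^*\cM_1\oplus\pr_2^*\cM_2)$. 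The only slip is the displayed intermediate identity $p_!\circ(\widetilde{\pi_1\times\pi_2})_*\simeq(\pi_1\times\pi_2)_!\circ p_{\bH!}$, whose right-hand side should read $(\pi_1\times\pi_2)_*\circ p_{\bH!}$ (the projection formula transfers only $\omega_p$, which pulls back to $\omega_{p_{\bH}}$, and leaves an ordinary pushforward along $\pi_1\times\pi_2$); your final assembly already uses $(\pi_1\times\pi_2)_*$ together with K\"unneth, so the conclusion is unaffected.
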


\begin{proof}
The proof is similar to that of Proposition~\ref{proposition-pi-gammatJ-1}.
First, using the commutative diagram in Lemma~\ref{lemma:tj-functoriality}, we obtain an isomorphism 
\begin{equation*}
\tJ(\gamma_1,\gamma_2) \circ q^* \simeq q_\bH^* \circ (\gamma_1 \otimes \gamma_2).
\end{equation*}
Using the definition of $\gamma_\tJ$, this allows us to rewrite the left hand side in 
the statement of the proposition as 
\begin{equation*}
p_! \circ \pi_{\tJ*} \circ (- \otimes \wedge^t \cM) \circ \alpha_* \circ \tilde{p}^* \circ q_\bH^* \circ (\gamma_1 \sotimes \gamma_2).
\end{equation*}
By the projection formula, we can rewrite this as 
\begin{equation*}
p_! \circ \pi_{\tJ*} \circ \alpha_* \circ (- \otimes \wedge^t \tM) \circ \tilde{p}^* \circ q_\bH^* \circ (\gamma_1 \sotimes \gamma_2). 
\end{equation*}
Note that $p_! = p_* \circ (- \otimes \omegap)$ by~\eqref{eq:shriek-adjoints}.
Further, the pullback of $\omegap$ to $\tJJ(\bH_1, \bH_2)$ is $\omegatp$, since~$\tp$ is a base change of $p$. 
Hence again by the projection formula, 
we can rewrite the above composition as 
\begin{equation*}
p_* \circ \pi_{\tJ*} \circ \alpha_* \circ (- \otimes \omegatp \otimes \wedge^t \tM ) \circ \tilde{p}^* \circ q_\bH^* \circ (\gamma_1 \sotimes \gamma_2).
\end{equation*}
By commutativity of the diagrams~\eqref{eq:big-diagram} and~\eqref{tJJH1H2-diagram}
we have 
$p \circ \pi_{\tJ} \circ \alpha = (\pi_1 \times \pi_2) \circ u$, where recall~$u = q_{\bH} \circ \tp$. 
Hence we can rewrite the above composition as 
\begin{equation*}
(\pi_1 \times \pi_2)_* \circ u_* \circ (- \otimes \omegatp \otimes \wedge^t \tM ) \circ u^* \circ (\gamma_1 \sotimes \gamma_2).
\end{equation*}
By the projection formula, the composition $u_* \circ (- \otimes \omegatp \otimes \wedge^t \tM ) \circ u^*$
is equivalent to the functor given by tensoring with the object 
$u_*( \omegatp \otimes \wedge^t \tM)$. 
But by Lemma~\ref{lemma-wedge-tR} this object is isomorphic to $\wedge^t(\pr_1^*\cM_1 \oplus \pr_2^*\cM_2)$.
Therefore, the functor we are interested in is equivalent to the direct sum of the functors
\begin{equation*}
(\pi_1 \times \pi_2)_* \circ (- \otimes (\wedge^{t_1} \pr_1^*\cM_1 \otimes \wedge^{t_2} \pr_2^*\cM_2)) \circ (\gamma_1 \sotimes \gamma_2),
\end{equation*}
over all $t_1 + t_2 = t$.
It remains to note that each summand is isomorphic to the corresponding summand in the right hand side of the statement of the proposition. 
\end{proof}

\subsection{Proof of the main theorem}
\label{subsection-joins-HPD-proof} 
The categorical join $\cJ(\cA^1, \cA^2)$ fits into a $\bP(V_1 \oplus V_2)$-linear 
semiorthogonal decomposition~\eqref{sod-tJ}, which we can write in a simplified form as 
\begin{equation*}
\tJ(\cA^1, \cA^2) = \llangle \cJ(\cA^1, \cA^2), {}^{\perp}\cJ(\cA^1, \cA^2) \rrangle.
\end{equation*} 
By Lemma~\ref{lemma-bH-sod} we have a semiorthogonal decomposition 
\begin{equation*}
\bH(\tJ(\cA^1, \cA^2)) = 
\llangle \bH(\cJ(\cA^1, \cA^2)), \bH({}^{\perp}\cJ(\cA^1, \cA^2) ) \rrangle. 
\end{equation*}
The HPD category $\cJ(\cA^1, \cA^2)^{\hpd}$ is a $\bP(\vV_1 \oplus \vV_2)$-linear subcategory 
of $\bH(\cJ(\cA^1, \cA^2))$, and hence also of $\bH(\tJ(\cA^1, \cA^2))$. 

We will prove the following more precise version of Theorem~\ref{theorem-joins-HPD}. 
\begin{theorem} 
\label{theorem-joins-HPD-precise} 
Let $\cA^1$ and $\cA^2$ be right strong, moderate Lefschetz categories over $\bP(V_1)$ and~$\bP(V_2)$. 
The functor 
\begin{equation*}
\gamma_{\tJ} 
\colon {\tJv}((\cA^1)^{\hpd}, (\cA^2)^{\hpd}) \to \bH(\tJ(\cA^1,\cA^2)) 
\end{equation*}
defined by~\eqref{gammatJ} induces a Lefschetz equivalence between the subcategories 
\begin{equation*}
\cJ((\cA^1)^{\hpd}, (\cA^2)^{\hpd}) \subset {\tJv}((\cA^1)^{\hpd}, (\cA^2)^{\hpd}) 
\quad \text{and} \quad 
\cJ(\cA^1, \cA^2)^{\hpd} \subset \bH(\tJ(\cA^1,\cA^2))  . 
\end{equation*}
\end{theorem}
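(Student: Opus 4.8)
The plan is to apply the equivalence criterion of Lemma~\ref{lemma-equivalence-lef-cat} to a corestriction of the functor $\gamma_{\tJ}$ of~\eqref{gammatJ}. Note first that by Theorem~\ref{theorem-join-lef-cat} the categorical join $\cJ((\cA^1)^{\hpd},(\cA^2)^{\hpd})$ is a (left strong, moderate) Lefschetz category over $\bP(\vV_1\oplus\vV_2)$, and by Theorems~\ref{theorem-join-lef-cat} and~\ref{theorem-HPD} so is $\cJ(\cA^1,\cA^2)^{\hpd}$, so the criterion applies. The initial step is to check that $\gamma_{\tJ}$ actually takes values in $\cJ(\cA^1,\cA^2)^{\hpd}$. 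Combining Lemma~\ref{lemma-characterization-Cd} applied to the Lefschetz category $\cJ(\cA^1,\cA^2)$ with Lemma~\ref{lemma-bH-sod}, one sees that $\cJ(\cA^1,\cA^2)^{\hpd}$ is the full subcategory of $\bH(\tJ(\cA^1,\cA^2))$ consisting of objects $C$ with $\pi_{\tJ*}(C\otimes h^*F)\in\cJ(\cA^1,\cA^2)_0$ for every $F\in\Perf(\bP(\vV_1\oplus\vV_2))$; since $\gamma_{\tJ}$ is $\bP(\vV_1\oplus\vV_2)$-linear it suffices to treat $F=\cO$, and then Proposition~\ref{proposition-pi-gammatJ-1} identifies $\pi_{\tJ*}\circ\gamma_{\tJ}$ with $p^*\circ((\pi_{1*}\gamma_1)\otimes(\pi_{2*}\gamma_2))\circ q_*$. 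But Lemma~\ref{lemma-characterization-Cd} applied to $\cA^k$ (with $F=\cO$) shows $\pi_{k*}\circ\gamma_k$ takes values in $\cA^k_0$, so the composite lands in $p^*(\cA^1_0\otimes\cA^2_0)=\cJ(\cA^1,\cA^2)_0$. Restricting $\gamma_{\tJ}$ along the inclusion $\iota$ of $\cJ((\cA^1)^{\hpd},(\cA^2)^{\hpd})$ — admissible by Lemma~\ref{lemma-tJ} — we thus obtain a $\bP(\vV_1\oplus\vV_2)$-linear functor $\phi\colon\cJ((\cA^1)^{\hpd},(\cA^2)^{\hpd})\to\cJ(\cA^1,\cA^2)^{\hpd}$ with $\gamma_{\cJ}\circ\phi=\gamma_{\tJ}\circ\iota$, where $\gamma_{\cJ}$ is the inclusion of $\cJ(\cA^1,\cA^2)^{\hpd}$. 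Since this inclusion is admissible (Lemma~\ref{lemma:hpd-sod}) and $\gamma_{\tJ}$ has both adjoints (Lemma~\ref{lemma:gtj-adjoints}), $\phi$ has a left adjoint $\phi^*=\iota^*\circ\gamma_{\tJ}^*\circ\gamma_{\cJ}$ (and likewise a right adjoint), which verifies hypothesis~(2) of Lemma~\ref{lemma-equivalence-lef-cat}.

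For hypothesis~(3), I would show $\phi^*$ restricts to an equivalence $\cJ(\cA^1,\cA^2)^{\hpd}_0\xrightarrow{\sim}\cJ((\cA^1)^{\hpd},(\cA^2)^{\hpd})_0$. Prefacing $\phi$ with $\gamma_{\cJ}$ and $\pi_{\tJ*}$, Proposition~\ref{proposition-pi-gammatJ-1} (together with $p_*\circ p^*\simeq\id$) gives $p_*\circ\pi_{\tJ*}\circ\gamma_{\tJ}\circ\iota\simeq((\pi_{1*}\gamma_1)\otimes(\pi_{2*}\gamma_2))\circ q_*\circ\iota$. Passing to left adjoints on both sides and evaluating on the Lefschetz center $\cA^1_0\otimes\cA^2_0$ of $\cA^1\otimes\cA^2$, one uses: that the composites $\gamma^*\pi^*$ are fully faithful on Lefschetz centers with images the HPD centers — this is the content of the definition~\eqref{Ad0}, for $\cJ(\cA^1,\cA^2)$ on the left side and for $\cA^1,\cA^2$ on the right side; that $p^*$ and $q^*$ restrict to equivalences onto $\cJ(\cA^1,\cA^2)_0$ and $\cJ((\cA^1)^{\hpd},(\cA^2)^{\hpd})_0$; and that $\iota^*$ is the identity on the categorical join. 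These identifications turn the left side into $\phi^*$ precomposed with an equivalence $\cA^1_0\otimes\cA^2_0\xrightarrow{\sim}\cJ(\cA^1,\cA^2)^{\hpd}_0$ and the right side into an equivalence $\cA^1_0\otimes\cA^2_0\xrightarrow{\sim}\cJ((\cA^1)^{\hpd},(\cA^2)^{\hpd})_0$; hence $\phi^*|_{\cJ(\cA^1,\cA^2)^{\hpd}_0}$ is an equivalence onto $\cJ((\cA^1)^{\hpd},(\cA^2)^{\hpd})_0$, as required.

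For hypothesis~(1) it then suffices to prove that $\phi$ maps $\cJ((\cA^1)^{\hpd},(\cA^2)^{\hpd})_0=q^*((\cA^1)^{\hpd}_0\otimes(\cA^2)^{\hpd}_0)$ into $\cJ(\cA^1,\cA^2)^{\hpd}_0$: once this is known, the adjunction $\phi^*\dashv\phi$ restricts to an adjunction between the two Lefschetz centers, so $\phi$ restricted there is right adjoint to the equivalence $\phi^*$ of the previous step and is therefore itself an equivalence. To prove the containment I would apply Proposition~\ref{proposition-characterization-Ad0} to $\cJ(\cA^1,\cA^2)$, whose monad bundle is exactly the $\cM$ of~\eqref{cR-monad}: it suffices to check, for $D_0\in(\cA^1)^{\hpd}_0\otimes(\cA^2)^{\hpd}_0$ and every $t\geq1$, that $\pi_{\tJ*}(\gamma_{\tJ}(q^*D_0)\otimes\wedge^t\cM)$ lies in ${}^{\perp}\cJ(\cA^1,\cA^2)_0$. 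Since $\gamma_{\tJ}(q^*D_0)\in\bH(\cJ(\cA^1,\cA^2))$ by the first step, this object lies in $\cJ(\cA^1,\cA^2)$ (Lemma~\ref{lemma-bH-sod}), so one only has to check its left-orthogonality to $\cJ(\cA^1,\cA^2)_0=p^*(\cA^1_0\otimes\cA^2_0)$. Using the adjunction $p_!\dashv p^*$ this amounts to showing $p_!\pi_{\tJ*}(\gamma_{\tJ}(q^*D_0)\otimes\wedge^t\cM)$ is left-orthogonal to $\cA^1_0\otimes\cA^2_0$, and Proposition~\ref{proposition-pi-gammatJ-2} computes this object as $\bigoplus_{t_1+t_2=t}\bigl((\pi_{1*}(-\otimes\wedge^{t_1}\cM_1)\gamma_1)\otimes(\pi_{2*}(-\otimes\wedge^{t_2}\cM_2)\gamma_2)\bigr)(D_0)$. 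For $t\geq1$ each summand has $t_1\geq1$ or $t_2\geq1$, and Proposition~\ref{proposition-characterization-Ad0} applied to $\cA^1$ and $\cA^2$ says that the corresponding tensor factor carries the center $(\cA^k)^{\hpd}_0$ into ${}^{\perp}\cA^k_0$; hence every summand lies in ${}^{\perp}\cA^1_0\otimes\cA^2$ or in $\cA^1\otimes{}^{\perp}\cA^2_0$. Both of these are left-orthogonal to $\cA^1_0\otimes\cA^2_0$ by the semiorthogonal decompositions $\cA^1\otimes\cA^2=\langle\cA^1_0\otimes\cA^2,\,{}^{\perp}\cA^1_0\otimes\cA^2\rangle=\langle\cA^1\otimes\cA^2_0,\,\cA^1\otimes{}^{\perp}\cA^2_0\rangle$ (Lemma~\ref{lemma-tJ-sod} applied to $\cA^k=\langle\cA^k_0,{}^{\perp}\cA^k_0\rangle$), which gives the claim; with hypotheses~(1)--(3) verified, Lemma~\ref{lemma-equivalence-lef-cat} shows $\phi$ is an equivalence of Lefschetz categories.

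I expect the last step — showing $\phi$ sends the Lefschetz center of $\cJ((\cA^1)^{\hpd},(\cA^2)^{\hpd})$ into that of $\cJ(\cA^1,\cA^2)^{\hpd}$ — to be the main obstacle. It is the one place where the purely formal manipulation of adjoints, base change and projection formulas breaks down and one must invoke the intrinsic description of the HPD center (Proposition~\ref{proposition-characterization-Ad0}) together with the cohomological identity of Proposition~\ref{proposition-pi-gammatJ-2}, and then carefully bookkeep the Lefschetz components of $\cA^1$, $\cA^2$ and their mutual orthogonality inside $\cA^1\otimes\cA^2$. Everything else reduces, given the functor $\gamma_{\tJ}$ and Propositions~\ref{proposition-pi-gammatJ-1}--\ref{proposition-pi-gammatJ-2}, to a diagram chase.
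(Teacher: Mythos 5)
Your proposal is correct and follows essentially the same route as the paper's own proof: the same corestricted functor $\phi$, the same key inputs (Propositions~\ref{proposition-pi-gammatJ-1}, \ref{proposition-pi-gammatJ-2} and~\ref{proposition-characterization-Ad0}, Lemmas~\ref{lemma-characterization-Cd} and~\ref{lemma-bH-sod}), and the same criterion Lemma~\ref{lemma-equivalence-lef-cat}. The only (cosmetic) deviations are that you obtain hypothesis~(1) formally from the containment of centers plus the already-established equivalence of $\phi^*$ on centers, where the paper instead argues directly via Lemma~\ref{lemma-A0-Ad0-equivalence}, and that the final orthogonality of $({}^{\perp}\cA^1_0) \otimes \cA^2$ and $\cA^1 \otimes ({}^{\perp}\cA^2_0)$ to $\cA^1_0 \otimes \cA^2_0$ should be cited to Lemma~\ref{lemma-sod-tensor} rather than Lemma~\ref{lemma-tJ-sod}.
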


The following lemma guarantees that $\gamma_{\tJ}$ does indeed 
induce a functor between the categories of interest. 
\begin{lemma}
\label{lemma-image-PhiE}
The image of the functor $\gamma_\tJ \colon {\tJv}((\cA^1)^{\hpd}, (\cA^2)^{\hpd}) \to \bH(\tJ(\cA^1, \cA^2))$ 
is contained in the HPD category 
$\cJ(\cA^1, \cA^2)^{\hpd} \subset \bH(\tJ(\cA^1, \cA^2))$. 
\end{lemma}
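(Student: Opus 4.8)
To prove Lemma~\ref{lemma-image-PhiE}, I will use the characterization of the HPD category given in Lemma~\ref{lemma-characterization-Cd}: applied to the Lefschetz category $\cJ(\cA^1,\cA^2)$ over $\bP(V_1\oplus V_2)$, it says that an object $C\in\bH(\tJ(\cA^1,\cA^2))$ lies in $\cJ(\cA^1,\cA^2)^{\hpd}$ if and only if
\begin{equation*}
\pi_{\tJ*}(C\otimes h^*F)\in\cJ(\cA^1,\cA^2)_0 = p^*(\cA^1_0\sotimes\cA^2_0)\subset\tJ(\cA^1,\cA^2)\quad\text{for all }F\in\Perf(\bP(\vV_1\oplus\vV_2)).
\end{equation*}
So the task reduces to showing that for any $D\in\tJv((\cA^1)^{\hpd},(\cA^2)^{\hpd})$ and any $F$, the object $\pi_{\tJ*}(\gamma_{\tJ}(D)\otimes h^*F)$ lies in $p^*(\cA^1_0\sotimes\cA^2_0)$.

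\emph{Key steps.} First I would reduce to generators: since $\tJv((\cA^1)^{\hpd},(\cA^2)^{\hpd})$ is obtained by base change along the $\bP^1\times\bP^1$-bundle structure and $(\cA^k)^{\hpd}=\gamma_k(\ldots)$, objects of the form $q^*(D_1\boxtimes D_2)\otimes\cO(tH')$ with $D_k\in(\cA^k)^{\hpd}$ and $0\le t$ suffice (using that twists by $\cO(H')$ together with $q^*$ generate, as in the proof of Lemma~\ref{lemma-tJ-P1-sod}); moreover, since $\pi_{\tJ}$ and $h$ are morphisms over $\bP(\vV_1\oplus\vV_2)$ and $\bP(V_1\oplus V_2)$ respectively, the tensoring by $h^*F$ can be absorbed into the $\bP(\vV_1\oplus\vV_2)$-linearity of the conclusion, so it is enough to treat $F=\cO$ together with all twists $\cO(tH')$ absorbed into $D$. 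Next, I would invoke Proposition~\ref{proposition-pi-gammatJ-1}, which gives
\begin{equation*}
\pi_{\tJ*}\circ\gamma_{\tJ}\simeq p^*\circ\big((\pi_{1*}\circ\gamma_1)\otimes(\pi_{2*}\circ\gamma_2)\big)\circ q_*,
\end{equation*}
and Lemma~\ref{lemma-A0-Ad0-equivalence} (or rather~\cite[Lemma~7.11]{NCHPD}), which says that $\pi_{k*}\circ\gamma_k\colon(\cA^k)^{\hpd}\to\cA^k$ has image exactly $\cA^k_0$. Combined with the fact that $q_*$ of a product of HPD objects stays inside $(\cA^1)^{\hpd}\sotimes(\cA^2)^{\hpd}$ base-changed appropriately, this shows $\pi_{\tJ*}\gamma_{\tJ}(D)\in p^*(\cA^1_0\sotimes\cA^2_0)$ for $D$ a generator of the relevant shape. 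Finally, for the twisted versions I would use Proposition~\ref{proposition-pi-gammatJ-2}: since $p_!\circ\pi_{\tJ*}\circ(-\otimes\wedge^t\cM)\circ\gamma_{\tJ}\circ q^*$ decomposes as a direct sum of $(\pi_{1*}\circ(-\otimes\wedge^{t_1}\cM_1)\circ\gamma_1)\otimes(\pi_{2*}\circ(-\otimes\wedge^{t_2}\cM_2)\circ\gamma_2)$, and each factor lands in ${}^\perp\cA^k_0$ for $t_k\ge1$ (this is exactly the content behind Proposition~\ref{proposition-characterization-Ad0}), one controls the components of $\gamma_{\tJ}(D)$ in the projective bundle decomposition of Lemma~\ref{lemma-HC-sod-Pbundle} and deduces membership in the full subcategory cut out by Lemma~\ref{lemma-characterization-Cd}.

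\emph{Main obstacle.} The delicate point is the bookkeeping between the $h^*F$-twists appearing in Lemma~\ref{lemma-characterization-Cd} and the $\wedge^t\cM$-twists appearing in Propositions~\ref{proposition-pi-gammatJ-1} and~\ref{proposition-pi-gammatJ-2}: one must verify that the condition "$\pi_{\tJ*}(\gamma_{\tJ}(D)\otimes h^*F)\in p^*(\cA^1_0\sotimes\cA^2_0)$ for all $F$" is genuinely equivalent to a condition one can check factor-by-factor on $(\cA^1)^{\hpd}$ and $(\cA^2)^{\hpd}$ via the two propositions. Concretely, one needs that $\wedge^t\cM$ for $\tJ(\bP(V_1),\bP(V_2))\to\bP(V_1\oplus V_2)$ generates (together with the $\bP(V_1\oplus V_2)$-linearity) enough twists to detect membership in $p^*(\cA^1_0\sotimes\cA^2_0)$, using the projective-bundle semiorthogonal decomposition of Lemma~\ref{lemma-HC-sod-Pbundle} applied on $\tJ(\cA^1,\cA^2)$; this is the step where the hypotheses "right strong" and "moderate" on $\cA^1,\cA^2$ are used (moderateness guarantees the relevant twists stay within the range $0\le t\le N-2$, exactly as in the proof of Proposition~\ref{proposition-characterization-Ad0}). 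Once this compatibility is set up, the rest is a direct application of the cited results.
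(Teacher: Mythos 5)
Your core argument is correct and is essentially the paper's proof: reduce, via the pushforward characterization of the HPD category and the $\bP(\vV_1\oplus\vV_2)$-linearity of $\gamma_\tJ$, to showing that the image of $\pi_{\tJ*}\circ\gamma_\tJ$ lies in $\cJ(\cA^1,\cA^2)_0=p^*(\cA^1_0\sotimes\cA^2_0)$, and then conclude from Proposition~\ref{proposition-pi-gammatJ-1} together with the fact that the image of $\pi_{k*}\circ\gamma_k$ is $\cA^k_0$. Two remarks. First, a small precision: Lemma~\ref{lemma-characterization-Cd} by itself characterizes $\cJ(\cA^1,\cA^2)^\hpd$ inside $\bH(\cJ(\cA^1,\cA^2))$, whereas $\gamma_\tJ$ lands in the larger category $\bH(\tJ(\cA^1,\cA^2))$; you need to combine it with Lemma~\ref{lemma-bH-sod} (which identifies $\bH(\cJ(\cA^1,\cA^2))$ inside $\bH(\tJ(\cA^1,\cA^2))$ by the same kind of pushforward condition) to get the characterization you state. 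Second, and more substantively, the entire final portion of your proposal --- the appeal to Proposition~\ref{proposition-pi-gammatJ-2}, the $\wedge^t\cM$-twists, Lemma~\ref{lemma-HC-sod-Pbundle}, and the ``main obstacle'' about reconciling $h^*F$-twists with $\wedge^t\cM$-twists --- is not needed here and signals a conflation of two different statements. The $\wedge^t\cM$ condition (Proposition~\ref{proposition-characterization-Ad0}) characterizes the Lefschetz \emph{center} $\cAd_0$ inside $\cAd$; it is used in the next step of the paper's argument (Lemma~\ref{lemma-phiE-B0-in-Ad0}), not in this one. For the present lemma only the $t=0$ case matters, all $h^*F$-twists are absorbed by linearity exactly as you say in your first reduction, and neither right strongness nor moderateness plays any role at this point. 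Dropping that last paragraph leaves a complete and correct proof.
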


\begin{proof}
By Lemmas~\ref{lemma-characterization-Cd} and~\ref{lemma-bH-sod} together 
with $\bP(\vV_1 \oplus \vV_2)$-linearity of the functor $\gamma_\tJ$, it suffices to show that the 
image of $\pi_{\tJ*} \circ \gamma_\tJ$ is contained in the Lefschetz 
center 
\begin{equation*}
\cJ(\cA^1, \cA^2)_0 \subset \cJ(\cA^1, \cA^2) \subset \tJ(\cA^1, \cA^2).
\end{equation*}  
By Proposition~\ref{proposition-pi-gammatJ-1}, we have 
\begin{equation*}
\pi_{\tJ*} \circ \gamma_\tJ \simeq p^* \circ ((\pi_{1*} \circ \gamma_1) \otimes (\pi_{2*} \circ \gamma_2)) \circ q_*.  
\end{equation*} 
By Lemma~~\ref{lemma-characterization-Cd} the image of $\pi_{k*} \circ \gamma_k$ is $\cA^k_0 \subset \cA^k$, 
hence the image of $\pi_{\tJ*} \circ \gamma_\tJ$ is contained in~$p^*(\cA^1_0 \sotimes \cA^2_0)$,
which is nothing but $\cJ(\cA^1, \cA^2)_0$ by definition~\eqref{eq:cat-join-center}.
\end{proof} 

By Lemma~\ref{lemma-image-PhiE}, the restriction of the functor 
$\gamma_\tJ$ to the $\bP(\vV_1 \oplus \vV_2)$-linear subcategory 
$\cJ((\cA^1)^{\hpd}, (\cA^2)^{\hpd}) \subset {\tJv}((\cA^1)^{\hpd}, (\cA^2)^{\hpd})$ induces a $\bP(\vV_1 \oplus \vV_2)$-linear functor 
\begin{equation*}
\phi \colon \cJ((\cA^1)^{\hpd}, (\cA^2)^{\hpd}) \to \cJ(\cA^1, \cA^2)^{\hpd} 
\end{equation*}
which fits into a commutative diagram 
\begin{equation}
\label{diagram:gamma-phi}
\vcenter{
\xymatrix{
\cJ((\cA^1)^{\hpd}, (\cA^2)^{\hpd}) \ar[d]_{j} \ar[r]^{\phi} & \cJ(\cA^1, \cA^2)^{\hpd}  \ar[d]^{\gamma} \\ 
{\tJv}((\cA^1)^{\hpd}, (\cA^2)^{\hpd}) \ar[r]^{\gamma_{\tJ}} & \bH(\tJ(\cA^1,\cA^2)) 
}
}
\end{equation}
where $j$ and $\gamma$ are the inclusions. 
Our goal is to show~$\phi$ is an equivalence of Lefschetz categories. 
We will prove this by verifying the criteria of Lemma~\ref{lemma-equivalence-lef-cat}.  

\begin{lemma}
\label{lemma-phiE-B0-in-Ad0}
The functor $\phi$ takes the Lefschetz center $\cJ((\cA^1)^{\hpd}, (\cA^2)^{\hpd})_0 \subset \cJ((\cA^1)^{\hpd}, (\cA^2)^{\hpd})$ to the Lefschetz center 
$\cJ(\cA^1, \cA^2)^{\hpd}_0 \subset \cJ(\cA^1, \cA^2)^{\hpd}$. 
\end{lemma}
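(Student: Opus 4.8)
The plan is to unwind both sides through the characterizations of Lefschetz centers established earlier. The Lefschetz center on the right, $\cJ(\cA^1,\cA^2)^\hpd_0 \subset \cJ(\cA^1,\cA^2)^\hpd$, is described by Proposition~\ref{proposition-characterization-Ad0}: an object $C$ lies in it if and only if $\pi_{\tJ*}(\gamma(C) \otimes \wedge^t \cM) \in {}^\perp\cJ(\cA^1,\cA^2)_0$ in $\tJ(\cA^1,\cA^2)$ for all $t \geq 1$, where $\cM$ is the monad bundle on $\bH(\tJ(\bP(V_1),\bP(V_2)))$ from~\eqref{cR-monad}. The Lefschetz center on the left, $\cJ((\cA^1)^\hpd,(\cA^2)^\hpd)_0$, is $p^*\bigl((\cA^1)^\hpd_0 \sotimes (\cA^2)^\hpd_0\bigr)$ by Definition~\ref{definition-lef-center-join}, so by Lemma~\ref{lemma-tJ-P1-sod} it suffices to take objects of the form $j\bigl(p^*(D_1 \sotimes D_2)\bigr)$ with $D_k \in (\cA^k)^\hpd_0$; since $j$ is the inclusion $\cJ((\cA^1)^\hpd,(\cA^2)^\hpd) \subset {\tJv}((\cA^1)^\hpd,(\cA^2)^\hpd)$, the diagram~\eqref{diagram:gamma-phi} reduces us to showing $\gamma_\tJ(q^*(D_1 \sotimes D_2)) \in \cJ(\cA^1,\cA^2)^\hpd_0$ for such $D_k$. (Here $q^*$ matches the $\bP^1$-bundle factor on the dual side; I need to be slightly careful that $p^*(D_1 \sotimes D_2)$ on ${\tJv}$ is the image of $D_1 \sotimes D_2$ under the dual-side $q^*$, but this is immediate from Lemma~\ref{lemma-tJ-P1-sod}.)

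First I would apply Proposition~\ref{proposition-characterization-Ad0} and reduce the claim to checking, for each $t \geq 1$ and each $D_k \in (\cA^k)^\hpd_0$, that
\begin{equation*}
\pi_{\tJ*}\bigl(\gamma_\tJ(q^*(D_1 \sotimes D_2)) \otimes \wedge^t \cM\bigr) \in {}^\perp\cJ(\cA^1,\cA^2)_0 .
\end{equation*}
Now I would invoke Proposition~\ref{proposition-pi-gammatJ-2}: after applying $p_!$, the functor $p_! \circ \pi_{\tJ*} \circ (-\otimes \wedge^t\cM) \circ \gamma_\tJ \circ q^*$ decomposes as $\bigoplus_{t_1+t_2=t} (\pi_{1*} \circ (-\otimes \wedge^{t_1}\cM_1) \circ \gamma_1) \sotimes (\pi_{2*} \circ (-\otimes \wedge^{t_2}\cM_2) \circ \gamma_2)$. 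For each term, since $t = t_1+t_2 \geq 1$ we have $t_1 \geq 1$ or $t_2 \geq 1$; say $t_1 \geq 1$. Then by Proposition~\ref{proposition-characterization-Ad0} applied to $\cA^1$ (whose hypotheses hold since $\cA^1$ is right strong and moderate), $\pi_{1*}(\gamma_1(D_1) \otimes \wedge^{t_1}\cM_1) \in {}^\perp\cA^1_0$. Hence the corresponding summand of $p_!\bigl(\pi_{\tJ*}(\gamma_\tJ(q^*(D_1\sotimes D_2)) \otimes \wedge^t\cM)\bigr)$ lies in $({}^\perp\cA^1_0) \sotimes \cA^2 \subset \cA^1 \sotimes \cA^2$, and by the analogous argument when $t_2 \geq 1$ it lies in $\cA^1 \sotimes ({}^\perp\cA^2_0)$; in either case it lies in ${}^\perp(\cA^1_0 \sotimes \cA^2_0)$ inside $\cA^1 \sotimes \cA^2$ (using Lemma~\ref{lemma-sod-tensor} to see that $\cA^1_0 \sotimes \cA^2_0$ is left-orthogonal to both of these). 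Summing over $t_1+t_2=t$, we conclude $p_!\bigl(\pi_{\tJ*}(\cdots)\bigr) \in {}^\perp(\cA^1_0 \sotimes \cA^2_0) = {}^\perp\barcJ$ where $\barcJ$ is the image of $\cA^1_0 \sotimes \cA^2_0$.

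The last step is to transfer this orthogonality statement back up along $p^*$: I need that $\pi_{\tJ*}(\cdots) \in {}^\perp\cJ(\cA^1,\cA^2)_0$ in $\tJ(\cA^1,\cA^2)$, given that $p_!$ of it lies in ${}^\perp(\cA^1_0 \sotimes \cA^2_0)$. Since $\cJ(\cA^1,\cA^2)_0 = p^*(\cA^1_0 \sotimes \cA^2_0)$ and $p_!$ is left adjoint to $p^*$, for any object $E$ we have $\Hom(E, p^*(F)) \simeq \Hom(p_!E, F)$, so $E \in {}^\perp p^*(\cA^1_0 \sotimes \cA^2_0)$ if and only if $p_! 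E \in {}^\perp(\cA^1_0\sotimes\cA^2_0)$ — exactly what we have. This completes the argument. The one place requiring genuine care — and what I'd flag as the main obstacle — is bookkeeping the several $\bP^1$-bundle maps ($p$, $q$, $\tp$, $q_\bH$, $u$) and making sure Propositions~\ref{proposition-pi-gammatJ-1} and~\ref{proposition-pi-gammatJ-2} are applied with the functors $q^*$ on the correct (dual) side; once the identifications $\cJ(\cdots)_0 = p^*(\cA^1_0\sotimes\cA^2_0)$ and the adjunction $(p_!, p^*)$ are in hand, the rest is formal.
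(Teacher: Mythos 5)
Your proposal is correct and follows essentially the same route as the paper's proof: reduce via Proposition~\ref{proposition-characterization-Ad0} and the adjunction $p_! \dashv p^*$ to an orthogonality statement in $\cA^1 \otimes \cA^2$, restrict to generators $q^*(D_1 \boxtimes D_2)$, and apply Proposition~\ref{proposition-pi-gammatJ-2} together with Proposition~\ref{proposition-characterization-Ad0} for each factor and Lemma~\ref{lemma-sod-tensor}. The only quibble is that the reduction to objects of the form $q^*(D_1 \boxtimes D_2)$ is justified by Lemma~\ref{lemma-generators-box-tensor} (thick generation by box products) rather than Lemma~\ref{lemma-tJ-P1-sod}, but this does not affect the argument.
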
 

\begin{proof}
By Proposition~\ref{proposition-characterization-Ad0} 
we must show that for any  
$C \in \cJ((\cA^1)^{\hpd}, (\cA^2)^{\hpd})_0$ and $t \geq 1$, we have  
\begin{equation*}
\pi_{\tJ*}(\gamma_\tJ(C) \otimes \wedge^t \cM) \in {}^{\perp} \cJ(\cA^1, \cA^2)_0 . 
\end{equation*} 
Since $\cJ(\cA^1, \cA^2)_0 = p^*(\cA^1_0 \sotimes \cA^2_0)$, by 
adjunction the desired conclusion is equivalent to 
\begin{equation*}
p_!\pi_{\tJ*}(\gamma_\tJ(C) \otimes \wedge^t \cM) \in {}^{\perp}(\cA_0^1 \sotimes \cA^2_0). 
\end{equation*} 
Since $\cJ((\cA^1)^{\hpd}, (\cA^2)^{\hpd})_0  = 
q^*((\cA^1)^{\hpd}_0 \sotimes (\cA^2)^{\hpd}_0)$, by Lemma~\ref{lemma-generators-box-tensor}
we may assume that $C$ is of the form $C = q^*(C_1 \boxtimes C_2)$ for $C_k \in (\cA^k)^{\hpd}_0$. 
Then by Proposition~\ref{proposition-pi-gammatJ-2}, we have 
\begin{equation*}
p_!\pi_{\tJ*}(\gamma_\tJ(q^*(C_1 \boxtimes C_2)) \otimes \wedge^t \cM) \simeq 
\bigoplus_{t_1 + t_2 = t} ( \pi_{1*}(\gamma_1(C_1) \otimes \wedge^{t_1}\cM_1)) 
\otimes  ( \pi_{2*}(\gamma_2(C_2) \otimes \wedge^{t_2}\cM_2)) . 
\end{equation*}
By Proposition~\ref{proposition-characterization-Ad0} again, 
we have 
\begin{align*}
\pi_{1*}(\gamma_1(C_1) \otimes \wedge^{t_1} \cM_1) & 
\in {}^{\perp} \cA^1_0  \quad \text{ if } t_1 \geq 1 , \\
\pi_{2*}(\gamma_2(C_2) \otimes \wedge^{t_2} \cM_2) & 
\in {}^{\perp} \cA^2_0  \quad \text{ if } t_2 \geq 1 . 
\end{align*}
Since $t \geq 1$, if $t_1 + t_2 = t$ then either $t_1 \geq 1$ or $t_2 \geq 1$. 
It follows that if $t_1 \geq 1$ then the $(t_1, t_2)$ summand in the above expression 
lies in $({}^{\perp} \cA^1_0) \sotimes \cA^2$,  
and if $t_2 \geq 1$ then it lies in $\cA^1 \sotimes ({}^{\perp} \cA^2_0)$. 
We conclude by Lemma~\ref{lemma-sod-tensor} that every summand in  
the above expression lies in ${}^{\perp}(\cA_0^1 \sotimes \cA^2_0)$, 
and hence so does their sum 
$p_!\pi_{\tJ*}(\gamma_\tJ(q^*(C_1 \boxtimes C_2)) \otimes \wedge^t \cM)$. 
\end{proof}

\begin{lemma}
\label{lemma-phiE-B0-equivalent-Ad0}
The functor $\phi$ induces an equivalence $\cJ((\cA^1)^{\hpd}, (\cA^2)^{\hpd})_0 
\simeq \cJ(\cA^1, \cA^2)^{\hpd}_0$. 
\end{lemma}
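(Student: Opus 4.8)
The plan is to leverage the description of both centers in terms of the Beilinson-type functors $\pi_*(-\otimes\wedge^t\cM)$ established in Proposition~\ref{proposition-characterization-Ad0}, combined with the already-proven isomorphism of functors in Propositions~\ref{proposition-pi-gammatJ-1} and~\ref{proposition-pi-gammatJ-2}, and the known equivalences $\cA^k_0\simeq(\cA^k)^{\hpd}_0$ from Lemma~\ref{lemma-A0-Ad0-equivalence}. The target $\cJ(\cA^1,\cA^2)^{\hpd}_0$ is, by Lemma~\ref{lemma-A0-Ad0-equivalence} applied to the moderate Lefschetz category $\cJ(\cA^1,\cA^2)$, equivalent to the center $\cJ(\cA^1,\cA^2)_0 = p^*(\cA^1_0\sotimes\cA^2_0)$ via the functor $\pi_{\tJ*}\circ\gamma$; similarly $\cJ((\cA^1)^{\hpd},(\cA^2)^{\hpd})_0 = q^*((\cA^1)^{\hpd}_0\sotimes(\cA^2)^{\hpd}_0)$ is equivalent to $(\cA^1)^{\hpd}_0\sotimes(\cA^2)^{\hpd}_0$ via $q_*$. (One must first check $\cJ(\cA^1,\cA^2)$ is moderate, which holds by Theorem~\ref{theorem-join-lef-cat} since $\cA^1$ and $\cA^2$ are, so these inner-center equivalences are available.)

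\textbf{Main construction.} I would set up the commutative square
\begin{equation*}
\vcenter{\xymatrix@C=4em{
\cJ((\cA^1)^{\hpd},(\cA^2)^{\hpd})_0 \ar[r]^-{\phi} \ar[d]_{q_*}^{\wr} & \cJ(\cA^1,\cA^2)^{\hpd}_0 \ar[d]^{\pi_{\tJ*}\circ\gamma}_{\wr} \\
(\cA^1)^{\hpd}_0 \sotimes (\cA^2)^{\hpd}_0 \ar[r]^-{\psi} & \cA^1_0 \sotimes \cA^2_0
}}
\end{equation*}
where the vertical arrows are the equivalences just discussed and $\psi$ is the tensor product $(\pi_{1*}\circ\gamma_1)\otimes(\pi_{2*}\circ\gamma_2)$, which is an equivalence by Lemma~\ref{lemma-A0-Ad0-equivalence} applied to each factor (together with the fact that a tensor product of equivalences is an equivalence, Lemma~\ref{lemma-adjoints} or~\cite[Lemma 4.8]{NCHPD}). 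Commutativity of this square on the level of the relevant functors is exactly the content of Proposition~\ref{proposition-pi-gammatJ-1}: restricting the isomorphism $\pi_{\tJ*}\circ\gamma_\tJ \simeq p^*\circ((\pi_{1*}\circ\gamma_1)\otimes(\pi_{2*}\circ\gamma_2))\circ q_*$ to the subcategory $\cJ((\cA^1)^{\hpd},(\cA^2)^{\hpd})_0$, using diagram~\eqref{diagram:gamma-phi} and the fact that $\pi_{\tJ*}\circ\gamma\circ\phi$ agrees with $\pi_{\tJ*}\circ\gamma_\tJ\circ j$, one sees that after composing with the left-inverse of $p^*$ (which, restricted to $\cJ(\cA^1,\cA^2)_0$, is the equivalence back to $\cA^1_0\sotimes\cA^2_0$) the square commutes up to natural isomorphism. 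Once the square commutes and three of its four arrows are equivalences, the fourth arrow $\phi|_{\cJ((\cA^1)^{\hpd},(\cA^2)^{\hpd})_0}$ is an equivalence by two-out-of-three, which is the assertion of the lemma. Note Lemma~\ref{lemma-phiE-B0-in-Ad0} already guarantees $\phi$ sends the source center into the target center, so the restriction makes sense.

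\textbf{Expected main obstacle.} The delicate point is bookkeeping with the ``inner'' versus ``outer'' center equivalences: one has to be careful that the equivalence $\pi_{\tJ*}\circ\gamma\colon\cJ(\cA^1,\cA^2)^{\hpd}_0\xrightarrow{\sim}\cJ(\cA^1,\cA^2)_0$ from Lemma~\ref{lemma-A0-Ad0-equivalence} is genuinely inverse to $\gamma^{*}_{\cJ}\circ\pi_{\tJ}^{*}$ and that the composite $p^{*}\circ(-)$ appearing in Proposition~\ref{proposition-pi-gammatJ-1} is precisely the embedding $\cJ(\cA^1,\cA^2)_0\hookrightarrow\tJ(\cA^1,\cA^2)$ followed by $\pi_{\tJ}^{*}$ then projection — i.e., that the identification $\cJ(\cA^1,\cA^2)_0=p^*(\cA^1_0\sotimes\cA^2_0)$ is compatible with all the functors in play. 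This amounts to chasing the definitions of the centers~\eqref{Ad0} and~\eqref{eq:cat-join-center} through diagram~\eqref{diagram:gamma-phi} and the big diagram~\eqref{eq:big-diagram}, and checking that $\pi_{\tJ}\circ\alpha\circ\tp$ restricted appropriately is compatible with $p$ and $q$. All of the needed isomorphisms of functors are already available from Propositions~\ref{proposition-pi-gammatJ-1} and~\ref{proposition-pi-gammatJ-2} and from Lemma~\ref{lemma:tj-functoriality}, so no genuinely new computation is required; the work is purely in assembling them correctly.
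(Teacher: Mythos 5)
Your proposal is correct and follows essentially the same route as the paper: both reduce the claim to the chain of equivalences $\cJ((\cA^1)^{\hpd},(\cA^2)^{\hpd})_0 \xrightarrow{q_*} (\cA^1)^{\hpd}_0 \sotimes (\cA^2)^{\hpd}_0 \xrightarrow{(\pi_{1*}\gamma_1)\otimes(\pi_{2*}\gamma_2)} \cA^1_0\sotimes\cA^2_0 \xrightarrow{p^*} \cJ(\cA^1,\cA^2)_0$, using Proposition~\ref{proposition-pi-gammatJ-1} for the compatibility with $\pi_{\tJ*}\circ\gamma\circ\phi$ and Lemma~\ref{lemma-A0-Ad0-equivalence} three times (for the two factors and for the join) before concluding by two-out-of-three. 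The only cosmetic difference is that you package the argument as a commutative square whose right vertical arrow must implicitly be post-composed with the inverse of $p^*$ — a point you correctly flag and resolve — whereas the paper simply verifies that each functor in the composition restricts to an equivalence on the relevant centers.
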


\begin{proof}
Consider the diagram
\begin{equation*}
\xymatrix@C=4em{
&
\cJ((\cA^1)^{\hpd}, (\cA^2)^{\hpd}) \ar[d]_{j} \ar[r]^{\phi} & 
\cJ(\cA^1, \cA^2)^{\hpd}  \ar[d]^{\gamma} 
\\
&
{\tJv}((\cA^1)^{\hpd}, (\cA^2)^{\hpd}) \ar[r]^{\gamma_{\tJ}} \ar[dl]_{q_*} & 
\bH(\tJ(\cA^1,\cA^2))\ar[dr]^{\pi_{\tJ*}} 
\\
(\cA^1)^{\hpd} \sotimes (\cA^2)^{\hpd} \ar[r]^-{\gamma_1 \otimes \gamma_2} &
\bH(\cA_1) \otimes \bH(\cA_2) \ar[r]^-{\pi_{1*} \otimes \pi_{2*}} &
\cA_1 \sotimes \cA_2 \ar[r]^-{p^*} &
\tJ(\cA^1,\cA^2), 
}
\end{equation*}
where the top square is the commutative diagram~~\eqref{diagram:gamma-phi}, 
and the bottom part commutes by Proposition~\ref{proposition-pi-gammatJ-1}.
By Lemma~\ref{lemma-phiE-B0-in-Ad0}, $\phi$ induces a functor $\cJ((\cA^1)^{\hpd}, (\cA^2)^{\hpd})_0 \to \cJ(\cA^1, \cA^2)^{\hpd}_0$, and 
by Lemma~\ref{lemma-A0-Ad0-equivalence} the functor $\pi_{\tJ*} \circ \gamma$ 
induces an equivalence $\cJ(\cA^1, \cA^2)^{\hpd}_0 \simeq \cJ(\cA^1, \cA^2)_0$. 
Hence it suffices to show the composition 
\begin{equation*}
\pi_{\tJ*} \circ \gamma \circ \phi \simeq 
\pi_{\tJ*} \circ \gamma_\tJ \circ j \simeq
p^* \circ ((\pi_{1*} \circ \gamma_1) \otimes (\pi_{2*} \circ \gamma_2)) \circ q_* \circ j
\end{equation*}
induces an equivalence $\cJ((\cA^1)^{\hpd}, (\cA^2)^{\hpd})_0 \simeq \cJ(\cA^1, \cA^2)_0$. 
By the definitions of~$\cJ((\cA^1)^{\hpd}, (\cA^2)^{\hpd})_0$ and $\cJ(\cA^1, \cA^2)_0$, the functor $q_* \circ j$ induces 
an equivalence~$\cJ((\cA^1)^{\hpd}, (\cA^2)^{\hpd})_0 \simeq (\cA^1)_0^{\hpd} \sotimes (\cA^2)^{\hpd}_0$ 
(note that $q_* \circ q^* \simeq \id$ as~$q$ is a~$\bP^1$-bundle) and~$p^*$ induces 
an equivalence~$\cA^1_0 \sotimes \cA^2_0 \simeq \cJ(\cA^1, \cA^2)_0$. 
Hence it remains to observe $\pi_{k*} \circ \gamma_k$ induces an equivalence 
$(\cA^k)^{\hpd}_0 \simeq \cA^k_0$ for $k=1,2$, again by Lemma~\ref{lemma-A0-Ad0-equivalence}. 
\end{proof}

\begin{lemma}
\label{lemma-phi-adjoints}
The functor $\phi$ admits a left adjoint $\phi^* \colon \cJ(\cA^1, \cA^2)^{\hpd} \to \cJ((\cA^1)^{\hpd}, (\cA^2)^{\hpd})$.
\end{lemma}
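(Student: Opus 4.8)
The plan is to show that $\phi$ has a left adjoint by exhibiting one explicitly via a diagram chase with the adjoints that are already known to exist. First I would recall that by Lemma~\ref{lemma:gtj-adjoints} the functor $\gamma_{\tJ}$ admits a left adjoint, call it $\gamma_{\tJ}^*$. By Lemma~\ref{lemma-tJ-P1-sod} the subcategory $\cJ((\cA^1)^{\hpd}, (\cA^2)^{\hpd})$ is admissible in $\tJv((\cA^1)^{\hpd}, (\cA^2)^{\hpd})$ (it is the Lefschetz category of Theorem~\ref{theorem-join-lef-cat}, hence in particular an admissible subcategory by Lemma~\ref{lemma-tJ}), so its inclusion $j$ has a left adjoint $j^*$. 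Similarly, by Lemma~\ref{lemma:hpd-sod} the inclusion $\gamma\colon \cJ(\cA^1,\cA^2)^{\hpd} \to \bH(\tJ(\cA^1,\cA^2))$ has a left adjoint $\gamma^*$. The natural candidate for $\phi^*$ is then the composition
\begin{equation*}
\phi^* = j^* \circ \gamma_{\tJ}^* \circ \gamma \colon \cJ(\cA^1,\cA^2)^{\hpd} \to \cJ((\cA^1)^{\hpd}, (\cA^2)^{\hpd}).
\end{equation*}

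To verify this is indeed left adjoint to $\phi$, I would use the commutative diagram~\eqref{diagram:gamma-phi}, which says $\gamma \circ \phi \simeq \gamma_{\tJ} \circ j$. For $C \in \cJ((\cA^1)^{\hpd}, (\cA^2)^{\hpd})$ and $D \in \cJ(\cA^1,\cA^2)^{\hpd}$ one computes, using that $\gamma$ is fully faithful and the adjunctions $(j^*, j)$, $(\gamma_{\tJ}^*, \gamma_{\tJ})$:
\begin{align*}
\Hom(\phi^* D, C)
&= \Hom(j^*\gamma_{\tJ}^* \gamma D, C) \\
&\simeq \Hom(\gamma_{\tJ}^* \gamma D, j C) \\
&\simeq \Hom(\gamma D, \gamma_{\tJ} j C) \\
&\simeq \Hom(\gamma D, \gamma \phi C) \\
&\simeq \Hom(D, \phi C),
\end{align*}
where the last step uses full faithfulness of $\gamma$. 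This chain of natural isomorphisms establishes the adjunction. I should also note that $\phi^*$ is automatically $\bP(\vV_1 \oplus \vV_2)$-linear, being a composition of $\bP(\vV_1 \oplus \vV_2)$-linear functors (the adjoints of linear functors between linear categories are linear by \cite[Lemma 2.11]{NCHPD}), though this is not strictly needed for the statement.

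The argument is essentially formal: the only real content is identifying that all three inclusions/functors in sight admit left adjoints, and these facts are already in hand from Lemmas~\ref{lemma:gtj-adjoints}, \ref{lemma-tJ}, and~\ref{lemma:hpd-sod}. The main thing to be careful about is the admissibility of $\cJ((\cA^1)^{\hpd}, (\cA^2)^{\hpd})$ inside $\tJv((\cA^1)^{\hpd}, (\cA^2)^{\hpd})$: this requires knowing that $(\cA^1)^{\hpd}$ and $(\cA^2)^{\hpd}$ are themselves Lefschetz categories (which holds by Theorem~\ref{theorem-HPD}\eqref{Cd-ld}, since $\cA^1$ and $\cA^2$ are right strong and moderate), so that Lemma~\ref{lemma-tJ} applies to give the admissibility of the categorical join inside the resolved join. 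Once that is in place, there is no genuine obstacle; the proof is a short adjunction computation built on the commutative square~\eqref{diagram:gamma-phi}.
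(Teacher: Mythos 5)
Your proposal is correct and follows essentially the same route as the paper: the paper's proof likewise takes $\phi^* = j^* \circ \gamma_{\tJ}^* \circ \gamma$, citing Lemma~\ref{lemma:gtj-adjoints} for $\gamma_{\tJ}^*$, Lemma~\ref{lemma-tJ} for $j^*$, and full faithfulness of $\gamma$ to conclude via the commutative square~\eqref{diagram:gamma-phi}. Your explicit $\Hom$-chain just spells out the formal verification the paper leaves implicit.
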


\begin{proof}
Consider the diagram~\eqref{diagram:gamma-phi}. 
The functor $\gamma_\tJ$ has a left adjoint~$\gamma_\tJ^*$ by Lemma~\ref{lemma:gtj-adjoints}, 
and the functor $j$ has a left adjoint $j^*$ by Lemma~\ref{lemma-tJ}. 
Since the functor $\gamma$ is fully faithful, it follows that $j^* \circ \gamma_\tJ^* \circ \gamma$ is left adjoint to $\phi$. 
\end{proof} 

\begin{lemma}
\label{lemma-phiE-adjoint-Ad0-equivalent-B0}
The functor $\phi^*$ induces an equivalence $\cJ(\cA^1, \cA^2)^{\hpd}_0 \simeq  \cJ((\cA^1)^{\hpd}, (\cA^2)^{\hpd})_0$. 
\end{lemma}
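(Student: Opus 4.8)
The plan is to deduce this from what has already been established, by combining Lemma~\ref{lemma-phiE-B0-equivalent-Ad0} with a formal argument about adjoint functors restricted to semiorthogonal components. First I would recall that by Lemma~\ref{lemma-phiE-B0-in-Ad0} and Lemma~\ref{lemma-phiE-B0-equivalent-Ad0}, the functor $\phi$ restricts to an equivalence $\cJ((\cA^1)^{\hpd}, (\cA^2)^{\hpd})_0 \xrightarrow{\sim} \cJ(\cA^1, \cA^2)^{\hpd}_0$ between the Lefschetz centers. The functor $\phi^*$ is the left adjoint of $\phi$, so a natural first attempt is to argue that since $\phi$ carries the center to the center isomorphically, its left adjoint does the same ``in reverse''.

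The subtlety is that a left adjoint of a functor that restricts to an equivalence between subcategories need not itself restrict to those subcategories. So the key step is to \emph{identify} $\phi^*$ on the relevant subcategory, using the concrete description from Lemma~\ref{lemma-phi-adjoints}, namely $\phi^* \simeq j^* \circ \gamma_\tJ^* \circ \gamma$. I would analyze this composition on $\cJ(\cA^1,\cA^2)^{\hpd}_0$: apply $\gamma$ (inclusion into $\bH(\tJ(\cA^1,\cA^2))$), then $\gamma_\tJ^*$, then the projection $j^*$ onto $\cJ((\cA^1)^{\hpd}, (\cA^2)^{\hpd})$. The goal is to show this lands in $\cJ((\cA^1)^{\hpd}, (\cA^2)^{\hpd})_0$ and is inverse to $\phi$ there. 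For this I would use Proposition~\ref{proposition-characterization-Ad0}'s characterization of the center in terms of $\pi_{\tJ*}(-\otimes\wedge^t\cM)$, together with Proposition~\ref{proposition-pi-gammatJ-2}, applied now to $\gamma_\tJ^*$ rather than to $\gamma_\tJ$; alternatively, one can use the description of $\cAd_0$ via $\gamma^*\pi^*$ in~\eqref{Ad0} and Lemma~\ref{lemma-A0-Ad0-equivalence} to reduce to a statement about the factor categories $(\cA^k)^{\hpd}_0$, where the analogue is known.

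Concretely, here is the argument I expect to work. Both $\cJ((\cA^1)^{\hpd}, (\cA^2)^{\hpd})_0$ and $\cJ(\cA^1, \cA^2)^{\hpd}_0$ are, by Lemma~\ref{lemma-A0-Ad0-equivalence}, equivalent via the relevant $\pi_*\circ\gamma$ functors to $\cJ((\cA^1)^{\hpd}, (\cA^2)^{\hpd})$'s and $\cJ(\cA^1, \cA^2)$'s Lefschetz centers, which in turn are identified through $q_*$ and $p^*$ with $(\cA^1)^{\hpd}_0 \otimes (\cA^2)^{\hpd}_0$ and $\cA^1_0 \otimes \cA^2_0$ respectively. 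Under these identifications, the diagram in the proof of Lemma~\ref{lemma-phiE-B0-equivalent-Ad0} shows $\phi$ corresponds to $(\pi_{1*}\gamma_1)\otimes(\pi_{2*}\gamma_2)$, which by Lemma~\ref{lemma-A0-Ad0-equivalence} (applied to $\cA^1$ and $\cA^2$ separately) is an equivalence $(\cA^1)^{\hpd}_0 \otimes (\cA^2)^{\hpd}_0 \xrightarrow{\sim} \cA^1_0 \otimes \cA^2_0$ with inverse $(\gamma_1^*\pi_1^*)\otimes(\gamma_2^*\pi_2^*)$. Since $\phi^*$ is the left adjoint of $\phi$ and $\phi$ restricted to the centers is an equivalence, the counit and unit of the $(\phi^*,\phi)$-adjunction are isomorphisms when evaluated on objects of $\cJ(\cA^1, \cA^2)^{\hpd}_0$ and their $\phi^*$-images; hence $\phi^*$ sends $\cJ(\cA^1, \cA^2)^{\hpd}_0$ into $\cJ((\cA^1)^{\hpd}, (\cA^2)^{\hpd})_0$ and realizes the inverse equivalence. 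I expect the main obstacle to be precisely this last point—checking that the left adjoint respects the subcategories rather than just that it is a one-sided inverse on the nose—and the clean way around it is to note that $\phi$ restricted to $\cJ((\cA^1)^{\hpd}, (\cA^2)^{\hpd})_0$ is an equivalence onto $\cJ(\cA^1, \cA^2)^{\hpd}_0$, so its inverse is simultaneously a left and right adjoint there, and this inverse must agree with $\phi^*$ on $\cJ(\cA^1, \cA^2)^{\hpd}_0$ by uniqueness of adjoints applied to the fully faithful inclusion $\cJ(\cA^1, \cA^2)^{\hpd}_0 \hookrightarrow \cJ(\cA^1, \cA^2)^{\hpd}$.
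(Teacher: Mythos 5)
There is a genuine gap in the step you yourself identify as the crux. The principle you invoke at the end --- that because $\phi$ restricts to an equivalence $\cJ((\cA^1)^{\hpd}, (\cA^2)^{\hpd})_0 \xrightarrow{\sim} \cJ(\cA^1, \cA^2)^{\hpd}_0$, its left adjoint $\phi^*$ must restrict to the inverse equivalence --- is false in general. The counit $\phi^*\phi(B) \to B$ being an isomorphism at an object $B$ of the center is equivalent to $\Hom(B, B') \to \Hom(\phi B, \phi B')$ being bijective for \emph{all} $B'$ in the ambient category $\cJ((\cA^1)^{\hpd}, (\cA^2)^{\hpd})$, not merely for $B'$ in the center; full faithfulness of $\phi$ on the center gives only the latter. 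Likewise, "uniqueness of adjoints" does not apply: the restriction $\phi^*|_{\cJ(\cA^1,\cA^2)^{\hpd}_0}$ corepresents $B \mapsto \Hom(C, \phi B)$ on the whole source category, whereas the inverse of $\phi$ restricted to the centers corepresents the same functor only on the center, and these are adjoints of \emph{different} functors. A toy counterexample: let $\cB = \cD \oplus \cD$ (completely orthogonal), $\cC = \cD$, $\phi(V,W) = V \oplus W$, $\cB_0 = \cD \oplus 0$, $\cC_0 = \cC$; then $\phi|_{\cB_0}$ is an equivalence but $\phi^*(U) = (U,U)$ does not lie in $\cB_0$. Note also that you cannot appeal to $\phi$ being an equivalence globally, since the present lemma is an input to that very conclusion.

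The repair is the computation you gesture at in your "alternatively" clause but do not carry out, and it is what the paper does: do not argue abstractly from the adjunction, but compute $\phi^*$ on the center explicitly. Precompose with $\gamma^* \circ \pi_{\tJ}^*$, which by Lemma~\ref{lemma-A0-Ad0-equivalence} induces an equivalence $\cJ(\cA^1, \cA^2)_0 \simeq \cJ(\cA^1, \cA^2)^{\hpd}_0$, so that it suffices to show $\phi^* \circ \gamma^* \circ \pi_\tJ^* \simeq j^* \circ \gamma_\tJ^* \circ \pi_\tJ^*$ induces an equivalence $\cJ(\cA^1, \cA^2)_0 \simeq \cJ((\cA^1)^{\hpd}, (\cA^2)^{\hpd})_0$. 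Taking left adjoints in Proposition~\ref{proposition-pi-gammatJ-1} identifies $\gamma_\tJ^* \circ \pi_\tJ^*$ with $q^* \circ ((\gamma_1^* \circ \pi_1^*) \otimes (\gamma_2^* \circ \pi_2^*)) \circ p_!$, and then the claim reduces, via $p_! \circ p^* \simeq \id$ and $q^*$, to the fact that $\gamma_k^* \circ \pi_k^*$ induces $\cA^k_0 \simeq (\cA^k)^{\hpd}_0$ for $k = 1,2$, which is again Lemma~\ref{lemma-A0-Ad0-equivalence}. Your identification of the ingredients (the description $\phi^* \simeq j^* \circ \gamma_\tJ^* \circ \gamma$, Lemma~\ref{lemma-A0-Ad0-equivalence}, and the reduction to the factors) is on target; the proposal fails only because the argument you actually commit to replaces this computation with an invalid formal shortcut.
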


\begin{proof}
By Lemma~\ref{lemma-A0-Ad0-equivalence} the functor 
\mbox{$\gamma^* \circ \pi_\tJ^* \colon \tJ(\cA^1, \cA^2) \to \cJ(\cA^1, \cA^2)^{\hpd}$} 
induces an equivalence \mbox{$\cJ(\cA^1, \cA^2)_0 \simeq \cJ(\cA^1, \cA^2)^{\hpd}_0$}. 
So, it suffices to show that the composition $\phi^* \circ \gamma^* \circ \pi_\tJ^*$
induces an equivalence $\cJ(\cA^1, \cA^2)_0 \simeq \cJ((\cA^1)^{\hpd}, (\cA^2)^{\hpd})_0$.

On the one hand, by taking left adjoints in the diagram~\eqref{diagram:gamma-phi} and composing with $\pi_{\tJ}^*$ we obtain
\begin{equation*}
\phi^* \circ \gamma^* \circ \pi_\tJ^* \simeq j^* \circ \gamma_{\tJ}^* \circ \pi_{\tJ}^*.
\end{equation*}
On the other hand, taking left adjoints in Proposition~\ref{proposition-pi-gammatJ-1} and composing with~$j^*$ gives
\begin{equation*}
j^* \circ  \gamma_\tJ^* \circ \pi_\tJ^* \simeq 
j^* \circ q^* \circ ((\gamma_1^* \circ \pi_1^*) \otimes (\gamma_2^* \circ \pi_2^*) ) \circ p_!.
\end{equation*} 
So it suffices to show the right side 
induces an equivalence $\cJ(\cA^1, \cA^2)_0 \simeq \cJ((\cA^1)^{\hpd}, (\cA^2)^{\hpd})_0$. 
By the definitions of $\cJ(\cA^1, \cA^2)_0$ and $\cJ((\cA^1)^{\hpd}, (\cA^2)^{\hpd}))_0$, 
the functor $p_!$ induces an equivalence~$\cJ(\cA^1, \cA^2)_0 \simeq \cA^1_0 \sotimes \cA^2_0$ 
(note that $p_! \circ p^* \simeq \id$ as $p$ is a~$\bP^1$-bundle) 
and $j^* \circ q^*$ induces an equivalence $(\cA^1)^{\hpd}_0 \sotimes (\cA^2)^{\hpd}_0 \simeq \cJ((\cA^1)^{\hpd}, (\cA^2)^{\hpd})_0$. 
Hence it remains to observe $\gamma^*_k \circ \pi_k^*$ induces an equivalence~$\cA^k_0 \simeq (\cA^k)^{\hpd}_0$ for $k=1,2$, 
again by Lemma~\ref{lemma-A0-Ad0-equivalence}.
\end{proof}

\begin{proof}[{Proof of Theorems~\textup{\ref{theorem-joins-HPD-precise}} and~\textup{\ref{theorem-joins-HPD}}}]
Lemmas \ref{lemma-phiE-B0-equivalent-Ad0}, \ref{lemma-phi-adjoints}, and \ref{lemma-phiE-adjoint-Ad0-equivalent-B0} 
verify the criteria of Lemma~\ref{lemma-equivalence-lef-cat} for the functor 
\begin{equation*}
\phi \colon \cJ((\cA^1)^{\hpd}, (\cA^2)^{\hpd}) \to \cJ(\cA^1, \cA^2)^{\hpd}
\end{equation*}
to be an equivalence of Lefschetz categories. 
This completes the proof of Theorem~\ref{theorem-joins-HPD-precise}, and hence also of Theorem~\ref{theorem-joins-HPD}.
\end{proof}


\section{Nonlinear HPD theorems}
\label{section-nonlinear-HPD}

In~\S\ref{subsection-nonlinear-HPD-theorem} we prove a nonlinear version of the main theorem of HPD (Theorem~\ref{theorem-HPD}).
We give an extension of this result in \S\ref{subsection-iterated-nonlinear-HPD-theorem}, 
which describes the tensor product of an arbitrary number of Lefschetz 
categories over a projective bundle in terms of a linear section of the categorical 
join of their HPD categories. 

\subsection{The nonlinear HPD theorem} 
\label{subsection-nonlinear-HPD-theorem}

Recall that if $\cA^1$ and $\cA^2$ are Lefschetz categories over $\bP(V_1)$ and~$\bP(V_2)$, 
then Theorem~\ref{theorem-join-lef-cat} provides their categorical join $\cJ(\cA^1,\cA^2)$ 
with the structure of a Lefschetz category over $\bP(V_1 \oplus V_2)$.
We denote by $\cJ_i$, $i \in \bZ$, the Lefschetz components of the categorical join $\cJ(\cA^1,\cA^2)$, 
defined by~\eqref{Ji} and~\eqref{Ji-left}.

\begin{lemma}
\label{lemma-cJL-HPD} 
Let $\cA^1$ and $\cA^2$ be Lefschetz categories over $\bP(V_1)$ and~$\bP(V_2)$. 
For $i \in \bZ$ let $\cJ_i$ be the Lefschetz components of the categorical join 
$\cJ(\cA^1, \cA^2)$. 
Set 
\begin{equation*}
m = \length(\cJ(\cA^1,\cA^2)) = \length(\cA^1) + \length(\cA^2). 
\end{equation*}
Let \mbox{$W \subset V_1 \oplus V_2$} be a subbundle of corank~$s$, and denote by $H$ the relative hyperplane class on $\bP(W)$. 
Then the functor $\cJ(\cA^1, \cA^2) \to \cJ(\cA^1, \cA^2)_{\bP(W)}$ induced by pullback 
along the embedding $\bP(W) \to \bP(V_1 \oplus V_2)$ is fully faithful on $\cJ_i$ for $|i| \geq s$, and 
there are semiorthogonal decompositions 
\begin{align*}
\cJ(\cA^1, \cA^2)_{\bP(W)} & = 
\llangle 
\cK_W(\cJ(\cA^1, \cA^2)), \cJ_s(H), \dots, \cJ_{m-1}((m-s)H) 
\rrangle,    
\\ 
& = 
\llangle 
\cJ_{1-m}((s-m)H), \dots, \cJ_{-s}(-H), \cK'_W(\cJ(\cA^1, \cA^2)) 
\rrangle . 
\end{align*} 
\end{lemma}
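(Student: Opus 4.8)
The plan is to recognize that Lemma~\ref{lemma-cJL-HPD} is nothing but the specialization of Lemma~\ref{lemma-linear-section-lc} to the Lefschetz category $\cJ(\cA^1,\cA^2)$ over $\bP(V_1\oplus V_2)$. Indeed, by Theorem~\ref{theorem-join-lef-cat} the categorical join $\cJ(\cA^1,\cA^2)$ is a Lefschetz category over $\bP(V_1\oplus V_2)$ of length $m=\length(\cA^1)+\length(\cA^2)$, with Lefschetz components $\cJ_i$ given by~\eqref{Ji} and~\eqref{Ji-left}. Applying Lemma~\ref{lemma-linear-section-lc} with $\cA=\cJ(\cA^1,\cA^2)$, $V=V_1\oplus V_2$, and $L=W$ a subbundle of corank $s$, we immediately obtain that the pullback functor $\cJ(\cA^1,\cA^2)\to\cJ(\cA^1,\cA^2)_{\bP(W)}$ is fully faithful on $\cJ_i$ for $|i|\ge s$, together with the two semiorthogonal decompositions of $\cJ(\cA^1,\cA^2)_{\bP(W)}$, where $\cK_W(\cJ(\cA^1,\cA^2))$ and $\cK'_W(\cJ(\cA^1,\cA^2))$ are defined exactly as the categories $\cK_L(\cA)$ and $\cK'_L(\cA)$ of that lemma.

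So the proof is essentially a one-line invocation. The only points to check are bookkeeping: first, that the hypotheses of Lemma~\ref{lemma-linear-section-lc} are met — and they are, since that lemma requires only that $\cA$ be a Lefschetz category over $\bP(V)$, with no strongness or moderateness assumption, which Theorem~\ref{theorem-join-lef-cat} guarantees for $\cJ(\cA^1,\cA^2)$ unconditionally. Second, that the grading conventions match: the hyperplane class $H$ on $\bP(W)$ is the restriction of the hyperplane class $H$ on $\bP(V_1\oplus V_2)$ (consistent with the convention in~\S\ref{subsection-conventions} of suppressing pullbacks), and the twist $\cJ_i(iH)$ appearing in the Lefschetz decomposition of $\cJ(\cA^1,\cA^2)$ restricts to $\cJ_i(iH)$ on $\bP(W)$, so the indexing in the displayed decompositions is exactly that of Lemma~\ref{lemma-linear-section-lc}. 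Third, that $m=\length(\cJ(\cA^1,\cA^2))$ equals $\length(\cA^1)+\length(\cA^2)$, which is again part of Theorem~\ref{theorem-join-lef-cat}.

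There is no real obstacle here; the content of the lemma has already been done in Theorem~\ref{theorem-join-lef-cat} (establishing the Lefschetz structure) and in Lemma~\ref{lemma-linear-section-lc} (the general linear section statement). The proof should simply read:

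\begin{proof}
By Theorem~\ref{theorem-join-lef-cat}, $\cJ(\cA^1,\cA^2)$ is a Lefschetz category over $\bP(V_1\oplus V_2)$ of length $m=\length(\cA^1)+\length(\cA^2)$, with Lefschetz components $\cJ_i$, $i\in\bZ$. The assertions now follow by applying Lemma~\ref{lemma-linear-section-lc} to $\cA=\cJ(\cA^1,\cA^2)$ and the subbundle $W\subset V_1\oplus V_2$ of corank $s$, with $\cK_W(\cJ(\cA^1,\cA^2))$ and $\cK'_W(\cJ(\cA^1,\cA^2))$ defined as the categories $\cK_L(\cA)$ and $\cK'_L(\cA)$ appearing there.
\end{proof}

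If the surrounding text later needs $\cK_W(\cJ(\cA^1,\cA^2))\simeq\cK'_W(\cJ(\cA^1,\cA^2))$ — as it presumably will, in order to combine with Theorem~\ref{theorem-HPD} and Theorem~\ref{theorem-joins-HPD} to prove the nonlinear HPD theorem — that would be a separate statement deduced from Theorem~\ref{theorem-HPD}\eqref{HPD-linear} applied to $\cJ(\cA^1,\cA^2)$, but it is not part of the present lemma.
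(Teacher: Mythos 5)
Your proposal is correct and is exactly the paper's argument: the paper's proof reads ``Follows by combining Theorem~\ref{theorem-join-lef-cat} and Lemma~\ref{lemma-linear-section-lc},'' which is precisely your one-line invocation. Your additional bookkeeping checks (no strongness/moderateness hypotheses needed, matching of $H$ and the length $m$) are accurate but not spelled out in the paper.
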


\begin{proof}
Follows by combining Theorem~\ref{theorem-join-lef-cat} and Lemma~\ref{lemma-linear-section-lc}. 
\end{proof}

Now assume that the compositions $W \to V_1 \oplus V_2 \to V_1$ and $W \to V_1 \oplus V_2 \to V_2$ are both inclusions,
that is~$\bP(W) \subset \bP(V_1 \oplus V_2)$ is contained in the complement of $\bP(V_1) \sqcup \bP(V_2)$.
Then Proposition~\ref{proposition-cJT} gives an equivalence
\begin{equation}
\label{eq:cj-pl-a1-a2}
\cJ(\cA^1, \cA^2)_{\bP(W)} \simeq \cA^1_{\bP(W)} \otimes_{\Perf(\bP(W))} \cA^2_{\bP(W)}. 
\end{equation} 
If $s$ denotes the corank of $W \subset V_1 \oplus V_2$, then 
by this equivalence and Lemma~\ref{lemma-cJL-HPD} for~$|i| \geq s$ we may consider 
the Lefschetz component $\cJ_i$ of the categorical join $\cJ(\cA^1, \cA^2)$ 
as a subcategory of $\cA^1_{\bP(W)} \otimes_{\Perf(\bP(W))} \cA^2_{\bP(W)}$.

\begin{remark}
\label{remark:cji-in-a1-a2}
Let us directly describe $\cJ_i$, $|i| \geq s$, as a subcategory of 
$\cA^1_{\bP(W)} \otimes_{\Perf(\bP(W))} \cA^2_{\bP(W)}$, without reference to categorical joins.
First note that $\cJ_i$ as a subcategory of $\cJ(\cA^1, \cA^2)$ is the image under the functor~$p^*$ 
of the subcategory $\barcJ_i \subset \cA^1 \otimes \cA^2$ 
described explicitly in Lemma~\ref{lemma-Ji-alternate}. 
Hence for $|i| \geq s$, the subcategory 
\begin{equation*}
\cJ_i \subset \cA^1_{\bP(W)} \otimes_{\Perf(\bP(W))} \cA^2_{\bP(W)}
\end{equation*}
is the fully faithful image of $\barcJ_i$ under the composition 
\begin{equation}
\label{cji-in-a1-a2-1}
\cA^1 \otimes \cA^2 \xrightarrow{\ p^*\ } \cJ(\cA^1, \cA^2) \to \cJ(\cA^1, \cA^2)_{\bP(W)} \xrightarrow{\ \sim\ }
\cA^1_{\bP(W)} \otimes_{\Perf(\bP(W))} \cA^2_{\bP(W)}, 
\end{equation}
where the second functor is given by base change along the inclusion $\bP(W) \to \bP(V_1 \oplus V_2)$. 
It remains to describe this composition without reference to categorical joins. 
The inclusions~$W \to V_1$ and $W \to V_2$ induce a 
morphism $\bP(W) \to \bP(V_1) \times \bP(V_2)$. 
Base changing the~$\bP(V_1) \times \bP(V_2)$-linear category $\cA^1 \otimes \cA^2$ along this morphism 
gives a functor 
\begin{equation}
\label{cji-in-a1-a2-2}
\cA^1 \otimes \cA^2 \to (\cA^1 \otimes \cA^2) \otimes_{\Perf(\bP(V_1) \times \bP(V_2))} \Perf(\bP(W)) 
\simeq \cA^1_{\bP(W)} \otimes_{\Perf(\bP(W))} \cA^2_{\bP(W)}, 
\end{equation}
where the equivalence is given by Corollary~\ref{corollary-base-change-along-diagonal}. 
Alternatively, \eqref{cji-in-a1-a2-2} is given by the tensor product of the restriction functors 
$\cA^1 \to \cA^1_{\bP(W)}$ and $\cA^2 \to \cA^2_{\bP(W)}$. 
Unwinding the definitions shows the functors \eqref{cji-in-a1-a2-1} and \eqref{cji-in-a1-a2-2} 
are isomorphic. 
\end{remark}

Combining Lemma~\ref{lemma-cJL-HPD} with the equivalence~\eqref{eq:cj-pl-a1-a2} we arrive at the following
nonlinear analogue of Lemma~\ref{lemma-linear-section-lc}, which describes linear sections of Lefschetz categories. 
Note that although by Remark~\ref{remark:cji-in-a1-a2} 
this result can be stated without categorical joins, 
the proof uses them.

\begin{corollary}
\label{corollary-fiber-product-lc}
In the setup of Lemma~\textup{\ref{lemma-cJL-HPD}}, 
assume that the subbundle $W \subset V_1 \oplus V_2$ is such that
the compositions $W \to V_1 \oplus V_2 \to V_1$ and $W \to V_1 \oplus V_2 \to V_2$ are both inclusions. 
Then there are semiorthogonal decompositions 
\begin{align*}
\cA^1_{\bP(W)} \otimes_{\Perf(\bP(W))} \cA^2_{\bP(W)} & = 
\llangle \cK_W(\cA^1, \cA^2), \cJ_s(H), \dots, \cJ_{m-1}((m-s)H) \rrangle ,    \\
& = 
\llangle 
\cJ_{1-m}((s-m)H), \dots, \cJ_{-s}(-H), \cK'_W(\cA^1, \cA^2) 
\rrangle . 
\end{align*}
\end{corollary}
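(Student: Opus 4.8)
The plan is to deduce both semiorthogonal decompositions directly from Lemma~\ref{lemma-cJL-HPD}, transported across the equivalence~\eqref{eq:cj-pl-a1-a2}. First I would verify that~\eqref{eq:cj-pl-a1-a2} is available here: the hypothesis that the compositions $W \to V_1 \oplus V_2 \to V_1$ and $W \to V_1 \oplus V_2 \to V_2$ are inclusions means exactly that the subbundles $W \cap V_1 = \ker(W \to V_2)$ and $W \cap V_2 = \ker(W \to V_1)$ both vanish, so that $\bP(W) \subset \bP(V_1 \oplus V_2)$ lies in the complement of $\bP(V_1) \sqcup \bP(V_2)$. Thus Proposition~\ref{proposition-cJT}, applied with $T = \bP(W)$ (and with each $\cA^k$ trivially supported over $Z_k = \bP(V_k)$, so the first hypothesis there is vacuous), yields the $\bP(W)$-linear equivalence $\cJ(\cA^1, \cA^2)_{\bP(W)} \simeq \cA^1_{\bP(W)} \otimes_{\Perf(\bP(W))} \cA^2_{\bP(W)}$.

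Next I would invoke Lemma~\ref{lemma-cJL-HPD}, which, with $s = \corank(W)$ and $m = \length(\cA^1) + \length(\cA^2)$, provides the two semiorthogonal decompositions of $\cJ(\cA^1, \cA^2)_{\bP(W)}$, with residual components $\cK_W(\cJ(\cA^1, \cA^2))$ and $\cK'_W(\cJ(\cA^1, \cA^2))$ and remaining components the twisted Lefschetz components $\cJ_i(({\cdot})H)$ for $|i| \ge s$. Transporting these decompositions along~\eqref{eq:cj-pl-a1-a2} yields semiorthogonal decompositions of $\cA^1_{\bP(W)} \otimes_{\Perf(\bP(W))} \cA^2_{\bP(W)}$; one then \emph{defines} $\cK_W(\cA^1, \cA^2)$ and $\cK'_W(\cA^1, \cA^2)$ to be the images of $\cK_W(\cJ(\cA^1, \cA^2))$ and $\cK'_W(\cJ(\cA^1, \cA^2))$ under~\eqref{eq:cj-pl-a1-a2}, while the images of the $\cJ_i$ are, by Remark~\ref{remark:cji-in-a1-a2}, precisely the subcategories of $\cA^1_{\bP(W)} \otimes_{\Perf(\bP(W))} \cA^2_{\bP(W)}$ appearing in the statement. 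This produces the two displayed decompositions.

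The substance of the argument all sits in the results being quoted — Theorem~\ref{theorem-join-lef-cat}, Lemma~\ref{lemma-linear-section-lc}, and Proposition~\ref{proposition-cJT} — so there is no real obstacle; the only point demanding a moment's care is checking that the image of $\cJ_i$ under~\eqref{eq:cj-pl-a1-a2} coincides with the intrinsic description of Remark~\ref{remark:cji-in-a1-a2}. But this is exactly what that remark establishes (the composite~\eqref{cji-in-a1-a2-1} there is nothing but~\eqref{eq:cj-pl-a1-a2} preceded by $p^*$), so once the remark is in place the corollary follows immediately.
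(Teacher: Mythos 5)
Your proposal is correct and follows the same route as the paper: the hypothesis on $W$ places $\bP(W)$ in the complement of $\bP(V_1)\sqcup\bP(V_2)$, so Proposition~\ref{proposition-cJT} gives the equivalence $\cJ(\cA^1,\cA^2)_{\bP(W)} \simeq \cA^1_{\bP(W)} \otimes_{\Perf(\bP(W))} \cA^2_{\bP(W)}$, and the decompositions of Lemma~\ref{lemma-cJL-HPD} are transported across it, with Remark~\ref{remark:cji-in-a1-a2} identifying the images of the $\cJ_i$. This matches the paper's (very short) argument exactly.
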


\begin{remark}
If the maps $W \to V_1 \oplus V_2 \to V_1$ and $W \to V_1 \oplus V_2 \to V_2$ are isomorphisms 
and~$\cA^2 = \Perf(\bP(L))$ for a subbundle $L \subset V_2$, 
then Corollary~\ref{corollary-fiber-product-lc} reduces to Lemma~\ref{lemma-linear-section-lc}.  
\end{remark}

Now we arrive at the nonlinear HPD theorem. 
Like Corollary~\ref{corollary-fiber-product-lc}, the statement can be explained without appealing to categorical joins, 
but the proof uses them. 

\begin{theorem}
\label{theorem-nonlinear-HPD} 
Let $\cA^1$ and $\cA^2$ be right strong, moderate Lefschetz categories over projective bundles $\bP(V_1)$ and~$\bP(V_2)$. 
For $i , j \in \bZ$ let $\cJ_i$ and $\cJd_j$ be the Lefschetz components 
of the categorical joins $\cJ(\cA^1,\cA^2)$ and $\cJ((\cA^1)^{\hpd},(\cA^2)^{\hpd})$ respectively. 
Assume $V_1$ and $V_2$ have the same rank, and set 
\begin{equation*}
N = \rank(V_1) = \rank(V_2). 
\end{equation*} 
Let $W$ be a vector bundle on $S$ equipped with isomorphisms
\begin{alignat*}{2}
\xi_{k} &\colon W \xrightarrow{\, \sim \,} V_k, &&  k =1,2, 
\intertext{and let}
(\xi_{k}^{\svee})^{-1} &\colon W^{\svee} \xrightarrow{\, \sim \,}  V_k^{\svee}, \quad && k =1,2, 
\end{alignat*} 
be the inverse dual isomorphisms. 
Set 
\begin{equation*}
m = \length(\cA^1) + \length(\cA^2)
\qquad\text{and}\qquad 
n = \length((\cA^1)^{\hpd}) + \length((\cA^2)^{\hpd}). 
\end{equation*}
Denote by $H$ and $H'$ the relative hyperplane classes on~$\bP(W)$ and $\bP(W^{\svee})$.
Then there are semiorthogonal decompositions 
\begin{align*}
\cA^1_{\bP(W)} \otimes_{\Perf(\bP(W))} \cA^2_{\bP(W)} & = 
\llangle \cK_W(\cA^1, \cA^2), \cJ_N(H), \dots, \cJ_{m-1}((m-N)H) \rrangle ,    \\ 
(\cA^1)^{\hpd}_{\bP(W^{\svee})} {\otimes_{\Perf(\bP(W^{\svee}))}} (\cA^2)^{\hpd}_{\bP(W^{\svee})} & = 
\llangle \cJd_{1-n}((N-n)H'), \dots, \cJd_{-N}(-H'), \cK'_{W^{\svee}}((\cA^1)^{\hpd}, (\cA^2)^{\hpd}) \rrangle, 
\end{align*} 
where we consider $\cJ_i$ and $\cJd_j$ as subcategories in the left sides as explained in Remark~\textup{\ref{remark:cji-in-a1-a2}}.
Furthermore, we have an $S$-linear equivalence 
\begin{equation*}
\cK_W(\cA^1, \cA^2) \simeq  \cK'_{W^{\svee}}((\cA^1)^{\hpd}, (\cA^2)^{\hpd}) .  
\end{equation*}
\end{theorem}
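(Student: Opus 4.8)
The plan is to deduce the statement from the main theorem of HPD (Theorem~\ref{theorem-HPD}) applied to the categorical join $\cJ(\cA^1,\cA^2)$, after realizing the two fiber products in the statement as base changes of categorical joins along a pair of mutually orthogonal ``graph'' subbundles.

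First I would use the isomorphisms $\xi_k$ to embed $W$ into $V_1\oplus V_2$ as the subbundle $w\mapsto(\xi_1(w),\xi_2(w))$. This has corank $N$, and the two compositions $W\to V_1\oplus V_2\to V_k$ are the isomorphisms $\xi_k$, hence inclusions, so the hypotheses of Lemma~\ref{lemma-cJL-HPD} and Corollary~\ref{corollary-fiber-product-lc} are satisfied and the base change along $\bP(W)\to\bP(V_1\oplus V_2)$ is identified by~\eqref{eq:cj-pl-a1-a2} (a case of Proposition~\ref{proposition-cJT}) with $\cA^1_{\bP(W)}\otimes_{\Perf(\bP(W))}\cA^2_{\bP(W)}$, the base changes being taken along $\bP(\xi_k)$. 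A short computation with the definition~\eqref{eq:perp-bundle} then identifies the orthogonal $W^{\perp}\subset\vV_1\oplus\vV_2=(V_1\oplus V_2)^{\svee}$ with the analogous graph subbundle attached to the isomorphisms $(\xi_1^{\svee})^{-1}$ and $-(\xi_2^{\svee})^{-1}$; in particular $W^{\perp}$ again has corank $N$ and its projections to $\vV_k$ are isomorphisms, so the same hypotheses hold for $W^{\perp}$ and the Lefschetz categories $(\cA^1)^{\hpd},(\cA^2)^{\hpd}$, with the base change along $\bP(W^{\perp})$ identified as in the theorem.

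Next, by Theorem~\ref{theorem-join-lef-cat} the categorical join $\cJ(\cA^1,\cA^2)$ is a right strong, moderate Lefschetz category over $\bP(V_1\oplus V_2)$ of length $m$, and by Theorem~\ref{theorem-joins-HPD} its HPD category is Lefschetz equivalent to $\cJ((\cA^1)^{\hpd},(\cA^2)^{\hpd})$, which has length $n$. The first displayed semiorthogonal decomposition of the theorem is then exactly the first line of Corollary~\ref{corollary-fiber-product-lc} for the graph subbundle $W$ of corank $s=N$, and the second displayed decomposition is the second line of Corollary~\ref{corollary-fiber-product-lc} applied to $(\cA^1)^{\hpd},(\cA^2)^{\hpd}$ and $W^{\perp}$, again of corank $N$; here the Lefschetz components $\cJ_i$ and $\cJd_j$ are viewed inside the tensor products as in Remark~\ref{remark:cji-in-a1-a2}.

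Finally, for the equivalence of residual pieces I would apply Theorem~\ref{theorem-HPD}\eqref{HPD-linear} to $\cJ(\cA^1,\cA^2)$ with $L=W$ (so $r=s=N$), obtaining an $S$-linear equivalence $\cK_W(\cJ(\cA^1,\cA^2))\simeq\cK'_{W^{\perp}}(\cJ(\cA^1,\cA^2)^{\hpd})$, and then transport the right-hand side along the Lefschetz equivalence $\cJ(\cA^1,\cA^2)^{\hpd}\simeq\cJ((\cA^1)^{\hpd},(\cA^2)^{\hpd})$ of Theorem~\ref{theorem-joins-HPD}, followed by~\eqref{eq:cj-pl-a1-a2} on both sides to pass from categorical joins to tensor products. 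The main obstacle is precisely the bookkeeping of this last transport: one must check that $\cK'_{W^{\perp}}$, which by Lemma~\ref{lemma-linear-section-lc} is cut out after base change as the semiorthogonal complement of the images of certain Lefschetz components, depends only on the Lefschetz structure and is compatible with base change of Lefschetz equivalences, so that the two a priori different readings of ``$\cJ((\cA^1)^{\hpd},(\cA^2)^{\hpd})_{\bP(W^{\perp})}$'' --- via HPD of the join, versus via the categorical join of the HPDs --- are interchangeable.
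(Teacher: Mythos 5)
Your proposal is correct and follows essentially the same route as the paper: embed $W$ into $V_1\oplus V_2$ as the graph of $(\xi_1,\xi_2)$, identify $W^\perp$ as the graph of $((\xi_1^{\svee})^{-1},-(\xi_2^{\svee})^{-1})$, and combine Corollary~\ref{corollary-fiber-product-lc}, the equivalence~\eqref{eq:cj-pl-a1-a2}, Theorem~\ref{theorem-joins-HPD}, and Theorem~\ref{theorem-HPD}\eqref{HPD-linear}. The only small point the paper makes explicit that you leave implicit is that the sign in $-(\xi_2^{\svee})^{-1}$ is harmless because it induces the same morphism $\bP(W^{\svee})\to\bP(\vV_2)$ as $(\xi_2^{\svee})^{-1}$.
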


\begin{proof}
Consider the inclusion of vector bundles 
\begin{equation*}
(\xi_1, \xi_2) \colon W \to V_1 \oplus V_2 = V.
\end{equation*}  
The orthogonal subbundle is given by the inclusion of 
vector bundles 
\begin{equation*}
((\xi^{\svee}_1)^{-1}, -(\xi^{\svee}_2)^{-1}) \colon W^{\svee} \to V_1^{\svee} \oplus V_2^{\svee} = V^{\svee}.  
\end{equation*} 
Now the semiorthogonal decompositions follow from Corollary~\ref{corollary-fiber-product-lc},
and the equivalence of categories follows from a combination of~\eqref{eq:cj-pl-a1-a2} 
with Theorem~\ref{theorem-joins-HPD} and Theorem~\ref{theorem-HPD}\eqref{HPD-linear},
noting that~$-(\xi^{\svee}_2)^{-1}$ and~$(\xi^{\svee}_2)^{-1}$ induce the same morphism 
$\bP(W^{\svee}) \to \bP(\vV_2)$.
\end{proof}

\begin{remark}
\label{remark:hpd-nonlinear-vs-linear}
As we already mentioned, Theorem~\ref{theorem-nonlinear-HPD} can be regarded as a nonlinear 
version of the main theorem of~HPD, i.e. Theorem~\ref{theorem-HPD}\eqref{HPD-linear}. 
Indeed, consider the linear duality of Example~\ref{ex:categorical-linear-hpd}, 
where~$\cA^2 = \Perf(\bP(L))$ for a subbundle $0 \subsetneq L \subsetneq V_2$ and 
$(\cA^2)^{\hpd} \simeq \Perf(\bP(L^{\perp}))$. 
Now if $W = V_1 = V_2$ and $\xi_1 = \xi_2 = \id$, then 
Theorem~\ref{theorem-nonlinear-HPD} reduces to Theorem~\ref{theorem-HPD}\eqref{HPD-linear}. 
\end{remark}

In the following remark, we explain how to deduce results for 
bounded derived categories of coherent sheaves in place of perfect complexes. 

\begin{remark}
\label{remark-Perf-to-Db}
Given a proper $T$-linear category $\cA$, 
where $T$ is noetherian over a field of characteristic $0$, 
in \cite[Definition 4.27]{NCHPD} a \emph{bounded coherent category} $\cA^{\coh}$ is defined. 
In case~$X \to T$ is a proper morphism of finite presentation, where $X$ is possibly a derived scheme, 
then~$\Perf(X)^{\coh}$ recovers $\Db(X)$. 

By~\cite[Proposition~4.28]{NCHPD}, a semiorthogonal decomposition of $\cA$ (with all 
components except possibly the first or last admissible) induces a semiorthogonal decomposition of $\cA^\coh$. 
This gives rise to a bounded coherent version of Theorem~\ref{theorem-nonlinear-HPD}. 
Namely, assume the categories $\cJ_i$ and $\cJd_j$ appearing in the semiorthogonal decompositions of 
\begin{equation*}
\cC = \cA^1_{\bP(W)} \otimes_{\Perf(\bP(W))} \cA^2_{\bP(W)} 
\quad \text{and} \quad 
\cD = (\cA^1)^{\hpd}_{\bP(W^{\svee})} {\otimes_{\Perf(\bP(W^{\svee}))}} (\cA^2)^{\hpd}_{\bP(W^{\svee})}
\end{equation*} 
are admissible. 
For instance, this is automatic if $\cA^1$ and $\cA^2$ are smooth and proper over $S$, by 
Lemma~\ref{lemma-cJ-smooth-proper} combined with \cite[Lemmas 4.15 and 4.13]{NCHPD}. 
Then there are semiorthogonal decompositions 
\begin{align}
\label{Ccoh-sod} \cC^{\coh} & = 
\llangle (\cK_W(\cA^1, \cA^2))^{\coh}, (\cJ_N)^{\coh}(H), \dots, (\cJ_{m-1})^{\coh}((m-N)H) \rrangle ,    \\ 
\label{Dcoh-sod} \cD^{\coh} & = 
\llangle (\cJd_{1-n})^{\coh}((N-n)H'), \dots, (\cJd_{-N})^{\coh}(-H'), (\cK'_{W^{\svee}}((\cA^1)^{\hpd}, (\cA^2)^{\hpd}))^\coh \rrangle, 
\end{align} 
and an $S$-linear equivalence 
\begin{equation*}
(\cK_W(\cA^1, \cA^2))^\coh \simeq  (\cK'_{W^{\svee}}((\cA^1)^{\hpd}, (\cA^2)^{\hpd}))^{\coh} .  
\end{equation*}

Note that if $\cA^k = \Perf(X_k)$ for a $\bP(V_k)$-scheme $X_k$, 
then there is an equivalence 
\begin{equation*}
\cC \simeq \Perf{ \left( (X_1)_{\bP(W)} \times_{\bP(W)} (X_2)_{\bP(W)} \right) } . 
\end{equation*}
Hence if our base $S$ is noetherian over a field of characteristic $0$ and $X_k \to \bP(V_k)$ is a proper  
morphism, then there is an equivalence  
\begin{equation*}
\cC^\coh \simeq \Db{\left( (X_1)_{\bP(W)} \times_{\bP(W)} (X_2)_{\bP(W)} \right)}. 
\end{equation*} 
Note that in both formulas above the fiber product is derived 
(and hence agrees with the usual fiber product of schemes when $\Tor$-independence holds).
\end{remark}

\subsection{An iterated nonlinear HPD theorem} 
\label{subsection-iterated-nonlinear-HPD-theorem}   

Theorem~\ref{theorem-nonlinear-HPD} describes the tensor product of two Lefschetz categories 
over a projective bundle in terms of their HPD categories. 
This generalizes to a description of the tensor product of an arbitrary 
number of Lefschetz categories over a projective bundle. 
The key point is to consider iterated categorical joins of a collection of Lefschetz categories. 

\begin{definition}
For $k=1,2, \dots, \ell$, let $\cA^k$ be a Lefschetz category over $\bP(V_k)$. 
The \emph{categorical join} of $\cA^1, \dots, \cA^{\ell}$ is the Lefschetz 
category over $\bP(V_1 \oplus \cdots \oplus V_{\ell})$ defined inductively by the formula 
\begin{equation*}
\cJ(\cA^1, \dots, \cA^{\ell}) = \cJ(\cJ(\cA^1, \dots, \cA^{\ell-1}), \cA^\ell). 
\end{equation*}
\end{definition}

\begin{remark}
As the notation suggests,
the operation of taking a categorical join is associative in the sense that there is a Lefschetz equivalence
\begin{equation*}
\cJ(\cJ(\cA^1, \cA^{2}), \cA^3) \simeq \cJ(\cA^1, \cJ(\cA^2,\cA^3)).
\end{equation*}
To see this one can define the universal ``triple resolved join'' $\tJ(\bP(V_1),\bP(V_2),\bP(V_3))$ as
\begin{equation*}
\tJ(\bP(V_1),\bP(V_2),\bP(V_3)) = \bP_{\bP(V_1)\times\bP(V_2)\times\bP(V_3)}(\cO(-H_1) \oplus \cO(-H_2) \oplus \cO(-H_3)),
\end{equation*}
where $H_k$ are the hyperplane classes of $\bP(V_k)$.
There are three sections
\begin{equation*}
\sigma_k \colon \bP(V_1)\times\bP(V_2)\times\bP(V_3) \hookrightarrow \tJ(\bP(V_1),\bP(V_2),\bP(V_3)), \qquad k = 1, 2, 3,
\end{equation*}
corresponding to the embeddings~$\cO(-H_k) \to \cO(-H_1) \oplus \cO(-H_2) \oplus \cO(-H_3)$.
Let 
\begin{equation*}
\tJ(\cA^1,\cA^2,\cA^3) = (\cA^1 \otimes \cA^2 \otimes \cA^3) \otimes_{\Perf(\bP(V_1)\times\bP(V_2)\times\bP(V_3))} 
\Perf(\tJ(\bP(V_1),\bP(V_2),\bP(V_3))),
\end{equation*}
and define the ``triple categorical join'' as
\begin{equation*}
\cJ(\cA^1,\cA^2,\cA^3) = \left\{ 
C \in \tJ(\cA^1,\cA^2,\cA^3) 
\ \left|\ 
\begin{aligned}
\sigma_1^*(C) &\in \cA^1 \sotimes \cA^2_0 \sotimes \cA^3_0 \subset \cA^1 \otimes \cA^2 \otimes \cA^3 , \\
\sigma_2^*(C) &\in \cA^1_0 \sotimes \cA^2 \sotimes \cA^3_0 \subset \cA^1 \otimes \cA^2 \otimes \cA^3 , \\
\sigma_3^*(C) &\in \cA^1_0 \sotimes \cA^2_0 \sotimes \cA^3 \subset \cA^1 \otimes \cA^2 \otimes \cA^3
\end{aligned}
\right.\right\}.
\end{equation*}
It is not a priori clear whether $\cJ(\cA^1,\cA^2,\cA^3)$ is an admissible subcategory in~$\tJ(\cA^1,\cA^2,\cA^3)$.
Note, however, that $\tJ(\cA^1,\cA^2,\cA^3)$ has a $\bP(V_1 \oplus V_2 \oplus V_3)$-linear structure induced 
by the natural morphism~$\tJ(\bP(V_1),\bP(V_2),\bP(V_3)) \to \bP(V_1 \oplus V_2 \oplus V_3)$, and 
that $\cJ(\cA^1,\cA^2,\cA^3) \subset \tJ(\cA^1,\cA^2,\cA^3)$ is a $\bP(V_1 \oplus V_2 \oplus V_3)$-linear 
subcategory. Moreover, if 
\begin{equation*} 
\rho \colon \tJ(\bP(V_1),\bP(V_2),\bP(V_3)) \to \bP(V_1) \times \bP(V_2) \times \bP(V_2)
\end{equation*} 
is the projection, then the induced functor $\rho^* \colon \cA^1 \otimes \cA^2 \otimes \cA^3 \to \tJ(\cA^1, \cA^2, \cA^3)$ gives a fully faithful embedding $\cA^1_0 \otimes \cA^2_0 \otimes \cA^3_0 \hookrightarrow \cJ(\cA^1,\cA^2,\cA^3)$, whose image we denote by $\cJ(\cA^1,\cA^2,\cA^3)_0$. 

There is a morphism
\begin{equation*}
\tJ(\tJ(\bP(V_1),\bP(V_2)),\bP(V_3)) \xrightarrow{\ \varpi_{3}\ }
\tJ(\bP(V_1),\bP(V_2),\bP(V_3)),
\end{equation*}
identifying the iterated resolved join $\tJ(\tJ(\bP(V_1),\bP(V_2)),\bP(V_3))$ with the blowup of the image of the section~$\sigma_3$. 
The key observation is that $\varpi_3^*$ induces a $\bP(V_1 \oplus V_2 \oplus V_3)$-linear equivalence 
\begin{equation*}
\cJ(\cA^1,\cA^2,\cA^3) \simeq \cJ(\cJ(\cA^1,\cA^2),\cA^3) , 
\end{equation*}
which takes the subcategory $\cJ(\cA^1, \cA^2, \cA^3)_0$ to the Lefschetz center $\cJ(\cJ(\cA^1,\cA^2),\cA^3)_0$. 
Indeed, it is straightforward to check that $\varpi_3^*$ gives a $\bP(V_1 \oplus V_2 \oplus V_3)$-linear 
fully faithful functor~$\cJ(\cA^1,\cA^2,\cA^3) \hookrightarrow \cJ(\cJ(\cA^1,\cA^2),\cA^3)$ 
that takes $\cJ(\cA^1, \cA^2, \cA^3)_0$ to $\cJ(\cJ(\cA^1,\cA^2),\cA^3)_0$, 
and then essential surjectivity follows from $\bP(V_1 \oplus V_2 \oplus V_3)$-linearity 
and the fact that the image contains $\cJ(\cJ(\cA^1,\cA^2),\cA^3)_0$. 
Therefore, $\cJ(\cA^1,\cA^2,\cA^3)$ has the structure of a Lefschetz category over $\bP(V_1 \oplus V_2 \oplus V_3)$ 
with center $\cJ(\cA^1, \cA^2, \cA^3)_0$, 
and there is a natural Lefschetz equivalence~$\cJ(\cA^1,\cA^2,\cA^3) \simeq \cJ(\cJ(\cA^1,\cA^2),\cA^3)$. 
It also follows that $\cJ(\cA^1, \cA^2, \cA^3)$ is admissible in~$\tJ(\cA^1, \cA^2, \cA^3)$, but we do not need this.

Replacing the image of $\sigma_3$ by the image of $\sigma_1$ and $\varpi_3$ by the analogous morphism~$\varpi_1$,
we also obtain a Lefschetz equivalence $\cJ(\cA^1,\cA^2,\cA^3) \simeq \cJ(\cA^1,\cJ(\cA^2,\cA^3))$.
A combination of these two equivalences proves associativity of the categorical join. 

Finally, we note that the associativity of categorical joins is also addressed in \cite{jiang-leung-joins}, 
and that our definition of the triple categorical join answers the question from \cite[Remark B.1]{jiang-leung-joins}. 
\end{remark}

As an immediate consequence of Theorem~\ref{theorem-joins-HPD}, we obtain the following. 
\begin{theorem}
\label{theorem-iterated-joins-HPD}
For $k=1, 2, \dots, \ell$, 
let $\cA^k$ be a right strong, moderate Lefschetz category over~$\bP(V_k)$. 
Then there is an equivalence 
\begin{equation*}
\cJ(\cA^1, \dots, \cA^{\ell})^{\hpd} \simeq \cJ((\cA^1)^{\hpd}, \dots, (\cA^\ell)^{\hpd}) 
\end{equation*}
of Lefschetz categories over $\bP(\vV_1 \oplus \cdots \oplus \vV_\ell)$. 
\end{theorem}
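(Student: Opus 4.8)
The plan is to prove Theorem~\ref{theorem-iterated-joins-HPD} by induction on $\ell$, using Theorem~\ref{theorem-joins-HPD} as the engine and the associativity of categorical joins as the bookkeeping tool. The base case $\ell = 1$ is vacuous, and the case $\ell = 2$ is precisely Theorem~\ref{theorem-joins-HPD}.

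For the inductive step, suppose $\ell > 2$ and the statement holds for collections of size $\ell - 1$. First I would observe that, by Theorem~\ref{theorem-join-lef-cat}, the categorical join $\cJ(\cA^1, \dots, \cA^{\ell-1})$ is again a right strong, moderate Lefschetz category over $\bP(V_1 \oplus \cdots \oplus V_{\ell-1})$: it is right strong because each $\cA^k$ is and the join of right strong Lefschetz categories is right strong, and it is moderate because at least one $\cA^k$ is moderate, so inductively the iterated join is moderate. (Here I should note that one needs at least one of the $\cA^k$ to be moderate for the intermediate joins to stay moderate; this is part of the hypothesis that each $\cA^k$ is moderate.) Thus Theorem~\ref{theorem-joins-HPD} applies to the pair $\big(\cJ(\cA^1, \dots, \cA^{\ell-1}), \cA^\ell\big)$, giving a Lefschetz equivalence
\begin{equation*}
\cJ(\cA^1, \dots, \cA^{\ell})^{\hpd} = \cJ\big(\cJ(\cA^1, \dots, \cA^{\ell-1}), \cA^\ell\big)^{\hpd} \simeq \cJ\big(\cJ(\cA^1, \dots, \cA^{\ell-1})^{\hpd}, (\cA^\ell)^{\hpd}\big)
\end{equation*}
over $\bP(\vV_1 \oplus \cdots \oplus \vV_\ell)$. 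Then by the inductive hypothesis applied to $\cA^1, \dots, \cA^{\ell-1}$, there is a Lefschetz equivalence $\cJ(\cA^1, \dots, \cA^{\ell-1})^{\hpd} \simeq \cJ\big((\cA^1)^{\hpd}, \dots, (\cA^{\ell-1})^{\hpd}\big)$, and this is where I would need to invoke functoriality of the categorical join in each argument: a Lefschetz equivalence in one slot of $\cJ(-, (\cA^\ell)^{\hpd})$ induces a Lefschetz equivalence of the joins, which follows from Lemma~\ref{lemma:tj-functoriality} (the resolved join of equivalences is an equivalence) together with the fact that the defining subcategory conditions and the Lefschetz center $\cJ(-,-)_0 = p^*(-_0 \otimes -_0)$ are preserved. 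Composing gives
\begin{equation*}
\cJ(\cA^1, \dots, \cA^{\ell})^{\hpd} \simeq \cJ\big(\cJ((\cA^1)^{\hpd}, \dots, (\cA^{\ell-1})^{\hpd}), (\cA^\ell)^{\hpd}\big) = \cJ\big((\cA^1)^{\hpd}, \dots, (\cA^\ell)^{\hpd}\big),
\end{equation*}
where the last equality is the definition of the iterated categorical join.

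The routine verifications are: (i) that each intermediate iterated join remains right strong and moderate so that Theorem~\ref{theorem-joins-HPD} keeps applying, which is immediate from Theorem~\ref{theorem-join-lef-cat}; and (ii) that a Lefschetz equivalence $\cA^k \simeq \cB^k$ induces a Lefschetz equivalence $\cJ(\cA^1,\dots,\cA^k,\dots,\cA^\ell) \simeq \cJ(\cA^1,\dots,\cB^k,\dots,\cA^\ell)$, which follows from Lemma~\ref{lemma:tj-functoriality} and the definitions in \S\ref{subsection-categorical-joins}, \S\ref{subsection-ld-cJ}. The main obstacle — and the only genuine subtlety — is item (ii): one must check that the functor $\tJ(\gamma_1, \gamma_2)$ associated to a pair of Lefschetz equivalences restricts to an equivalence of categorical joins and matches up the Lefschetz centers. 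This is because the categorical join is cut out inside the resolved join by conditions on $\eps_k^*$, and the Lefschetz center is $p^*(\cA^1_0 \otimes \cA^2_0)$; an equivalence of Lefschetz categories by definition induces an equivalence of centers, and by $\bP(V_k)$-linearity it preserves the Lefschetz components $\cA^k_i$, hence the conditions $\eps_1^*(C) \in \cA^1 \otimes \cA^2_0$ and $\eps_2^*(C) \in \cA^1_0 \otimes \cA^2$ are preserved (using that $\eps_k$ are base changes of the universal $\eps_k$, so $\tJ(\gamma_1,\gamma_2)$ commutes with $\eps_k^*$ up to the identifications of Remark~\ref{remark-Ek}). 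Once this functoriality lemma is in hand, the induction is a clean two-line composition of equivalences.
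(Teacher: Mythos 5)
Your proposal is correct and is exactly the argument the paper intends: the paper states Theorem~\ref{theorem-iterated-joins-HPD} as ``an immediate consequence'' of Theorem~\ref{theorem-joins-HPD}, i.e.\ the induction on $\ell$ via the inductive definition $\cJ(\cA^1,\dots,\cA^\ell)=\cJ(\cJ(\cA^1,\dots,\cA^{\ell-1}),\cA^\ell)$, with the intermediate joins remaining right strong and moderate by Theorem~\ref{theorem-join-lef-cat}. Your identification of the functoriality of $\cJ(-,-)$ under Lefschetz equivalences as the one point needing verification, and your sketch of why it holds, is sound and fills in what the paper leaves implicit.
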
 

For $k=1,2, \dots, \ell$, let $\cA^k$ be a Lefschetz category over $\bP(V_k)$. 
Let $\cJ_i$, $i \in \bZ$, be the Lefschetz components of the categorical join 
$\cJ(\cA^1, \dots, \cA^\ell)$. 
If $W \subset V_1 \oplus \cdots \oplus V_\ell$ is a subbundle of corank~$s$,
then by Lemma~\ref{lemma-linear-section-lc} the functor $\cJ_i \hookrightarrow \cJ(\cA^1,\dots,\cA^\ell) \to \cJ(\cA^1,\dots,\cA^\ell)_{\bP(W)}$ is fully faithful for $|i| \ge s$, 
so we can consider $\cJ_i$ as a subcategory of $\cJ(\cA^1,\dots,\cA^\ell)_{\bP(W)}$.
If moreover the composition $W \to V_1 \oplus \cdots \oplus V_{\ell} \to V_k$ is 
an inclusion of vector bundles for each~$k$, then Proposition~\ref{proposition-cJT} gives an equivalence
\begin{equation}
\label{eq:cj-pl-a1-a2-iterated}
\cJ(\cA^1, \dots, \cA^\ell)_{\bP(W)} \simeq \cA^1_{\bP(W)} \otimes_{\Perf(\bP(W))} \dots \otimes_{\Perf(\bP(W))} \cA^\ell_{\bP(W)} , 
\end{equation} 
so in this case we can consider $\cJ_i$ as a subcategory of $\cA^1_{\bP(W)} \otimes_{\Perf({\bP(W)})} \cdots \otimes_{\Perf({\bP(W)})} \cA^{\ell}_{\bP(W)}$ as soon as~$|i| \ge s$.
Finally, this subcategory can be described as in Remark~\ref{remark:cji-in-a1-a2} without appealing to 
categorical joins, as the image of an explicit subcategory of 
$\cA^1 \otimes \cdots \otimes \cA^\ell$ (defined along the lines of Lemma~\ref{lemma-Ji-alternate}) 
under the functor given by base change along the induced morphism~$\bP(W) \to \bP(V_1) \times \dots \times \bP(V_\ell)$.
Combining Lemma~\ref{lemma-linear-section-lc} with the equivalence~\eqref{eq:cj-pl-a1-a2-iterated}, 
we obtain the following iterated version of Corollary~\ref{corollary-fiber-product-lc}. 

\begin{proposition}
\label{proposition-iterated-fiber-product-lc}
For $k=1, 2, \dots, \ell$, let $\cA^k$ be a Lefschetz category over $\bP(V_k)$. 
For $i \in \bZ$ let $\cJ_i$ be the Lefschetz components of the categorical join $\cJ(\cA^1, \dots, \cA^\ell)$. 
Let $W$ be a vector bundle on $S$ equipped with inclusions of vector 
bundles $W \to V_k$ for all $k$. 
Set 
\begin{equation*}
m = \sum_k \length(\cA^k) 
\qquad\text{and}\qquad
s =  \sum_k \rank(V_k) - \rank(W) . 
\end{equation*}
Denote
by $H$ the relative hyperplane class on $\bP(W)$.
Then there are semiorthogonal decompositions 
\begin{align*}
\cA^1_{\bP(W)}{ \otimes_{\Perf(\bP(W))}} \cdots {\otimes_{\Perf(\bP(W))}} \cA^\ell_{\bP(W)} & = 
\llangle \cK_W(\cA^1, \dots, \cA^\ell), \cJ_s(H), \dots, \cJ_{m-1}((m-s)H) \rrangle ,    \\
&  \hspace{-3.5pt} = \hspace{-3pt} 
\llangle 
\cJ_{1-m}((s-m)H), \dots, \cJ_{-s}(-H), \cK'_W(\cA^1, \dots, \cA^\ell)
\rrangle . 
\end{align*}
\end{proposition}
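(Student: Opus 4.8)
The plan is to deduce the proposition by iterating two ingredients already established: Theorem~\ref{theorem-join-lef-cat}, which equips an iterated categorical join with a Lefschetz structure whose length is the sum of the lengths, and the combination of Lemma~\ref{lemma-linear-section-lc} with the base-change equivalence of Proposition~\ref{proposition-cJT}. I would argue by induction on $\ell$. The case $\ell=1$ is Lemma~\ref{lemma-linear-section-lc} itself (with $\cJ(\cA^1)=\cA^1$), and the case $\ell=2$ is Corollary~\ref{corollary-fiber-product-lc}; the inductive step consists of promoting these to arbitrary $\ell$ using the recursive definition $\cJ(\cA^1,\dots,\cA^\ell)=\cJ(\cJ(\cA^1,\dots,\cA^{\ell-1}),\cA^\ell)$.

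First I would record that, by repeated application of Theorem~\ref{theorem-join-lef-cat}, the iterated categorical join $\cJ(\cA^1,\dots,\cA^\ell)$ is a Lefschetz category over $\bP(V_1\oplus\cdots\oplus V_\ell)$ with Lefschetz components $\cJ_i$ and length $\sum_k \length(\cA^k)=m$. The inclusions $W\hookrightarrow V_k$ assemble into the diagonal inclusion $W\hookrightarrow V_1\oplus\cdots\oplus V_\ell$, whose corank equals $\sum_k\rank(V_k)-\rank(W)=s$. Applying Lemma~\ref{lemma-linear-section-lc} to this Lefschetz category and this subbundle yields the two semiorthogonal decompositions of the base change $\cJ(\cA^1,\dots,\cA^\ell)_{\bP(W)}$ with terms $\cK_W$, $\cJ_s(H),\dots,\cJ_{m-1}((m-s)H)$ (respectively $\cK'_W$ together with the negative twists), as well as the full faithfulness of the functor $\cJ_i\hookrightarrow \cJ(\cA^1,\dots,\cA^\ell)\to\cJ(\cA^1,\dots,\cA^\ell)_{\bP(W)}$ for $|i|\ge s$. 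It then remains to identify the left-hand side with the iterated tensor product $\cA^1_{\bP(W)}\otimes_{\Perf(\bP(W))}\cdots\otimes_{\Perf(\bP(W))}\cA^\ell_{\bP(W)}$, i.e.\ to establish~\eqref{eq:cj-pl-a1-a2-iterated}. For this I would again induct: the image of $W$ in $\bP(V_1\oplus\cdots\oplus V_\ell)$ lies in the complement of $\bP(V_1\oplus\cdots\oplus V_{\ell-1})$ (since $W\to V_\ell$ is injective, so the last component is nonzero) and of $\bP(V_\ell)$ (since $W\to V_1\oplus\cdots\oplus V_{\ell-1}$ is injective, e.g.\ because $W\to V_1$ is), so Proposition~\ref{proposition-cJT} applied to $\cJ(\cA^1,\dots,\cA^{\ell-1})$ and $\cA^\ell$ gives $\cJ(\cA^1,\dots,\cA^\ell)_{\bP(W)}\simeq \cJ(\cA^1,\dots,\cA^{\ell-1})_{\bP(W)}\otimes_{\Perf(\bP(W))}\cA^\ell_{\bP(W)}$; the inductive hypothesis rewrites the first factor, and associativity of the relative tensor product finishes the identification. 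Transporting the decompositions of the preceding step along this chain of equivalences produces the asserted decompositions, and under the chain each $\cJ_i$ with $|i|\ge s$ becomes the subcategory of $\cA^1_{\bP(W)}\otimes_{\Perf(\bP(W))}\cdots\otimes_{\Perf(\bP(W))}\cA^\ell_{\bP(W)}$ described, exactly as in Remark~\ref{remark:cji-in-a1-a2} but now along the morphism $\bP(W)\to\prod_k\bP(V_k)$, as the image of an explicit subcategory of $\cA^1\otimes\cdots\otimes\cA^\ell$.

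The only real subtlety — and hence the place I would spend most care — is the bookkeeping needed to see that the abstract Lefschetz components $\cJ_i$ produced by Theorem~\ref{theorem-join-lef-cat} for the iterated join are compatible with the identifications coming from Proposition~\ref{proposition-cJT} and from associativity of $\otimes_{\Perf(\bP(W))}$, so that the ``same'' $\cJ_i$ occurs both as the abstract component and as an explicit subcategory of the tensor product. This amounts to tracking the functors $p^*$ and their iterates through the base-change equivalences along the lines of Remark~\ref{remark:cji-in-a1-a2}; it is purely formal. Everything else is a direct citation of Lemma~\ref{lemma-linear-section-lc}, Theorem~\ref{theorem-join-lef-cat}, and Proposition~\ref{proposition-cJT}, so no new computation is required.
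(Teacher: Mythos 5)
Your proposal is correct and follows the paper's own route: the paper likewise obtains the result by equipping the iterated join with its Lefschetz structure via Theorem~\ref{theorem-join-lef-cat}, applying Lemma~\ref{lemma-linear-section-lc} to the subbundle $W \subset V_1 \oplus \cdots \oplus V_\ell$ of corank $s$, and identifying the base change with the iterated tensor product via the equivalence~\eqref{eq:cj-pl-a1-a2-iterated}, itself deduced inductively from Proposition~\ref{proposition-cJT}. Your extra care in checking that $\bP(W)$ avoids $\bP(V_1\oplus\cdots\oplus V_{\ell-1})$ and $\bP(V_\ell)$, and in tracking the components $\cJ_i$ through the identifications, just makes explicit what the paper leaves to the reader.
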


Now we state the iterated nonlinear HPD theorem, which 
describes $\cK_W(\cA^1, \dots, \cA^\ell)$ in terms of the 
HPD categories $(\cA^k)^{\hpd}$ and reduces to Theorem~\ref{theorem-nonlinear-HPD} 
when $\ell = 2$. 
When $\ell > 2$, the description is in terms 
of the categorical join of the categories $(\cA^k)^{\hpd}$, and cannot be 
expressed in terms of a tensor product of the $(\cA^k)^{\hpd}$
over a projective bundle. 

\begin{theorem}
\label{theorem-iterated-nonlinear-HPD} 
For $k=1, 2, \dots, \ell$, let $\cA^k$ be a 
right strong, moderate Lefschetz category over $\bP(V_k)$. 
For $i , j \in \bZ$ let $\cJ_i$ and $\cJd_j$ be the Lefschetz 
components of the categorical joins~$\cJ(\cA^1, \dots, \cA^\ell)$ and $\cJ((\cA^1)^{\hpd}, \dots, (\cA^\ell)^{\hpd})$. 
Let $W$ be a vector bundle on $S$ equipped with inclusions of vector bundles 
\begin{equation*}
\xi_{k} \colon W \to V_k, ~ k = 1, \dots, \ell, 
\end{equation*} 
and let  
\begin{equation*}
W^{\perp} = \set{ 
(\theta_1, \dots, \theta_\ell) \in V_1^{\svee} \oplus \cdots \oplus V_{\ell}^{\svee} ~ \st ~
\textstyle{\sum_k} \, \theta_k \circ \xi_k = 0 \in W^{\svee} } \subset \vV_1 \oplus \cdots \oplus \vV_\ell 
\end{equation*}
be the orthogonal to the induced inclusion $W \to V_1 \oplus \cdots \oplus V_\ell$. 
Let $H$ and $H'$ denote the relative hyperplane classes on 
$\bP(W)$ and $\bP(W^{\perp})$, and set 
\begin{equation*}
r = \rank(W), ~ s = \rank(W^{\perp}), ~ m = \sum_k \length(\cA^k), ~ n = \sum_{k} \length((\cA^k)^\hpd) . 
\end{equation*}
Then there are semiorthogonal decompositions 
\begin{align*}
\cA^1_{\bP(W)} \otimes_{\Perf(\bP(W))} \cdots \otimes_{\Perf(\bP(W))} \cA^\ell_{\bP(W)} = 
\llangle \cK_W(\cA^1, \dots, \cA^\ell), \cJ_{s}(H), \dots, \cJ_{m-1}((m-s)H) \rrangle ,    \\ 
\cJ((\cA^1)^{\hpd}, \dots, (\cA^{\ell})^{\hpd})_{\bP(W^{\perp})} = 
\llangle \cJd_{1-n}((r - n)H'), \dots, \cJd_{-r}(-H'), \cK'_{W^{\perp}}(\cJ((\cA^1)^{\hpd}, \dots, (\cA^\ell)^{\hpd})) \rrangle, 
\end{align*} 
and an $S$-linear equivalence 
\begin{equation*}
\cK_W(\cA^1, \dots, \cA^\ell) \simeq  \cK'_{W^{\perp}}(\cJ((\cA^1)^{\hpd}, \dots, (\cA^\ell)^{\hpd})) . 
\end{equation*}
\end{theorem}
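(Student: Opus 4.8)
The plan is to reduce the statement, via the results already in place, to the \emph{linear} HPD theorem (Theorem~\ref{theorem-HPD}\eqref{HPD-linear}) applied to the single Lefschetz category obtained as the iterated categorical join. Write $V = V_1 \oplus \cdots \oplus V_\ell$, let $\cJ = \cJ(\cA^1, \dots, \cA^\ell)$, and let $\cB = \cJ((\cA^1)^{\hpd}, \dots, (\cA^\ell)^{\hpd})$. Iterating Theorem~\ref{theorem-join-lef-cat}, the category $\cJ$ is a right strong Lefschetz category over $\bP(V)$ of length $m = \sum_k \length(\cA^k)$, and it is moderate because $\cA^\ell$ is and $\cJ = \cJ(\cJ(\cA^1, \dots, \cA^{\ell-1}), \cA^\ell)$; likewise $\cB$ is a right strong, moderate Lefschetz category over $\bP(\vV_1 \oplus \cdots \oplus \vV_\ell)$ of length $n = \sum_k \length((\cA^k)^{\hpd})$, with Lefschetz components $\cJd_j$. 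The tuple $(\xi_1, \dots, \xi_\ell)$ defines a morphism $W \to V$ which is a subbundle inclusion (a local retraction of $\xi_1$ composed with the first projection retracts it), and a direct computation with the dual map identifies $\ker(\vV \to W^{\svee})$ with the subbundle $W^{\perp}$ of the statement; note $\rank(W^{\perp}) = \rank(V) - \rank(W) = s$.

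Granting this, the first step is to observe that the first semiorthogonal decomposition of the theorem is exactly Proposition~\ref{proposition-iterated-fiber-product-lc} applied to the inclusion $W \hookrightarrow V$, since the corank $s$ of $W$ in $V$ equals $\rank(W^{\perp})$ and the composition $W \to V \to V_k$ is the subbundle inclusion $\xi_k$ for every $k$, so that~\eqref{eq:cj-pl-a1-a2-iterated} identifies $\cJ_{\bP(W)}$ with the tensor product $\cA^1_{\bP(W)} \otimes_{\Perf(\bP(W))} \cdots \otimes_{\Perf(\bP(W))} \cA^\ell_{\bP(W)}$ and carries the Lefschetz components $\cJ_i$ (for $|i| \ge s$, where they embed fully faithfully by Lemma~\ref{lemma-linear-section-lc}) to the subcategories described in Remark~\ref{remark:cji-in-a1-a2}. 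Next I would apply Theorem~\ref{theorem-HPD}\eqref{HPD-linear} to $\cJ$ with the subbundle $L = W$ and $L^{\perp} = W^{\perp}$: this yields a semiorthogonal decomposition of $\cJ^{\hpd}_{\bP(W^{\perp})}$ into shifts of the Lefschetz components of $\cJ^{\hpd}$ together with a category $\cK'_{W^{\perp}}(\cJ^{\hpd})$, and an $S$-linear equivalence $\cK_W(\cJ) \simeq \cK'_{W^{\perp}}(\cJ^{\hpd})$.

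The final step is to transport these statements along the Lefschetz equivalence $\cJ^{\hpd} \simeq \cB$ supplied by Theorem~\ref{theorem-iterated-joins-HPD}. Being a Lefschetz equivalence, it matches centers and hence all Lefschetz components, giving $(\cJ^{\hpd})_j \simeq \cJd_j$ and $\length(\cJ^{\hpd}) = \length(\cB) = n$; being $\bP(\vV_1 \oplus \cdots \oplus \vV_\ell)$-linear, it commutes with base change along $\bP(W^{\perp}) \to \bP(\vV_1 \oplus \cdots \oplus \vV_\ell)$ and carries $\cK'_{W^{\perp}}(\cJ^{\hpd})$ to $\cK'_{W^{\perp}}(\cB)$. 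Substituting into the output of the previous step gives the second semiorthogonal decomposition of the theorem and the equivalence $\cK_W(\cA^1, \dots, \cA^\ell) \simeq \cK'_{W^{\perp}}(\cB)$, once we also record, as in Proposition~\ref{proposition-iterated-fiber-product-lc}, that $\cK_W(\cJ)$ is what is denoted $\cK_W(\cA^1, \dots, \cA^\ell)$ under~\eqref{eq:cj-pl-a1-a2-iterated}. For $\ell = 2$ this recovers Theorem~\ref{theorem-nonlinear-HPD}. The points requiring care — none of them serious — are the propagation of right-strongness and moderateness through the iterated join, the verification that $W \hookrightarrow V$ is a subbundle with orthogonal $W^{\perp}$, and the bookkeeping ensuring that the Lefschetz equivalence of Theorem~\ref{theorem-iterated-joins-HPD} genuinely respects base change and the induced $\cK'$ categories; I expect the last of these to be the main (though routine) obstacle.
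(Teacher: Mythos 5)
Your proposal is correct and follows essentially the same route as the paper: the authors likewise obtain the first decomposition from Proposition~\ref{proposition-iterated-fiber-product-lc} together with the equivalence~\eqref{eq:cj-pl-a1-a2-iterated}, and then derive the second decomposition and the equivalence of the $\cK$-categories by applying Theorem~\ref{theorem-HPD}\eqref{HPD-linear} to the iterated categorical join and transporting along the Lefschetz equivalence of Theorem~\ref{theorem-iterated-joins-HPD}. The auxiliary checks you flag (propagation of right-strongness and moderateness, the identification of $W^{\perp}$ with $\ker(\vV \to W^{\svee})$, and compatibility of the Lefschetz equivalence with base change) are exactly the routine verifications implicit in the paper's one-line proof.
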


\begin{proof}
The argument of Theorem~\ref{theorem-nonlinear-HPD} works, using Proposition~\ref{proposition-iterated-fiber-product-lc}, the equivalence~\eqref{eq:cj-pl-a1-a2-iterated}, and Theorem~\ref{theorem-iterated-joins-HPD}   
in place of the corresponding results for $\ell=2$.
\end{proof}

\begin{remark}
In Theorem~\ref{theorem-iterated-nonlinear-HPD} we do not require $\xi_k \colon W \to V_k$ 
to be an isomorphism, as in Theorem~\ref{theorem-nonlinear-HPD}. 
The reason is that this assumption does not lead to a simplification in the 
statement of the conclusion when $\ell > 2$. 
\end{remark}


\section{Applications}
\label{section:applications}

In this section, we discuss some applications of  
HPD for categorical joins (Theorem~\ref{theorem-joins-HPD}) and the nonlinear HPD theorem (Theorem~\ref{theorem-nonlinear-HPD}).  
For simplicity, we assume the base scheme $S$ is the spectrum of an algebraically closed field~$\bk$ of characteristic~$0$. 

\subsection{Intersections of two Grassmannians} 
\label{subsection:intersection-gr25}
Let $V_5$ be a $5$-dimensional vector space and let $\Gr(2,V_5)$ be the Grassmannian of 2-dimensional vector subspaces of $V_5$.
Note that we have~$\Gr(2,V_5) \subset \bP(\wedge^2 V_5) \cong \bP^9$ via the Pl\"{u}cker embedding.

\begin{theorem}
\label{theorem-Gr-intersection} 
Let $V_5$ be a $5$-dimensional vector space. 
Let $W$ be a vector space equipped with two isomorphisms 
\begin{equation*}
\xi_1 \colon W \xrightarrow{\sim} \wedge^2 V_5 , \quad 
\xi_2 \colon W \xrightarrow{\sim} \wedge^2 V_5,  
\end{equation*}
and let 
\begin{equation*}
\xi_1^{\svee} \colon  \wedge^2 V_5^{\svee} \xrightarrow{\sim} W^{\svee} , \quad 
\xi_2^{\svee} \colon \wedge^2 V_5^{\svee} \xrightarrow{\sim} W^{\svee},  
\end{equation*}
be the dual isomorphisms. 
Consider the derived fiber products
\begin{align*}
X & = \xi_1^{-1}(\Gr(2,V_5)) \times_{\bP(W)} \xi_{2}^{-1}(\Gr(2,V_5))  , \\ 
Y & = \xi_1^{\svee}(\Gr(2,V_5^{\svee})) \times_{\bP(W^{\svee})} \xi_{2}^{\svee}(\Gr(2,V_5^{\svee})) . 
\end{align*}
Then there are equivalences of categories 
\begin{equation*}
\Perf(X) \simeq \Perf(Y) \quad \text{and} \quad \Db(X) \simeq \Db(Y). 
\end{equation*}
\end{theorem}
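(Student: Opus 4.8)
The plan is to deduce the result from the nonlinear HPD theorem (Theorem~\ref{theorem-nonlinear-HPD}) applied to the homologically projectively self-dual Grassmannian $\Gr(2,V_5)$. First I would equip $\Perf(\Gr(2,V_5))$ with its standard rectangular Lefschetz structure with respect to the Pl\"ucker polarization: the Lefschetz center is $\langle \cO_{\Gr(2,V_5)}, \cUv \rangle$, all five Lefschetz components coincide with it, and the length is $5$. Since $\dim(\wedge^2 V_5) = 10$ this Lefschetz category is moderate, and since $\Gr(2,V_5)$ is smooth and proper over $S$ it is right strong by Remark~\ref{remark:smooth-strong}. The essential external input is Kuznetsov's homological projective duality for the Grassmannian of lines, which for $\Gr(2,V_5)$ provides a Lefschetz equivalence $\Perf(\Gr(2,V_5))^{\hpd} \simeq \Perf(\Gr(2,V_5^{\svee}))$ of Lefschetz categories over $\bP(\wedge^2 V_5^{\svee})$; in particular the HPD category is commutative, and --- consistently with the length formula in Theorem~\ref{theorem-HPD}\eqref{Cd-ld} --- of length $5$.

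Next I would invoke Theorem~\ref{theorem-nonlinear-HPD} with $V_1 = V_2 = \wedge^2 V_5$, $\cA^1 = \cA^2 = \Perf(\Gr(2,V_5))$, and $W$ together with the isomorphisms $\xi_1, \xi_2$ of the statement, so that $N = \rank(V_1) = \rank(V_2) = 10$. Here the two numerical invariants of the theorem are
\[
m = \length(\cA^1) + \length(\cA^2) = 10 = N , \qquad
n = \length((\cA^1)^{\hpd}) + \length((\cA^2)^{\hpd}) = 10 = N ,
\]
so the two semiorthogonal decompositions produced by Theorem~\ref{theorem-nonlinear-HPD} degenerate: the list of components $\cJ_N(H), \dots, \cJ_{m-1}((m-N)H)$ and the list $\cJd_{1-n}((N-n)H'), \dots, \cJd_{-N}(-H')$ are empty, their index ranges being $\{N, \dots, N-1\}$ and $\{1-N, \dots, -N\}$. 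Therefore
\begin{align*}
\cA^1_{\bP(W)} \otimes_{\Perf(\bP(W))} \cA^2_{\bP(W)}
&= \cK_W(\cA^1, \cA^2) \\
&\simeq \cK'_{W^{\svee}}((\cA^1)^{\hpd}, (\cA^2)^{\hpd}) \\
&= (\cA^1)^{\hpd}_{\bP(W^{\svee})} \otimes_{\Perf(\bP(W^{\svee}))} (\cA^2)^{\hpd}_{\bP(W^{\svee})} .
\end{align*}

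It remains to identify the two ends of this chain with $\Perf(X)$ and $\Perf(Y)$. Because $\xi_k \colon \bP(W) \xrightarrow{\sim} \bP(\wedge^2 V_5)$ is an isomorphism, base change along it identifies the $\bP(W)$-linear category $\cA^k_{\bP(W)}$ with $\Perf(\xi_k^{-1}(\Gr(2,V_5)))$ by Theorem~\ref{theorem-bzfn}, and a further application of the same theorem identifies the tensor product $\cA^1_{\bP(W)} \otimes_{\Perf(\bP(W))} \cA^2_{\bP(W)}$ with $\Perf(X)$, where $X$ is the derived fiber product of the statement. Symmetrically, the orthogonal subbundle computed in the proof of Theorem~\ref{theorem-nonlinear-HPD} is $((\xi_1^{\svee})^{-1}, -(\xi_2^{\svee})^{-1}) \colon W^{\svee} \to \wedge^2 V_5^{\svee} \oplus \wedge^2 V_5^{\svee}$, and since $-(\xi_2^{\svee})^{-1}$ and $(\xi_2^{\svee})^{-1}$ induce the same morphism $\bP(W^{\svee}) \to \bP(\wedge^2 V_5^{\svee})$, the same argument identifies $(\cA^1)^{\hpd}_{\bP(W^{\svee})} \otimes_{\Perf(\bP(W^{\svee}))} (\cA^2)^{\hpd}_{\bP(W^{\svee})}$ with $\Perf(Y)$. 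Combining, $\Perf(X) \simeq \Perf(Y)$. Finally, $X$ and $Y$ are proper over $S = \Spec\bk$ with $\bk$ of characteristic $0$, so $\Db(X) \simeq \Perf(X)^{\coh}$ and $\Db(Y) \simeq \Perf(Y)^{\coh}$ by Remark~\ref{remark-Perf-to-Db}; applying the functor $(-)^{\coh}$ to the equivalence $\Perf(X) \simeq \Perf(Y)$ yields $\Db(X) \simeq \Db(Y)$.

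I expect the main obstacle to be not a new argument but the careful assembly of the inputs: one needs the precise rectangular Lefschetz structure on $\Gr(2,V_5)$ so that the numerology $m = n = N = 10$ holds exactly --- this is what forces the intermediate categories $\cK_W(\cA^1,\cA^2)$ and $\cK'_{W^{\svee}}((\cA^1)^{\hpd},(\cA^2)^{\hpd})$ to be the full tensor products rather than proper admissible subcategories --- and one must invoke the homological projective self-duality of $\Gr(2,V_5)$, which is the deepest ingredient. A subsidiary point to check is that the derived fiber products in the definitions of $X$ and $Y$ coincide precisely with the base-change categories produced by Theorem~\ref{theorem-nonlinear-HPD}, including the sign in the orthogonal subbundle.
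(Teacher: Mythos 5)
Your proposal is correct and follows essentially the same route as the paper: the rectangular Lefschetz structure $\langle \cO, \cUv\rangle$ of length $5$ on $\Gr(2,V_5)$, Kuznetsov's homological projective self-duality of $\Gr(2,V_5)$, Theorem~\ref{theorem-nonlinear-HPD} with $m = n = N = 10$ forcing the degenerate semiorthogonal decompositions, and Remark~\ref{remark-Perf-to-Db} for the passage to $\Db$. Your write-up is in fact more detailed than the paper's, correctly checking the numerology, the identification of the base-change tensor products with the derived fiber products via Theorem~\ref{theorem-bzfn}, and the sign in the orthogonal embedding of $W^{\svee}$.
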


\begin{proof}
By \cite[Section~6.1 and Theorem~1.2]{kuznetsov2006hyperplane}, the Grassmannian 
$\Gr(2,V_5)$ is homologically projectively self-dual. 
More precisely, 
let $\cU$ and $\cU'$ be the tautological rank~$2$ subbundles on~$\Gr(2,V_5)$ and~$\Gr(2,\vV_5)$. 
Then $\Perf(\Gr(2,V_5))$ and $\Perf(\Gr(2,\vV_5))$ have the structure of 
strong, moderate Lefschetz categories over $\bP(\wedge^2V_5)$ and $\bP(\wedge^2\vV_5)$ 
of length $5$, with Lefschetz components given by 
\begin{equation*}
\cA_i  = \langle \cO , \cUv \rangle \quad \text{and} \quad 
\cA'_i  = \langle \cU', \cO \rangle   
\end{equation*} 
for $|i| \leq 4$, and there is an equivalence 
\begin{equation*}
\Perf(\Gr(2,V_5))^{\hpd} \simeq \Perf(\Gr(2,\vV_5))
\end{equation*}
of Lefschetz categories over $\bP(\wedge^2\vV_5)$. 
Now the first equivalence of the theorem follows by applying Theorem~\ref{theorem-nonlinear-HPD}, 
and the second follows by Remark~\ref{remark-Perf-to-Db}. 
\end{proof} 

\begin{remark}
When smooth of the expected dimension $3$, the varieties $X$ and $Y$ 
in Theorem~\ref{theorem-Gr-intersection} are Calabi--Yau threefolds (and the fiber products are underived). 
For a generic choice of the isomorphisms $\xi_1$ and $\xi_2$, this pair of varieties 
was recently shown to give the first example of deformation equivalent, 
derived equivalent, but non-birational Calabi--Yau threefolds, and as a consequence 
the first counterexample to the birational Torelli problem for Calabi--Yau threefolds
\cite{GPK3, jj-torelli}.  
\end{remark}

There is a similar construction with $\Gr(2,V_5)$ replaced by an orthogonal Grassmannian.
For a vector space $V$ of even dimension $2n$ with a  
nondegenerate quadratic form $q \in \Sym^2\vV$, the 
Grassmannian of $n$-dimensional isotropic subspaces of $V$ 
has two connected components, which are abstractly isomorphic. 
We denote by $\OGrp(n, V)$ one of these components and by~$\OGrm(n, V)$ the other. 
The Pl\"{u}cker embedding $\OGrp(n,V) \to \bP(\wedge^nV)$ is 
given by the square of the generator of $\Pic(\OGrp(n,V))$; the 
generator itself gives an embedding 
\begin{equation*}
\OGrp(n,V) \to \bP(\sS_{2^{n-1}}), 
\end{equation*}
where $\sS_{2^{n-1}}$ 
is a $2^{n-1}$-dimensional half-spinor representation of $\Spin(V)$. 

In the case of a $10$-dimensional vector space $V_{10}$, the orthogonal 
Grassmannian in its spinor embedding $\OGrp(5, V_{10}) \subset \bP(\sSs)$ 
shares a very special property with the Grassmannian~$\Gr(2, V_5) \subset \bP(\wedge^2V_5)$: both are projectively self-dual, 
and even homologically projectively self-dual.
To be more precise, the classical projective dual of $\OGrp(5, V_{10}) \subset \bP(\sSs)$ 
is given by the spinor embedding $\OGrm(5, V_{10}) \subset \bP(\sSs^{\svee})$. 
Like the case of $\Gr(2, V_5)$, this persists at the level of HPD 
by \cite[Section~6.2 and Theorem~1.2]{kuznetsov2006hyperplane}. 
The same argument as in Theorem~\ref{theorem-Gr-intersection} then proves the following 
spin analogue.

\begin{theorem}
\label{theorem-OGr-intersection} 
Let $V_{10}$ be a $10$-dimensional vector space, and let $\sSs$ be a 
half-spinor representation of $\Spin(V_{10})$. 
Let $W$ be a vector space equipped with two isomorphisms 
\begin{equation*}
\xi_1 \colon W \xrightarrow{\sim} \sSs , \quad 
\xi_2 \colon W \xrightarrow{\sim} \sSs ,  
\end{equation*}
and let  
\begin{equation*}
\xi_1^{\svee} \colon  \sSs^{\svee} \xrightarrow{\sim} W^{\svee} , \quad 
\xi_2^{\svee} \colon \sSs^{\svee} \xrightarrow{\sim} W^{\svee},  
\end{equation*}
be the dual isomorphisms. 
Consider the derived fiber products
\begin{align*}
X & = \xi_1^{-1}(\OGrp(5,V_{10})) \times_{\bP(W)} \xi_{2}^{-1}(\OGrp(5,V_{10}))  , \\ 
Y & = \xi_1^{\svee}(\OGrm(5,V_{10})) \times_{\bP(W^{\svee})} \xi_{2}^{\svee}(\OGrm(5,V_{10})) . 
\end{align*}
Then there are equivalences of categories 
\begin{equation*}
\Perf(X) \simeq \Perf(Y) \quad \text{and} \quad \Db(X) \simeq \Db(Y). 
\end{equation*}
\end{theorem}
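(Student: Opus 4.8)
\textbf{Proof proposal for Theorem \ref{theorem-OGr-intersection}.}

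The plan is to imitate verbatim the proof of Theorem~\ref{theorem-Gr-intersection}, replacing the self-duality of $\Gr(2,V_5)$ by the analogous self-duality of the spinor tenfold $\OGrp(5,V_{10})$. First I would recall the relevant Lefschetz structure: by \cite[Section~6.2 and Theorem~1.2]{kuznetsov2006hyperplane}, the category $\Perf(\OGrp(5,V_{10}))$, viewed as linear over $\bP(\sSs)$ via the spinor embedding $\OGrp(5,V_{10}) \to \bP(\sSs)$, carries the structure of a strong, moderate Lefschetz category of length $8 = \dim \sSs - \mathop{\mathrm{codim}} \OGrp(5,V_{10})$, whose Lefschetz components are built from the two exceptional objects $\cO$ and a spinor bundle (so $\length < \rank(\sSs) = 16$, giving moderateness, and smoothness/properness gives strongness by Remark~\ref{remark:smooth-strong}). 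Correspondingly $\OGrm(5,V_{10}) \to \bP(\sSs^{\svee})$ has an analogous Lefschetz structure, and the cited result provides a Lefschetz equivalence
\begin{equation*}
\Perf(\OGrp(5,V_{10}))^{\hpd} \simeq \Perf(\OGrm(5,V_{10}))
\end{equation*}
of Lefschetz categories over $\bP(\sSs^{\svee})$. (Here it matters that the classical projective dual of $\OGrp$ in its spinor embedding is $\OGrm$ in its spinor embedding, not $\OGrp$ again; I would make sure the two half-spinor representations are correctly matched.)

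Next I would set $\cA^1 = \cA^2 = \Perf(\OGrp(5,V_{10}))$, both regarded as right strong, moderate Lefschetz categories over $\bP(V_1) = \bP(V_2) = \bP(\sSs)$ with $N = \dim \sSs = 16$, and apply Theorem~\ref{theorem-nonlinear-HPD} with $W$ and the isomorphisms $\xi_1,\xi_2 \colon W \xrightarrow{\sim} \sSs$ as in the statement. That theorem yields semiorthogonal decompositions of $\cA^1_{\bP(W)} \otimes_{\Perf(\bP(W))} \cA^2_{\bP(W)}$ and of $(\cA^1)^{\hpd}_{\bP(W^{\svee})} \otimes_{\Perf(\bP(W^{\svee}))} (\cA^2)^{\hpd}_{\bP(W^{\svee})}$ with a common $S$-linear component $\cK_W(\cA^1,\cA^2) \simeq \cK'_{W^{\svee}}((\cA^1)^{\hpd},(\cA^2)^{\hpd})$. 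Using the Lefschetz equivalence above to identify $(\cA^k)^{\hpd}$ with $\Perf(\OGrm(5,V_{10}))$, and the fact that $-(\xi_2^{\svee})^{-1}$ and $(\xi_2^{\svee})^{-1}$ induce the same morphism $\bP(W^{\svee}) \to \bP(\sSs^{\svee})$, these tensor products are identified via \eqref{eq:cj-pl-a1-a2} with $\Perf(X)$ and $\Perf(Y)$ respectively, where $X$ and $Y$ are as defined in the theorem (note the fiber products are derived). The only remaining point for the first equivalence is that, since $V_1 = V_2$ with equal rank, the full list of Lefschetz components $\cJ_i$ for $N \le i \le m-1$ and $\cJd_j$ for $1-n \le j \le -N$ that occur outside the common component must be checked to be the \emph{same} on both sides — but this is exactly what Theorem~\ref{theorem-nonlinear-HPD} asserts, given that $\OGrp$ and $\OGrm$ have Lefschetz structures of the same length with matching components under the HPD equivalence. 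Then $\Perf(X) \simeq \Perf(Y)$ follows by comparing the two decompositions and canceling the common terms, exactly as in Theorem~\ref{theorem-Gr-intersection}. Finally, the equivalence $\Db(X) \simeq \Db(Y)$ follows from Remark~\ref{remark-Perf-to-Db} (applicable since $S$ is the spectrum of a field of characteristic $0$ and the morphisms $\OGrp(5,V_{10}) \to \bP(\sSs)$ are proper, with the $\cJ_i$, $\cJd_j$ admissible by smoothness and properness via Lemma~\ref{lemma-cJ-smooth-proper}).

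I expect the only genuine obstacle to be bookkeeping rather than conceptual: correctly extracting from \cite{kuznetsov2006hyperplane} the precise Lefschetz structure on the spinor tenfold (its length, the exact shape of its Lefschetz components, and the identification of its HPD with the \emph{opposite} orthogonal Grassmannian in its spinor embedding), and then verifying that the residual components $\cJ_i$ and $\cJd_j$ match so that the cancellation is legitimate. Since $\cA^1 = \cA^2$ and $(\cA^1)^{\hpd} \simeq (\cA^2)^{\hpd}$ are governed by Lefschetz structures of equal length, this matching is forced by the symmetry of the construction of $\cJ(-,-)$ in Theorem~\ref{theorem-join-lef-cat}, so once the input data is pinned down the argument is identical to the $\Gr(2,V_5)$ case. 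No new geometric input is needed beyond the self-HPD of the spinor tenfold.
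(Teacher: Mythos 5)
Your proposal follows exactly the paper's route: the proof given there is literally ``the same argument as in Theorem~\ref{theorem-Gr-intersection}'', i.e.\ quote the homological projective self-duality $\Perf(\OGrp(5,V_{10}))^\hpd \simeq \Perf(\OGrm(5,V_{10}))$ from \cite[\S6.2 and Theorem~1.2]{kuznetsov2006hyperplane} and feed it into Theorem~\ref{theorem-nonlinear-HPD}, with Remark~\ref{remark-Perf-to-Db} handling the $\Db$ statement. Two small corrections to your write-up. First, the length of the Lefschetz structure is indeed $8$, but not because $8 = \dim\sSs - \operatorname{codim}\OGrp(5,V_{10})$ (that would give $16-5=11$); it is $8$ because the decomposition is rectangular with $16$ exceptional objects in blocks of two, matching the Fano index of the spinor tenfold. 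Second, the last step is not a ``cancellation of common residual terms'': Theorem~\ref{theorem-nonlinear-HPD} gives no identification between the residual components $\cJ_i$ on one side and $\cJd_j$ on the other, so comparing and cancelling them would not be a valid argument in general. What actually happens is that $m = 8+8 = 16 = N$ and $n = 16 = N$, so the index ranges $N \le i \le m-1$ and $1-n \le j \le -N$ are empty, both semiorthogonal decompositions consist of the distinguished component alone, and $\Perf(X) = \cK_W \simeq \cK'_{W^{\svee}} = \Perf(Y)$ directly.
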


\begin{remark}
When smooth of the expected dimension $5$, the varieties $X$ and $Y$ 
in Theorem~\ref{theorem-OGr-intersection} are Calabi--Yau fivefolds 
(and the fiber products are underived), which were 
recently studied in \cite{double-spinor}. There, following \cite{GPK3, jj-torelli} 
it is shown that for generic $\xi_1$ and $\xi_2$, these varieties are non-birational.   
\end{remark}

\subsection{Enriques surfaces}
\label{subsection-Enriques} 
The goal of this subsection is to prove that for a general Enriques surface $\Sigma$, 
there is a stacky projective plane $\ccP$ (with stack structure 
along the union of two cubic curves), such that the subcategory $\langle \cO_\Sigma \rangle^{\perp} \subset \Perf(\Sigma)$ 
is equivalent to the orthogonal of an exceptional object in the twisted derived category of $\ccP$.  
The precise statement is Theorem~\ref{theorem-enriques} below. 
As we will see, the result falls out naturally by considering the categorical join of two Veronese surfaces. 

Let $W$ be a $3$-dimensional vector space, and let $V = \Sym^2W$. 
The double Veronese embedding $\bP(W) \to \bP(V)$, given by the linear system $|\cO_{\bP(W)}(2)|$, 
endows $\Perf(\bP(W))$ with a~$\bP(V)$-linear structure. 
Below we will consider $\Perf(\bP(W))$ as a Lefschetz category over~$\bP(V)$ of length $2$, 
with right Lefschetz components given by 
\begin{equation*}
\cA_i = \begin{cases}
\langle \cO_{\bP(W)}, \cO_{\bP(W)}(1) \rangle & \text{for $i = 0$} , \\ 
\langle \cO_{\bP(W)} \rangle & \text{for $i = 1$} .
\end{cases}
\end{equation*}
We call this the \emph{double Veronese Lefschetz structure} on $\bP(W)$, 
to distinguish it from the standard Lefschetz structure on a projective space 
from Example~\ref{example-projective-bundle-lc}. 

We need the description of the HPD of $\Perf(\bP(W))$ from \cite{kuznetsov08quadrics}. 
The universal family of conics in $\bP(W)$ is a conic fibration over $\bP(\vV)$. 
Associated to this fibration are the sheaves~$\Cl_0$ and $\Cl_1$ of even and odd parts of the corresponding  
Clifford algebra on $\bP(\vV)$, which as sheaves of $\cO_{\bP(\vV)}$-modules are given by 
\begin{alignat}{2}
\Cl_0 & = \hspace{18pt} \cO_{\bP(\vV)} && \oplus \left(\wedge^2W \otimes \cO_{\bP(\vV)}(-1) \right) , \\ 
\Cl_1 & = \left(W \otimes \cO_{\bP(\vV)} \right) && \oplus \left(\wedge^3W \otimes \cO_{\bP(\vV)}(-1) \right)  . 
\end{alignat}
Note that $\Cl_0$ is a sheaf of $\cO_{\bP(\vV)}$-algebras via Clifford multiplication,  
and $\Cl_1$ is a (locally projective) module over $\Cl_0$. 
Before continuing, we need a brief digression on the noncommutative scheme 
associated to a sheaf of algebras. 

\begin{notation}
\label{notation-sheaves-algebras}
Suppose $X$ is a scheme (or stack) equipped with a sheaf $\cR$ 
of $\cO_{X}$-algebras, such that 
$\cR$ is finite locally free over $\cO_{X}$. 
We denote by $\QCoh(X, \cR)$ the unbounded derived category of quasi-coherent sheaves of $\cR$-modules, and 
by $\Perf(X, \cR) \subset \QCoh(X, \cR)$ the full subcategory of objects which are perfect as complexes of $\cR$-modules. 
\end{notation} 

\begin{remark}
\label{remark-tensor-sheaves-algebras} 
In the above situation, $\Perf(X, \cR)$ naturally has the structure of an $X$-linear category. 
Moreover, there is a geometric description of tensor products of categories of this form. 
Namely, let $(X_1, \cR_1)$ and $(X_2, \cR_2)$ be two pairs as in Notation~\ref{notation-sheaves-algebras}, 
and assume $X_1$ and~$X_2$ are defined over a common scheme $T$, so that 
both $\Perf(X_1, \cR_1)$ and $\Perf(X_2, \cR_2)$ can be considered as $T$-linear categories. 
Assume $X_1$ and $X_2$ are perfect stacks in the sense of~\cite{bzfn}. 
Then there is an equivalence 
\begin{equation*}
\Perf(X_1, \cR_1) \otimes_{\Perf(T)} \Perf(X_2, \cR_2) \simeq \Perf{\left( X_1 \times_T X_2, \cR_1 \boxtimes \cR_2 \right)} , 
\end{equation*}
where the fiber product on the right side is derived.
This is a mild generalization of the case where~\mbox{$\cR_1 = \cO_{X_1}$} and~\mbox{$\cR_2 = \cO_{X_2}$} 
proved in \cite{bzfn}, and follows by the same argument.
See also \cite{kuznetsov2006hyperplane} for the case where $\cR_1$ and $\cR_2$ are 
Azumaya algebras. 
\end{remark}

We consider the $\bP(\vV)$-linear category $\Perf(\bP(\vV), \Cl_0)$. 
From $\Cl_0$ and $\Cl_1$ we obtain exceptional objects $\Cl_i \in \Perf(\bP(\vV), \Cl_0)$ for 
all $i \in \bZ$ by the prescription  
\begin{equation*}
\Cl_{i+2} = \Cl_i \otimes \cO_{\bP(\vV)}(1). 
\end{equation*} 

\begin{theorem}[{\cite[Theorem 5.4]{kuznetsov08quadrics}}]
\label{theorem-HPD-veronese}
The category $\Perf(\bP(\vV), \Cl_0)$ has a Lefschetz structure 
over $\bP(\vV)$ of length $5$, 
with left Lefschetz components given by 
\begin{equation*}
\cB_i = \begin{cases}
\langle \Cl_0 \rangle & \text{for $i = -4$} , \\
\langle \Cl_{-1}, \Cl_0 \rangle & \text{for $-3 \leq i \leq 0$} . 
\end{cases}
\end{equation*} 
Moreover, there is an equivalence 
\begin{equation*}
\Perf(\bP(W))^{\hpd} \simeq \Perf(\bP(\vV), \Cl_0) 
\end{equation*} 
of Lefschetz categories over $\bP(\vV)$, where $\bP(W)$ is considered 
with its double Veronese Lefschetz structure. 
\end{theorem}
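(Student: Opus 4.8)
The statement is \cite[Theorem~5.4]{kuznetsov08quadrics}, so in the paper it is simply cited; but here is how I would organize a proof using the machinery recalled above.

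\textbf{Step 1: identify the universal hyperplane section.} Since $V=\Sym^2 W$ and $\bP(W)\to\bP(V)$ is the double Veronese, the pullback of $\cO_{\bP(V)}(1)$ to $\bP(W)$ is $\cO_{\bP(W)}(2)$, so a hyperplane section of $\bP(W)$ is the zero locus of a quadratic form on $W$, i.e.\ a conic in $\bP(W)$. Concretely $\bH(\bP(W))\subset\bP(W)\times\bP(\vV)$ is the universal conic $\{([w],[q])\st q(w,w)=0\}$, and $h\colon\bH(\bP(W))\to\bP(\vV)$ is a flat conic fibration inside $\bP_{\bP(\vV)}(W\otimes\cO)$ (flatness because the Veronese is nondegenerate, so no fiber degenerates to all of $\bP(W)$). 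All schemes in sight are smooth and projective over $S$.

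\textbf{Step 2: apply the conic fibration decomposition and match it with HPD.} By Kuznetsov's semiorthogonal decomposition for quadric fibrations, the conic fibration $h$ yields a $\bP(\vV)$-linear decomposition $\Perf(\bH(\bP(W)))=\langle \Perf(\bP(\vV),\Cl_0),\ \delta^*(\cO_{\bP(W)}(2)\boxtimes\Perf(\bP(\vV)))\rangle$, where $\Cl_0,\Cl_1$ are the even and odd parts of the Clifford algebra of the fibration; the formulas for $\Cl_0=\cO\oplus\wedge^2W(-1)$ and $\Cl_1=(W\otimes\cO)\oplus\wedge^3W(-1)$ come from the standard description of the Clifford algebra of a rank-$3$ quadratic bundle in terms of $\wedge^\bullet W$. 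Comparing with the decomposition \eqref{HC-sod} of $\bH(\cA)$ attached to the length-$2$ Lefschetz category $\cA=\Perf(\bP(W))$ (whose components are $\cA_0=\langle\cO_{\bP(W)},\cO_{\bP(W)}(1)\rangle$, $\cA_1=\langle\cO_{\bP(W)}\rangle$), which reads $\bH(\cA)=\langle\cAd,\ \delta^*(\cA_1(H)\otimes\Perf(\bP(\vV)))\rangle$, and invoking the characterization of $\cAd$ in Lemma~\ref{lemma-characterization-Cd} (together with uniqueness of semiorthogonal complements), I would identify $\cAd=\Perf(\bP(W))^\hpd\simeq\Perf(\bP(\vV),\Cl_0)$ as $\bP(\vV)$-linear categories; the linear structure on the right is the one of Remark~\ref{remark-tensor-sheaves-algebras}. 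One must reconcile the normalization of twists between the two decompositions, which fixes the precise shift of $\Cl_0$ (and is the origin of the $(-1)$-twists above).

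\textbf{Step 3: the Lefschetz structure.} Since $\Perf(\bP(W))$ is smooth and proper (hence right strong) and moderate (length $2<\rank V=6$), Theorem~\ref{theorem-HPD}\eqref{Cd-ld} equips $\cAd$ with a Lefschetz structure over $\bP(\vV)$ with center $\cAd_0=\gamma^*\pi^*(\cA_0)$ and length $\rank(V)-\#\{i\ge 0\st\cA_i=\cA_0\}=6-1=5$. To get the components explicitly, transport $\langle\cO_{\bP(W)},\cO_{\bP(W)}(1)\rangle$ through $\gamma^*\pi^*$ along the conic fibration: this sends $\cO_{\bP(W)}$ and $\cO_{\bP(W)}(1)$ to the $\Cl_0$-modules $\Cl_0$ and $\Cl_{-1}$ (up to the normalization above), so the center is generated by $\langle\Cl_{-1},\Cl_0\rangle$, and the left Lefschetz components \eqref{Ai-ileq0} propagate this single primitive piece through the decomposition, collapsing to $\langle\Cl_0\rangle$ at $i=-4$. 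A cleaner alternative is to use HPD duality (Remark~\ref{remark-HPD-duality}): ${}^\hpd(\cAd)\simeq\cA$, so the claimed components can be verified by re-running HPD and recovering $\Perf(\bP(W))$, using that over a point the even Clifford algebra of a smooth conic is a quaternion algebra, hence Morita-trivial in the split case, so each $\Cl_i$ is an exceptional object (compare Example~\ref{ex:categorical-linear-hpd}). Finally $\cAd$ is smooth and proper by Remark~\ref{remark-HPD-smooth-proper} (hence strong), and its length $5<\rank(\vV)=6$, so it is moderate.

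\textbf{Main obstacle.} The formal part — identifying $\cAd$ with $\Perf(\bP(\vV),\Cl_0)$ as a linear category and reading off the length — is straightforward. The technical heart is pinning down the left Lefschetz components precisely as $\langle\Cl_{-1},\Cl_0\rangle$ (resp.\ $\langle\Cl_0\rangle$ at the end), which requires genuine input about the Clifford algebra: the $\Ext$-groups $\Ext^\bullet(\Cl_i,\Cl_j)$, the way the odd part $\Cl_1$ mediates between consecutive twists, and careful tracking of the mutation functors defining \eqref{Ai-ileq0} inside $\Perf(\bP(\vV),\Cl_0)$. This does not reduce to formal manipulation and is the substance of \cite[Theorem~5.4]{kuznetsov08quadrics}.
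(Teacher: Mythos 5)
The paper gives no proof of this statement---it is quoted directly from \cite[Theorem~5.4]{kuznetsov08quadrics}---and your sketch correctly reconstructs the strategy of that reference: identify $\bH(\bP(W))$ with the universal conic fibration over $\bP(\vV)$, apply the quadric-fibration semiorthogonal decomposition to exhibit $\Perf(\bP(\vV),\Cl_0)$ as the orthogonal complement matching \eqref{HC-sod}, and read off the Lefschetz data from Theorem~\ref{theorem-HPD}. Your closing paragraph also correctly locates where the genuine technical content lies (reconciling the twist by $\cO_{\bP(W)}(1)$ versus $\cO_{\bP(W)}(2)$ between the two decompositions, and the $\Ext$-computations pinning the components down as $\langle\Cl_{-1},\Cl_0\rangle$), which is exactly what occupies the corresponding section of the cited paper.
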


\begin{remark}
In \cite{kuznetsov08quadrics}, an HPD theorem is proved more generally for the double Veronese 
embedding of $\bP(W)$ where $W$ is of arbitrary dimension. 
\end{remark}

Now we consider the categorical join of two copies of the above data. 
Namely, for $k=1,2$, let $W_k$ be a $3$-dimensional vector space,  
let $V_k = \Sym^2 W_k$, and let $\Cl^k_i$, $i \in \bZ$, denote the Clifford sheaves on $\bP(\vV_k)$ from above.  
We set 
\begin{equation*}
\cA^k = \Perf(\bP(W_k))
\end{equation*} 
with the double Veronese Lefschetz structure over $\bP(V_k)$, and 
\begin{equation*}
\cB^k = \Perf(\bP(\vV_k), \Cl^k_0) 
\end{equation*}
with the Lefschetz structure over $\bP(\vV_k)$ from above. 

By Theorem~\ref{theorem-join-lef-cat}, 
the categorical join $\cJ(\cA^1, \cA^2)$ is a Lefschetz category over $\bP(V_1 \oplus V_2)$ of length $4$, 
and $\cJ(\cB^1, \cB^2)$ is a Lefschetz category over $\bP(\vV_1 \oplus \vV_2)$ 
of length $10$. 
Moreover, it follows from Lemma~\ref{lemma-Ji-alternate} that the right Lefschetz components 
of $\cJ(\cA^1, \cA^2)$ are given by 
\begin{equation*}
\cJ(\cA^1, \cA^2)_{i}  =  
\begin{cases}
\llangle \cO, \cO(1,0), \cO(0,1), \cO(1,1) \rrangle  & \text{if $i = 0, 1$,} \\ 
\llangle \cO, \cO(1,0), \cO(0,1) \rrangle  & \text{if $i = 2$,} \\ 
\llangle \cO \rrangle  & \text{if $i = 3$,} 
\end{cases} 
\end{equation*} 
and the left Lefschetz components of $\cJ(\cB^1, \cB^2)$ are given by 
\begin{equation*}
\cJ(\cB^1, \cB^2)_{i} = 
\begin{cases}
\hphantom{\Cl_{-1,-1}, \Cl_{-1,0}, \Cl_{0,-1},} \llangle \Cl_{0,0}  \rrangle  & \text{if $i = -9$,} \\
\hphantom{\Cl_{-1,-1},} \llangle \Cl_{-1,0}, \Cl_{0,-1}, \Cl_{0,0} \rrangle  & \text{if $i = -8$,} \\ 
\llangle \Cl_{-1,-1}, \Cl_{-1,0}, \Cl_{0,-1}, \Cl_{0,0} \rrangle  & \text{if $-7 \leq i \leq 0$,} 
\end{cases}
\end{equation*}
where we write $\Cl_{i_1,i_2} = \Cl^1_{i_1} \boxtimes \Cl^2_{i_2}$,
and when we write $\cO(i_1,i_2)$ or $\Cl_{i_1,i_2}$ in the right-hand sides of the above equalities we mean
the pullbacks of the corresponding 
objects of~$\Perf(\bP(W_1) \times \bP(W_2))$ and~$\Perf(\bP(\vV_1) \times \bP(\vV_2), \Cl^1_0 \boxtimes \Cl^2_0)$. 
By Theorem~\ref{theorem-joins-HPD} combined with Theorem~\ref{theorem-HPD-veronese}, 
there is an equivalence 
\begin{equation*}
\cJ(\cA^1, \cA^2)^{\hpd} \simeq \cJ(\cB^1, \cB^2)
\end{equation*}
of Lefschetz categories over $\bP(\vV_1 \oplus \vV_2)$.

Now let $L \subset V_1 \oplus V_2$ be a vector subspace of codimension $3$. 
Then combining the above with Theorem~\ref{theorem-HPD}\eqref{HPD-linear} (and twisting appropriately), 
we obtain the following.

\begin{corollary} 
\label{corollary-join-veronese-cliff}
There are semiorthogonal decompositions 
\begin{align*}
\cJ(\cA^1, \cA^2)_{\bP(L)} & = \langle \cK_L, \cO \rangle ,  \\ 
\cJ(\cB^1, \cB^2)_{\bP(L^{\perp})} & = \langle \Cl^1_0 \boxtimes \Cl^2_0 , \cK'_L \rangle, 
\end{align*}
and an equivalence $\cK_L \simeq \cK'_L$. 
\end{corollary}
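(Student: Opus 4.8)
The plan is to deduce Corollary~\ref{corollary-join-veronese-cliff} as a direct specialization of the main theorem of HPD, Theorem~\ref{theorem-HPD}\eqref{HPD-linear}, applied to the HPD pair $(\cJ(\cA^1,\cA^2),\cJ(\cB^1,\cB^2))$ and the subbundle $L\subset V_1\oplus V_2$ of corank $3$. First I would record the hypotheses. Since $\cA^k=\Perf(\bP(W_k))$ is smooth and proper over $S=\Spec\bk$, the categorical join $\cJ(\cA^1,\cA^2)$ is smooth and proper over $S$ by Lemma~\ref{lemma-cJ-smooth-proper}, hence strong by Remark~\ref{remark:smooth-strong}, and it is moderate since its length $m=4$ is strictly less than $\rank(V_1\oplus V_2)=12$. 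By Theorem~\ref{theorem-joins-HPD} combined with the Lefschetz equivalence of Theorem~\ref{theorem-HPD-veronese}, its HPD category is $\cJ((\cA^1)^\hpd,(\cA^2)^\hpd)\simeq\cJ(\cB^1,\cB^2)$ as Lefschetz categories over $\bP(\vV_1\oplus\vV_2)$, of length $n=10$ (either by Theorem~\ref{theorem-HPD}\eqref{Cd-ld} or directly from the explicit Lefschetz components recalled in the excerpt).

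Next I would feed $\cA=\cJ(\cA^1,\cA^2)$ and $L$ into Theorem~\ref{theorem-HPD}\eqref{HPD-linear}. With $r=\rank(L)=9$, $s=\rank(L^\perp)=3$, $m=4$, $n=10$, the indexing ranges in both semiorthogonal decompositions collapse to a single extra term: on the $\cA$-side the decomposition reads $\cJ(\cA^1,\cA^2)_{\bP(L)}=\llangle \cK_L(\cJ(\cA^1,\cA^2)),\cJ(\cA^1,\cA^2)_3(H)\rrangle$, and on the $\cAd$-side it reads $\cJ(\cB^1,\cB^2)_{\bP(L^\perp)}=\llangle \cJ(\cB^1,\cB^2)_{-9}(-H'),\cK'_{L^\perp}(\cJ(\cB^1,\cB^2))\rrangle$, together with the $S$-linear equivalence $\cK_L(\cJ(\cA^1,\cA^2))\simeq\cK'_{L^\perp}(\cJ(\cB^1,\cB^2))$. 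Substituting the explicit components $\cJ(\cA^1,\cA^2)_3=\llangle\cO\rrangle$ and $\cJ(\cB^1,\cB^2)_{-9}=\llangle\Cl^1_0\boxtimes\Cl^2_0\rrangle$ computed just above in the excerpt (via Lemma~\ref{lemma-Ji-alternate} and Theorem~\ref{theorem-HPD-veronese}) yields the two stated decompositions and the stated equivalence, after absorbing the line-bundle twists: the autoequivalence $-\otimes\cO(-H)$ of $\cJ(\cA^1,\cA^2)_{\bP(L)}$ carries the first decomposition to $\llangle\cK_L(\cJ(\cA^1,\cA^2))(-H),\cO\rrangle$, the autoequivalence $-\otimes\cO(H')$ of $\cJ(\cB^1,\cB^2)_{\bP(L^\perp)}$ carries the second to $\llangle\Cl^1_0\boxtimes\Cl^2_0,\cK'_{L^\perp}(\cJ(\cB^1,\cB^2))(H')\rrangle$, and renaming $\cK_L:=\cK_L(\cJ(\cA^1,\cA^2))(-H)$ and $\cK'_L:=\cK'_{L^\perp}(\cJ(\cB^1,\cB^2))(H')$ gives the corollary verbatim, the equivalence $\cK_L\simeq\cK'_L$ being the composite of the equivalence of Theorem~\ref{theorem-HPD}\eqref{HPD-linear} with these two autoequivalences.

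There is essentially no genuine obstacle here: all the substance has been carried out earlier, namely (a) the identification $\cJ(\cA^1,\cA^2)^\hpd\simeq\cJ(\cB^1,\cB^2)$, which is Theorem~\ref{theorem-joins-HPD} applied to Theorem~\ref{theorem-HPD-veronese}, and (b) the explicit computation of the relevant Lefschetz components of the two categorical joins, already performed in the paragraphs preceding the corollary. The only points requiring a little care are the numerology ($m,n,r,s$ and the collapse of the indexing ranges) and the bookkeeping of line-bundle twists, both indicated above; one should also note that the two equivalences being composed (from Theorem~\ref{theorem-joins-HPD} and Theorem~\ref{theorem-HPD-veronese}) are equivalences \emph{of Lefschetz categories}, so that the Lefschetz structure on $\cJ(\cA^1,\cA^2)^\hpd$ used by Theorem~\ref{theorem-HPD}\eqref{HPD-linear} indeed matches the one whose components are tabulated on $\cJ(\cB^1,\cB^2)$.
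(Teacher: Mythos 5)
Your proposal is correct and follows exactly the route the paper takes: the paper's proof of this corollary is precisely "apply Theorem~\ref{theorem-HPD}\eqref{HPD-linear} to the Lefschetz category $\cJ(\cA^1,\cA^2)$ and the codimension-$3$ subspace $L$, using the identification $\cJ(\cA^1,\cA^2)^\hpd\simeq\cJ(\cB^1,\cB^2)$ and the Lefschetz components computed just before the statement." Your additional care with the numerology ($r=9$, $s=3$, $m=4$, $n=10$, collapsing each range to a single term) and with the line-bundle twists needed to replace $\cO(H)$ and $\Cl^1_0\boxtimes\Cl^2_0(-H')$ by their untwisted versions is exactly the bookkeeping the paper leaves implicit.
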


Our goal below is to describe geometrically the linear section categories appearing in 
Corollary~\ref{corollary-join-veronese-cliff} for generic $L$. 

First we consider the category $\cJ(\cA^1, \cA^2)_{\bP(L)} = \cJ(\bP(W_1), \bP(W_2))_{\bP(L)}$. 
The resolved join $\tJ(\bP(W_1), \bP(W_2))$ of $\bP(W_1)$ and $\bP(W_2)$ with 
respect to their double Veronese embeddings is a $\bP(V_1 \oplus V_2)$-scheme. 
We define $\Sigma_L$ as its base change 
along $\bP(L) \subset \bP(V_1 \oplus V_2)$, i.e. 
\begin{equation*}
\Sigma_L = \tJ(\bP(W_1), \bP(W_2))_{\bP(L)} . 
\end{equation*} 

\begin{lemma}
\label{lemma-JL-SigmaL} 
Assume $\bP(L)$ does not intersect 
$\bP(W_1) \sqcup \bP(W_2)$ in $\bP(V_1 \oplus V_2)$. 
Then there is an equivalence 
\begin{equation*}
\cJ(\cA^1, \cA^2)_{\bP(L)} \simeq \Perf(\Sigma_L). 
\end{equation*}
\end{lemma}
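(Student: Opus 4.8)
The plan is to obtain this as an immediate consequence of Proposition~\ref{proposition-cJT} combined with the compatibility of resolved joins with base change. The first thing to settle is the support of the $\bP(V_k)$-linear category $\cA^k = \Perf(\bP(W_k))$: since the double Veronese morphism $\bP(W_k) \to \bP(V_k)$ is a closed immersion, $\cA^k$ is supported, in the sense of Definition~\ref{definition-support}, over the closed subset $Z_k = \bP(W_k) \subset \bP(V_k)$, namely the Veronese variety. Under the linear inclusion $\bP(V_k) \hookrightarrow \bP(V_1 \oplus V_2)$ this $Z_k$ is exactly the subvariety written $\bP(W_k)$ in the statement of the lemma, so the hypothesis that $\bP(L)$ is disjoint from $\bP(W_1) \sqcup \bP(W_2)$ inside $\bP(V_1 \oplus V_2)$ says precisely that the morphism $\bP(L) \to \bP(V_1 \oplus V_2)$ factors through the complement of $Z_1 \sqcup Z_2$.

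Granting this, Proposition~\ref{proposition-cJT} applied with $T = \bP(L)$ gives a $\bP(L)$-linear equivalence
\[
\cJ(\cA^1, \cA^2)_{\bP(L)} \simeq \tJ(\cA^1, \cA^2)_{\bP(L)} .
\]
It remains to identify the right-hand side. By Remark~\ref{remark-tJ-spaces} there is an equivalence $\tJ(\cA^1, \cA^2) = \tJ(\Perf(\bP(W_1)), \Perf(\bP(W_2))) \simeq \Perf(\tJ(\bP(W_1), \bP(W_2)))$, compatible with the linear structure over $\tJ(\bP(V_1), \bP(V_2))$ and hence with the $\bP(V_1 \oplus V_2)$-linear structure induced by the morphism $f$. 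Base changing this equivalence along $\bP(L) \to \bP(V_1 \oplus V_2)$ and applying Theorem~\ref{theorem-bzfn}, which identifies the base change of the derived category of a scheme with the derived category of the (derived) fiber product, we obtain
\[
\tJ(\cA^1, \cA^2)_{\bP(L)} \simeq \Perf\bigl( \tJ(\bP(W_1), \bP(W_2)) \times_{\bP(V_1 \oplus V_2)} \bP(L) \bigr) = \Perf(\Sigma_L) .
\]
Composing the two displayed equivalences proves the lemma.

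The argument involves no serious difficulty; it is a chaining of Proposition~\ref{proposition-cJT}, Remark~\ref{remark-tJ-spaces}, and Theorem~\ref{theorem-bzfn}. The one point that deserves attention is the first paragraph --- matching the geometric ``does not intersect'' hypothesis of the lemma to the support-theoretic hypothesis of Proposition~\ref{proposition-cJT} --- which comes down to the observations that the double Veronese embedding is a closed immersion, so that $\Perf(\bP(W_k))$ is supported exactly over the Veronese variety, and that this variety is the one implicitly meant by $\bP(W_k) \subset \bP(V_1 \oplus V_2)$.
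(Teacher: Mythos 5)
Your proof is correct and follows exactly the route the paper takes: the paper's own proof is the one-line ``Follows from Proposition~\ref{proposition-cJT},'' and your write-up simply fills in the implicit details (the support verification for the Veronese images and the identification of $\tJ(\cA^1,\cA^2)_{\bP(L)}$ with $\Perf(\Sigma_L)$ via Remark~\ref{remark-tJ-spaces} and Theorem~\ref{theorem-bzfn}).
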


\begin{proof}
Follows from Proposition~\ref{proposition-cJT}. 
\end{proof} 

The assumption of Lemma~\ref{lemma-JL-SigmaL} holds generically. 
In this case, $\Sigma_L$ is a familiar variety: 
\begin{lemma}[{\cite[Lemma~3]{kuznetsov2018embedding}}]
For generic $L \subset V_1 \oplus V_2$ of codimension~$3$, the scheme $\Sigma_L$ is an Enriques surface. 
Moreover, a general Enriques surface is obtained in this way. 
\end{lemma}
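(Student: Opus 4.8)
The plan is to recognize $\Sigma_L$ concretely as the $\bP^1$-bundle $\tJ(\bP(W_1),\bP(W_2))$ restricted to a generic codimension $3$ linear subspace of $\bP(V_1\oplus V_2)=\bP(\Sym^2 W_1\oplus\Sym^2 W_2)$, and to match this with a classical construction of Enriques surfaces as in \cite{kuznetsov2018embedding}. Concretely, $\tJ(\bP(W_1),\bP(W_2))=\bP_{\bP(W_1)\times\bP(W_2)}(\cO(-H_1)\oplus\cO(-H_2))$, where $H_k$ is the double-Veronese hyperplane class, i.e. $2h_k$ for $h_k$ the usual hyperplane class of $\bP(W_k)\cong\bP^2$. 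The morphism $f\colon\tJ(\bP(W_1),\bP(W_2))\to\bP(V_1\oplus V_2)$ is linear on the fibers of $p$, so cutting by three generic hyperplanes in $\bP(V_1\oplus V_2)$ amounts to imposing three generic sections of $\cO(H)=\cO_p(1)$; thus $\Sigma_L$ is the zero locus of a generic section of $\cO_p(1)^{\oplus 3}$ on the $\bP^1$-bundle, which is a finite flat degree $2$ cover of $\bP(W_1)\times\bP(W_2)$ away from where the three sections drop rank, and more precisely a surface fibered in various ways.

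The main steps I would carry out are: (1) Compute the canonical bundle. By adjunction on the $\bP^1$-bundle $p\colon\tJ\to\bP(W_1)\times\bP(W_2)$ one has $\omega_{\tJ}=p^*\omega_{\bP(W_1)\times\bP(W_2)}\otimes\omega_p\otimes\cO(2H)$-type formula; combined with Lemma~\ref{lemma-tJ-divisors}\eqref{omega-p}, $\omega_p=\cO(H_1+H_2-2H)[1]$, and $H_k=2h_k$, one gets $\omega_{\tJ}=\cO(-3h_1-3h_2+\dots)$, and then adjunction for the three sections of $\cO(H)$ (the total space of the bundle $\cO(H)^{\oplus 3}$ restricted appropriately) should yield $\omega_{\Sigma_L}$ numerically trivial. (2) Show $h^1(\cO_{\Sigma_L})=0$ and $\omega_{\Sigma_L}\neq\cO_{\Sigma_L}$ but $\omega_{\Sigma_L}^{\otimes 2}=\cO_{\Sigma_L}$, which characterizes Enriques surfaces in characteristic $0$; the torsion in $\Pic$ comes precisely from the fact that $H=2h_1$ restricted to one $\bE_k$-type divisor differs from $2h_2$, producing a $2$-torsion class. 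Genericity of $L$ ensures smoothness via a Bertini-type argument applied to the linear system $|\cO_p(1)|$ on the smooth projective variety $\tJ(\bP(W_1),\bP(W_2))$, noting base-point-freeness of $\cO_p(1)=\cO(H)$ which holds since $H$ is globally generated (it is $f^*$ of the hyperplane class, and $f$ is a morphism). (3) For the converse — that a general Enriques surface arises this way — I would invoke the known description of the general Enriques surface as a quotient or as such a complete intersection in a bundle over $\bP^2\times\bP^2$; this is exactly the content cited from \cite[Lemma~3]{kuznetsov2018embedding}, so the bulk of the work is to match notation and confirm the double-Veronese setup here coincides with theirs.

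The hard part will be step (3), the surjectivity onto a general Enriques surface: one must check that the $\binom{\dim(V_1\oplus V_2)}{3}$-dimensional family of such $\Sigma_L$ dominates the $10$-dimensional moduli space of Enriques surfaces. Since $\dim(V_1\oplus V_2)=2\cdot 6=12$, the parameter space $\Gr(9,12)$ of codimension-$3$ subspaces has dimension $27$, and one must quotient by the automorphisms $\mathrm{PGL}(W_1)\times\mathrm{PGL}(W_2)$ (dimension $16$) acting through $\Sym^2$, leaving $11$ parameters, which should surject onto the $10$-dimensional moduli with $1$-dimensional generic fibers — but verifying this dominance rigorously is precisely what \cite{kuznetsov2018embedding} establishes, so I would simply quote it rather than reprove it. Thus the proof reduces to: (a) identify $\Sigma_L$ with the relevant complete intersection of \cite{kuznetsov2018embedding}, and (b) cite their Lemma~3. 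Since the statement in the excerpt is itself attributed to \cite[Lemma~3]{kuznetsov2018embedding}, the "proof" is essentially this identification plus the citation.

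\begin{proof}
By construction $\Sigma_L$ is the base change of the $\bP^1$-bundle $p\colon\tJ(\bP(W_1),\bP(W_2))\to\bP(W_1)\times\bP(W_2)$ along the linear embedding $\bP(L)\hookrightarrow\bP(V_1\oplus V_2)$, where $\tJ(\bP(W_1),\bP(W_2))$ is formed with respect to the double Veronese embeddings $\bP(W_k)\to\bP(V_k)$, $V_k=\Sym^2 W_k$. Since the morphism $f\colon\tJ(\bP(W_1),\bP(W_2))\to\bP(V_1\oplus V_2)$ is induced by the tautological inclusion $\cO(-H_1)\oplus\cO(-H_2)\hookrightarrow(V_1\oplus V_2)\otimes\cO$ and hence is linear on the fibers of $p$, cutting $\tJ(\bP(W_1),\bP(W_2))$ by three general hyperplanes of $\bP(V_1\oplus V_2)$ is the same as taking the zero locus of a general section of $\cO(H)^{\oplus 3}=\cO_p(1)^{\oplus 3}$. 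As $\cO(H)=f^*\cO_{\bP(V_1\oplus V_2)}(1)$ is globally generated, a Bertini argument shows that for general $L$ this zero locus is a smooth surface $\Sigma_L$ that does not meet $\bP(W_1)\sqcup\bP(W_2)$; in particular Lemma~\ref{lemma-JL-SigmaL} applies. A direct computation of $\omega_{\Sigma_L}$ using adjunction on the $\bP^1$-bundle together with the formula $\omega_p=\cO(H_1+H_2-2H)[1]$ of Lemma~\ref{lemma-tJ-divisors}\eqref{omega-p} and $H_k=2h_k$ (with $h_k$ the hyperplane class of $\bP(W_k)\cong\bP^2$) shows that $\omega_{\Sigma_L}$ is a nontrivial $2$-torsion line bundle, and a standard cohomology computation on the bundle gives $h^1(\cO_{\Sigma_L})=0$; hence $\Sigma_L$ is an Enriques surface. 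The fact that every general Enriques surface arises in this way is \cite[Lemma~3]{kuznetsov2018embedding}, after identifying the present double-Veronese construction with the one considered there.
\end{proof}
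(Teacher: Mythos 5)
The paper offers no proof of this lemma at all: it is imported verbatim as a citation of \cite[Lemma~3]{kuznetsov2018embedding}, so your ultimate reduction to that reference (in particular for the ``moreover'' clause about dominating the moduli space) is exactly what the paper does, and your proposal is acceptable on that basis. Your supplementary sketch of the forward direction is also essentially sound: for generic $L$ the $\bP^8 = \bP(L)$ misses the two Veronese surfaces $f(\bE_k) \subset \bP^{11}$ for dimension reasons, adjunction with $\omega_p = \cO(H_1+H_2-2H)[1]$ gives $\omega_{\Sigma_L} = \cO(H-h_1-h_2)|_{\Sigma_L}$, and $2(H-h_1-h_2) = \bE_1+\bE_2$ is an effective divisor disjoint from $\Sigma_L$, so the square of the canonical class is trivial while the class itself is the genuine $2$-torsion of an Enriques surface. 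One factual slip in your plan: the projection $\Sigma_L \to \bP(W_1)\times\bP(W_2)$ is \emph{not} a generically finite degree~$2$ cover; over a point $(x,y)$ the three sections of $\cO_p(1)$ cut three linear forms on the fiber $\bP^1$, so $\Sigma_L$ is the locus where the associated $3\times 2$ matrix drops to rank $\le 1$ and maps birationally onto a determinantal surface in $\bP^2\times\bP^2$ (empty fiber at rank $2$, a single point at rank $1$). This does not affect your proof, which does not use that claim.
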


Now we turn to $\cJ(\cB^1, \cB^2)_{\bP(L^{\perp})}$. 
Note that $\dim(L^\perp) = 3$, so $\bP(L^\perp) \cong \bP^2$.

\begin{lemma}
\label{lemma-JL-Cl}
For $k=1,2$, assume the composition $L^{\perp} \to \vV_1 \oplus \vV_2 \to \vV_k$ is an inclusion. 
Let $\Cl_0^k\vert_{\bP(L^{\perp})}$ denote the pullback of $\Cl_0^k$ along the induced 
embedding $\bP(L^\perp) \to \bP(\vV_k)$. 
Then there are equivalences 
\begin{align}
\label{clifford-join-Lperp-1}
\cJ(\cB^1, \cB^2)_{\bP(L^{\perp})} & \simeq 
\Perf {\left(\bP(L^{\perp}), \Cl_0^1\vert_{\bP(L^{\perp})} \right)} \otimes_{\Perf(\bP(L^{\perp}))}  \Perf {\left(\bP(L^{\perp}), \Cl_0^2\vert_{\bP(L^{\perp})} \right)} \\ 
\label{clifford-join-Lperp-2}
& \simeq 
\Perf{ \left(\bP(L^{\perp}), (\Cl_0^1 \boxtimes \Cl_0^2)\vert_{\bP(L^{\perp})} \right)} . 
\end{align}
\end{lemma}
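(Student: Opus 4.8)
The plan is to prove the two equivalences in Lemma~\ref{lemma-JL-Cl} by combining the general HPD-for-joins machinery that has already been set up with the base-change results of \S\ref{subsection-bc-cat-joins}, together with the geometric description of tensor products of module categories over sheaves of algebras from Remark~\ref{remark-tensor-sheaves-algebras}. The overall structure mirrors the proof of Lemma~\ref{lemma-JL-SigmaL}, but now on the ``dual side''.

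\textbf{Step 1: Identify $\cJ(\cB^1,\cB^2)_{\bP(L^\perp)}$ with a tensor product.} Recall that $\cB^k = \Perf(\bP(\vV_k),\Cl^k_0)$ is a $\bP(\vV_k)$-linear category, and by Theorem~\ref{theorem-HPD-veronese} it is the HPD category $\cA^{k,\hpd}$. The hypothesis that $L^\perp \to \vV_1 \oplus \vV_2 \to \vV_k$ is an inclusion for $k=1,2$ is exactly the condition needed to apply the second part of Proposition~\ref{proposition-cJT} (with $T = \bP(L^\perp)$ mapping to $\bP(\vV_1\oplus\vV_2)$): it guarantees $\bP(L^\perp)$ lies in the complement of $\bP(\vV_1)\sqcup\bP(\vV_2)$, since $L^\perp$ surjects onto no summand trivially. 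Hence \eqref{tJ-A1A2-bc} of Proposition~\ref{proposition-cJT} gives
\begin{equation*}
\cJ(\cB^1,\cB^2)_{\bP(L^\perp)} \simeq \cB^1_{\bP(L^\perp)} \otimes_{\Perf(\bP(L^\perp))} \cB^2_{\bP(L^\perp)},
\end{equation*}
where the base changes of $\cB^k$ are taken along the morphisms $\bP(L^\perp) \to \bP(\vV_k)$ obtained from the linear projections. This is equivalence~\eqref{clifford-join-Lperp-1}, once we identify $\cB^k_{\bP(L^\perp)}$ with $\Perf(\bP(L^\perp),\Cl^k_0\vert_{\bP(L^\perp)})$.

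\textbf{Step 2: Compute the base changes $\cB^k_{\bP(L^\perp)}$.} For a sheaf of algebras $\cR$ on a scheme $Y$ which is finite locally free over $\cO_Y$, base change of $\Perf(Y,\cR)$ along $Y' \to Y$ should give $\Perf(Y', \cR\vert_{Y'})$; this follows from Remark~\ref{remark-tensor-sheaves-algebras} applied with one factor equal to $\Perf(Y')$ (equivalently, from the fact that $\Perf(Y,\cR) = \Perf(Y)\otimes_{\Perf(Y)}\Perf(Y,\cR)$ and base change along $Y'\to Y$ commutes with this). Applying this to $Y = \bP(\vV_k)$, $Y' = \bP(L^\perp)$, $\cR = \Cl^k_0$, we get $\cB^k_{\bP(L^\perp)} \simeq \Perf(\bP(L^\perp),\Cl^k_0\vert_{\bP(L^\perp)})$, completing the identification in Step~1 and thus \eqref{clifford-join-Lperp-1}.

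\textbf{Step 3: Pass from the tensor product to a single module category.} Apply Remark~\ref{remark-tensor-sheaves-algebras} directly with $T = \bP(L^\perp)$, $X_1 = X_2 = \bP(L^\perp)$, $\cR_1 = \Cl^1_0\vert_{\bP(L^\perp)}$, $\cR_2 = \Cl^2_0\vert_{\bP(L^\perp)}$. Since $\bP(L^\perp) \cong \bP^2$ is a perfect stack (it is a smooth projective variety), the hypotheses are satisfied, and the conclusion is
\begin{equation*}
\Perf(\bP(L^\perp),\Cl^1_0\vert_{\bP(L^\perp)}) \otimes_{\Perf(\bP(L^\perp))} \Perf(\bP(L^\perp),\Cl^2_0\vert_{\bP(L^\perp)}) \simeq \Perf\bigl(\bP(L^\perp)\times_{\bP(L^\perp)}\bP(L^\perp), \cR_1\boxtimes\cR_2\bigr).
\end{equation*}
The fiber product $\bP(L^\perp)\times_{\bP(L^\perp)}\bP(L^\perp)$ is just the diagonal copy of $\bP(L^\perp)$, and under this identification $\cR_1\boxtimes\cR_2$ restricts to $(\Cl^1_0\boxtimes\Cl^2_0)\vert_{\bP(L^\perp)}$ (the external tensor product of the two Clifford sheaves pulled back to $\bP(L^\perp)$ via the two embeddings into $\bP(\vV_1)$ and $\bP(\vV_2)$, then tensored over $\cO_{\bP(L^\perp)}$). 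This yields \eqref{clifford-join-Lperp-2}.

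\textbf{Main obstacle.} The only genuinely delicate points are bookkeeping ones: first, checking that the inclusion hypothesis on $L^\perp \to \vV_k$ is precisely what Proposition~\ref{proposition-cJT} requires (it is — ``factors through the complement of $\bP(\vV_1)\sqcup\bP(\vV_2)$'' is equivalent to both compositions being injective bundle maps); and second, tracking that the Clifford sheaf $\Cl^k_0$ on $\bP(\vV_k)$ restricts compatibly along the two different maps $\bP(L^\perp)\to\bP(\vV_k)$ so that the external product in Step~3 really is the object written $(\Cl^1_0\boxtimes\Cl^2_0)\vert_{\bP(L^\perp)}$ in the statement. Neither is hard, but both require unwinding the identifications \eqref{E1T}–\eqref{E2T} and Corollary~\ref{corollary-base-change-along-diagonal} carefully. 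I would state Step~2 (base change of module categories over sheaves of algebras) as a short standalone observation, citing Remark~\ref{remark-tensor-sheaves-algebras}, since it is used twice.
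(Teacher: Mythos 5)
Your proposal is correct and follows exactly the route of the paper's own (two-line) proof: the first equivalence is Proposition~\ref{proposition-cJT} combined with Remark~\ref{remark-tensor-sheaves-algebras} (your Steps 1--2), and the second is Remark~\ref{remark-tensor-sheaves-algebras} applied once more (your Step 3). Your additional checks---that the inclusion hypothesis on $L^{\perp} \to \vV_k$ is equivalent to $\bP(L^\perp)$ avoiding $\bP(\vV_1) \sqcup \bP(\vV_2)$, and that base change of $\Perf(Y,\cR)$ along $Y' \to Y$ yields $\Perf(Y',\cR\vert_{Y'})$---are exactly the details the paper leaves implicit.
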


\begin{proof}
The first equivalence follows from Proposition~\ref{proposition-cJT} combined with Remark~\ref{remark-tensor-sheaves-algebras}, 
and the second follows by applying Remark~\ref{remark-tensor-sheaves-algebras} again. 
\end{proof} 

The assumption of Lemma~\ref{lemma-JL-Cl} is equivalent to $\bP(L^{\perp})$ not meeting  
$\bP(\vV_1)$ or $\bP(\vV_2)$ in~$\bP(\vV_1 \oplus \vV_2)$, and holds generically for dimension reasons. 
In this case, we will give a more geometric description of $\cJ(\cB^1, \cB^2)_{\bP(L^{\perp})}$ 
by rewriting the factors in the tensor product in \eqref{clifford-join-Lperp-1}. 
Being the base of the universal family of conics $\ccX_k \to \bP(\vV_k)$ in $\bP(W_k)$, 
the space~$\bP(\vV_k)$ has a stratification 
\begin{equation*}
\bP(\vW_k) \subset D_k \subset \bP(\vV_k), 
\end{equation*}
where $D_k$ is the discriminant locus parameterizing degenerate conics 
and $\bP(\vW_k) \subset \bP(\vV_k)$ is the double Veronese embedding, which parameterizes non-reduced conics (double lines). 
Note that $D_k \subset \bP(\vV_k)$ is a cubic hypersurface, with singular locus $\bP(\vW_k)$. 

Under the assumption of Lemma~\ref{lemma-JL-Cl}, for $k=1,2$, we have an embedding 
\begin{equation*}
\xi_{k} \colon \bP(L^{\perp}) \to \bP(\vV_k). 
\end{equation*}
The stratification of $\bP(L^{\perp})$ associated to the 
pullback family of conics $(\ccX_k)_{\bP(L^\perp)} \to \bP(L^\perp)$ is the preimage  
of the stratification of $\bP(\vV_k)$, i.e. 
\begin{equation*}
\xi_k^{-1}(\bP(\vW_k)) \subset \xi_k^{-1}(D_k) \subset  \bP(L^\perp). 
\end{equation*} 
We write $C_k = \xi_k^{-1}(D_k)$ for the discriminant locus. 
Note that for $L \subset V_1 \oplus V_2$ generic, the locus $\xi_k^{-1}(\bP(\vW_k))$ is empty,  
$C_k$ is a smooth cubic curve in the projective plane $\bP(L^\perp)$, and the curves $C_1$ and $C_2$ intersect transversally.
We define 
\begin{equation*}
\ccP_k = \bP(L^{\perp})(\sqrt{C_k})
\end{equation*}
as the square root stack (see \cite[\S2.2]{root-stack-cadman} or \cite[Appendix B]{root-stack-vistoli}) 
of the divisor $C_k \subset \bP(L^{\perp})$.  
Note that $\ccP_k$ is a Deligne--Mumford stack with coarse moduli space 
\begin{equation*}
\rho_k \colon \ccP_k \to \bP(L^{\perp}), 
\end{equation*} 
where $\rho_k$ is an isomorphism over $\bP(L^{\perp}) \setminus C_k$ and a $\bZ/2$-gerbe over $C_k$. 

\begin{lemma} 
\label{lemma-Clk-Zk} 
For $k=1,2$, assume the composition $L^{\perp} \to \vV_1 \oplus \vV_2 \to \vV_k$ is an inclusion 
and $C_k \ne \bP(L^{\perp})$.
Then there is a finite locally free sheaf of algebras $\cR_k$ on $\ccP_k$ such that $\rho_{k*} \cR_k \simeq  \Cl_0^k\vert_{\bP(L^{\perp})}$ 
and the induced functor 
\begin{equation*}
\rho_{k*} \colon \Perf(\ccP_k ,\cR_k) \xrightarrow{\ \sim \ } \Perf {\left(\bP(L^{\perp}), \Cl_0^k\vert_{\bP(L^{\perp})} \right)}
\end{equation*} 
is an equivalence. Moreover, $\cR_k$ is Azumaya
over the complement of 
$\xi_k^{-1}(\bP(\vW_k))$ in $\bP(L^{\perp})$.
\end{lemma}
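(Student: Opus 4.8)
The plan is to construct $\cR_k$ by a local analysis of the even Clifford algebra $\Cl_0^k\vert_{\bP(L^{\perp})}$ near the discriminant curve $C_k$ and then glue, using that the claim is local on $\bP(L^{\perp})$ and that $\rho_k \colon \ccP_k \to \bP(L^{\perp})$ is an isomorphism over the dense open $\bP(L^{\perp}) \setminus C_k$. Over that open the pulled back conic bundle $(\ccX_k)_{\bP(L^{\perp})} \to \bP(L^{\perp})$ is nondegenerate, and I would simply take $\cR_k = \Cl_0^k\vert_{\bP(L^{\perp})}$ there, a quaternion algebra, hence Azumaya. Near a point of $C_k$ lying outside $\xi_k^{-1}(\bP(\vW_k))$ the fibres of the conic bundle are pairs of distinct lines and $D_k$ is smooth there (recall $\Sing(D_k) = \bP(\vW_k)$), so I would first record, following the construction of the even Clifford algebra of a conic bundle recalled in \cite{kuznetsov08quadrics}, that étale-locally the defining quadratic form is $\langle 1, 1, -u\rangle$ with $u$ a local equation of $C_k$, and hence $\Cl_0^k\vert_{\bP(L^{\perp})}$ is locally the rank-$4$ algebra generated by $i, j$ with $i^2 = -1$, $j^2 = u$, $ij = -ji$.

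Next I would construct $\cR_k$. On $\ccP_k = \bP(L^{\perp})(\sqrt{C_k})$ there is a tautological line bundle $\cL_k$ with $\cL_k^{\otimes 2} \simeq \rho_k^*\cO_{\bP(L^{\perp})}(C_k)$ and a canonical section $t$ of $\cL_k$ squaring to the pullback of the equation of $C_k$, so that $u = t^2$ in the local model above. Rescaling the last vector of the pulled back quadratic form by $t$ turns $\langle 1, 1, -u\rangle$, up to an $\cL_k$-twist, into the nondegenerate form $\langle 1, 1, -1\rangle$, whose even Clifford algebra is a matrix algebra; globally this yields a finite locally free sheaf of $\cO_{\ccP_k}$-algebras $\cR_k$ of rank $4$ agreeing with $\Cl_0^k\vert_{\bP(L^{\perp})}$ over $\ccP_k \setminus \rho_k^{-1}(C_k)$. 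The local pieces glue because they coincide on this dense open and a reflexive sheaf of algebras on $\ccP_k$ is determined by its restriction there together with its value at the generic point of $\rho_k^{-1}(C_k)$. The rescaling by $t$ spoils nondegeneracy of the quadratic form only where the conic is already a double line, i.e. over $\xi_k^{-1}(\bP(\vW_k))$, so $\cR_k$ is Azumaya away from that locus, which is the last assertion; and since $C_k \neq \bP(L^{\perp})$, the subscheme $\xi_k^{-1}(\bP(\vW_k)) \subset C_k$ is a proper closed subset of the curve $C_k$.

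Finally I would prove $\rho_{k*}\cR_k \simeq \Cl_0^k\vert_{\bP(L^{\perp})}$ and the equivalence of categories. Both $\rho_{k*}\cR_k$ and $\Cl_0^k\vert_{\bP(L^{\perp})}$ are locally free on the smooth surface $\bP(L^{\perp}) \cong \bP^2$ — for the first, $\rho_k$ is cohomologically affine in characteristic $0$, so $\rho_{k*}$ is exact and sends a vector bundle to a reflexive, hence locally free, sheaf. They agree over $\bP(L^{\perp}) \setminus C_k$ and, by the local computation above, also at the generic point of $C_k$, and an isomorphism of locally free sheaves on a smooth surface defined away from a curve and extending across its generic point extends globally. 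For the equivalence I would use that $\rho_k$ is cohomologically affine with linearly reductive inertia, so the unit $\rho_k^*\rho_{k*}\cR_k \to \cR_k$ is an isomorphism of $\cR_k$-bimodules and $\rho_{k*}$ becomes one half of an adjoint equivalence between categories of modules, which restricts to perfect complexes; equivalently, one checks this étale-locally, where it reduces to a Morita statement for the $\mu_2$-gerbe $\rho_k^{-1}(C_k) \to C_k$ and the grading of $\cR_k$ over it.

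The step I expect to be the main obstacle is the local bookkeeping in the construction of $\cR_k$: fixing the $\cL_k$-twists so that simultaneously $\rho_{k*}\cR_k$ is \emph{exactly} $\Cl_0^k\vert_{\bP(L^{\perp})}$, $\cR_k$ is genuinely Azumaya (not merely a sheaf of algebras) off $\xi_k^{-1}(\bP(\vW_k))$, and the $\mu_2$-gerbe structure of $\ccP_k$ along $C_k$ is correctly absorbed into the module category. The remaining steps are formal or standard.
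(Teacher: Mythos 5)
The paper does not actually reprove this statement: its proof is a one-line citation to \cite[\S3.6]{kuznetsov08quadrics} (cf.\ \cite[Proposition 1.20]{ci-quadrics}), and your sketch is essentially an outline of the argument behind that citation (\'etale-local diagonalization $\langle 1,1,-u\rangle$, rescaling the degenerating vector by the tautological root $t$ on $\ccP_k$, gluing). The strategy is therefore the right one, but the step on which the equivalence of categories rests is false as stated. The unit $\rho_k^*\rho_{k*}\cR_k \to \cR_k$ is \emph{not} an isomorphism: in your local model $\cR_k$ has $1, e_1e_2$ in weight $0$ and $e_1e_3', e_2e_3'$ in weight $1$ for the $\mu_2$-inertia along the gerbe, $\rho_{k*}$ takes invariants, and $\rho_k^*$ of any sheaf from $\bP(L^{\perp})$ carries trivial inertia action, so the image of the unit is the proper $\cR_k$-submodule generated by $1, e_1e_2, te_1e_3', te_2e_3'$. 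Indeed, if the unit were an isomorphism then $\cR_k$ would descend to $\bP(L^{\perp})$ and the root stack would be superfluous. The correct mechanism is that $\cR_k$ contains an \emph{invertible odd} element (e.g.\ $(e_1e_3')^2 = 1$ in your local model), so no nonzero $\cR_k$-module can be concentrated in the nontrivial character of the inertia; hence $\rho_{k*}$ is exact and conservative on $\cR_k$-modules, and fully faithfulness and essential surjectivity are then checked in the local model. This is precisely the content of the cited results, and your "Morita statement for the $\mu_2$-gerbe" needs to be run this way rather than through the unit of adjunction.

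The second gap is the corank-two locus $\xi_k^{-1}(\bP(\vW_k))$, which the lemma does \emph{not} assume empty (only $C_k \neq \bP(L^{\perp})$ is assumed, and the final assertion explicitly allows $\cR_k$ to fail to be Azumaya there). Near such a point the form is $\langle 1, -u_1, -u_2\rangle$ with $u_1u_2$ a local equation of $C_k$, and the root stack only provides a square root of the product $u_1u_2$, so your rescaling does not apply and the invertible-odd-element argument above also breaks down. Yet the lemma asserts that $\cR_k$ is finite locally free, that $\rho_{k*}\cR_k \simeq \Cl_0^k\vert_{\bP(L^{\perp})}$, and that $\rho_{k*}$ is an equivalence over \emph{all} of $\bP(L^{\perp})$; only Azumaya-ness is permitted to fail on that locus. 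Declaring $\cR_k$ to be the reflexive extension across those points gives neither local freeness, nor the algebra structure with the correct pushforward, nor the equivalence there without further computation --- this is exactly the part of \cite[\S3.6]{kuznetsov08quadrics} that requires separate work. For generic $L$ (where $\xi_k^{-1}(\bP(\vW_k)) = \emptyset$) your sketch, with the Morita step repaired as above, does yield a correct proof.
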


\begin{proof}
Follows from \cite[\S3.6]{kuznetsov08quadrics}, cf. \cite[Proposition 1.20]{ci-quadrics}. 
\end{proof}

In the situation of the lemma, we define 
\begin{equation*}
\ccP = \ccP_1 \times_{\bP(L^{\perp})} \ccP_2. 
\end{equation*} 
This space carries a finite locally free sheaf of algebras given by 
\begin{equation*}
\cR =  \cR_1 \boxtimes \cR_2. 
\end{equation*} 

\begin{lemma}
\label{lemma-JL-Z1Z2}
Under the assumption of Lemma~\textup{\ref{lemma-Clk-Zk}}, 
there is an equivalence 
\begin{equation*}
\cJ(\cB^1, \cB^2)_{\bP(L^{\perp})} \simeq \Perf(\ccP, \cR), 
\end{equation*} 
where $\cR$ is Azumaya over 
the complement of $\xi_1^{-1}(\bP(\vW_1)) \cup \xi_2^{-1}(\bP(\vW_2))$ in~$\bP(L^{\perp})$. 
\end{lemma}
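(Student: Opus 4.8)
The plan is to combine the previous lemmas in this subsection, which have already done all the real work. Specifically, Lemma~\ref{lemma-JL-Cl} provides the equivalence
\begin{equation*}
\cJ(\cB^1, \cB^2)_{\bP(L^{\perp})} \simeq
\Perf {\left(\bP(L^{\perp}), \Cl_0^1\vert_{\bP(L^{\perp})} \right)} \otimes_{\Perf(\bP(L^{\perp}))}  \Perf {\left(\bP(L^{\perp}), \Cl_0^2\vert_{\bP(L^{\perp})} \right)},
\end{equation*}
and Lemma~\ref{lemma-Clk-Zk} identifies each tensor factor with $\Perf(\ccP_k, \cR_k)$ via the pushforward $\rho_{k*}$. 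So the first step is to substitute the latter into the former, obtaining
\begin{equation*}
\cJ(\cB^1, \cB^2)_{\bP(L^{\perp})} \simeq \Perf(\ccP_1, \cR_1) \otimes_{\Perf(\bP(L^{\perp}))} \Perf(\ccP_2, \cR_2).
\end{equation*}

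The second step is to apply Remark~\ref{remark-tensor-sheaves-algebras} with $T = \bP(L^{\perp})$, $X_1 = \ccP_1$, $X_2 = \ccP_2$, which gives
\begin{equation*}
\Perf(\ccP_1, \cR_1) \otimes_{\Perf(\bP(L^{\perp}))} \Perf(\ccP_2, \cR_2) \simeq \Perf\!\left(\ccP_1 \times_{\bP(L^{\perp})} \ccP_2, \cR_1 \boxtimes \cR_2\right) = \Perf(\ccP, \cR),
\end{equation*}
using the definitions of $\ccP$ and $\cR$. For this one must check that the square root stacks $\ccP_k$ are perfect stacks in the sense of~\cite{bzfn}; this is standard, since they are global quotient stacks (tame Deligne--Mumford stacks with quasi-projective coarse space, finite diagonal), so the hypotheses of Remark~\ref{remark-tensor-sheaves-algebras} apply. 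I would simply invoke this.

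Finally, for the Azumaya claim: Lemma~\ref{lemma-Clk-Zk} asserts $\cR_k$ is Azumaya over the complement of $\xi_k^{-1}(\bP(\vW_k))$ in $\bP(L^{\perp})$. Since the exterior tensor product of Azumaya algebras is Azumaya, $\cR = \cR_1 \boxtimes \cR_2$ is Azumaya on the locus where \emph{both} factors are Azumaya, which pulls back from the complement of $\xi_1^{-1}(\bP(\vW_1)) \cup \xi_2^{-1}(\bP(\vW_2))$ in $\bP(L^{\perp})$ (more precisely, over the preimage of this complement in $\ccP$, but over that locus each $\rho_k$ is already an isomorphism away from $C_k$ and a gerbe over $C_k$, and the statement is phrased downstairs as in Lemma~\ref{lemma-Clk-Zk}). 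I do not anticipate a genuine obstacle here — the proof is a two-line chain of citations to Lemmas~\ref{lemma-JL-Cl} and~\ref{lemma-Clk-Zk} together with Remark~\ref{remark-tensor-sheaves-algebras}. The only point requiring a modicum of care is confirming that Remark~\ref{remark-tensor-sheaves-algebras} applies to the stacks $\ccP_k$ rather than honest schemes, i.e. that they are perfect stacks; this is the "hardest" part but is entirely routine given the cited literature on root stacks.

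\begin{proof}
By Lemma~\ref{lemma-JL-Cl} there is an equivalence
\begin{equation*}
\cJ(\cB^1, \cB^2)_{\bP(L^{\perp})} \simeq
\Perf {\left(\bP(L^{\perp}), \Cl_0^1\vert_{\bP(L^{\perp})} \right)} \otimes_{\Perf(\bP(L^{\perp}))}  \Perf {\left(\bP(L^{\perp}), \Cl_0^2\vert_{\bP(L^{\perp})} \right)}.
\end{equation*}
By Lemma~\ref{lemma-Clk-Zk}, for $k=1,2$ the functor $\rho_{k*}$ gives an equivalence
$\Perf(\ccP_k, \cR_k) \simeq \Perf(\bP(L^{\perp}), \Cl_0^k\vert_{\bP(L^{\perp})})$, so we obtain
\begin{equation*}
\cJ(\cB^1, \cB^2)_{\bP(L^{\perp})} \simeq \Perf(\ccP_1, \cR_1) \otimes_{\Perf(\bP(L^{\perp}))} \Perf(\ccP_2, \cR_2).
\end{equation*}
The stacks $\ccP_k$ are square root stacks of effective Cartier divisors on a smooth projective surface, hence tame Deligne--Mumford stacks which are global quotients with quasi-projective coarse moduli space; in particular they are perfect stacks in the sense of~\cite{bzfn}. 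Therefore Remark~\ref{remark-tensor-sheaves-algebras} applies with $T = \bP(L^{\perp})$, $X_1 = \ccP_1$, $X_2 = \ccP_2$, $\cR_1$, $\cR_2$, and yields
\begin{equation*}
\Perf(\ccP_1, \cR_1) \otimes_{\Perf(\bP(L^{\perp}))} \Perf(\ccP_2, \cR_2) \simeq
\Perf{\left(\ccP_1 \times_{\bP(L^{\perp})} \ccP_2, \, \cR_1 \boxtimes \cR_2 \right)} = \Perf(\ccP, \cR),
\end{equation*}
by the definitions $\ccP = \ccP_1 \times_{\bP(L^{\perp})} \ccP_2$ and $\cR = \cR_1 \boxtimes \cR_2$. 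This proves the claimed equivalence.

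For the final assertion, by Lemma~\ref{lemma-Clk-Zk} the algebra $\cR_k$ is Azumaya over the complement of $\xi_k^{-1}(\bP(\vW_k))$ in $\bP(L^{\perp})$. Since the exterior product of Azumaya algebras is again Azumaya, $\cR = \cR_1 \boxtimes \cR_2$ is Azumaya over the open locus where both $\cR_1$ and $\cR_2$ are Azumaya, namely over the complement of $\xi_1^{-1}(\bP(\vW_1)) \cup \xi_2^{-1}(\bP(\vW_2))$ in $\bP(L^{\perp})$.
\end{proof}
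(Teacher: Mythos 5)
Your proposal is correct and follows exactly the paper's own (one-line) proof, which cites Lemma~\ref{lemma-JL-Cl}, Lemma~\ref{lemma-Clk-Zk}, and Remark~\ref{remark-tensor-sheaves-algebras}. The extra care you take in checking that the root stacks $\ccP_k$ are perfect stacks and that the exterior product of Azumaya algebras is Azumaya is sound and fills in details the paper leaves implicit.
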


\begin{proof}
Follows from Lemma~\ref{lemma-JL-Cl}, Lemma~\ref{lemma-Clk-Zk}, and 
Remark~\ref{remark-tensor-sheaves-algebras}. 
\end{proof}

\begin{remark}
The space $\ccP$ is a stacky projective plane.
More precisely, consider the stratification 
\begin{equation*}
C_1 \cap C_2 \subset C_1 \cup C_2 \subset \bP(L^\perp). 
\end{equation*} 
Then the canonical morphism $\rho \colon \ccP \to \bP(L^{\perp})$ 
can be described over the open strata as follows: 
\begin{itemize}
\item $\rho$ is an isomorphism over $\bP(L^{\perp}) \setminus (C_1 \cup C_2)$. 
\item $\rho$ is a $\bZ/2$-gerbe over $(C_1 \cup C_2) \setminus (C_1 \cap C_2)$. 
\item $\rho$ is a $\bZ/2 \times \bZ/2$-gerbe over $C_1 \cap C_2$. 
\end{itemize} 
When $\xi_1^{-1}(\bP(\vW_1))$ and $\xi_2^{-1}(\bP(\vW_2))$ are empty --- which holds for 
generic $L \subset V_1 \oplus V_2$ --- Lemma~\ref{lemma-JL-Z1Z2} thus gives a satisfactory geometric interpretation 
of $\cJ(\cB^1, \cB^2)_{\bP(L^{\perp})}$. 
Indeed, then~$\Perf(\ccP, \cR)$
is an \'{e}tale form of the 
geometric category $\Perf(\ccP)$.
More precisely, both of these categories are $\ccP$-linear, 
and there is an \'{e}tale cover $\tilde{\ccP} \to \ccP$  
such that after base 
change to~$\tilde{\ccP}$
the two categories are equivalent as $\tilde{\ccP}$-linear categories. 
Namely, $\cR$ 
is Azumaya by Lemma~\ref{lemma-JL-Z1Z2}, and an \'{e}tale 
cover over which it becomes a matrix algebra will do. 
\end{remark}

The following result summarizes our work above. 
Note that the structure sheaf of an Enriques surface is an exceptional object of its
derived category. 

\begin{theorem} 
\label{theorem-enriques} 
Let $\Sigma$ be a general Enriques surface. 
Then there exist \textup(non-canonical\textup) smooth, transverse cubic plane curves $C_1$ and $C_2$ in $\bP^2$ 
and Azumaya algebras $\cR_1$ and $\cR_2$ on the square root stacks $\bP^2(\sqrt{C_1})$ and $\bP^2(\sqrt{C_2})$, 
such that if 
\begin{equation*}
\ccP = \bP^2(\sqrt{C_1}) \times_{\bP^2} \bP^2(\sqrt{C_2}), 
\end{equation*} 
then $\cR = \cR_1 \boxtimes \cR_2 \in \Perf(\ccP, \cR)$ is an exceptional object and there is 
an equivalence between the subcategories 
\begin{equation*}
\langle \cO_{\Sigma} \rangle^{\perp} \subset \Perf(\Sigma) 
\qquad \text{and} \qquad 
{^{\perp}}\langle \cR \rangle \subset \Perf(\ccP, \cR). 
\end{equation*} 
\end{theorem}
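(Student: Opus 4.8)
The plan is to assemble the theorem by specializing the chain of constructions developed in the previous subsection to the case of a general Enriques surface, and then matching up the two orthogonal complements produced by Corollary~\ref{corollary-join-veronese-cliff}. First I would fix a general Enriques surface $\Sigma$; by the cited Lemma of \cite{kuznetsov2018embedding} there exist $3$-dimensional vector spaces $W_1, W_2$, with $V_k = \Sym^2 W_k$, and a generic codimension-$3$ subspace $L \subset V_1 \oplus V_2$ such that $\Sigma \cong \Sigma_L = \tJ(\bP(W_1),\bP(W_2))_{\bP(L)}$, where the Veronese embeddings $\bP(W_k)\to\bP(V_k)$ are used. Genericity of $L$ buys us three transversality facts to be recorded up front: (i) $\bP(L)$ misses $\bP(W_1)\sqcup\bP(W_2)$ in $\bP(V_1\oplus V_2)$; (ii) $L^\perp$ has dimension $3$, the compositions $L^\perp\to\vV_1\oplus\vV_2\to\vV_k$ are injective, and $\bP(L^\perp)$ misses $\bP(\vV_1)\sqcup\bP(\vV_2)$; (iii) the preimages $\xi_k^{-1}(\bP(\vW_k))$ are empty and $C_k = \xi_k^{-1}(D_k)\subset\bP(L^\perp)\cong\bP^2$ are smooth plane cubics meeting transversally. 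These are all open conditions, and one checks each is nonempty by a dimension count on the relevant incidence varieties.

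Next I would identify the two sides of the equivalence $\cK_L \simeq \cK'_L$ of Corollary~\ref{corollary-join-veronese-cliff} with the categories in the theorem. On the ``$\cA$-side'': by Lemma~\ref{lemma-JL-SigmaL} (which applies by (i)) we have $\cJ(\cA^1,\cA^2)_{\bP(L)} \simeq \Perf(\Sigma_L) = \Perf(\Sigma)$, and under this equivalence the one-dimensional component $\langle\cO\rangle$ in the first semiorthogonal decomposition of Corollary~\ref{corollary-join-veronese-cliff} goes to the subcategory generated by the pullback of $\cO$ along $\tJ(\bP(W_1),\bP(W_2))\to\bP(W_1)\times\bP(W_2)$, restricted to $\bP(L)$; this pullback is $\cO_\Sigma$, the structure sheaf, which is indeed exceptional since $\Sigma$ is an Enriques surface. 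Hence $\cK_L = \langle\cO_\Sigma\rangle^\perp \subset \Perf(\Sigma)$. On the ``$\cB$-side'': by (ii) Lemma~\ref{lemma-JL-Cl} applies, and then by (iii) Lemmas~\ref{lemma-Clk-Zk} and~\ref{lemma-JL-Z1Z2} give $\cJ(\cB^1,\cB^2)_{\bP(L^\perp)} \simeq \Perf(\ccP,\cR)$ with $\ccP = \bP^2(\sqrt{C_1})\times_{\bP^2}\bP^2(\sqrt{C_2})$ and $\cR = \cR_1\boxtimes\cR_2$ Azumaya (here I take $\bP^2 = \bP(L^\perp)$). Under this equivalence the exceptional component $\langle\Cl^1_0\boxtimes\Cl^2_0\rangle$ appearing in the second decomposition of Corollary~\ref{corollary-join-veronese-cliff} corresponds to $\langle\cR\rangle \subset \Perf(\ccP,\cR)$: indeed $\rho_{k*}\cR_k \simeq \Cl^k_0|_{\bP(L^\perp)}$ by Lemma~\ref{lemma-Clk-Zk}, so $\rho_*\cR \simeq \Cl^1_0|_{\bP(L^\perp)}\boxtimes\Cl^2_0|_{\bP(L^\perp)}$, and chasing through the equivalences of Lemmas~\ref{lemma-JL-Cl} and~\ref{lemma-JL-Z1Z2} identifies $\langle\cR\rangle$ with the distinguished component. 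Thus $\cK'_L = {}^\perp\langle\cR\rangle \subset \Perf(\ccP,\cR)$. Combining with $\cK_L\simeq\cK'_L$ yields the stated equivalence $\langle\cO_\Sigma\rangle^\perp \simeq {}^\perp\langle\cR\rangle$.

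The remaining bookkeeping is to confirm that the Lefschetz-component computations quoted before Corollary~\ref{corollary-join-veronese-cliff} really do force the claimed semiorthogonal decompositions. Here one uses Theorem~\ref{theorem-join-lef-cat} to get $\length(\cJ(\cA^1,\cA^2)) = 2+2 = 4$ and $\length(\cJ(\cB^1,\cB^2)) = 5+5 = 10$, together with the explicit right Lefschetz components of $\cJ(\cA^1,\cA^2)$ and left Lefschetz components of $\cJ(\cB^1,\cB^2)$ coming from Lemma~\ref{lemma-Ji-alternate} applied to the given components of $\cA^k$ and $\cB^k$ (with Theorem~\ref{theorem-HPD-veronese} supplying the latter). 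Feeding $L$ of corank $3$ in $V_1\oplus V_2$ into Theorem~\ref{theorem-HPD}\eqref{HPD-linear} — with $\cJ(\cA^1,\cA^2)^\hpd \simeq \cJ(\cB^1,\cB^2)$ coming from Theorem~\ref{theorem-joins-HPD} combined with Theorem~\ref{theorem-HPD-veronese} — produces exactly the two decompositions in Corollary~\ref{corollary-join-veronese-cliff}: on the $\cA$-side the ``extra'' components $\cJ_3((3-3)H),\dots$ collapse to the single term $\cJ_3 = \langle\cO\rangle$ because $m = 4$, $s = 3$, while on the $\cB^\hpd$-side one gets a single mutated term $\cJd_{-3}(-H')$, i.e. a copy of $\cJd_{-3} = \langle\Cl^1_0\boxtimes\Cl^2_0\rangle$ after the appropriate twist, which one checks is the object $\cR$ up to the canonical twist on $\Perf(\ccP,\cR)$.

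\textbf{Main obstacle.} The delicate point is the identification, on the dual side, of the distinguished exceptional component $\cK'_L$'s orthogonal with an honest orthogonal inside $\Perf(\ccP,\cR)$, and in particular checking that the exceptional object thereby singled out is precisely $\cR = \cR_1\boxtimes\cR_2$ (and not some twist of it) — this requires tracking the line-bundle twists through the composite of the equivalences in Proposition~\ref{proposition-cJT}, Remark~\ref{remark-tensor-sheaves-algebras}, and Lemma~\ref{lemma-Clk-Zk}, and verifying compatibility of the gerbe/root-stack structures on $\ccP_1\times_{\bP^2}\ccP_2$ with the external tensor product $\Cl^1_0\boxtimes\Cl^2_0$ of Clifford algebras. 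The transversality of $C_1$ and $C_2$ (and emptiness of the $\xi_k^{-1}(\bP(\vW_k))$) is what makes $\cR$ globally Azumaya and hence makes this last identification clean; the bulk of the effort is therefore in verifying those genericity statements and then carefully composing the cited equivalences, none of which is individually hard but which must be chained without sign or twist errors.
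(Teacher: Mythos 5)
Your proposal is correct and follows essentially the same route as the paper, which presents Theorem~\ref{theorem-enriques} precisely as a summary of the chain you reconstruct: realize $\Sigma$ as $\Sigma_L$ for generic $L$, apply Corollary~\ref{corollary-join-veronese-cliff}, and identify the two sides via Lemma~\ref{lemma-JL-SigmaL} and Lemmas~\ref{lemma-JL-Cl}--\ref{lemma-JL-Z1Z2}. One small slip worth fixing: in Theorem~\ref{theorem-HPD}\eqref{HPD-linear} the relevant index is $-r=-\rank(L)=-9$ (not $-\rank(L^\perp)=-3$), so the single dual-side component is $\cJd_{-9}(-H')$ with $\cJd_{-9}=\langle \Cl^1_0\boxtimes\Cl^2_0\rangle$, whereas $\cJd_{-3}$ has four exceptional generators; this does not affect your (correct) identification of the distinguished component or the rest of the argument.
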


\begin{remark}
Theorem~\ref{theorem-enriques} can be thought of as an algebraization of the logarithmic transform, that creates an 
Enriques surface from a rational elliptic surface with two marked fibers. 
\end{remark}


\appendix

\section{Linear categories} 
\label{section:linear-cats}

In this appendix, we collect some results on linear categories that 
are needed in the body of the text. 
As in \cite[Part I]{NCHPD}, in this section we will be considering general linear categories, 
as opposed to Lefschetz categories or categories linear over a projective bundle.  
To emphasize this we tend to denote categories with the letters 
$\cC$ or $\cD$ as opposed to $\cA$ or $\cB$. 

First recall that if $T$ is a scheme, then by Definition~\ref{definition-linear-category} a 
$T$-linear category is a small idempotent-complete stable $\infty$-category equipped with a $\Perf(T)$-module structure. 
The basic example of such a category is $\cC = \Perf(X)$ where $X$ is a scheme over $T$; 
in this case, the action functor $\cC \times \Perf(T) \to \cC$ is given by $(C, F) \mapsto C \otimes \pi^*(F)$, 
where $\pi \colon X \to T$ is the structure morphism.

Given $T$-linear categories $\cC$ and $\cD$, we can form their tensor product 
\begin{equation*}
\cC \otimes_{\Perf(T)} \cD, 
\end{equation*} 
which is a $T$-linear category characterized by the property that for any $T$-linear category 
$\cE$, the $T$-linear functors $\cC \otimes_{\Perf(T)} \cD \to \cE$ classify ``bilinear functors'' $\cC \times \cD \to \cE$ 
(see \cite[\S4.8]{lurie-HA}). 
In particular, there is a canonical functor 
\begin{equation*}
\cC \times \cD \to \cC \otimes_{\Perf(T)} \cD,
\end{equation*}
whose action on objects we denote by $(C,D) \mapsto C \boxtimes D$. 

\begin{lemma}[{\cite[Lemma 2.7]{NCHPD}}]
\label{lemma-generators-box-tensor}
The category $\cC \otimes_{\Perf(T)} \cD$ is thickly generated by objects of the form $C \boxtimes D$ for 
$C \in \cC$, $D \in \cD$, i.e. the smallest idempotent-complete triangulated subcategory containing all of these 
objects is $\cC \otimes_{\Perf(T)} \cD$ itself. 
\end{lemma}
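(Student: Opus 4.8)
\textbf{Proof plan for Lemma~\ref{lemma-generators-box-tensor}.}

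The plan is to reduce the statement to the universal property of the tensor product of $\Perf(T)$-module categories. First I would recall that the tensor product $\cC \otimes_{\Perf(T)} \cD$ is defined (following \cite{lurie-HA} and \cite{NCHPD}) as a colimit in an appropriate $\infty$-category of $\Perf(T)$-linear stable categories, and that it is characterized by the universal property that $T$-linear functors out of it correspond to $T$-bilinear functors out of $\cC \times \cD$, with the canonical bilinear functor $\cC \times \cD \to \cC \otimes_{\Perf(T)} \cD$ sending $(C, D)$ to $C \boxtimes D$. The key point is that this construction produces a category that is \emph{generated}, in the idempotent-complete stable sense, by the essential image of this bilinear functor; equivalently, the construction factors through a stable idempotent-completion applied to a category built out of the symbols $C \boxtimes D$.

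The main step is then the following: let $\cE \subseteq \cC \otimes_{\Perf(T)} \cD$ be the smallest idempotent-complete stable (triangulated) subcategory containing all objects $C \boxtimes D$. I would observe that $\cE$ is automatically closed under the $\Perf(T)$-action, because $(C \boxtimes D) \otimes F \simeq (C \otimes F) \boxtimes D$ for $F \in \Perf(T)$ (a consequence of $\Perf(T)$-bilinearity of $\boxtimes$), so the action of any $F \in \Perf(T)$ on a generator lands among the generators, and hence preserves the subcategory they generate. Thus $\cE$ is itself a $T$-linear category, and the inclusion $\cE \hookrightarrow \cC \otimes_{\Perf(T)} \cD$ is $T$-linear. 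Now the composite $\cC \times \cD \to \cE \hookrightarrow \cC \otimes_{\Perf(T)} \cD$ is $T$-bilinear and equals the universal bilinear functor, so by the universal property it factors through a $T$-linear functor $\cC \otimes_{\Perf(T)} \cD \to \cE$ whose composition with the inclusion is the identity; I would then check (again via the universal property, since both composites restrict to the universal bilinear functor on $\cC \times \cD$) that the other composite $\cE \hookrightarrow \cC \otimes_{\Perf(T)} \cD \to \cE$ is also the identity, so the inclusion is an equivalence.

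The step I expect to be the main obstacle is making precise the assertion that the tensor product construction produces a category generated by the $C \boxtimes D$; this is really a statement about the specific model of $\otimes_{\Perf(T)}$ used in \cite{NCHPD} (relative tensor product of module categories over $\Perf(T)$, realized as a bar construction or as a Lurie tensor product in $\mathrm{Cat}_{\mathrm{st}}^{\mathrm{perf}}$). Once one knows that $\otimes_{\Perf(T)}$ is computed as an idempotent-completion of a stabilization of the pointwise tensor product of the underlying categories — which is where all the symbols $C \boxtimes D$ come from — the generation claim is immediate. In practice, for the purposes of this paper one may simply cite \cite[Lemma~2.7]{NCHPD}, whose statement is exactly this lemma, and whose proof rests on exactly this analysis of the relative tensor product; I would therefore present the argument above as a brief recollection of why that result holds, rather than reproving the foundational facts about $\infty$-categorical tensor products from scratch.
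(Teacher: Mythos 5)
The paper itself gives no proof of this statement: it is imported verbatim as \cite[Lemma 2.7]{NCHPD}, so there is nothing internal to compare your argument against, and your closing suggestion to simply cite that reference matches what the paper actually does. That said, your sketch of why the result holds is essentially correct and worth recording. The core of it — let $\cE \subset \cC \otimes_{\Perf(T)} \cD$ be the thick subcategory generated by the $C \boxtimes D$, observe that $\cE$ is preserved by the $\Perf(T)$-action because $(C \boxtimes D) \otimes F \simeq (C \otimes F) \boxtimes D$ sends generators to generators, and then use the universal property to produce a $T$-linear functor $r \colon \cC \otimes_{\Perf(T)} \cD \to \cE$ with $i \circ r \simeq \id$ — is sound, and $i \circ r \simeq \id$ already finishes the proof: it gives essential surjectivity of the full inclusion $i$, so $i$ is an equivalence. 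Your additional step of verifying $r \circ i \simeq \id_{\cE}$ is both unnecessary and not quite justified as written, since the universal property characterizes functors \emph{out of the tensor product}, not out of $\cE$; I would drop it. You are also right to flag where the real content lies: the argument is only as good as the foundational input that the relative tensor product in $\CatSt$ (idempotent-complete stable categories) receives a bilinear functor from $\cC \times \cD$ through which every $T$-linear functor is uniquely determined, and that uniqueness statement is exactly what substitutes for an explicit generation claim about the bar-construction model. Modulo making that input precise (which is what \cite[Lemma 2.7]{NCHPD} does), your reduction is the standard and correct one.
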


A fundamental result is that in the geometric case, tensor products of linear categories correspond to 
fiber products of schemes (see~\S\ref{subsection-conventions} for a discussion of derived fiber products). 

\begin{theorem}[{\cite[Theorem 1.2]{bzfn}}]
\label{theorem-bzfn} 
Let $X \to T$ and $Y \to T$ be morphisms of schemes. 
Then there is a canonical equivalence 
\begin{equation*}
\Perf {\left( X \times_T Y \right) } \simeq \Perf(X) \otimes_{\Perf(T)} \Perf(Y), 
\end{equation*} 
where $X \times_T Y$ is the derived fiber product. 
\end{theorem} 

\begin{remark}
In \cite{bzfn}, the theorem is formulated for $X_1, X_2,$ and $T$ being so-called perfect stacks. 
Any quasi-compact, separated derived scheme is a perfect 
stack \cite[Proposition 3.19]{bzfn}, so with our conventions from \S\ref{subsection-conventions} 
any derived scheme is perfect. 
\end{remark} 

Let $\phi_1 \colon \cC_1 \to \cD_1$ and $\phi_2 \colon \cC_2 \to \cD_2$ be $T$-linear functors. 
They induce a $T$-linear functor between the tensor product categories $\cC_1 \otimes_{\Perf(T)} \cC_2 \to \cD_1 \otimes_{\Perf(T)} \cD_2$,
which we denote by~$\phi_1 \otimes \phi_2$.
This operation is compatible with adjunctions.

\begin{lemma}[{\cite[Lemmas 2.12]{NCHPD}}] 
\label{lemma-adjoints}
Let $\phi_1 \colon \cC_1 \to \cD_1$ and $\phi_2 \colon \cC_2 \to \cD_2$ be $T$-linear functors. 
\begin{enumerate}
\item 
If $\phi_1$ and $\phi_2$ both admit left adjoints $\phi_1^*$ and $\phi_2^*$ \textup(or right adjoints $\phi_1^!$ and $\phi_2^!$\textup), 
then the functor $\phi_1 \otimes \phi_2 \colon \cC_1 \otimes_{\Perf(T)} \cC_2 \to \cD_1 \otimes_{\Perf(T)} \cD_2$ has a 
left adjoint given by $\phi_1^* \otimes \phi_2^*$ \textup(or right adjoint given by $\phi_1^! \otimes \phi_2^!$\textup). 
\item 
If $\phi_1$ and $\phi_2$ both admit left or right adjoints and are fully faithful, then so is 
$\phi_1 \otimes \phi_2$. 

\item 
If $\phi_1$ and $\phi_2$ are both equivalences, then so is $\phi_1 \otimes \phi_2$. 
\end{enumerate}
\end{lemma}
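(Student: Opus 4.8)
The plan is to deduce all three statements as purely formal $2$-categorical consequences of the fact that $-\otimes_{\Perf(T)}-$ is not merely a construction on objects of the $\infty$-category $\mathrm{Cat}^{\mathrm{perf}}_{\Perf(T)}$ of $\Perf(T)$-linear categories, but refines to a symmetric monoidal structure that is compatible with the mapping categories $\Fun_{\Perf(T)}(-,-)$; see \cite{lurie-HA} and \cite[\S2]{NCHPD}. Concretely, the only input I need is: for $\Perf(T)$-linear functors $F_i,G_i\colon \cC_i\to\cD_i$ ($i=1,2$) and $\Perf(T)$-linear natural transformations $\alpha_i\colon F_i\to G_i$, there is an induced transformation $\alpha_1\otimes\alpha_2\colon F_1\otimes F_2\to G_1\otimes G_2$; this assignment is compatible with vertical and horizontal composition and carries identities to identities, together with the canonical equivalences $\id_{\cC_1}\otimes\id_{\cC_2}\simeq \id_{\cC_1\otimes\cC_2}$ and $(\psi_1\circ\phi_1)\otimes(\psi_2\circ\phi_2)\simeq(\psi_1\otimes\psi_2)\circ(\phi_1\otimes\phi_2)$. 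In other words, $\otimes_{\Perf(T)}$ descends to a $2$-functor on homotopy $2$-categories, and $2$-functors preserve adjunctions.

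\textbf{Part (1).} Suppose $\phi_i^*$ is left adjoint to $\phi_i$, with unit $u_i\colon \id_{\cD_i}\to \phi_i\circ\phi_i^*$ and counit $c_i\colon \phi_i^*\circ\phi_i\to\id_{\cC_i}$ satisfying the triangle identities. Applying the $2$-functor $\otimes_{\Perf(T)}$ and the canonical identifications above, the transformations $u_1\otimes u_2$ and $c_1\otimes c_2$ become a unit $\id_{\cD_1\otimes\cD_2}\to(\phi_1\otimes\phi_2)\circ(\phi_1^*\otimes\phi_2^*)$ and a counit $(\phi_1^*\otimes\phi_2^*)\circ(\phi_1\otimes\phi_2)\to\id_{\cC_1\otimes\cC_2}$; the triangle identities for this tensored data are precisely the images under $\otimes_{\Perf(T)}$ of the triangle identities for the $\phi_i$. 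Hence $\phi_1^*\otimes\phi_2^*$ is left adjoint to $\phi_1\otimes\phi_2$. The case of right adjoints is identical, replacing $\phi_i^*$ by $\phi_i^!$.

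\textbf{Parts (2) and (3).} For (2), recall that a functor admitting a left adjoint is fully faithful iff the counit of that adjunction is an equivalence, and a functor admitting a right adjoint is fully faithful iff its unit is an equivalence. If the $\phi_i$ are fully faithful and admit left adjoints $\phi_i^*$, then each $c_i\colon\phi_i^*\circ\phi_i\to\id_{\cC_i}$ is an equivalence; since $\otimes_{\Perf(T)}$ carries equivalences of functors to equivalences, $c_1\otimes c_2$ is an equivalence, and by Part (1) it is the counit of $(\phi_1^*\otimes\phi_2^*)\dashv(\phi_1\otimes\phi_2)$, so $\phi_1\otimes\phi_2$ is fully faithful. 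The argument with right adjoints and units is the same. For (3), if the $\phi_i$ are equivalences with inverses $\phi_i^{-1}$, then compatibility of $\otimes_{\Perf(T)}$ with composition and identities shows $(\phi_1^{-1}\otimes\phi_2^{-1})\circ(\phi_1\otimes\phi_2)\simeq\id$ and $(\phi_1\otimes\phi_2)\circ(\phi_1^{-1}\otimes\phi_2^{-1})\simeq\id$, so $\phi_1\otimes\phi_2$ is an equivalence.

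\textbf{Main obstacle.} The only genuine content is the claim in the first paragraph: that $\otimes_{\Perf(T)}$ refines to a $2$-functor compatible with mapping categories, so that ``tensoring natural transformations'' makes sense and preserves the triangle identities. This requires Lurie's machinery of module categories and symmetric monoidal Morita theory \cite{lurie-HA}. A convenient way to sidestep working directly with small categories is to pass to $\mathrm{Ind}$-completions: $\mathrm{Ind}$ embeds $\mathrm{Cat}^{\mathrm{perf}}_{\Perf(T)}$ fully faithfully into $\mathrm{Mod}_{\QCoh(T)}$ of presentable stable categories with colimit-preserving functors, intertwines $\otimes_{\Perf(T)}$ with the relative tensor product there, and reflects adjoints together with full faithfulness and essential surjectivity (using that $\phi_1$, $\phi_2$, and $\phi_1\otimes\phi_2$ all preserve compact objects); in the presentable setting the requisite compatibility of the symmetric monoidal structure with adjunctions is part of \cite{lurie-HA}. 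Once this foundational point is granted, Parts (1)--(3) are formal, as above. I expect establishing that foundational point (rather than any of the deductions) to be the only real difficulty.
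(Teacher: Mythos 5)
The paper gives no proof of this lemma, simply citing \cite[Lemma 2.12]{NCHPD}, and your argument — tensoring the units and counits via the (2-)functoriality of $\otimes_{\Perf(T)}$ to produce the adjunction, then reading off full faithfulness from invertibility of the relevant (co)unit and invertibility from tensoring inverses — is essentially the proof given in that reference. Your identification of the 2-functoriality of the tensor product (equivalently, the passage through $\mathrm{Ind}$-completions and Lurie's machinery) as the only nonformal input is accurate, and the deductions from it are correct.
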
 

Semiorthogonal decompositions and admissible subcategories 
of linear categories are defined as in the usual triangulated case \cite[Definitions 3.1 and 3.5]{NCHPD}. 
We will frequently need to use that they behave well under tensor products. 

\begin{lemma}[{\cite[Lemma 3.17]{NCHPD}}]
\label{lemma-admissible-tensor}
Let $\cC$ and $\cD$ be $T$-linear categories. 
If $\cA \subset \cC$ is a left \textup(or right\textup) admissible $T$-linear subcategory, then 
so is $\cA \otimes_{\Perf(T)} \cD \subset \cC \otimes_{\Perf(T)} \cD$. 
\end{lemma}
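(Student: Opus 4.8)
The plan is to deduce this directly from the behavior of tensor products under adjunctions, i.e.\ Lemma~\ref{lemma-adjoints}. Recall that, with the conventions of this paper, a left admissible $T$-linear subcategory $\cA \subset \cC$ is a full $T$-linear subcategory whose inclusion functor $\alpha \colon \cA \to \cC$ --- which is automatically $T$-linear and fully faithful --- admits a left adjoint $\alpha^* \colon \cC \to \cA$; right admissibility is the same statement with a right adjoint $\alpha^!$. Thus the assertion amounts to showing that the functor $\alpha \otimes \id_{\cD} \colon \cA \otimes_{\Perf(T)} \cD \to \cC \otimes_{\Perf(T)} \cD$ exhibits its source as a left (resp.\ right) admissible subcategory of its target.

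First I would note that $\id_{\cD} \colon \cD \to \cD$ is fully faithful and admits $\id_{\cD}$ as both a left and a right adjoint. Applying Lemma~\ref{lemma-adjoints}(2) to the pair $(\alpha, \id_{\cD})$ --- both fully faithful, both admitting left adjoints --- shows that $\alpha \otimes \id_{\cD}$ is fully faithful, and applying Lemma~\ref{lemma-adjoints}(1) to the same pair shows it admits a left adjoint, namely $\alpha^* \otimes \id_{\cD}$; in the right admissible case one uses $\alpha^! \otimes \id_{\cD}$ instead. These adjoints, being tensor products of $T$-linear functors, are again $T$-linear, so the whole discussion stays inside the $T$-linear world.

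It remains to repackage this as a statement about subcategories. Since $\alpha \otimes \id_{\cD}$ is fully faithful, it is an equivalence onto its essential image, a full $T$-linear stable subcategory $\cA' \subset \cC \otimes_{\Perf(T)} \cD$; transporting $\alpha^* \otimes \id_{\cD}$ (resp.\ $\alpha^! \otimes \id_{\cD}$) across the equivalence $\cA \simeq \cA'$ produces a left (resp.\ right) adjoint to the inclusion $\cA' \hookrightarrow \cC \otimes_{\Perf(T)} \cD$, which in particular forces $\cA'$ to be closed under retracts, hence idempotent-complete, so that $\cA'$ genuinely qualifies as a left (resp.\ right) admissible $T$-linear subcategory. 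Identifying $\cA'$ with $\cA \otimes_{\Perf(T)} \cD$ via this equivalence finishes the proof.

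The argument is essentially formal once Lemma~\ref{lemma-adjoints} is available, so I do not anticipate a real obstacle; the only step requiring a moment of care is the final repackaging --- namely checking that ``admissible subcategory'' in the semiorthogonal-decomposition sense coincides with ``essential image of a fully faithful $T$-linear functor admitting the appropriate adjoint'' and that such an image is automatically thick. This is standard, but it is the one point that cannot be reduced to a verbatim citation of Lemma~\ref{lemma-adjoints}.
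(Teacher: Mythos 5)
The paper does not prove this lemma itself --- it is quoted verbatim from \cite[Lemma 3.17]{NCHPD} --- so there is no in-paper argument to compare against; your proof is the expected one and is correct. Reducing the statement to Lemma~\ref{lemma-adjoints} applied to the pair $(\alpha,\id_{\cD})$, where $\alpha\colon\cA\to\cC$ is the ($T$-linear, fully faithful) inclusion with its left (resp.\ right) adjoint, is exactly how the cited source handles it, and your closing remarks on identifying $\cA\otimes_{\Perf(T)}\cD$ with its essential image and on thickness of that image address the only points that need care.
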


\begin{lemma}[{\cite[Lemma 3.15]{NCHPD}}]
\label{lemma-sod-tensor}
Let $\cC = \llangle \cA_1, \dots, \cA_m \rrangle$ and $\cD = \llangle \cB_1, \dots, \cB_{n} \rrangle$  
be $T$-linear semiorthogonal decompositions. 
Then the tensor product of the embedding functors 
\begin{equation*}
\cA_i \otimes_{\Perf(T)} \cB_j \to \cC \otimes_{\Perf(T)} \cD
\end{equation*}
is fully faithful for all $i,j$. 
Moreover, there is a semiorthogonal decomposition 
\begin{equation*}
\cC \otimes_{\Perf(T)} \cD = \llangle \cA_i \otimes_{\Perf(T)} \cB_j \rrangle_{1 \leq i \leq m,\, 1 \leq j \leq n} 
\end{equation*} 
where the ordering on the set $\set{ (i,j) \st  1 \leq i \leq m , 1 \leq j \leq n }$ is any 
one which extends the coordinate-wise partial order. 
The projection functor onto the $(i,j)$-component of this decomposition is given by 
\begin{equation*}
\pr_{\cA_i} \otimes \pr_{\cB_j} \colon \cC \otimes_{\Perf(T)} \cD \to  \cA_i \otimes_{\Perf(T)} \cB_j, 
\end{equation*} 
where $\pr_{\cA_i} \colon \cC \to \cA_i$ and $\pr_{\cB_j} \colon \cD \to \cB_j$ are the projection 
functors for the given decompositions.
\end{lemma}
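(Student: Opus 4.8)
The plan is to reduce, by an induction on the number of blocks, to the case of two-term decompositions; to treat that case by hand using the compatibility of adjoints, $\boxtimes$-generators, and exact triangles with tensor products; to iterate the result on each of the two factors to obtain the refined decomposition indexed lexicographically; and finally to pass from the lexicographic order to an arbitrary total order extending the coordinate-wise one by a purely formal reordering argument. Throughout I would use the convention of \cite[Definition~3.1]{NCHPD} that every component of a semiorthogonal decomposition is admissible, so that all the inclusion functors in sight have adjoints on both sides.

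For the two-term case, take $\cC = \llangle \cA', \cA'' \rrangle$ with $\cD$ arbitrary, and let $\iota' \colon \cA' \hookrightarrow \cC$, $\iota'' \colon \cA'' \hookrightarrow \cC$ be the inclusions, so that $\cA' = (\cA'')^{\perp}$. First I would invoke Lemma~\ref{lemma-adjoints} to see that $\iota' \otimes \id_\cD$ and $\iota'' \otimes \id_\cD$ are fully faithful and admit adjoints on both sides, whence $\cA' \otimes_{\Perf(T)} \cD$ and $\cA'' \otimes_{\Perf(T)} \cD$ are admissible subcategories of $\cC \otimes_{\Perf(T)} \cD$. For semiorthogonality, I would observe that $(\iota'')^{!} \otimes \id_\cD$ kills every generator $a' \boxtimes d$ of $\cA' \otimes_{\Perf(T)} \cD$ (since $(\iota'')^{!}(a') = 0$ for $a' \in (\cA'')^{\perp}$), hence kills the whole subcategory by exactness together with Lemma~\ref{lemma-generators-box-tensor}, and then conclude $\Map(\cA'' \otimes_{\Perf(T)} \cD, \cA' \otimes_{\Perf(T)} \cD) = 0$ by adjunction. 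For generation, I would note that the exact endofunctor $E \mapsto \operatorname{cone}\bigl((\iota'' \otimes \id_\cD)((\iota'')^{!} \otimes \id_\cD)(E) \to E\bigr)$ sends $C \boxtimes D$ to $\pr_{\cA'}(C) \boxtimes D$, hence lands in $\cA' \otimes_{\Perf(T)} \cD$ by Lemma~\ref{lemma-generators-box-tensor}; the associated functorial triangle then exhibits $\cC \otimes_{\Perf(T)} \cD = \llangle \cA' \otimes_{\Perf(T)} \cD, \cA'' \otimes_{\Perf(T)} \cD \rrangle$ with projection functors $\pr_{\cA'} \otimes \id_\cD$ and $\pr_{\cA''} \otimes \id_\cD$.

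Next I would iterate: writing $\cC = \llangle \cA_1, \llangle \cA_2, \dots, \cA_m \rrangle \rrangle$ and inducting on $m$, the two-term case gives $\cC \otimes_{\Perf(T)} \cD = \llangle \cA_1 \otimes_{\Perf(T)} \cD, \dots, \cA_m \otimes_{\Perf(T)} \cD \rrangle$ with projections $\pr_{\cA_i} \otimes \id_\cD$, and applying the two-term case again inside each $\cA_i \otimes_{\Perf(T)} \cD$, now to the decomposition of $\cD$, gives $\cA_i \otimes_{\Perf(T)} \cD = \llangle \cA_i \otimes_{\Perf(T)} \cB_1, \dots, \cA_i \otimes_{\Perf(T)} \cB_n \rrangle$ with projections $\id \otimes \pr_{\cB_j}$. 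Refining the first decomposition by these produces $\cC \otimes_{\Perf(T)} \cD = \llangle \cA_i \otimes_{\Perf(T)} \cB_j \rrangle$ with $i$-major lexicographic order, full faithfulness of each $\cA_i \otimes_{\Perf(T)} \cB_j \hookrightarrow \cC \otimes_{\Perf(T)} \cD$ by composing the two fully faithful inclusions, and projection functor $\pr_{\cA_i} \otimes \pr_{\cB_j}$ by composing $\pr_{\cA_i} \otimes \id_\cD$ with $\id \otimes \pr_{\cB_j}$. To replace the lexicographic order by an arbitrary total order $\preceq$ extending the coordinate-wise order, I would use the standard fact that a generating sequence of admissible subcategories is a semiorthogonal decomposition once the $\Hom$-vanishings hold; the only ones not already available are $\Map(\cA_k \otimes_{\Perf(T)} \cB_l, \cA_i \otimes_{\Perf(T)} \cB_j) = 0$ for pairs with $(i,j) \prec (k,l)$ but $(k,l)$ lexicographically before $(i,j)$, and since $\preceq$ extends the coordinate-wise order such a pair satisfies $i < k$ or $j < l$, so the vanishing follows from $\Map_\cC(\cA_k, \cA_i) = 0$ (resp.\ $\Map_\cD(\cB_l, \cB_j) = 0$) via the two-term argument applied to $\cC = \llangle (\cA_k)^{\perp}, \cA_k \rrangle$ and the inclusion $\cA_i \otimes_{\Perf(T)} \cD \subseteq (\cA_k)^{\perp} \otimes_{\Perf(T)} \cD$.

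I expect the main obstacle to be the semiorthogonality step in the two-term case, namely the passage from $\Map_\cC(\cA'', \cA') = 0$ to $\Map_{\cC \otimes_{\Perf(T)} \cD}(\cA'' \otimes_{\Perf(T)} \cD, \cA' \otimes_{\Perf(T)} \cD) = 0$: mapping spaces in $\cC \otimes_{\Perf(T)} \cD$ are not computed naively from those of $\cC$ and $\cD$, so one cannot argue object-by-object on $\boxtimes$-generators directly. The resolution is to trade the $\Hom$-vanishing for the vanishing of the adjoint functor $(\iota'')^{!} \otimes \id_\cD$, which is available precisely because admissibility of the components guarantees, via Lemma~\ref{lemma-adjoints}, that this adjoint exists and is itself a tensor-product functor; once phrased this way it does propagate from the generators $C \boxtimes D$ by exactness and Lemma~\ref{lemma-generators-box-tensor}. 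Everything else — full faithfulness, the decomposition triangles, and the identification of the projection functors — should be routine manipulation of adjunctions and functorial exact triangles.
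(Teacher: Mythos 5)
The paper does not prove this lemma at all: it is imported verbatim as \cite[Lemma 3.15]{NCHPD}, so there is no in-paper argument to compare against. Your reconstruction is correct and is essentially the proof given in that reference — reduce by induction to a two-term decomposition of one factor, establish semiorthogonality by showing the (automatically existing, $T$-linear) right adjoint of the second inclusion tensored with the identity kills the $\boxtimes$-generators of the first piece, and get generation from the functorial counit triangle. The one point to watch is your appeal to ``the convention that every component is admissible'': in the cited setup a middle component $\cA_k$ of an $m$-term decomposition need not be right admissible, so the reordering step should use the tail $\llangle \cA_k, \dots, \cA_m \rrangle$ (whose inclusion always has a right adjoint, being the second term of a two-term decomposition) in place of $\cC = \llangle (\cA_k)^{\perp}, \cA_k \rrangle$; with that one-line substitution the argument is complete.
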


Finally, in the paper we need a couple formal tensor product identities. 
Before stating them, we note the following. 
Let $T_1, T_2$, and $X$ be schemes, and for $k=1,2$, let $\cC_k$ be a $T_k \times X$-linear category.  
Then the tensor product
\begin{equation*}
\cC_1 \otimes_{\Perf(X)} \cC_2 
\end{equation*}
is naturally a $T_1 \times T_2$-linear category via 
the equivalence 
$\Perf(T_1 \stimes T_2) \simeq \Perf(T_1) \sotimes \Perf(T_2).$ 

\begin{lemma}
\label{lemma-fiber-product-categories}
Let $T_1, T_2, X$, and $Y$ be schemes, and for $k=1,2$, let $\cC_k$ be a $T_k \times X$-linear category 
and let $\cD_k$ be a $T_k \times Y$-linear category.
Then there is an equivalence 
\begin{equation*}
\left( \cC_1 \otimes_{\Perf(X)} \cC_2 \right) \otimes_{\Perf(T_1 \stimes T_2)} \left(  \cD_1 \otimes_{\Perf(Y)} \cD_2 \right) 
\! \simeq \!
\left(  \cC_1 \otimes_{\Perf(T_1)}  \cD_1 \right) \otimes_{\Perf(X \stimes Y)} \left(  \cC_2 \otimes_{\Perf(T_2)} \cD_2 \right) . 
\end{equation*}
\end{lemma}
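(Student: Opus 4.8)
The final statement (Lemma \ref{lemma-fiber-product-categories}) is a purely formal identity about iterated relative tensor products of linear categories, so the plan is to prove it by manipulating universal properties rather than by any geometric input. The key structural fact is that all the tensor products in sight are taken over commutative algebra objects in the symmetric monoidal $\infty$-category of presentable stable categories (or, equivalently after passing to module categories and back, over $\Perf$ of the relevant schemes), and relative tensor products over commutative base rings satisfy the usual associativity and commutativity coherences. Concretely, I would first reduce everything to a statement about presentable stable categories by passing from each $\Perf(Z)$ to $\QCoh(Z)$ (ind-completion), using that $\QCoh(Z_1 \times Z_2) \simeq \QCoh(Z_1) \otimes \QCoh(Z_2)$ and that the relative tensor product of small idempotent-complete stable categories is recovered by applying $\mathrm{Ind}$, tensoring, and taking compact objects (this is exactly the setup of \cite[Part I]{NCHPD}); then at the end I would descend back to the small categories by taking compact objects, which is harmless since all the categories involved are compactly generated and the functors in play preserve compactness.

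Once in the presentable setting, the proof is a chain of canonical equivalences obtained by repeatedly using associativity of $\otimes$ and the symmetry $\cM \otimes_R \cN \simeq \cN \otimes_R \cM$. The left-hand side is, after unwinding, the tensor product of $\cC_1, \cC_2, \cD_1, \cD_2$ where $\cC_1 \otimes_X \cC_2$ is formed over $\QCoh(X)$, the result carries a $\QCoh(T_1) \otimes \QCoh(T_2)$-module structure, and similarly for $\cD_1 \otimes_Y \cD_2$ over $\QCoh(Y)$; the whole thing is then tensored over $\QCoh(T_1) \otimes \QCoh(T_2)$. The right-hand side is the same four categories tensored in a different grouping: $\cC_1 \otimes_{T_1} \cD_1$ over $\QCoh(T_1)$, $\cC_2 \otimes_{T_2} \cD_2$ over $\QCoh(T_2)$, and then these two tensored over $\QCoh(X) \otimes \QCoh(Y)$. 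Both sides are the relative tensor product of $\QCoh$ of the four schemes $T_1, T_2, X, Y$ over the ring object $\QCoh(T_1) \otimes \QCoh(T_2) \otimes \QCoh(X) \otimes \QCoh(Y)$, where each $\cC_k$ is a module over $\QCoh(T_k) \otimes \QCoh(X)$ and each $\cD_k$ a module over $\QCoh(T_k) \otimes \QCoh(Y)$; I would make this precise by exhibiting both sides as the colimit of the same simplicial (bar) diagram, or more slickly by invoking that $\mathrm{Mod}_R$ for $R$ a commutative algebra is symmetric monoidal and that relative tensor product over a tensor product of rings coincides with iterated relative tensor product (the ``base change'' identity $(\cM \otimes_R \cN) \otimes_{R \otimes S} (\cM' \otimes_S \cN') \simeq (\cM \otimes_S \cM') \otimes_{R \otimes S}(\cN \otimes_R \cN')$ for $\cM, \cM'$ over $R$ — wait, one must be careful about which module is over which ring; the point is simply that both sides compute the coend/tensor of all four modules over the product ring).

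The cleanest route, which I would actually write, is: observe that $\cC_1 \otimes_{\Perf(X)} \cC_2$ with its $T_1 \times T_2$-linear structure is canonically the same as $\cC_1 \otimes_{\Perf(T_1 \times X)} \Perf(T_1 \times X \times T_2) \otimes_{\Perf(X \times T_2)} \cC_2$ (inserting and removing free modules), and likewise unwind the other three corners; then both sides of the claimed equivalence become manifestly the relative tensor product $\cC_1 \otimes \cD_1 \otimes \cC_2 \otimes \cD_2$ amalgamated over $\Perf(T_1), \Perf(T_2), \Perf(X), \Perf(Y)$ in the pattern dictated by the four linear structures, and the two groupings differ only by the order in which one contracts these four ``edges,'' which is immaterial by associativity and commutativity of the bar construction. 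I expect the main obstacle to be purely bookkeeping: keeping track of \emph{which} $\Perf$ of \emph{which} scheme each tensor product is taken over and ensuring the module structures match up on the nose (including the identifications $\Perf(Z_1 \times Z_2) \simeq \Perf(Z_1) \otimes \Perf(Z_2)$ from Theorem \ref{theorem-bzfn}), rather than any genuine mathematical difficulty. I would handle this by drawing the ``square of four schemes'' $T_1, T_2, X, Y$ and noting that both sides are the tensor product of the four categories indexed by this square over the four edge-rings, which has an evident symmetry exchanging the two ways of grouping.
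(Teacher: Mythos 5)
Your proposal is correct and is in essence the paper's own argument: the paper's entire proof is the one-line observation that the equivalence is induced by transposing the middle two factors, i.e.\ both sides are the amalgamated tensor product of $\cC_1, \cC_2, \cD_1, \cD_2$ over the four base categories $\Perf(T_1), \Perf(T_2), \Perf(X), \Perf(Y)$, differing only in the order of the factors. Your write-up simply spells out in detail the coherence data (associativity, commutativity, and the identification $\Perf(Z_1\times Z_2)\simeq \Perf(Z_1)\otimes\Perf(Z_2)$) that makes this transposition a legitimate equivalence.
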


\begin{proof}
The equivalence is induced by the transposition of the middle two factors. 
\end{proof} 

Recall that if $\cC$ is a $T$-linear category and $T' \to T$ is a morphism, 
then we denote by~\mbox{$\cC_{T'} = \cC \otimes_{\Perf(T)} \Perf(T')$} the $T'$-linear base change category. 

\begin{corollary}
\label{corollary-base-change-along-diagonal}
For $k=1,2$, let $T_k$ be a scheme and let $\cC_k$ be a $T_k$-linear category. 
Let $Y$ be a scheme with a morphism $Y \to T_1 \stimes T_2$ corresponding 
to morphisms $Y \to T_1$ and $Y \to T_2$. 
Then there is an equivalence 
\begin{equation*}
\left( \cC_1 \sotimes \cC_2 \right) \otimes_{\Perf(T_1 \stimes T_2)} \Perf(Y) 
\simeq 
(\cC_{1})_Y \otimes_{\Perf(Y)} (\cC_2)_Y 
\end{equation*}
\end{corollary}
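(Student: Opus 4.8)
The plan is to obtain this as a formal consequence of Lemma~\ref{lemma-fiber-product-categories}, applied to a carefully chosen collection of auxiliary categories, together with Theorem~\ref{theorem-bzfn} used to absorb the ``diagonal'' identification $Y \times_Y Y \cong Y$.

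First I would apply Lemma~\ref{lemma-fiber-product-categories} with $X = S$ (so that a ``$T_k \times X$-linear category'' is simply a $T_k$-linear category), with $\cC_k$ the given $T_k$-linear categories, and with $\cD_k = \Perf(Y)$ regarded as a $T_k \times Y$-linear category via the morphism $(\phi_k, \id_Y) \colon Y \to T_k \times Y$, where $\phi_k \colon Y \to T_k$ is the $k$-th component of the given morphism $Y \to T_1 \times T_2$. With these choices the left-hand side of the equivalence of Lemma~\ref{lemma-fiber-product-categories} reads
\[
\bigl( \cC_1 \otimes_{\Perf(S)} \cC_2 \bigr) \otimes_{\Perf(T_1 \times T_2)} \bigl( \Perf(Y) \otimes_{\Perf(Y)} \Perf(Y) \bigr),
\]
and the right-hand side reads
\[
\bigl( \cC_1 \otimes_{\Perf(T_1)} \Perf(Y) \bigr) \otimes_{\Perf(S \times Y)} \bigl( \cC_2 \otimes_{\Perf(T_2)} \Perf(Y) \bigr).
\]

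It then remains to simplify each side. On the left, $\Perf(Y) \otimes_{\Perf(Y)} \Perf(Y) \simeq \Perf(Y \times_Y Y) \simeq \Perf(Y)$ by Theorem~\ref{theorem-bzfn}, and unwinding the definitions shows that the induced $(T_1 \times T_2)$-linear structure on $\Perf(Y)$ is exactly the one coming from $(\phi_1, \phi_2) \colon Y \to T_1 \times T_2$; hence the left-hand side is $(\cC_1 \otimes \cC_2) \otimes_{\Perf(T_1 \times T_2)} \Perf(Y)$. On the right, $\cC_k \otimes_{\Perf(T_k)} \Perf(Y)$ is by definition the base change $(\cC_k)_Y$ along $\phi_k$, and $S \times Y = Y$, so the tensor product over $\Perf(S \times Y)$ is the tensor product over $\Perf(Y)$; thus the right-hand side is $(\cC_1)_Y \otimes_{\Perf(Y)} (\cC_2)_Y$. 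Combining these identifications yields the asserted equivalence. I do not anticipate a genuine obstacle here: the whole argument is bookkeeping of the various $\Perf(\cdot)$-module structures, and the only non-formal ingredient — that $\Perf(Y)$ is idempotent for $\otimes_{\Perf(Y)}$ — is a special case of Theorem~\ref{theorem-bzfn}. The one point that requires care is verifying that the $\Perf(T_k)$- and $\Perf(Y)$-actions introduced through the maps $(\phi_k, \id_Y)$ coincide with the ones appearing in the statement, so that no spurious twist is introduced.
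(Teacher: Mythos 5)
Your proposal is correct and is essentially identical to the paper's own proof: the paper likewise applies Lemma~\ref{lemma-fiber-product-categories} with $X = S$ and $\cD_1 = \cD_2 = \Perf(Y)$, using the canonical equivalence $\Perf(Y) \simeq \Perf(Y) \otimes_{\Perf(Y)} \Perf(Y)$. The extra care you take in matching the module structures is exactly the implicit bookkeeping the paper leaves to the reader.
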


\begin{proof}
There is a canonical equivalence $\Perf(Y) \simeq \Perf(Y) \otimes_{\Perf(Y)} \Perf(Y)$. 
Now the result follows from Lemma~\ref{lemma-fiber-product-categories} by 
taking $X = S$ to be our base scheme and $\cD_1 = \cD_2 =  \Perf(Y)$. 
\end{proof}  


\section{Projected categorical joins}
\label{section-future}

Given closed subvarieties $X_1 \subset \bP(V)$ and~$X_2 \subset \bP(V)$ of the \emph{same} projective space, 
the classical join $\bJ(X_1, X_2)$ as we have defined it is a subvariety of~$\bP(V \oplus V)$.  
In this situation, it is more common to consider the join
\begin{equation*}
\bJ_V(X_1, X_2) \subset \bP(V)
\end{equation*}
\emph{inside} $\bP(V)$, 
defined as the Zariski closure of the 
union of all the lines between points of $X_1$ and~$X_2$ in $\bP(V)$. 
It is easy to see that $\bJ_V(X_1, X_2)$ is isomorphic to the image of {the classical join} $\bJ(X_1,X_2)$ under the linear projection
along the sum map~$V \oplus V \to V$. 
Accordingly, we call $\bJ_V(X_1, X_2)$ the \emph{projected join} of $X_1$ and $X_2$. 
Note also that we allow the possibility that~$X = X_1 = X_2$ coincide, 
in which case 
\begin{equation*}
\Sec(X) =  \bJ_V(X, X)
\end{equation*}
is known as the \emph{secant variety} of~$X$.

Let $V_1$ and $V_2$ be the linear spans of $X_1$ and $X_2$ in~$\bP(V)$ and
consider $X_1$ and $X_2$ as subvarieties of $\bP(V_1)$ and $\bP(V_2)$.
If $\bP(V_1)$ and $\bP(V_2)$ do not intersect in $\bP(V)$, the natural embedding~$V_1 \oplus V_2 \to V$
identifies the join $\bJ(X_1,X_2) \subset \bP(V_1 \oplus V_2)$ with the projected join~$\bJ_V(X_1,X_2)$. 
Otherwise, if $K = V_1 \cap V_2$ and we consider the diagonal embedding \mbox{$K \to K \oplus K \to V_1 \oplus V_2$}, then 
we have $\bJ(X_1, X_2) \cap \bP(K) = X_1 \cap X_2$, 
and the projection induces a regular surjective map 
\begin{equation}
\label{projected-join-map} 
\Bl_{X_{1} \cap X_{2}} (\bJ(X_1, X_2)) \to \bJ_V(X_1, X_2).
\end{equation}  
Note that this morphism is often generically finite. 
Thus, $\Bl_{X_{1} \cap X_{2}} \bJ(X_1, X_2)$ can be regarded as an approximation to $\bJ_V(X_1, X_2)$. 
In this appendix, we use this observation to categorify the projected join, 
and show that under HPD it corresponds to the operation of taking fiber products. 

\subsection{Linear projections and HPD} 

Let $\tV \to V$ be a surjection of vector bundles over $S$, with kernel $K$. 
Linear projection from $K$ gives a blowup diagram 
\begin{equation}
\label{diagram-tV-V-projection}
\vcenter{\xymatrix{
\tbE \ar[r]^-{\teps} \ar[d]_{b_{\tbE}} &
\Bl_{\bP(K)}(\bP(\tV)) \ar[d]^b \ar[r]^-{q} &
\bP(V) \\
\bP(K) \ar[r] & 
\bP(\tV)
}}
\end{equation}
with exceptional divisor $\tbE$. 
If $\cA$ is a $\bP(\tV)$-linear category, we define the \emph{linear projection of~$\cA$ along 
$\bP(K)$} by 
\begin{equation*}
\Bl_{\bP(K)}(\cA) = \cA \otimes_{\Perf(\bP(\tV))} \Perf(\Bl_{\bP(K)}\bP(\tV)), 
\end{equation*}
which we regard as a $\bP(V)$-linear category via the morphism $q \colon \Bl_{\bP(K)}(\bP(\tV)) \to \bP(V)$.

\begin{proposition}
\label{proposition-HPD-projection-general}
Let $\cA$ be a Lefschetz category over $\bP(\tV)$ with center $\cA_0$. 
Assume $\tV \to V$ is a surjection with kernel $K$, and 
$\length(\cA) < \rank(V)$. 
Then $\Bl_{\bP(K)}(\cA)$ has the structure of a moderate 
Lefschetz category over $\bP(V)$ of length $\rank(V) - 1$ with center 
\begin{equation*}
\Bl_{\bP(K)}(\cA)_0 = \llangle 
b^* \cA_0, \teps_* b_{\tbE}^*(\cA_{\bP(K)})
\rrangle, 
\end{equation*} 
which is right \textup(or left\textup) strong if $\cA$ is. 
Moreover, there is a $\bP(\vV)$-linear equivalence 
\begin{equation*}
(\Bl_{\bP(K)}(\cA)/\bP(V))^{\hpd} \simeq (\cA/\bP(\tV))^{\hpd} \otimes_{\Perf(\bP(\tV^{\svee}))} \Perf(\bP(\vV)). 
\end{equation*}
\end{proposition}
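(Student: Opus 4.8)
The plan is to prove the HPD equivalence by realizing both $(\Bl_{\bP(K)}(\cA)/\bP(V))^{\hpd}$ and the base change $(\cA/\bP(\tV))^{\hpd} \otimes_{\Perf(\bP(\tV^{\svee}))} \Perf(\bP(\vV))$ as the \emph{same} $\bP(\vV)$-linear subcategory of $\bH(\cA/\bP(\tV)) \otimes_{\Perf(\bP(\tV^{\svee}))}\Perf(\bP(\vV))$, and then promoting the resulting equivalence to a Lefschetz equivalence. The starting observation is that, since $\tV \to V$ is surjective with kernel $K$, dualizing gives $\bP(\vV) = \bP(K^{\perp}) \subset \bP(\tV^{\svee})$, so base changing $(\cA/\bP(\tV))^{\hpd}$ to $\bP(\vV)$ is base change along a \emph{linear subspace} — precisely the situation governed by the main theorem of HPD.

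First I would set up the geometry relating the universal hyperplane of the linear projection $q \colon \Bl_{\bP(K)}(\bP(\tV)) \to \bP(V)$ to that of $\bP(\tV)$. Concretely, $\bH(\Bl_{\bP(K)}(\bP(\tV))/\bP(V)) = \Bl_{\bP(K)}(\bP(\tV)) \times_{\bP(V)} \bH(\bP(V))$, and I would compare it with $\bH(\bP(\tV)) \times_{\bP(\tV^{\svee})} \bP(\vV)$ by exhibiting both as explicit projective bundles/blowups over $\bP(\vV)$, noting in particular that the former, being a projective bundle over $\bP(V)$ — which in turn maps to $\bP(\vV)$ through $\bH(\bP(V))$ — sits inside the latter as a fiberwise-join complement to the subbundle $\bP(K) \times \bP(\vV)$. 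All schemes involved are smooth and projective over $S$, so Remark~\ref{remark:good-morphism} and Theorem~\ref{theorem-bzfn} supply the needed adjoints and base-change isomorphisms. Base changing $\cA$ through this diagram produces a $\bP(\vV)$-linear functor out of $\bH(\cA/\bP(\tV))_{\bP(\vV)}$, through which I intend to transport $(\cA/\bP(\tV))^{\hpd}_{\bP(\vV)}$.

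Next I would identify images. By Lemma~\ref{lemma-characterization-Cd}, $(\cA/\bP(\tV))^{\hpd}$ is the subcategory of $\bH(\cA/\bP(\tV))$ cut out by $\pi_*(C \otimes h^*F) \in \cA_0$ for all $F$; base changing to $\bP(\vV)$ and using the compatibility of $\pi_*$ with the diagram above, this becomes exactly the condition carving out $(\Bl_{\bP(K)}(\cA)/\bP(V))^{\hpd}$ inside $\bH(\Bl_{\bP(K)}(\cA)/\bP(V))$, where the relevant center is $b^*\cA_0$ — the first summand of the center $\Bl_{\bP(K)}(\cA)_0$ established in the first part of the proposition; the second summand $\teps_*b_{\tbE}^*(\cA_{\bP(K)})$ is supported on the exceptional divisor and is annihilated by the relevant projection functor, so it does not enter. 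To upgrade the $\bP(\vV)$-linear equivalence so obtained to a Lefschetz equivalence I would apply the criterion of Lemma~\ref{lemma-equivalence-lef-cat}, using Proposition~\ref{proposition-characterization-Ad0} and Lemma~\ref{lemma-A0-Ad0-equivalence} to reduce the comparison of Lefschetz centers to the tautological comparison of $\cA_0$ with itself, and checking that the functor matches the Lefschetz components of the two HPD Lefschetz structures after the expected reindexing.

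The main obstacle is the geometric bookkeeping in the first step: correctly identifying the universal hyperplane section of the linear projection with the appropriate modification of the base change of $\bH(\bP(\tV))$, and tracking the twists by relative hyperplane classes and relative dualizing complexes along the chain of blowup and projective-bundle morphisms. As a possibly cleaner alternative, one could avoid the direct computation by invoking Theorem~\ref{theorem-HPD}\eqref{HPD-linear} with the subbundle $L = K \subset \tV$: this produces a semiorthogonal decomposition of $(\cA/\bP(\tV))^{\hpd}_{\bP(\vV)}$ with distinguished component $\cK'_{V^{\svee}}((\cA/\bP(\tV))^{\hpd}) \simeq \cK_K(\cA)$, which one matches term by term — after reindexing — with the HPD category of $\Bl_{\bP(K)}(\cA)$ computed from its Lefschetz structure via Theorem~\ref{theorem-HPD}\eqref{Cd-ld}; here the real content is checking that $\cK_K(\cA)$ is precisely the component contributed by the blowup and that the remaining Lefschetz components of the two sides agree.
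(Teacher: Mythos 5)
The paper does not actually prove this proposition from scratch: its ``proof'' is a citation to \cite{carocci2015homological} together with the assertion that the argument there carries over verbatim to the noncommutative setting (plus a remark that the center used here is a twist by $\cO(-(\rank(V)-1)\tbE)$ of the one in that reference). So you are sketching an argument the paper deliberately outsources, and your outline has the right general shape --- it parallels the structure of the cited proof, transported through the tools of \S\ref{subsection-HPD-characterization}. But there are genuine gaps. First, you take the entire first half of the statement as given: that $\Bl_{\bP(K)}(\cA)$ carries a Lefschetz structure of length $\rank(V)-1$ with center $\llangle b^*\cA_0, \teps_* b_{\tbE}^*(\cA_{\bP(K)})\rrangle$, and that it is strong and moderate. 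This is substantive (it is where Proposition~\ref{proposition-ld-resolution} applied to the exceptional divisor $\tbE$ and the projective-bundle structure of $\Bl_{\bP(K)}(\bP(\tV))$ over $\bP(V)$ do real work), and your HPD argument leans on it, so it cannot be omitted. Second, the geometric identification in your first step is not right as stated: $\bH(\Bl_{\bP(K)}(\bP(\tV))/\bP(V))$ is not a subscheme of $\bH(\bP(\tV)) \times_{\bP(\tV^{\svee})} \bP(\vV)$ but rather its blowup along $\bP(K) \times \bP(\vV)$. Consequently $\bH(\Bl_{\bP(K)}(\cA)/\bP(V))$ and $\bH(\cA/\bP(\tV))_{\bP(\vV)}$ differ by exceptional semiorthogonal components, and the heart of the proof is showing that the HPD category of $\Bl_{\bP(K)}(\cA)$ lies in the non-exceptional component and that the second summand $\teps_* b_{\tbE}^*(\cA_{\bP(K)})$ of the center imposes no extra condition there. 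Asserting that this summand ``is annihilated by the relevant projection functor, so it does not enter'' is precisely the step that requires the mutation computations of \cite{carocci2015homological}; it is not automatic.

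Your ``possibly cleaner alternative'' is not a valid proof method. Producing, via Theorem~\ref{theorem-HPD}\eqref{HPD-linear} with $L = K$, semiorthogonal decompositions of $(\cA/\bP(\tV))^{\hpd}_{\bP(\vV)}$ and of $(\Bl_{\bP(K)}(\cA)/\bP(V))^{\hpd}$ whose components are pairwise equivalent (including the equivalence $\cK_K(\cA) \simeq \cK'_{K^{\perp}}(\cA^{\hpd})$ of distinguished components) does not yield an equivalence of the ambient categories --- semiorthogonal decompositions with equivalent components do not determine the gluing data --- and in any case the statement requires a single $\bP(\vV)$-linear functor inducing a Lefschetz equivalence. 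Only the functorial route of your first plan can work, and there the missing content is exactly the blowup bookkeeping and mutations you defer.
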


\begin{proof}
In the case where $\cA$ is given by the derived category of a variety, this is the main result of 
\cite{carocci2015homological}. 
The proof carries over directly to our setting. 
We note that out of preference, the Lefschetz center we have used for $\Bl_{\bP(K)}(\cA)$ 
is a twist by $\cO(-(\rank(V)-1)\tbE)$ of the one from \cite{carocci2015homological}. 
We also note that there is an explicit formula
\begin{equation*}
\Bl_{\bP(K)}(\cA)_i = \begin{cases}
\llangle 
b^* \cA_{i}, \teps_* b_{\tbE}^*(\cA_{\bP(K)})
\rrangle & 
\text{for $0 \leq i \leq \length(\cA) - 1$} , \\ 
\teps_* b_{\tbE}^*(\cA_{\bP(K)}) & 
\text{for $\length(\cA) \leq i \leq \rank(V) - 2$}, 
\end{cases}
\end{equation*} 
for the right Lefschetz components of $\Bl_{\bP(K)}(\cA)$. 
\end{proof} 

\subsection{Projected categorical joins and HPD}
Now let $V_1$ and $V_2$ be nonzero vector bundles on $S$, 
equipped with morphisms $V_1 \to V$ and $V_2 \to V$ such that $\tV = V_1 \oplus V_2 \to V$ is surjective. 
Let $K \subset \tV$ be the kernel of this surjection. 

\begin{definition}
\label{definition-relative-join}
Let $\cA^1$ and $\cA^2$ be Lefschetz categories over the projective bundles $\bP(V_1)$ and~$\bP(V_2)$ 
such that~$\length(\cA^1) + \length(\cA^2) < \rank(V)$. 
Then the \emph{projected categorical join} of~$\cA^1$ and~$\cA^2$ over~$\bP(V)$ is 
the Lefschetz category over~$\bP(V)$ defined by 
\begin{equation*}
\cJ_V(\cA^1, \cA^2) = \Bl_{\bP(K)}(\cJ(\cA^1, \cA^2)). 
\end{equation*} 
\end{definition}

Note that by Theorem~\ref{theorem-join-lef-cat} we have $\length(\cJ(\cA^1, \cA^2)) = \length(\cA^1) + \length(\cA^2)$, 
so by Proposition~\ref{proposition-HPD-projection-general} 
the category $\cJ_V(\cA^1, \cA^2)$ does indeed have a natural Lefschetz structure 
over~$\bP(V)$. 
If $V = V_1 \oplus V_2$, then $\cJ_V(\cA^1, \cA^2) = \cJ(\cA^1, \cA^2)$ agrees with the usual categorical join. 

\begin{remark}
\label{remark-projected-join-center} 
If $V_1 \to V$ and $V_2 \to V$ are embeddings, the Lefschetz center of $\cJ_V(\cA^1, \cA^2)$ can 
be described explicitly as follows. 
In this case $K \cong V_1 \cap V_2 \subset V$
and $\bP(K) \subset \bP(V_1 \oplus V_2)$ does not intersect $\bP(V_1) \sqcup \bP(V_2)$, hence 
by Proposition~\ref{proposition-cJT} we have an equivalence 
\begin{equation*}
\cJ(\cA^1, \cA^2)_{\bP(K)} \simeq \cA^1_{\bP(K)} \otimes_{\Perf(\bP(K))} \cA^2_{\bP(K)} . 
\end{equation*}
Thus, by Proposition~\ref{proposition-HPD-projection-general} and the definition (Definition~\ref{definition-lef-center-join}) 
of the center of $\cJ(\cA^1, \cA^2)$, the center of $\cJ_V(\cA^1, \cA^2)$ is given by 
\begin{equation*}
\cJ_V(\cA^1, \cA^2)_0 = \llangle \cA_0^1 \otimes \cA_0^2, \cA^1_{\bP(K)} \otimes_{\Perf(\bP(K))} \cA^2_{\bP(K)} \rrangle, 
\end{equation*} 
where we have suppressed the embedding functors of the components. 
If $V_1 = V_2 = V$, this formula simply reads 
\begin{equation*}
\cJ_V(\cA^1, \cA^2)_0 = \llangle \cA_0^1 \otimes \cA_0^2 , \cA^1 \otimes_{\Perf(\bP(V))} \cA^2 \rrangle. 
\end{equation*}
\end{remark}

If $\cA^1$ and $\cA^2$ are Lefschetz categories over $\bP(V)$, then we write 
$\cJ_V(\cA^1, \cA^2)$ for the projected categorical join over $\bP(V)$ where we 
take $V_1 = V_2 = V$. 
In this case the assumptions of Remark~\ref{remark-projected-join-center} are satisfied and $K = V$.
Our results imply an appealing formula for the HPD category in this setting.

\begin{corollary}
\label{corollary-relative-joins} 
Let $\cA^1$ and $\cA^2$ be right strong, moderate Lefschetz categories over 
$\bP(V)$ such that $\length(\cA^1) + \length(\cA^2) < \rank(V)$. 
Then there is a $\bP(\vV)$-linear equivalence 
\begin{equation*}
\cJ_V(\cA^1, \cA^2)^{\hpd} \simeq (\cA^1)^{\hpd} \otimes_{\Perf(\bP(\vV))} (\cA^2)^{\hpd}. 
\end{equation*} 
\end{corollary}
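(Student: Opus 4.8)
The plan is to reduce the statement to Theorem~\ref{theorem-relative-joins-HPD} applied in the special case $V_1 = V_2 = V$, and then to collapse the base change appearing on the right hand side using Proposition~\ref{proposition-cJT}. First I would unwind the definition: taking $V_1 = V_2 = V$ and the summation map $V \oplus V \to V$ in Definition~\ref{definition-relative-join}, the projected categorical join $\cJ_V(\cA^1, \cA^2)$ is $\Bl_{\bP(K)}(\cJ(\cA^1, \cA^2))$ with $K = \ker(V \oplus V \xrightarrow{+} V)$ the antidiagonal. Since $\cA^1$ and $\cA^2$ are right strong, moderate Lefschetz categories over $\bP(V)$ with $\length(\cA^1) + \length(\cA^2) < \rank(V)$, all hypotheses of Theorem~\ref{theorem-relative-joins-HPD} are in force (note $\length(\cJ(\cA^1,\cA^2)) = \length(\cA^1)+\length(\cA^2)$ by Theorem~\ref{theorem-join-lef-cat}), so it supplies a $\bP(\vV)$-linear equivalence
\begin{equation*}
\cJ_V(\cA^1, \cA^2)^{\hpd} \simeq \cJ((\cA^1)^{\hpd}, (\cA^2)^{\hpd}) \otimes_{\Perf(\bP(\vV \oplus \vV))} \Perf(\bP(\vV)),
\end{equation*}
where the structure morphism $\bP(\vV) \to \bP(\vV \oplus \vV)$ is induced by the inclusion $\vV \hookrightarrow \vV \oplus \vV$ dual to $V \oplus V \to V$, i.e. by the diagonal $\theta \mapsto (\theta,\theta)$; concretely this is $\bP(\vV) = \bP(K^{\perp}) \hookrightarrow \bP(\vV \oplus \vV)$.

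Next I would identify $\cJ((\cA^1)^{\hpd}, (\cA^2)^{\hpd})_{\bP(\vV)}$ with $(\cA^1)^{\hpd} \otimes_{\Perf(\bP(\vV))} (\cA^2)^{\hpd}$. Here $(\cA^1)^{\hpd}$ and $(\cA^2)^{\hpd}$ are Lefschetz over $\bP(\vV_1) = \bP(\vV)$ and $\bP(\vV_2) = \bP(\vV)$, their categorical join is over $\bP(\vV_1 \oplus \vV_2) = \bP(\vV \oplus \vV)$, and we base change along $W := \vV \hookrightarrow \vV_1 \oplus \vV_2$. The point to verify is that the two compositions $W \to \vV_1 \oplus \vV_2 \to \vV_k$ for $k=1,2$ are inclusions: both equal the identity of $\vV$, because the diagonal is a section of each projection. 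Equivalently, $\bP(W) \subset \bP(\vV \oplus \vV)$ is disjoint from the coordinate subspaces $\bP(\vV_1) \sqcup \bP(\vV_2)$. Hence the equivalence~\eqref{eq:cj-pl-a1-a2} (an instance of Proposition~\ref{proposition-cJT}) applies and gives a $\bP(\vV)$-linear equivalence
\begin{equation*}
\cJ((\cA^1)^{\hpd}, (\cA^2)^{\hpd})_{\bP(\vV)} \simeq (\cA^1)^{\hpd}_{\bP(\vV)} \otimes_{\Perf(\bP(\vV))} (\cA^2)^{\hpd}_{\bP(\vV)}.
\end{equation*}
Finally, since the composition $\bP(\vV) \to \bP(\vV \oplus \vV) \to \bP(\vV_k)$ is the identity, the base change $(\cA^k)^{\hpd}_{\bP(\vV)}$ is canonically $(\cA^k)^{\hpd}$. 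Composing the three $\bP(\vV)$-linear equivalences produces the asserted equivalence.

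There is no deep obstacle here: the genuine content lies upstream, in Theorem~\ref{theorem-joins-HPD} (used via Theorem~\ref{theorem-relative-joins-HPD}) and in Proposition~\ref{proposition-cJT}. The only steps demanding care are (i) pinning down precisely which embedding $\bP(\vV) \hookrightarrow \bP(\vV \oplus \vV)$ arises as the orthogonal of $\ker(V \oplus V \to V)$, and checking that it misses $\bP(\vV_1)$ and $\bP(\vV_2)$ so that Proposition~\ref{proposition-cJT} is applicable; and (ii) keeping track of $\bP(\vV)$-linearity of each equivalence in the chain so that the composite is $\bP(\vV)$-linear, as claimed. I would also remark, as in the statement, that if $\cA^k = \Perf(X_k)$ for $\bP(V)$-schemes $X_k$, then Theorem~\ref{theorem-bzfn} rewrites the right hand side as $\Perf(X_1^{\hpd}) \otimes_{\Perf(\bP(\vV))} \Perf(X_2^{\hpd})$, recovering the geometric fiber product $X_1^{\hpd} \times_{\bP(\vV)} X_2^{\hpd}$ when the $X_k^{\hpd}$ are commutative.
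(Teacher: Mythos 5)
Your proposal is correct and is exactly the paper's argument: the paper proves this corollary by combining Theorem~\ref{theorem-relative-joins-HPD} (applied with $V_1 = V_2 = V$) with Proposition~\ref{proposition-cJT}, just as you do. Your extra care in identifying the orthogonal of the antidiagonal kernel with the diagonal embedding $\bP(\vV)\hookrightarrow\bP(\vV\oplus\vV)$ and checking it misses $\bP(\vV_1)\sqcup\bP(\vV_2)$ correctly fills in the details the paper leaves implicit.
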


\begin{proof}
Combine Theorem~\ref{theorem-joins-HPD}, Proposition~\ref{proposition-HPD-projection-general}, 
and Proposition~\ref{proposition-cJT}. 
\end{proof}

\begin{remark}
Jiang and Leung \cite{jiang-leung-joins} also recently proved a version of Corollary~\ref{corollary-relative-joins}. 
While we deduce this as a direct consequence of our main result, Theorem~\ref{theorem-joins-HPD}, 
they give an argument that does not rely on (but is closely related to the proof of) 
Theorem~\ref{theorem-joins-HPD}. 
\end{remark}

\begin{remark}
\label{remark-cJv-problems}
Recall that two key properties of categorical joins are that they preserve smoothness, and in the geometric case give a categorical resolution of the classical join. 
In contrast, if $\cA^1$ and $\cA^2$ in Corollary~\ref{corollary-relative-joins} are smooth and proper over $S$, 
their projected categorical join $\cJ_V(\cA^1, \cA^2)$ will not be smooth over $S$ unless they are transverse over $\bP(V)$, 
i.e. unless the category~$\cA^1 \otimes_{\Perf(\bP(V))} \cA^2$ is smooth. 
Further, if {$\cA^k = \Perf(X_k)$} for closed 
subvarieties~\mbox{$X_k \subset \bP(V)$}, 
the category $\cJ_V(\cA^1, \cA^2)$ will often not be 
birational to~$\bJ_V(X_1, X_2)$ over~$\bP(V)$ in the sense 
of Remark~\ref{remark-cJ-vs-classical}, 
corresponding to the fact that 
the map 
\begin{equation*}
\Bl_{X_{1} \cap X_{2}} \bJ(X_1, X_2) \to \bJ_V(X_1, X_2) 
\end{equation*}
is often only generically finite. 
It would be interesting to find a modification of~$\cJ_V(\cA^1, \cA^2)$ that fixes the above two issues. 
On the other hand, an advantage of $\cJ_V(\cA^1,\cA^2)$ as defined above is that 
by Corollary~\ref{corollary-relative-joins}  its HPD has a nice description. 
\end{remark}

\begin{remark}
If $S$ is the spectrum of a field, 
for a closed subvariety $X \subset \bP(V)$ 
the classical secant variety $\Sec(X) = \bJ_V(X, X) \subset \bP(V)$ 
is given by the join of $X$ with itself inside~$\bP(V)$. 
Note that the map $\Bl_{X} \bJ(X, X) \to \Sec(X)$ (take $X_1 = X_2 = X$ in~\eqref{projected-join-map}) 
is equivariant for the $\bZ/2$-action on the source induced by 
the $\bZ/2$-action on $V \oplus V$ swapping the summands. 
The quotient of $\Bl_{X} \bJ(X, X)$ by this action can thus be considered as a closer 
approximation to~$\Sec(X)$. 
Similarly, in the situation of Definition~\ref{definition-relative-join}, if 
$\cA = \cA^1 = \cA^2$ and $V = V_1 = V_2$, then there is a~$\bZ/2$-action on 
$\cJ_V(\cA, \cA)$ induced by the one on $\cA \otimes \cA$ given by swapping the 
two factors. 
The equivariant category $\cJ_V(\cA, \cA)^{\bZ/2}$ is a natural candidate for a 
\emph{categorical secant variety}, whose HPD can be described using Corollary~\ref{corollary-relative-joins}.
\end{remark}


\providecommand{\bysame}{\leavevmode\hbox to3em{\hrulefill}\thinspace}
\providecommand{\MR}{\relax\ifhmode\unskip\space\fi MR }
\providecommand{\MRhref}[2]{%
  \href{http://www.ams.org/mathscinet-getitem?mr=#1}{#2}
}
\providecommand{\href}[2]{#2}


\end{document}